\documentclass[10pt]{amsart}

\usepackage{enumerate}
\usepackage{amsmath,amssymb}

\usepackage{mathrsfs}

\newtheorem{theorem}{Theorem}
\newtheorem{lemma}[theorem]{Lemma}
\newtheorem{proposition}[theorem]{Proposition}
\newtheorem{corollary}[theorem]{Corollary}

\theoremstyle{definition}
\newtheorem{example}[theorem]{Example}
\newtheorem{question}[theorem]{Question}

\begin{document}
\title[Isometries between C$^*$-algebras]{Isometries between C$^*$-algebras\\ with finite corank}

\author[M. Mori]{Michiya Mori}

%\dedicatory{Dedicated to Professor Fumio Hiai on the occasion of his $80$th birthday.}

\address{Faculty of Engineering, Niigata University, 8050 Ikarashi 2-no-cho, Nishi-ku, Niigata 950-2181, Japan; Center for Interdisciplinary Theoretical and Mathematical Sciences (iTHEMS), RIKEN, 2-1 Hirosawa, Wako, Saitama 351-0198, Japan.}
\email{michiya.mori.eng@niigata-u.ac.jp}

\thanks{The author was supported by JSPS KAKENHI Grant Number 22K13934.}
\subjclass[2020]{15A60, 17C65, 46B04, 46L05, 47B49} 

\keywords{Isometry, C*-algebra, Order embedding}

\date{}

\begin{abstract}
We give a characterization of linear isometries with finite corank between two arbitrary unital C$^*$-algebras in terms of Jordan $^*$-homomorphisms and ``finite-dimensional remainders''. Also, the corresponding results are given for linear isometries and linear order embeddings between self-adjoint parts of C$^*$-algebras. In the finite-dimensional case, we give new examples of linear isometries between matrix algebras. 
\end{abstract}

\maketitle

\section{Introduction}
Throughout this paper, we assume that $\mathcal{A}, \mathcal{B}$ are general unital C$^*$-algebras. The symbols $\mathcal{U}(\mathcal{A})$, $\mathcal{P}(\mathcal{A})$, and $\mathcal{A}_{sa}$ denote the unitary group, the set of projections, and the self-adjoint part of $\mathcal{A}$, respectively.

In this paper, we are interested in linear isometries $\Phi\colon \mathcal{A}\to\mathcal{B}$, that is, linear mappings satisfying $\lVert \Phi(a_1)-\Phi(a_2)\rVert = \lVert a_1-a_2\rVert$ for any pair $a_1, a_2\in \mathcal{A}$, or equivalently, linear mappings satisfying $\lVert \Phi(a)\rVert = \lVert a\rVert$ for any $a\in \mathcal{A}$. 
In 1951, Kadison characterized surjective linear isometries in terms of Jordan $^*$-isomorphisms. 
Recall that a linear mapping (resp. a linear bijection) $J\colon \mathcal{A}\to \mathcal{B}$ is called a Jordan $^*$-homomorphism (resp. a Jordan $^*$-isomorphism) if it satisfies $J(a^*)=J(a)^*$ and $J(a_1a_2+a_2a_1)=J(a_1)J(a_2)+J(a_2)J(a_1)$ for any $a, a_1, a_2\in \mathcal{A}$. 
\begin{theorem}[Kadison {\cite[Theorem 7]{K}}]\label{kadison51}
 If $\Phi\colon \mathcal{A}\to\mathcal{B}$ is a surjective linear isometry, then $\Phi(1)\in \mathcal{B}$ is unitary and the mapping $\mathcal{A}\ni a\mapsto \Phi(1)^*\Phi(a)\in \mathcal{B}$ is a Jordan $^*$-isomorphism. 
\end{theorem}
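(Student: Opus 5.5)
We plan to prove Kadison's theorem through the extreme point structure of the closed unit balls, since a surjective linear isometry restricts to an affine homeomorphism of the unit ball of $\mathcal{A}$ onto that of $\mathcal{B}$. It therefore carries extreme points to extreme points. By Kadison's description, the extreme points of the unit ball of a unital C$^*$-algebra are the partial isometries $v$ with $(1-vv^*)\mathcal{A}(1-v^*v)=\{0\}$. Among these, the unitaries are characterized metrically. An extreme point $v$ is unitary exactly when every extreme point $w$ satisfies $\lVert v-w\rVert\le \sqrt{2}$, or equivalently when the unit ball is the closed convex hull of such points. Using this, we will show that $\Phi$ maps $\mathcal{U}(\mathcal{A})$ onto $\mathcal{U}(\mathcal{B})$. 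In particular $u:=\Phi(1)$ is unitary. Replacing $\Phi$ by $\Psi(a)=u^*\Phi(a)$, which is again a surjective linear isometry, we reduce to the unital case $\Psi(1)=1$.

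Next we show that a unital surjective linear isometry preserves adjoints and positivity. A self-adjoint $h$ with $\lVert h\rVert\le 1$ satisfies $\lVert h\pm i t\cdot 1\rVert^2\le 1+t^2$ for all real $t$. Conversely, if $a$ satisfies this for all $t$, then its numerical range lies in $\mathbb{R}$, so $a$ is self-adjoint. Both conditions are preserved by $\Psi$ because $\Psi(1)=1$, so $\Psi(\mathcal{A}_{sa})=\mathcal{B}_{sa}$. Hence $\Psi(a^*)=\Psi(a)^*$. For positivity, note that $0\le h\le 1$ holds iff $\lVert h-\tfrac12\rVert\le\tfrac12$, so $\Psi$ is an order isomorphism of the self-adjoint parts. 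Projections are the extreme points of $\{0\le h\le 1\}$, so they are preserved, and orthogonality is preserved as well: for projections $p,q$ we have $pq=0$ iff $p+q\le 1$.

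To obtain the Jordan property, first take a self-adjoint $h$ with finite spectrum, $h=\sum\lambda_j p_j$ with orthogonal projections $p_j$. Then $\Psi(h)=\sum\lambda_j\Psi(p_j)$ with the $\Psi(p_j)$ orthogonal projections, so $\Psi(h^2)=\Psi(h)^2$. To extend this to all of $\mathcal{A}_{sa}$, we use continuity together with continuous functional calculus: approximate $h$ by $f(h)$ for step-like $f$. The obstacle here is that a general C$^*$-algebra may lack projections, so $f(h)$ need not lie in $\mathcal{A}$.

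We therefore expect this extension, together with the unitary characterization, to be the main difficulty. For the extension we would try a different route that avoids projections. For self-adjoint $h$ with $\lVert h\rVert\le1$, the element $e^{ith}$ is unitary, and the unitary group is preserved. We would use the one-parameter unitary group $t\mapsto\Psi(e^{ith})$ and differentiate at $t=0$ twice: $\Psi(e^{ith})=\sum (it)^n\Psi(h^n)/n!$ must be unitary for all $t$. Comparing the $t^2$ coefficients in $\Psi(e^{ith})^*\Psi(e^{ith})=1$ gives $\Psi(h)^2=\Psi(h^2)$. Polarization then yields the Jordan identity on $\mathcal{A}_{sa}$, and complex-linearity extends it to all of $\mathcal{A}$.
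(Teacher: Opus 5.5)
Your plan has a genuine gap at exactly the step where the theorem has content: the claim that $\Phi$ maps unitaries to unitaries, which includes the conclusion that $\Phi(1)$ is unitary. The paper only cites this theorem, but the proofs of its Lemma~\ref{kadison} and Theorem~\ref{russodye} specialize to it, and that is where your plan falls short. The metric characterization you rely on is false. Whenever $v$ is an extreme point of the unit ball, so is $-v$, and $\lVert v-(-v)\rVert=2>\sqrt2$; hence no unitary satisfies your criterion. The convex-hull reformulation fails too: in $\mathcal{A}=\mathbb{C}$ with $v=1$, the extreme points within distance $\sqrt2$ of $1$ form the arc $\{e^{i\theta}\mid \lvert\theta\rvert\leq\pi/2\}$, and its closed convex hull is only half the disc. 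Your final step needs $\Psi(e^{ith})$ to be unitary for all $h\in\mathcal{A}_{sa}$ and small $t$, so nothing after your first paragraph is established as it stands.

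To close the gap, use the argument of Lemma~\ref{kadison}, which is simpler in the surjective case.
\begin{itemize}
\item By Theorem~\ref{extreme}, $v=\Phi(1)\in\mathrm{ext}\,\mathcal{B}^{\leq1}$ is a partial isometry.
\item For $a\in\mathcal{A}_{sa}$ with $\lVert a\rVert=\max\sigma(a)=1$ one has $\lVert v+\Phi(a)\rVert=2$. A vector argument then gives $\lVert v^*\Phi(a)v^*v\rVert=1$.
\item So the unital contraction $a\mapsto v^*\Phi(a)v^*v$ into $v^*v\mathcal{B}v^*v$ is isometric on $\mathcal{A}_{sa}$, hence injective on $\mathcal{A}$ by Lemma~\ref{restrict}.
\item Surjectivity of $\Phi$ then forces $\mathcal{B}(1-v^*v)=\{0\}$, i.e.\ $v^*v=1$.
\item Applying this to the surjective linear isometries $a\mapsto\Phi(a^*)^*$ and $a\mapsto\Phi(ua)$ gives $vv^*=1$ and $\Phi(\mathcal{U}(\mathcal{A}))\subset\mathcal{U}(\mathcal{B})$.
\end{itemize}
If you want a criterion that is genuinely invariant under isometries, use this one: a norm-one $u$ is unitary iff the norm-one functionals $f$ with $f(u)=1$ separate the points of $\mathcal{A}$. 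This holds because every such $f$ vanishes on $\mathcal{A}(1-u^*u)$ and on $(1-uu^*)\mathcal{A}$.

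With this step in place, the rest of your proposal is sound. The self-adjointness argument works. Comparing $t^2$-coefficients in $\Psi(e^{ith})^*\Psi(e^{ith})=1$ does give $\Psi(h^2)=\Psi(h)^2$. That is a more elementary route to the Jordan property than Theorem~\ref{russodye}, which goes through Kadison's Schwarz inequality and Broise's lemma. Your route uses only unitarity of $\Psi(e^{ith})$ for small $\lvert t\rvert$ together with $\Psi(\mathcal{A}_{sa})\subset\mathcal{B}_{sa}$.
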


What happens if we drop the assumption of surjectivity?
Some local properties are known for nonsurjective linear isometries between C$^*$-algebras or related spaces, see \cite{CW}, \cite{CM}, \cite{AP}.
However, in general, it appears to be very difficult to study the global behavior of linear isometries without the surjectivity assumption. 

\subsection{Linear isometries between matrix algebras}\label{subsec:matrix}
Indeed, no complete description of linear isometries is known even if we restrict our attention to the case of matrix algebras. 
In the paper \cite{CLP}, Cheung, Li, and Poon gave partial results and several problems on such mappings. 
For $n\geq 0$, let $\mathbb{M}_n$ denote the space of $n\times n$ matrices with complex entries.
Let $\alpha, \beta$ be nonnegative integers satisfying $0<\alpha+\beta$, and let $n,m$ be positive integers.
Let $T_1, T_2$ be $n(\alpha+\beta)\times m$ matrices satisfying $\lVert T_1\rVert, \lVert T_2\rVert \leq 1$ and $\mathrm{rank}\, (I_{n(\alpha+\beta)}-T_1T_1^*)+\mathrm{rank}\,(I_{n(\alpha+\beta)}-T_2T_2^*)<\alpha+\beta$. Then one may show via basic matrix analysis that the mapping $\mathbb{M}_n\ni X\mapsto T_1^*((X\otimes I_\alpha)\oplus (X^t\otimes I_\beta))T_2\in \mathbb{M}_{m}$ is a linear isometry. 
Therefore, for any integer $k\geq m$, any liner mapping $f\colon \mathbb{M}_n\to \mathbb{M}_{k-m}$ of norm at most $1$, and for any pair of unitaries $V_1, V_2\in \mathbb{M}_k$, we see that the mapping 
\begin{equation}\label{clp}
\Phi\colon \mathbb{M}_n\ni X\mapsto V_1(T_1^*((X\otimes I_\alpha)\oplus (X^t\otimes I_\beta))T_2\oplus f(X))V_2^* \in \mathbb{M}_k
\end{equation}
is also an isometry \cite[Proposition 4.1]{CLP}. 
In this paper, we call a mapping $\Phi \colon \mathbb{M}_n\to \mathbb{M}_k$ a \emph{CLP isometry} if it is of the above form for some suitable choice of $\alpha, \beta, m$, $f$, and $T_1, T_2$, $V_1, V_2$ as above.
Cheung, Li, and Poon asked whether every linear isometry $\Phi \colon \mathbb{M}_n\to \mathbb{M}_k$ is a CLP isometry \cite[p.\ 15, (1)]{CLP}, and proved that this is the case if $k\leq 2n-1$ \cite[Theorem 1.1]{CLP}.

In the proof of this result, it was essential to think of mappings on the space of hermitian matrices. 
Let $\mathscr{H}_n$ denote the space of $n\times n$ hermitian matrices.
If $\alpha, \beta, \delta$ are nonnegative integers satisfying $\delta<\alpha+\beta$, and if $U\colon \mathbb{C}^{n(\alpha+\beta)-\delta} \to \mathbb{C}^{n(\alpha+\beta)}$ is a linear isometry, then 
\begin{equation}\label{phi0}
\Phi _0\colon \mathscr{H}_n\ni X\mapsto U^*((X\otimes I_\alpha)\oplus (X^t\otimes I_\beta))U\in \mathscr{H}_{n(\alpha+\beta)-\delta}
\end{equation}
is a unital positive linear isometry.  
Therefore, for any integer $k\geq n(\alpha+\beta)-\delta$, any unital positive linear mapping $f\colon \mathscr{H}_n\to \mathscr{H}_{k-n(\alpha+\beta)+\delta}$ (here, $f$ can be the zero mapping in the case $k= n(\alpha+\beta)-\delta$), and for any unitary $V\in \mathbb{M}_k$, we see that the mapping 
\begin{equation}\label{clp2}
\mathscr{H}_n\ni X\mapsto V(\Phi_0(X)\oplus f(X))V^* \in \mathscr{H}_k
\end{equation}
is also a unital positive linear isometry \cite[Proposition 4.2]{CLP}. 
Let us again use the terminology \emph{CLP isometry} for a mapping $\Phi \colon \mathscr{H}_n\to \mathscr{H}_k$ that can be obtained in this manner for some suitable choice of $\alpha, \beta, \delta$, $f$, and $U, V$ as above. (It is unlikely that the use of the same terminology causes confusions because different spaces are considered.)
Cheung, Li, and Poon asked whether every unital positive linear isometry $\Phi \colon \mathscr{H}_n\to \mathscr{H}_k$ is a CLP-isometry \cite[p.\ 15, (2)]{CLP} (see also \cite[Theorem 2.3]{CLP}), and proved that this is the case if $k\leq 2n-2$ \cite[Theorem 1.2]{CLP}. 
They also asked the same problem under the additional assumption of complete positivity or decomposability \cite[p.\ 15, (4)]{CLP}.

In Section \ref{secexample}, we solve all these problems negatively by exhibiting examples.
Consequently, at present there is no clear understanding on the general form of linear isometries between matrix algebras.

\subsection{Linear isometries between commutative C$^*$-algebras}
If we restrict our attention to linear isometries between commutative C$^*$-algebras, one may find a lot of works in the literature.
For a compact Hausdorff space $X$, let $C(X)$ denote the commutative C$^*$-algebra of continuous complex-valued functions on $X$.
In what follows, let $X, Y$ be compact Hausdorff spaces. 
It is well-known as the Banach--Stone theorem from 1930s that a surjective linear isometry $\Phi\colon C(X)\to C(Y)$ is of the form $\Phi(f)(y)=\tau(y)f(\eta(y))$ for every $f\in C(X)$ and every $y\in Y$, where $\tau\colon Y\to \mathbb{T}:= \{z\in \mathbb{C}\mid \lvert z\rvert=1\}$ is a continuous function and $\eta\colon Y\to X$ is a homeomorphism. 
In 1966, Holszty\'nski \cite[Theorem]{Hols} proved that if $\Phi \colon C(X)\to C(Y)$ is a linear isometry then one can take a certain nonempty closed subset $Y_0\subset Y$, a continuous mapping $\tau\colon Y_0\to \mathbb{T}$, and a continuous surjective mapping $\eta\colon Y_0\to X$ satisfying $\Phi(f)(y)=\tau(y)f(\eta(y))$ for every $f\in C(X)$ and every $y\in Y_0$.

Holszty\'nski's theorem is generalized and modified in various directions. For example, Cambern \cite{C} gave a result on isometries between vector-valued continuous functions. Jeang and Wong \cite{JW0} gave a version of Holszty\'nski's result for commutative C$^*$-algebras that are not necessarily unital.
Araujo and Font \cite{AF0} studied linear isometries between subspaces of commutative C$^*$-algebras under certain assumptions.
Jim\'enez-Vargas and Villegas-Vallecillos \cite{JV} worked on linear isometries between the spaces of Lipschitz functions.

On the other hand, Holszty\'nski's theorem only gives a partial information on linear isometries. Therefore, we need some additional assumption to get a better description of such mappings. One direction of research which flourished in the past 40 years is the study of linear isometries of finite corank, and in particular, of corank $1$.

\subsection{Linear isometries of finite corank}
Recall that a linear mapping $\Phi\colon \mathcal{V}\to \mathcal{W}$ between vector spaces is said to have \emph{corank $n$} if the quotient vector space $\mathcal{W}/\Phi(\mathcal{V})$ is $n$-dimensional, and it is said to have \emph{finite corank} if it has corank $n$ for some integer $n\geq 0$. 
The best-known example of an isometry of corank $1$ is probably the \emph{unilateral shift}, that is, the linear isometry on a Hilbert space with orthonormal basis $\phi_n$, $n\geq 1$, that sends each $\phi_n$ to $\phi_{n+1}$. 
Recall that the famous Wold--von Neumann theorem assures that the unilateral shift is a building block of general linear isometries on a Hilbert space and it verifies the importance of the unilateral shift.

With the motivation to get a generalization of the unilateral shift to the setting of Banach spaces, Crownover \cite{Cr} introduced the following concept. 
A linear mapping $\Phi\colon \mathcal{X}\to \mathcal{X}$ on a Banach space $\mathcal{X}$ is called an \emph{isometric shift} if it is an isometry with corank $1$ satisfying $\bigcap_{n\geq 1} \Phi^n(\mathcal{X})=\{0\}$. Crownover \cite[Theorem 1]{Cr} proved that a Banach space with an isometric shift can be identified isometrically with a certain space of sequences.

Because commutative C$^*$-algebras form an important class of Banach spaces, it is natural to think about isometric shift, or more generally, isometries with finite corank, for commutative C$^*$-algebras.
In 1988, Holub \cite{Holu} studied the problem of when the real Banach space $C(X)_{sa}$ of real-valued continuous functions admits an isometric shift. In 1991, using Holszty\'nski's theorem \cite[Theorem]{Hols}, Gutek, Hart, Jamison, and Rajagopalan \cite{GHJR} gave the following result on general isometries of corank $1$ on $C(X)$, and gave answers to several questions posed by Holub. 
\begin{theorem}[Gutek, Hart, Jamison, and Rajagopalan {\cite[Theorem 2.1]{GHJR}}]\label{th:ghjr}
Let $\Phi\colon C(X)\to C(X)$ be a linear isometry with corank $1$. 
Then there are an open subset $X_1\subset X$ with at most one point, a continuous mapping $\tau\colon X\setminus X_1\to \mathbb{T}$, and a continuous surjective mapping $\eta\colon X\setminus X_1\to X$ satisfying $\Phi(f)(x)=\tau(x)f(\eta(x))$ for every $f\in C(X)$ and every $x\in X\setminus X_1$. %The same statement holds when $C(X)$ is replaced with $C(X)_{sa}$.
\end{theorem}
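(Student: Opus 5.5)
The plan is to combine Holszty\'nski's theorem with an analysis of the Choquet boundary (equivalently, the extreme points of the dual unit ball) of the range $\Phi(C(X))$ inside $C(X)$. By \cite[Theorem]{Hols}, there is a nonempty closed set $Y_0\subset X$, a continuous $\tau\colon Y_0\to\mathbb{T}$, and a continuous surjection $\eta\colon Y_0\to X$ with $\Phi(f)(x)=\tau(x)f(\eta(x))$ on $Y_0$. The canonical choice of $Y_0$ is the set of $x\in X$ such that the functional $\delta_x\circ\Phi$ is an extreme point of the dual unit ball of $C(X)$, i.e. a unimodular multiple of a point evaluation. The task is therefore to show that $X_1:=X\setminus Y_0$ has at most one point and is open. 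Openness is automatic once $Y_0$ is closed, so the real content is the cardinality bound. After that, $\tau$ and $\eta$ are the restrictions from Holszty\'nski's theorem.

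The key step is to show that each $x\in X_1$ contributes an independent linear functional vanishing on $\Phi(C(X))$. Fix $x\in X_1$. The functional $\mu_x:=\delta_x\circ\Phi\in C(X)^*=M(X)$ has norm at most $1$ and is not of the form $\lambda\delta_p$ with $|\lambda|=1$. Since $\Phi$ is an isometry and $\eta$ is surjective, every $p\in X$ is of the form $\eta(y)$ for some $y\in Y_0$. Hence $\delta_p=\overline{\tau(y)}\,\delta_y\circ\Phi$, which means $\mu_x$ can be written as $\nu_x\circ\Phi$ for the measure
\[
\nu_x:=\int \overline{\tau(y(p))}\,\delta_{y(p)}\,d\mu_x(p)
\]
supported on $Y_0$, at least when $\mu_x$ is atomic; in general one needs a measurable selection or a density argument. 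Consequently $\delta_x-\nu_x$ annihilates $\Phi(C(X))$. These functionals, as $x$ ranges over $X_1$, are linearly independent, since the point masses $\delta_x$ for distinct $x\notin Y_0$ are independent modulo measures supported on $Y_0$. Because the annihilator of a corank-$1$ subspace is one-dimensional, $|X_1|\le 1$. Alternatively, the measurable-selection issue can be avoided with a more elementary argument. If $x_1\neq x_2$ both lie in $X_1$, choose Urysohn functions and perturb to build two functions $g_1,g_2$ that are independent modulo $\Phi(C(X))$. To do this, use the fact that an element of $\Phi(C(X))$ has values at $x_i$ controlled by its values on $Y_0$, namely $|\Phi(f)(x_i)|\le\|f\|=\sup_{Y_0}|\Phi(f)|$. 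Note also that $x_i\notin Y_0$ requires $Y_0$ to be closed and each $x_i$ to be separated from it.

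The main obstacle is this independence and controlled-extension step. Concretely, one must show that if $x\notin Y_0$, then a function $g$ with $g|_{Y_0}=0$ and $g(x)=1$ cannot lie in $\Phi(C(X))+\mathbb{C}h$ for the other candidate $h$. I would handle it by noting that $\Phi(f)|_{Y_0}=0$ forces $f=0$, by isometry together with the surjectivity of $\eta$. Thus the restriction map $\Phi(C(X))\to C(Y_0)$ is injective, so $\Phi(C(X))\cap\{g: g|_{Y_0}=0\}=\{0\}$. The space $\{g\in C(X): g|_{Y_0}=0\}$ has dimension at least $|X\setminus Y_0|$ by Urysohn's lemma, since $X\setminus Y_0$ is open. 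Because this space meets $\Phi(C(X))$ trivially, it injects into $C(X)/\Phi(C(X))$, which is one-dimensional. Hence $|X\setminus Y_0|\le 1$, and this clean dimension count resolves the obstacle. The only delicate point left is the closedness of $Y_0$, which comes from the continuity built into Holszty\'nski's construction.
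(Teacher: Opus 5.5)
Your last paragraph is a complete and correct proof, and it takes a different route from the paper's. From Holszty\'nski's theorem you use only three facts: $Y_0$ is closed, $|\tau|=1$, and $\eta$ is onto. Then $\Phi(f)|_{Y_0}=0$ forces $f\circ\eta=0$, hence $f=0$. So $\{g\in C(X)\mid g|_{Y_0}=0\}$ meets $\Phi(C(X))$ trivially and embeds in the one-dimensional quotient $C(X)/\Phi(C(X))$. Two points of $X\setminus Y_0$ would give two linearly independent Urysohn functions vanishing on $Y_0$, so $\#(X\setminus Y_0)\le 1$.

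Some of your text can be cut. The closedness of $Y_0$ is part of Holszty\'nski's conclusion as quoted in the paper, so no ``delicate point'' remains. The paragraph with the measures $\nu_x$ is not a proof as written, since it needs a measurable selection $y(p)\in\eta^{-1}(p)$ that you never supply. The ``canonical choice'' of $Y_0$ is never used. Both can be deleted, because your dimension count works for whatever closed $Y_0$ the theorem provides.

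The paper never invokes Holszty\'nski. It obtains the statement as the commutative case of Proposition~\ref{corank1}, which is proved with noncommutative tools: Proposition~\ref{vvv} on extreme points of the unit ball of a finite-codimensional subspace, Lemmas~\ref{kadison} and~\ref{corank1lemma}, and the Russo--Dye-type Theorem~\ref{russodye}. It then applies Banach--Stone in the first alternative, and the description of injective unital $^*$-homomorphisms of $C(X)$ in the second.

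Your argument is much shorter, and it gives $\#(X\setminus Y_0)\le\mathrm{corank}\,\Phi$ for any finite corank with no extra work. However, it rests on point evaluations and does not transfer to the noncommutative setting. The paper's route costs more machinery but yields a sharper dichotomy. Either $X_1=\{x_0\}$ with $x_0$ isolated and $\eta$ a homeomorphism of $X\setminus\{x_0\}$ onto $X$, or $X_1=\emptyset$ with $\tau=\Phi(1)$. It also covers arbitrary unital C$^*$-algebras.
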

(In the statement of \cite[Theorem 2.1]{GHJR}, the mapping is assumed to be an isometric shift, but note that the assumption $\bigcap_{n\geq 1} \Phi^n(C(X))=\{0\}$ is not used in the proof.)
The mapping $\Phi$ as in Theorem \ref{th:ghjr} is said to be of \emph{type I} if one may take $X_1$ with $\# X_1=1$, and of \emph{type II} if one may take the empty set as $X_1$. Note that an isometric shift $\Phi\colon C(X)\to C(X)$ can be of both types I and II, see \cite[Example 3]{GHJR}.

After the work of Gutek, Hart, Jamison, and Rajagopalan, many researchers worked on isometric shifts on $C(X)$, often with the view on the dynamical properties of iterates of $\Phi$. See for example \cite{FV}, \cite{H}, \cite{AF2}, \cite{GN}, \cite{A}, and \cite{A2}. 
Isometries of finite corank (and in particular, of corank $1$) have been studied for several other spaces of functions as well.
For example, Jeang and Wong \cite{JW} studied isometries of finite corank between commutative C$^*$-algebras that are not necessarily unital. 
Izuchi \cite{I} studied isometries of corank $1$ on Douglas algebras. 
Koshimizu \cite{Ko} studied isometries of finite corank on the spaces of differentiable functions and Lipschitz functions. 
Araujo and Font \cite{AF}, \cite{AF3} studied the case of function algebras and that of the spaces of vector-valued continuous functions.

\subsection{Main results of this paper}
As reviewed above, the literature on isometries of finite corank between commutative C$^*$-algebras is rich. 
However, it seems that there is almost no such work for noncommutative C$^*$-algebras in the existing literature. 
The main purpose of this paper is to fill in this gap. 
Namely, we study linear isometries of finite corank between general C$^*$-algebras.

Recall that $\mathcal{A}$, $\mathcal{B}$ are unital C$^*$-algebras.
The following is one of the main results of this paper. This gives a variant of Kadison's theorem (Theorem \ref{kadison51}).

\begin{theorem}\label{main}
Let $\Phi \colon \mathcal{A}\to \mathcal{B}$ be a linear isometry with finite corank.
Then there exist central projections $p\in \mathcal{A}$, $q\in \mathcal{B}$ (some of which can be $0$ or $1$) such that $p\mathcal{A}$, $q\mathcal{B}$ are finite-dimensional, and there exist
\begin{itemize}
\item a linear isometry $\Phi _1\colon p\mathcal{A}\to q\mathcal{B}$, 
\item a linear isometry $\Phi _2\colon (1-p)\mathcal{A}\to (1-q)\mathcal{B}$ that is the composition $\Phi _2=vJ(\cdot)$ of an injective unital Jordan $^*$-homomorphism $J\colon (1-p)\mathcal{A}\to (1-q)\mathcal{B}$ with finite corank and the left-multiplication by a unitary $v=(1-q)\Phi(1)\in \mathcal{U}((1-q)\mathcal{B})$, and
\item a linear mapping $\Phi _3\colon (1-p)\mathcal{A}\to q\mathcal{B}$, 
\end{itemize}
satisfying the following two conditions: 
\begin{itemize}
\item For every $a\in \mathcal{A}$, we have $\Phi (a)=\Phi_1(pa)+\Phi_2((1-p)a)+\Phi_3((1-p)a)$, and 
\item the operator norm of the mapping $\mathcal{A}\ni a\mapsto \Phi_1(pa)+\Phi_3((1-p)a)\in q\mathcal{B}$ is at most $1$.
\end{itemize}
\end{theorem}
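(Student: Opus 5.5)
The strategy is to pass to biduals and use the structure of linear isometries from Kadison-type arguments on ``large'' parts. Since $\Phi$ has finite corank, the adjoint $\Phi^*\colon \mathcal{B}^*\to\mathcal{A}^*$ is a weak$^*$-continuous quotient map whose kernel $\Phi(\mathcal{A})^\perp$ is finite-dimensional. First I would show that $\Phi^*$ maps the set of extreme points of the unit ball of $\mathcal{B}^*$ onto a set containing all extreme points of the unit ball of $\mathcal{A}^*$. These extreme points are $\mathbb{T}$ times pure states. Because $\ker\Phi^*$ is finite-dimensional, all but ``finitely many directions'' of pure states of $\mathcal{B}$ are restricted faithfully. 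More concretely, the next step is to form the set $Q$ of pure states $\psi$ of $\mathcal{B}$ such that $\psi$ is not ``seen'' isometrically through $\Phi$, i.e.\ $\psi\circ\Phi$ is not a unimodular multiple of a pure state of $\mathcal{A}$. The claim is that the closed ideal generated by the supports of such $\psi$ is finite-dimensional. The finite-dimensionality should come from a counting argument: infinitely many inequivalent (or infinite-dimensional) such pieces would give infinitely many linearly independent functionals annihilating $\Phi(\mathcal{A})$. This yields a central projection $q$ (first in $\mathcal{B}^{**}$, then shown to lie in $\mathcal{B}$ because the corresponding summand is a finite-dimensional direct summand of the spectrum, an isolated clopen finite set of irreducible representations of finite dimension).

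On $(1-q)\mathcal{B}$, every pure state composed with $\Phi$ is $\lambda\varphi$ with $\varphi$ pure on $\mathcal{A}$. I then define $v=(1-q)\Phi(1)$ and show it is unitary: for each such pure state, $|\psi(\Phi(1))|=1$, and this together with $\|\Phi(1)\|=1$ forces $\psi(\Phi(1)^*\Phi(1))=1=\psi(\Phi(1)\Phi(1)^*)$ on all pure states of $(1-q)\mathcal{B}$. Then $J:=v^*(1-q)\Phi(\cdot)$ is unital and sends $\mathcal{A}_{sa}$ into the self-adjoint part, by the standard numerical-range argument: it is unital and contractive, hence positive. It also induces a map $\psi\mapsto\varphi$ from pure states of $(1-q)\mathcal{B}$ to pure states of $\mathcal{A}$. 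The Jordan property would be checked on each irreducible representation, following Kadison's proof of Theorem \ref{kadison51}. That proof uses the fact that $J^*$ preserves pure states (and that an isometry pulls extreme points back appropriately), so $J$ preserves squares of self-adjoints at the level of each pure state.

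Next let $p$ be the central projection of $\mathcal{A}$ corresponding to those pure states of $\mathcal{A}$ not attained by the map $\psi\mapsto\varphi$ from $(1-q)\mathcal{B}$. Every pure state of $\mathcal{A}$ is attained through $\Phi^*$ on the whole of $\mathcal{B}$, by the Krein--Milman argument and the fact that $\Phi^*$ is a quotient. So the unattained ones are attained only through the finite-dimensional part $q\mathcal{B}$, hence they are finitely many and finite-dimensional, giving $p\mathcal{A}$ finite-dimensional. It follows that $J$ restricted to $(1-p)\mathcal{A}$ is isometric, hence injective. Also $J(p)=0$: $J(p)$ is a projection whose pure-state evaluations vanish. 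Then $\Phi_1=q\Phi|_{p\mathcal{A}}$, $\Phi_2=(1-q)\Phi|_{(1-p)\mathcal{A}}$ and $\Phi_3=q\Phi|_{(1-p)\mathcal{A}}$. The norm condition holds since $q\Phi$ is a compression of an isometry. Isometry of $\Phi_1$ follows because $(1-q)\Phi$ kills $p\mathcal{A}$. Finite corank of $J$ is inherited from that of $\Phi$.

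The main obstacle is the first step: proving that the ``bad'' set of pure states of $\mathcal{B}$ generates a \emph{finite-dimensional} central summand lying in $\mathcal{B}$ itself, not merely in $\mathcal{B}^{**}$. I would attack it by showing that each bad pure state $\psi$ has a nonzero component in the finite-dimensional space $\Phi(\mathcal{A})^\perp$. More precisely, in the face of the dual ball, $\psi$ can be decomposed as a combination of elements of $\Phi(\mathcal{A})^\perp$ and functionals realizing norms. Then I would use the fact that mutually orthogonal or inequivalent pure states give linearly independent annihilator components. Isolation in the spectrum would follow from weak$^*$ closedness of the finite-dimensional annihilator.
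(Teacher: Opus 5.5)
There is a genuine gap. It sits exactly at the step you call the main obstacle, and the claim there is false, not merely hard. The extreme points of the unit ball of $\mathcal{B}^*$ are the functionals $b\mapsto\langle\pi(b)\xi,\eta\rangle$ with $\pi$ irreducible and $\xi,\eta$ unit vectors. They are unimodular multiples of pure states only when $\mathcal{B}$ is commutative, so the Banach--Stone picture you are importing does not apply.

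Concretely, let $\mathcal{A}=\mathcal{B}=\mathbb{B}(\mathcal{H})$ with $\dim\mathcal{H}=\infty$, and let $\Phi(a)=ua$ for a non-scalar unitary $u$. This is a surjective isometry, and the theorem holds with $p=q=0$. Let $x$ be a unit vector that is not an eigenvector of $u$. Then $\omega_x\circ\Phi$ is the normal functional $a\mapsto\langle ax,u^*x\rangle$, which is a multiple of a pure state only if $u^*x\in\mathbb{C}x$. So $\omega_x\in Q$, and the ideal generated by its support is infinite-dimensional, since $\mathbb{B}(\mathcal{H})$ has no nonzero finite-dimensional ideal. Here $\Phi(\mathcal{A})^\perp=\{0\}$, so your proposed mechanism ("each bad state has a nonzero component in $\Phi(\mathcal{A})^\perp$") fails too. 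For the same states $\lvert\psi(\Phi(1))\rvert=\lvert\langle ux,x\rangle\rvert<1$, so your unitarity argument for $v$ has nothing to act on.

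The criterion also fails in the other direction. Pulling pure states back to pure states does not give the Jordan property, so "checking on each irreducible representation" cannot work. Take $\Phi\colon\mathbb{M}_2\to\mathbb{M}_2\oplus\mathbb{C}$, $\Phi(a)=(a,a_{11})$, a unital isometry of corank $1$. Every pure state of $\mathbb{M}_2\oplus\mathbb{C}$ pulls back to a pure state of $\mathbb{M}_2$, so $Q=\emptyset$, $q=0$, $v=1$ and $J=\Phi$. Yet, with matrix units $e_{12},e_{21}$, the second coordinate of $\Phi(e_{12})\Phi(e_{21})+\Phi(e_{21})\Phi(e_{12})$ is $0$, while that of $\Phi(e_{12}e_{21}+e_{21}e_{12})=\Phi(1)$ is $1$. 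The correct $q$ is the $\mathbb{C}$-summand, which carries only a redundant copy of an already-attained pure state. This cannot happen in Kadison's surjective setting.

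The missing idea is to detect $q$ through extreme points of the unit ball of $\mathcal{A}$ rather than of the dual ball. For a unitary $u$, $\Phi(u)$ is extreme in the unit ball of the finite-codimensional space $\Phi(\mathcal{A})$. By Proposition \ref{vvv} and Lemma \ref{kadison}, $\Phi(u)$ is then a unitary up to a petty defect. Lemma \ref{qb} combines a Baire category argument over the matrix summands of $\mathcal{B}$ with the Russo--Dye-type Theorem \ref{russodye}. This yields a petty central $q$ with $(1-q)\Phi(\mathcal{U}(\mathcal{A}))\subset\mathcal{U}((1-q)\mathcal{B})$, which forces $a\mapsto(1-q)\Phi(1)^*\Phi(a)$ to be a Jordan $^*$-homomorphism. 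The projection $p$ then comes from its kernel being a finite-dimensional closed ideal (Corollary \ref{kernel}, Lemma \ref{fdideal}). In the example, $\lvert u_{11}\rvert<1$ off the nowhere dense set of diagonal unitaries, and that is exactly why the $\mathbb{C}$-summand lands in $q$.
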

Therefore, such an isometry has a rather special form.
Observe that the equation 
\begin{equation}\label{corank}
\mathrm{corank}\, \Phi =\mathrm{corank}\, \Phi_1+\mathrm{corank}\, \Phi_2=\mathrm{corank}\, \Phi_1+\mathrm{corank}\, J
\end{equation}
holds.

Let us show that the converse of this theorem holds. 
Let $\Phi$ be as in the conclusion of this theorem. Then it is clear that $\Phi$ has finite corank.
We show that $\Phi$ is an isometry.
Let $a\in \mathcal{A}$. Then the norm of $\Phi(a)=\Phi_1(pa)+\Phi_2((1-p)a)+\Phi_3((1-p)a)$ equals 
$\max\{\lVert \Phi_1(pa)+\Phi_3((1-p)a)\rVert, \lVert\Phi_2((1-p)a)\rVert\}$. 
We have  $\lVert \Phi_1(pa)+\Phi_3((1-p)a)\rVert\leq \lVert a\rVert$ and $\lVert\Phi_2((1-p)a)\rVert=\lVert(1-p)a\rVert$.  
Therefore, if $\lVert a\rVert = \lVert(1-p)a\rVert$, then we obtain $\lVert \Phi(a)\rVert =\lVert a\rVert$. 
If $\lVert a\rVert = \lVert pa\rVert$, then 
$\lVert \Phi_1(pa)\pm \Phi_3((1-p)a)\rVert\leq \lVert pa \pm (1-p)a\rVert = \lVert pa\rVert$ holds. Hence we have 
\[
2\lVert pa\rVert=2\lVert \Phi_1(pa)\rVert \leq \lVert \Phi_1(pa)+ \Phi_3((1-p)a)\rVert+\lVert \Phi_1(pa)- \Phi_3((1-p)a)\rVert\leq 2\lVert pa\rVert
\]
and $\lVert \Phi_1(pa)\pm \Phi_3((1-p)a)\rVert=\lVert pa\rVert$, so we get $\lVert \Phi(a)\rVert =\lVert a\rVert$.
Consequently, this theorem gives a characterization of linear isometries of unital C$^*$-algebras with finite corank. 

We also have a self-adjoint version of the above theorem, which gives a variant of another theorem by Kadison \cite[Theorem 2]{K2} in 1952.

\begin{theorem}\label{mainsa}
Let $\Phi \colon \mathcal{A}_{sa}\to \mathcal{B}_{sa}$ be a linear isometry with finite corank.
Then there exist central projections $p\in \mathcal{A}$, $q\in \mathcal{B}$ (some of which can be $0$ or $1$) such that $p\mathcal{A}$, $q\mathcal{B}$ are finite-dimensional, and there exist
\begin{itemize}
\item a linear isometry $\Phi _1\colon p\mathcal{A}_{sa}\to q\mathcal{B}_{sa}$, 
\item a linear isometry $\Phi _2\colon (1-p)\mathcal{A}_{sa}\to (1-q)\mathcal{B}_{sa}$ that satisfies $\Phi _2(a)=vJ(a)$ for every $a\in (1-p)\mathcal{A}_{sa}$, where $J\colon (1-p)\mathcal{A}\to (1-q)\mathcal{B}$ is an injective unital Jordan $^*$-homomorphism with finite corank and $v=(1-q)\Phi(1)$ is a central self-adjoint unitary in $(1-q)\mathcal{B}$, and 
\item a linear mapping $\Phi _3\colon (1-p)\mathcal{A}_{sa}\to q\mathcal{B}_{sa}$, 
\end{itemize}
satisfying the following two conditions: 
\begin{itemize}
\item For every $a\in \mathcal{A}_{sa}$, we have $\Phi (a)=\Phi_1(pa)+\Phi_2((1-p)a)+\Phi_3((1-p)a)$, and 
\item the operator norm of the mapping $\mathcal{A}_{sa}\ni a\mapsto \Phi_1(pa)+\Phi_3((1-p)a)\in q\mathcal{B}_{sa}$ is at most $1$.
\end{itemize}
\end{theorem}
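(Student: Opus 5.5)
The plan is to run the argument for Theorem \ref{main} again, replacing Kadison's 1951 theorem (Theorem \ref{kadison51}) by its self-adjoint counterpart, Kadison's 1952 result \cite[Theorem 2]{K2}. That result says that a surjective linear isometry $\Psi\colon \mathcal{A}_{sa}\to\mathcal{B}_{sa}$ has the form $\Psi=vJ$, where $v=\Psi(1)$ is a central symmetry and $J$ is a Jordan isomorphism.

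\textbf{Step 1: states and faces.} I would pass to the dual and study $\Phi^*\colon \mathcal{B}_{sa}^*\to\mathcal{A}_{sa}^*$. Because $\Phi$ is an isometry, $\Phi^*$ maps the dual unit ball onto the dual unit ball. Hence every extreme point $\pm\varphi$ of the ball of $\mathcal{A}_{sa}^*$, with $\varphi$ a pure state, is attained. Consider its preimage, a weak$^*$ closed face $F_\varphi$ of the ball of $\mathcal{B}_{sa}^*$. Finite corank means that $\ker\Phi^*=\Phi(\mathcal{A}_{sa})^\perp$ is finite-dimensional, say of dimension $n$. Thus $\Phi^*$ is injective up to an $n$-dimensional kernel.

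\textbf{Step 2: split off a finite-dimensional central summand.} Call a pure state $\psi$ of $\mathcal{B}$ \emph{good} if $\pm\psi$ is the unique preimage of an extreme point, and \emph{bad} otherwise. The bad states should span something controlled by $\ker\Phi^*$. My first attempt is to show that the bad pure states, together with the pure states of $\mathcal{A}$ that have non-singleton fibres, all lie in finitely many irreducible representations, and that each of these representations is finite-dimensional. The argument would compare dimensions of faces of the state space, using that faces of the normal state space of $B(H)$ are isomorphic to the state spaces of $B(K)$ for subspaces $K$. These representations should correspond to isolated points of the spectrum, which yields central projections $q\in\mathcal{B}$ and $p\in\mathcal{A}$ with finite-dimensional corners. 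Define $\Phi_1,\Phi_3$ by compressing $\Phi$ by $q$, and $\Phi_2(a)=(1-q)\Phi((1-p)a)$. Two further facts need proof: $(1-q)\Phi(pa)=0$, and $\Phi_1$ is an isometry on $p\mathcal{A}_{sa}$. The vanishing should follow by evaluating at the good states, whose preimages sit in the $(1-p)$ part. The isometry property would come from the norming argument for finite-dimensional summands, as in the proof of Theorem \ref{main}.

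\textbf{Step 3: the remaining part is a multiple of a Jordan map.} On the good part, $\Phi_2^*$ induces a map from pure states of $(1-q)\mathcal{B}$ to signed pure states of $(1-p)\mathcal{A}$. I expect this map to be affine on the relevant faces and to preserve transition probabilities, and I would handle this as in the proof of Theorem \ref{main}. The sign $\psi\mapsto\pm$ is given by evaluating at $\Phi(1)$. It should be locally constant, so $v=(1-q)\Phi(1)$ is a central symmetry. The map $J=v\Phi_2$ is then unital and positive, and it is an isometry onto its range. A unital linear isometry between self-adjoint parts has a Jordan structure on its image: the Kadison argument applies to $J$ viewed as a surjection onto $J((1-p)\mathcal{A}_{sa})$, together with an identification of that image as a Jordan subalgebra. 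Its complexification $J\colon(1-p)\mathcal{A}\to(1-q)\mathcal{B}$ is then an injective unital Jordan $^*$-homomorphism. It has finite corank because its range contains $(1-q)\Phi(\mathcal{A}_{sa})$, which has finite codimension in $(1-q)\mathcal{B}_{sa}$. The norm bound on $a\mapsto\Phi_1(pa)+\Phi_3((1-p)a)$ is immediate, since it is the compression of $\Phi$ by $q$.

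\textbf{Main obstacle.} The hardest point is Step 2. We must show that finite corank forces the exceptional behaviour into finitely many finite-dimensional central summands rather than, for instance, an infinite-dimensional irreducible representation. Self-adjointness removes the unitary-phase freedom but brings in sign ambiguities. In this real setting I would try to adapt the complex argument for Theorem \ref{main} directly.
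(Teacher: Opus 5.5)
There is a genuine gap. As described, neither Step 2 nor Step 3 can be carried out.

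\textbf{Step 2 splits off the wrong thing.} Test your plan on $\mathcal{B}=C([0,1],\mathbb{M}_2)$ and $\mathcal{A}=\{f\in\mathcal{B}\mid f(0)\ \text{is diagonal}\}$, with $\Phi$ the inclusion $\mathcal{A}_{sa}\hookrightarrow\mathcal{B}_{sa}$. This is a unital $^*$-embedding of corank $2$, and the theorem holds with $p=q=0$, $v=1$, $J=\Phi$. Let $\xi=(1,1)/\sqrt{2}$ and let $\psi(f)=\langle f(0)\xi,\xi\rangle$. On $\mathcal{A}$, $\psi$ restricts to $\frac12(f(0)_{11}+f(0)_{22})$, the midpoint of two distinct pure states.
\begin{itemize}
\item By your definition $\psi$ is ``bad''. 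It lies in a finite-dimensional irreducible representation, but $t=0$ is not isolated in $\hat{\mathcal{B}}\cong[0,1]$, and $\mathcal{B}$ has no nontrivial central projections. So the inference ``finite-dimensional representation carrying bad states $\Rightarrow$ isolated point $\Rightarrow$ central projection $q$'' is false.
\item If you only count states with non-singleton fibres as bad, then $q=0$ here. In that case your Step 3 claim, that $\Phi_2^*$ maps pure states of $(1-q)\mathcal{B}$ to signed pure states of $(1-p)\mathcal{A}$, fails.
\end{itemize}
The finite corank of $J$ itself produces exceptional behaviour on non-central finite-dimensional representations (compare Proposition \ref{J1}). Your good/bad dichotomy cannot separate this from the non-Jordan remainder $\Phi_1,\Phi_3$ without already knowing the Jordan structure.

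\textbf{Step 3 assumes that Jordan structure.} Kadison's theorem, and its JB-algebra versions, concern surjective isometries onto a Jordan algebra. Applying it to $J$ as a map onto $J((1-p)\mathcal{A}_{sa})$ presupposes that this image is closed under squaring. That is the content of the theorem, and it is false for general unital positive isometries: see Section \ref{secexample}, or $(x,y)\mapsto(x,y,\frac{x+y}{2})$ from $(\mathbb{C}^2)_{sa}$ to $(\mathbb{C}^3)_{sa}$. Nor can this be done ``as in the proof of Theorem \ref{main}''. That proof uses no transition probabilities. It uses that unitaries near $u_0$ form an open set, which feeds Baire category and Theorem \ref{russodye}. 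In $\mathcal{A}_{sa}$ the extreme points are symmetries, and a symmetry within distance $<2$ of $1$ equals $1$. For example, $C([0,1])_{sa}$ has no symmetries besides $\pm1$.

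The paper supplies the missing ingredients as follows.
\begin{itemize}
\item Proposition \ref{ppp} and the norm formula of Lemma \ref{e123} give Lemma \ref{1-chi} and Corollary \ref{01chi}. Hence $\chi_{(0,1)}(\Psi(e))$ is petty for every projection $e$, so projections go to projections up to a petty error.
\item In a von Neumann algebra, Baire category over a connected component of projections, Lemma \ref{generate} and Broise's Lemma \ref{broise} give a Jordan homomorphism off a petty central projection (Lemma \ref{abe}). The abelian case needs a separate maximality argument.
\item Lemma \ref{eepetty} makes $\chi_{\{\pm1\}}(\Phi(1))$ central after discarding a petty summand. This, not a ``locally constant sign'', is what makes $v$ central.
\item Finally the paper passes to $\Phi^{**}$ and descends to $\mathcal{A},\mathcal{B}$ via the petty set $S$. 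This is needed because the central projections found in the biduals need not lie in $\mathcal{A}$, $\mathcal{B}$.
\end{itemize}
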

The conclusion of the latter theorem is quite similar to the former, but notice that $v$ needs to be a self-adjoint unitary in the latter theorem. 

Even though Theorems \ref{main} and \ref{mainsa} give characterizations of linear isometries between (self-adjoint parts of) C$^*$-algebras with finite corank, we still do not have a good understanding of the mappings $\Phi _1, \Phi_2, \Phi_3$ in the theorems. 
In particular, $\Phi _1\colon p\mathcal{A}\to q\mathcal{B}$ in Theorem \ref{main} is a linear isometry of finite-dimensional C$^*$-algebras. However, as mentioned before, at present there is little understanding on such a mapping even in the special case of matrix algebras.

\subsection{Remarks on the main theorem}
In this paper, a linear subspace $\mathcal{A}_1\subset \mathcal{A}$ is called a \emph{Jordan $^*$-subalgebra} if $a^*$, $a_1a_2+a_2a_1\in \mathcal{A}_1$ for any $a, a_1, a_2\in \mathcal{A}_1$.
Let us think about the mapping $J$ in Theorem \ref{main} or \ref{mainsa}. 
To examine it, it is essential to study finite-codimensional closed Jordan $^*$-subalgebras of C$^*$-algebras. 
This can be understood by the following proposition.
\begin{proposition}\label{J1}
Let $\mathcal{A}_1\subset \mathcal{A}$ be a closed Jordan $^*$-subalgebra with codimension $m$.
Then there is a surjective $^*$-homomorphism $\pi\colon \mathcal{A}\to \mathcal{A}_2$ onto a finite-dimensional C$^*$-algebra $\mathcal{A}_2$ and a Jordan $^*$-subalgebra $\mathcal{A}_3\subset \mathcal{A}_2$ such that $m=\dim \mathcal{A}_2-\dim \mathcal{A}_3$ and $\mathcal{A}_1=\{a\in \mathcal{A}\mid \pi(a)\in \mathcal{A}_3\}$.
\end{proposition}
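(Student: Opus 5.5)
Let $\mathcal{I}$ be the largest closed two-sided ideal of $\mathcal{A}$ contained in $\mathcal{A}_1$. We take $\mathcal{A}_2=\mathcal{A}/\mathcal{I}$ with quotient map $\pi$, and $\mathcal{A}_3=\pi(\mathcal{A}_1)$. Since $\mathcal{I}\subset\mathcal{A}_1$, we automatically get $\mathcal{A}_1=\pi^{-1}(\mathcal{A}_3)$. Moreover $\mathcal{A}_3$ is a Jordan $^*$-subalgebra of $\mathcal{A}_2$ (image under a $^*$-homomorphism), and
\[
\dim\mathcal{A}_2-\dim\mathcal{A}_3=\dim(\mathcal{A}/\mathcal{A}_1)=m .
\]
So everything reduces to proving that $\mathcal{I}$ has finite codimension, i.e.\ that $\mathcal{A}/\mathcal{I}$ is finite-dimensional.

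The plan is to produce an ideal of finite codimension inside $\mathcal{A}_1$ explicitly. Put
\[
\mathcal{K}=\{x\in\mathcal{A}\mid x\circ\mathcal{A}\subset\mathcal{A}_1\},\qquad x\circ y=xy+yx .
\]
This is a closed subspace. It has finite codimension because it is the kernel of the map $x\mapsto (y\mapsto x\circ y \bmod \mathcal{A}_1)$. Two facts are needed.
\begin{enumerate}[(i)]
\item $\mathcal{K}\supset \mathcal{A}_1\cap\mathcal{K}'$ for some finite-codimensional $\mathcal{K}'$. Concretely, choose a complement $\mathcal{F}$ of $\mathcal{A}_1$ with $\dim\mathcal{F}=m$. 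For $x\in\mathcal{A}_1$ one has $x\circ\mathcal{A}_1\subset\mathcal{A}_1$, so only the $m$ conditions $x\circ f_i\in\mathcal{A}_1$ have to be imposed.
\item Using Jordan identities, the set of $x$ with $x\circ\mathcal{A}\subset\mathcal{A}_1$ and $\{x\mathcal{A}x\}\subset\mathcal{A}_1$ generates, via $\{a x b\}$-type expressions, a two-sided ideal still contained in $\mathcal{A}_1$.
\end{enumerate}

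A cleaner route is to pass to the closed Jordan ideal $\mathcal{J}\subset\mathcal{A}_1$, the largest Jordan ideal of $\mathcal{A}$ inside $\mathcal{A}_1$. To see it has finite codimension, iterate: $\mathcal{K}_0=\mathcal{A}_1$ and $\mathcal{K}_{j+1}=\{x\in\mathcal{K}_j\mid x\circ\mathcal{A}\subset\mathcal{K}_j\}$. Each step costs finitely many codimensions, at most $m\cdot\operatorname{codim}\mathcal{K}_j$. However the sequence need not stabilize after finitely many steps, and this is where I expect the main obstacle.

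To bypass it, I would use the known fact for C$^*$-algebras that closed Jordan ideals coincide with closed two-sided ideals, together with a finiteness argument. Consider $\mathcal{J}=\mathcal{K}_2$, or more directly $\mathcal{L}=\{x\in\mathcal{A}_1\mid \{a x b\}\in\mathcal{A}_1 \text{ for all } a,b\in\mathcal{A}\}$, where $\{axb\}=\tfrac12(axb+bxa)$. For fixed $x$ the conditions modulo $\mathcal{A}_1$ involve the pairs $(a,b)$ only through $\mathcal{A}/\mathcal{A}_1$ after a Jordan-algebraic reduction. The key inclusion is $\{\mathcal{A}_1 x \mathcal{A}_1\}\subset\mathcal{A}_1$ for $x\in\mathcal{A}_1$, the Jordan triple product being expressible through $\circ$. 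This shows that $\mathcal{L}$ is cut out from $\mathcal{A}_1$ by finitely many ($\le m^2+2m$) continuous linear conditions, hence has finite codimension.

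Next I would show that $\mathcal{L}$ is a closed Jordan triple ideal of $\mathcal{A}$. The quadratic ideal property follows from the Macdonald-type identity $\{a\{bxc\}d\}$ being expressible via triple products whose middle entry is a triple product of $x$; this is the technical core. Closed triple ideals (inner ideals stable under outer multiplication) of a C$^*$-algebra are two-sided ideals, by the result that closed Jordan/triple ideals in C$^*$-algebras are associative ideals. Then $\mathcal{L}\subset\mathcal{I}$, so $\mathcal{I}$ has finite codimension, $\mathcal{A}_2$ is finite-dimensional, and we are done.
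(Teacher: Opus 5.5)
Your reduction is sound. Any closed two-sided ideal $\mathcal{I}\subset\mathcal{A}_1$ of finite codimension gives the conclusion via $\pi\colon\mathcal{A}\to\mathcal{A}/\mathcal{I}$ and $\mathcal{A}_3=\pi(\mathcal{A}_1)$; the paper's $\ker\pi$ is such an ideal. The gap is that finite codimension, which is the whole content of the proposition, is never established.

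Your count for $\mathcal{L}$ rests on the claim that, for fixed $x\in\mathcal{A}_1$, the conditions ``involve $(a,b)$ only through $\mathcal{A}/\mathcal{A}_1$''. The bilinear map $(a,b)\mapsto\{axb\}\bmod\mathcal{A}_1$ vanishes on $\mathcal{A}_1\times\mathcal{A}_1$ by your key inclusion. That does not make it factor through $(\mathcal{A}/\mathcal{A}_1)^2$: for that it would have to vanish on $\mathcal{A}_1\times\mathcal{A}$. Write $b=b_1+g$ with $g$ in an $m$-dimensional complement. The mixed terms $\{a_1xg\}$, with $a_1$ ranging over all of $\mathcal{A}_1$, impose infinitely many conditions. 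Expanding them through $\circ$ produces $(a_1\circ x)\circ g$, so it requires $a_1\circ x\in\mathcal{K}$ for every $a_1\in\mathcal{A}_1$. That is exactly the non-terminating iteration $\mathcal{K}_j$ you flagged. So the bound $m^2+2m$ is unfounded, and the claim that $\mathcal{L}$ is stable under $\{a\,\cdot\,b\}$ (the ``Macdonald-type identity'') is also unproved.

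A cleverer Jordan identity cannot fix this, because your argument uses only the Jordan product, and at that level of generality the conclusion is false. Take a spin factor $J=\mathbb{C}1\oplus V$, realized in a Clifford algebra with $vw+wv=2\langle v,w\rangle$, where $V$ is infinite-dimensional and $\langle\cdot,\cdot\rangle$ is nondegenerate. Pick $e$ with $\langle e,e\rangle\neq 0$ and set $W=e^{\perp}$. Then $B=\mathbb{C}1\oplus W$ is a codimension-one Jordan subalgebra of the simple Jordan algebra $J$. A direct computation, for instance $\{w'we\}=\langle w',w\rangle e$ for $w,w'\in W$, shows that your $\mathcal{L}$ is $\{0\}$. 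So some structure of $\mathcal{A}$ beyond the Jordan product must enter the finiteness step itself, not only the final passage from closed Jordan ideals to associative ideals.

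The paper does this analytically:
\begin{enumerate}
\item It writes $\mathcal{A}_1=\bigcap_j\ker\varphi_j$ with self-adjoint $\varphi_j\in\mathcal{A}^*$ and observes that $\bigcap_j\ker\varphi_j\subset\mathcal{A}^{**}$ is still a Jordan subalgebra.
\item It restricts to maximal abelian subalgebras $\mathcal{M}\cong C(X)$. There a Stone--Weierstrass argument (Lemma~\ref{measure}) forces each $\varphi_j|_{\mathcal{M}}$ to have finite support, so the support projections of the $\varphi_j$ are finite sums of atoms.
\item Lemma~\ref{bhsub} rules out infinite-dimensional $\mathbb{B}(\mathcal{H})$ summands. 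Hence these supports generate a finite-dimensional ideal $e\mathcal{A}^{**}$ with $e$ central, and $\pi(a)=ea$ does the job.
\end{enumerate}
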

Therefore, the investigation on finite-codimensional closed Jordan $^*$-subalgebras reduces to that of the finite-dimensional $^*$-representation $\pi\colon \mathcal{A}\to \mathcal{A}_2$ and the inclusion $\mathcal{A}_3\subset \mathcal{A}_2$.

Now, we would like to consider the special case of corank $1$ in the main theorem.
To study this case, it is helpful to think of the case of $m=1$ in Proposition \ref{J1}.

\begin{proposition}\label{J2}
Let $\mathcal{A}_1\subset \mathcal{A}$ be a closed Jordan $^*$-subalgebra with codimension $1$.
Then one (and only one) of the following holds.
\begin{itemize}
\item There is a character (i.e. a $^*$-representation with $1$-dimensional range) $\varphi\colon \mathcal{A}\to \mathbb{C}$ such that $\mathcal{A}_1=\ker \varphi$. 
\item There are distinct characters $\varphi_1, \varphi_2\colon \mathcal{A}\to \mathbb{C}$ such that $\mathcal{A}_1=\{a\in \mathcal{A}\mid \varphi_1(a)=\varphi_2(a)\}$. 
\item There is a $^*$-representation $\pi\colon \mathcal{A}\to \mathbb{M}_2$ such that $\mathcal{A}_1=\{a\in \mathcal{A}\mid \pi(a)_{11}=\pi(a)_{22}\}$. (Here, for a matrix $X\in \mathbb{M}_2$, $X_{ij}$ denotes its $(i,j)$-entry.)
\end{itemize}
\end{proposition}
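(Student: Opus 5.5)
Apply Proposition \ref{J1} with $m=1$. This gives a surjective $^*$-homomorphism $\pi\colon\mathcal{A}\to\mathcal{A}_2$ onto a finite-dimensional C$^*$-algebra and a Jordan $^*$-subalgebra $\mathcal{A}_3\subset\mathcal{A}_2$ of codimension $1$ with $\mathcal{A}_1=\pi^{-1}(\mathcal{A}_3)$. The problem then reduces to classifying codimension-one Jordan $^*$-subalgebras $\mathcal{A}_3$ of a finite-dimensional C$^*$-algebra $\mathcal{A}_2=\bigoplus_{i}\mathbb{M}_{n_i}$. We are free to shrink $\mathcal{A}_2$: if $\mathcal{A}_3$ contains a full central summand $z\mathcal{A}_2$, we compose $\pi$ with the compression to $(1-z)\mathcal{A}_2$. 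So we may assume $\mathcal{A}_3$ contains no nonzero central ideal of $\mathcal{A}_2$. The aim is to show that only three configurations survive:
\begin{itemize}
\item $\mathcal{A}_2=\mathbb{C}$ and $\mathcal{A}_3=0$;
\item $\mathcal{A}_2=\mathbb{C}^2$ and $\mathcal{A}_3=\mathbb{C}(1,1)$;
\item $\mathcal{A}_2=\mathbb{M}_2$ and $\mathcal{A}_3=\{X\mid X_{11}=X_{22}\}$, after a unitary change of basis.
\end{itemize}
These give the three bullets of the proposition, with the characters being the coordinate maps composed with $\pi$.

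The key step is a structural analysis of $\mathcal{A}_3$. Since $\mathcal{A}_3$ is a finite-dimensional JC-algebra, it is spanned by its projections. We split on whether $1\in\mathcal{A}_3$.

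If $1\notin\mathcal{A}_3$, then $\mathcal{A}_2=\mathcal{A}_3+\mathbb{C}1$. For any $x\in\mathcal{A}_3$, the element $x^2=\tfrac12(x\circ x)$ lies in $\mathcal{A}_3$. Write an arbitrary $a\in\mathcal{A}_2$ as $x+\lambda 1$. Then $a^2=x^2+2\lambda x+\lambda^2 1$, and $ab+ba$ is computed similarly. From this one sees that $\mathcal{A}_3$ is a Jordan ideal of codimension one. The kernel of the linear functional $\mathcal{A}_2\to\mathcal{A}_2/\mathcal{A}_3\cong\mathbb{C}$ is then a Jordan ideal. A codimension-one Jordan ideal of a finite-dimensional C$^*$-algebra is the kernel of a Jordan character, and a Jordan character is a character. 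After minimizing, $\mathcal{A}_2=\mathbb{C}$, which is the first case.

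If $1\in\mathcal{A}_3$, then no character kills $\mathcal{A}_1$. Take the annihilator of $\mathcal{A}_3$ in the dual, using the trace pairing $\langle a,b\rangle=\mathrm{tr}(ab)$. It is spanned by a single element $h$, which we may take self-adjoint (replace $h$ by its real or imaginary part, since $\mathcal{A}_3$ is $^*$-closed). Invariance of $\mathcal{A}_3$ under $x\mapsto uxu^*$ for unitaries $u=e^{itx}$ with $x\in(\mathcal{A}_3)_{sa}$ holds, because Jordan algebras are closed under $x\mapsto e^{itx}ye^{-itx}$ only in a weak sense. More robustly, we use the identity $xyx=\tfrac12(x\circ(x\circ y))-\tfrac14(x^2\circ y)$, which shows that $\mathcal{A}_3$ is closed under $y\mapsto xyx$. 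This gives $\mathrm{tr}(hxyx)=0$ for all $x,y\in\mathcal{A}_3$, and hence $xhx\in\mathcal{A}_3^\perp=\mathbb{C}h$ for all self-adjoint $x\in\mathcal{A}_3$. Taking $x=1+t e$ for a projection $e\in\mathcal{A}_3$ yields $eh+he\in\mathbb{C}h$ and $ehe\in\mathbb{C}h$. So every projection of $\mathcal{A}_3$ either commutes with $h$ or satisfies $ehe=0$ and $eh+he=h$.

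Combining this with codimension one forces the following. Let $\mathcal{C}$ be the C$^*$-algebra generated by $\mathcal{A}_3$. Then $\mathcal{C}\supseteq\mathcal{A}_3$, and either $\mathcal{C}=\mathcal{A}_3$ or $\mathcal{C}=\mathcal{A}_2$.
\begin{itemize}
\item If $\mathcal{C}=\mathcal{A}_3$, then $\mathcal{A}_3$ is a unital C$^*$-subalgebra of codimension one. Comparing the dimensions of commutants shows that it is $\{a\mid\varphi_1(a)=\varphi_2(a)\}$ for two distinct central minimal blocks of size one. Minimality then leaves $\mathcal{A}_2=\mathbb{C}^2$, which is the second case.
\item If $\mathcal{C}=\mathcal{A}_2$, then $\mathcal{A}_3$ is a Jordan subalgebra that generates $\mathcal{A}_2$ and has codimension one. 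Using the classification of reversible or type-$I_2$ (spin factor) pieces, the only possibility with codimension one is the spin factor $V_2\subset\mathbb{M}_2$ of dimension three. This is $\{X\mid X_{11}=X_{22}\}$ in a suitable basis; it is unitarily equivalent to $\mathrm{span}\{1,\sigma_x,\sigma_y\}$, where $h=\sigma_z$. Conjugating $\pi$ by a unitary gives the third case.
\end{itemize}

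The main obstacle is this last dichotomy: showing that a generating Jordan subalgebra of codimension one can only occur for $\mathbb{M}_2$. I would first try a direct dimension count using the description of $h$. Projections of $\mathcal{A}_3$ commuting with $h$ generate a subalgebra of $\{h\}'$. The others anticommute in the sense $eh+he=h$, which forces $h$ to have a symmetric spectrum and to be supported on $2\times2$ pieces. Codimension one then restricts the size to $n=2$ with a single block.

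Finally, the three cases are mutually exclusive. In the first, $1\notin\mathcal{A}_1$. In the second, $\mathcal{A}/\ker$ is commutative. In the third, the generated quotient is noncommutative.
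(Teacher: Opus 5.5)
\textbf{There is a genuine gap.} Your reduction through Proposition~\ref{J1} is sound, and so is the Jordan-ideal argument when $1\notin\mathcal{A}_3$. The derivation that $xhx\in\mathbb{C}h$ for $x\in\mathcal{A}_3$ is also correct, and hence so is the dichotomy that each projection $e\in\mathcal{A}_3$ either commutes with $h$ or satisfies $ehe=0$, $eh+he=h$. There are two slips:
\begin{itemize}
\item The identity should read $xyx=\frac12\,x\circ(x\circ y)-\frac12\,x^2\circ y$.
\item The sentence asserting invariance of $\mathcal{A}_3$ under $y\mapsto e^{itx}ye^{-itx}$ is false and should be deleted. For $\mathrm{span}\{1,\sigma_x,\sigma_y\}\subset\mathbb{M}_2$, conjugating $\sigma_y$ by $e^{it\sigma_x}$ produces a $\sigma_z$-component.
\end{itemize}

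The gap is the step you call the main obstacle, which is really the whole content of the proposition. Nothing in the proposal rules out configurations other than $\mathbb{C}$, $\mathbb{C}^2$ and the spin factor in $\mathbb{M}_2$.
\begin{itemize}
\item In the case $\mathcal{C}=\mathcal{A}_2$ you appeal to an unspecified classification of JC-algebras. The dimension count you sketch rests on an inference that fails: $ehe=0$ and $eh+he=h$ only say that $h=eh(1-e)+(1-e)he$ is off-diagonal with respect to $e$. That gives a symmetric spectrum but no bound on the rank of $h$ or the size of the blocks. For example, $e=1_n\oplus 0_n$ and $h=\left(\begin{smallmatrix}0&1_n\\ 1_n&0\end{smallmatrix}\right)$ in $\mathbb{M}_{2n}$ satisfy both relations.
\item The case $\mathcal{C}=\mathcal{A}_3$ likewise rests on an unexplained ``comparison of commutants''.
\end{itemize}

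\textbf{The missing idea} is a bound on the support of $h$, which follows from commutativity.
\begin{itemize}
\item Choose a masa $\mathcal{D}\cong\mathbb{C}^N$ of $\mathcal{A}_2$ containing $h$ (diagonalize $h$ blockwise).
\item Then $\mathcal{D}\cap\mathcal{A}_3=\{d\in\mathcal{D}\mid \mathrm{tr}(dh)=0\}$ has codimension exactly one in $\mathcal{D}$, since $\mathrm{tr}(h^2)\neq 0$.
\item Being a Jordan $^*$-subalgebra of a commutative algebra, it is a $^*$-subalgebra, hence of the form $\{x_k=0\}$ or $\{x_k=x_l\}$.
\item So $h$ is a nonzero multiple of a minimal projection $e_k$, or of $e_k-e_l$ with $e_k\perp e_l$ minimal.
\end{itemize}
This is exactly the step the paper takes in Lemma~\ref{kl}, imitating Lemma~\ref{measure}.

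To finish, take $y=e_{km}+e_{mk}$, built from matrix units of the summand containing $e_k$, with $m\neq k,l$ an index of that summand. Then $\mathrm{tr}(yh)=0$, while $y^2=e_{kk}+e_{mm}$ gives $\mathrm{tr}(y^2h)\neq 0$, contradicting the Jordan property. Hence no such $m$ can exist:
\begin{itemize}
\item In the first case, the summand containing $e_k$ is $\mathbb{C}$.
\item In the second case, $e_k$ and $e_l$ are either two $1$-dimensional summands, or complementary rank-one projections in a single $\mathbb{M}_2$ summand.
\end{itemize}
This handles both of your subcases at once and makes the split via $\mathcal{C}$ and the JC-classification unnecessary.
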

Note that $\mathcal{A}_1\subset \mathcal{A}$ is a C$^*$-subalgebra in the first two cases, but the same doesn't hold in the third case.
This proposition together with the following result gives a more definitive description of isometries with corank $1$.

\begin{proposition}\label{corank1}
Let $\Phi\colon\mathcal{A}\to \mathcal{B}$ be a linear isometry with corank $1$.
Then one of the following holds.
\begin{itemize}
\item There is a central projection $q_0$ in $\mathcal{B}$ such that $\dim q_0\mathcal{B}=1$, and the mapping $\mathcal{A}\ni a\mapsto (1-q_0)\Phi(a)\in (1-q_0)\mathcal{B}$ is a linear surjective isometry (hence we may use Theorem \ref{kadison51} to this part). In this case, the mapping $\mathcal{A}\ni a\mapsto q_0\Phi(a)\in q_0\mathcal{B}$ is of norm at most $1$.
\item The operator $\Phi(1)\in \mathcal{B}$ is unitary, and the mapping $\mathcal{A}\ni a\mapsto \Phi(1)^*\Phi(a)\in \mathcal{B}$ is an injective unital Jordan $^*$-homomorphism with corank $1$. (In this case, we may apply Proposition \ref{J2} to the image of this Jordan $^*$-homomorphism.)
\end{itemize}
\end{proposition}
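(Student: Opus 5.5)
We derive Proposition \ref{corank1} from Theorem \ref{main}, using the corank count \eqref{corank}. Theorem \ref{main} gives central projections $p\in\mathcal{A}$, $q\in\mathcal{B}$ with $p\mathcal{A}$ and $q\mathcal{B}$ finite-dimensional, together with maps $\Phi_1,\Phi_2,\Phi_3$ as stated. By \eqref{corank}, $1=\mathrm{corank}\,\Phi_1+\mathrm{corank}\,J$, so exactly one of the two summands equals $1$. We treat the two possibilities separately.

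\emph{Case 1: $\mathrm{corank}\,J=1$ and $\mathrm{corank}\,\Phi_1=0$.} Then $\Phi_1\colon p\mathcal{A}\to q\mathcal{B}$ is a surjective linear isometry. By Theorem \ref{kadison51}, $u:=\Phi_1(p)$ is a unitary in $q\mathcal{B}$, and $u^*\Phi_1$ is a Jordan $^*$-isomorphism. We show $\Phi_3=0$. For $a\in(1-p)\mathcal{A}$ with $\lVert a\rVert\le1$, the norm bound on $\Phi_1(p\,\cdot)+\Phi_3((1-p)\,\cdot)$ gives $\lVert u+\lambda\Phi_3(a)\rVert\le1$ for all $\lambda\in\mathbb{T}$, since $\lVert p+\lambda a\rVert\le1$. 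Hence $\lVert 1_q+\lambda u^*\Phi_3(a)\rVert\le1$ for all $\lambda\in\mathbb{T}$. A unitary is an extreme point of the unit ball, and more strongly, $\lVert 1+\lambda x\rVert\le1$ for every $\lambda\in\mathbb{T}$ forces $x=0$. To see this, write $x=h+ik$ with $h,k$ self-adjoint; taking $\lambda=\pm1,\pm i$ and using that the real part of an element of the unit ball of norm at most $1$ is $\le1$ gives $\pm h\le0$ and $\pm k\le 0$. Thus $\Phi_3=0$. Consequently $\Phi(1)=u+v$ is unitary, and $\Phi(1)^*\Phi(a)=u^*\Phi_1(pa)+J((1-p)a)$ is an injective unital Jordan $^*$-homomorphism with corank $1$. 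This is the second alternative.

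\emph{Case 2: $\mathrm{corank}\,\Phi_1=1$ and $\mathrm{corank}\,J=0$.} Now $J$ is a Jordan $^*$-isomorphism $(1-p)\mathcal{A}\to(1-q)\mathcal{B}$, and $\Phi_1\colon p\mathcal{A}\to q\mathcal{B}$ is a non-surjective isometry with $\dim q\mathcal{B}=\dim p\mathcal{A}+1$. The finite-dimensional algebras $p\mathcal{A}$ and $q\mathcal{B}$ are direct sums of matrix blocks. This is the \textbf{main obstacle}: we need a finite-dimensional lemma stating that a linear isometry between finite-dimensional C$^*$-algebras whose dimensions differ by $1$ is, after composing with the projection onto the complement of a one-dimensional central summand $q_0$, surjective. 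The first thing I would try is a counting argument on blocks. Restrict $\Phi_1$ to each block of $p\mathcal{A}$ and examine which blocks of $q\mathcal{B}$ realize its norm. An isometric image of $\mathbb{M}_n$ needs room of size at least $n$, and the CLP-type results of \cite{CLP} (the case $k\le 2n-1$) indicate that an isometry into a block that is not a copy of the domain wastes at least $2n-1\ge$ a full extra block's worth of dimension, unless $n=1$. The dimension surplus is exactly $1$, and the only block of dimension $1$ is $\mathbb{C}$. This should force a matching of blocks, with one leftover $\mathbb{C}$-summand $q_0$, such that $(1-q_0)\Phi_1$ is a surjective isometry. Alternatively, one could use extreme points: $\Phi_1$ maps the maximal faces of the unit ball in a way that one can dualize to states.

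Granting the lemma, $(1-q_0)\Phi$ is the direct sum of a surjective isometry and $\Phi_2$, together with the part $(1-q_0)\Phi_3$. The argument from Case 1 applied to the surjective piece shows $(1-q_0)\Phi_3=0$. Therefore $(1-q_0)\Phi$ is a surjective isometry onto $(1-q_0)\mathcal{B}$. Finally, $q_0\Phi=q_0\Phi_1(p\,\cdot)+q_0\Phi_3((1-p)\,\cdot)$ has norm at most $1$ by the norm condition in Theorem \ref{main}. This gives the first alternative.
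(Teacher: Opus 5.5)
\textbf{There is a genuine gap in Case~2.} Your Case~1 is correct, and it is how the paper treats the self-adjoint analogue (Proposition~\ref{corank1sa}): surjectivity of $\Phi_1$ makes $\Phi_1(p)$ unitary, the norm condition kills $\Phi_3$, and the second alternative follows.

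Case~2 is not a side issue. When $\mathcal{A}$ and $\mathcal{B}$ are finite-dimensional, Theorem~\ref{main} carries no information: one may take $p=1$, $q=1$. So the lemma you postulate contains Proposition~\ref{corank1} for finite-dimensional algebras as a special case, and the detour through Theorem~\ref{main} has not touched the real difficulty. Your block-counting heuristic does not supply the missing proof, for three reasons.
\begin{enumerate}
\item A linear isometry need not send blocks into blocks. Already $\mathbb{C}\ni\lambda\mapsto(\lambda,\lambda/2)\in\mathbb{C}^2$ spreads one block over two, and images of different source blocks can share target blocks.
\item The Cheung--Li--Poon result for $k\le 2n-1$ only concerns maps $\mathbb{M}_n\to\mathbb{M}_k$ between single full matrix algebras.
\item You explicitly set aside $n=1$, yet one-dimensional blocks already carry nontrivial content. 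For instance, your lemma must rule out a linear isometry from the C$^*$-algebra $\mathbb{C}^3$ into $\mathbb{M}_2$ (dimensions $3$ and $4$, and $\mathbb{M}_2$ has no one-dimensional central summand). That is not a counting statement.
\end{enumerate}
The lemma must also assert that $(q-q_0)\Phi_1$ is a surjective \emph{isometry}, not merely surjective, since your last step feeds it into Theorem~\ref{kadison51}. The choice of $q_0$ matters: in the example above only $q_0=(0,1)$ works.

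\textbf{The missing idea is the extreme-point argument of Lemma~\ref{corank1lemma}.} The paper applies it to $\Phi$ directly, without Theorem~\ref{main}.
\begin{enumerate}
\item Suppose some unitary $u$ has $\Phi(u)$ nonunitary. Proposition~\ref{vvv} gives $\Phi(u)=v_0+\sum_j c_jv_j$.
\item The compression $a\mapsto v_0^*\Phi(ua)v_0^*v_0$ is injective (proof of Lemma~\ref{kadison}). Hence $\mathcal{B}(1-v_0^*v_0)\cap\Phi(\mathcal{A})=\{0\}$, and corank $1$ forces $1-v_0^*v_0=1-v_0v_0^*=q_0$ to be a central atom.
\item The Baire/Russo--Dye dichotomy from the proof of Lemma~\ref{qb} shows that $(1-q_0)\Phi$ maps all unitaries to unitaries. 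So $J_1=\Phi(1)^*(1-q_0)\Phi$ is a unital Jordan $^*$-homomorphism (Theorem~\ref{russodye}) with $\dim\ker J_1\le 1$.
\item A one-dimensional kernel $e\mathcal{A}$ is impossible. Indeed $\Phi(e)=\lambda q_0$ with $\lvert\lambda\rvert=1$ forces $q_0\Phi((1-e)\mathcal{A})=\{0\}$, and then $\Phi(\mathcal{U}(\mathcal{A}))\subset\mathcal{U}(\mathcal{B})$, contradicting the choice of $u$. So $J_1$ is injective, hence surjective by the codimension count.
\item If instead every $\Phi(u)$ is unitary, Theorem~\ref{russodye} gives the second alternative immediately.
\end{enumerate}

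Applied to $\Phi_1$, this argument would close your Case~2. There you need not insist on the first alternative: if $\Phi_1$ preserves unitaries, your Case~1 argument gives $\Phi_3=0$ and the second alternative. But once this argument is available, the reduction via Theorem~\ref{main} is superfluous. In the self-adjoint setting the paper does follow your route. It fills the analogue of your Case~2 with Lemmas~\ref{l2} and~\ref{l4}, which split $\Phi_1$ along central projections down to a single block $\mathbb{M}_n$ and then use a Jordan-homomorphism argument; nothing of this kind appears in your proposal.
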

By this, for a linear isometry of corank $1$, the difficulty in thinking about ``finite-dimensional remainder'' essentially disappears. Let us verify that this proposition actually generalizes Theorem \ref{th:ghjr}.
Assume that $\mathcal{A}=\mathcal{B}=C(X)$ is a commutative C$^*$-algebra.
If the first item holds, then there is an isolated point $x_0\in X$ such that $q_0$ equals the indicator function for $\{x_0\}\subset X$. We get a surjective isometry $C(X)\ni a\mapsto (1-q_0)\Phi(a)\in (1-q_0)C(X)=C(X\setminus\{x_0\})$ between commutative C$^*$- algebras. It then follows from the Banach--Stone theorem that this mapping can be written using a homeomorphism from $X\setminus\{x_0\}$ onto $X$, which leads to the fact that $\Phi$ is of type I.
If the second item holds, then the mapping $\mathcal{A}\ni a\mapsto \Phi(1)^*\Phi(a)\in \mathcal{B}$ is an injective $^*$-homomorphism between commutative C$^*$-algebras with corank $1$. It then follows that $\Phi$ is of type II.
We also give a self-adjoint variant of Proposition \ref{corank1}.
\begin{proposition}\label{corank1sa}
Let $\Phi\colon\mathcal{A}_{sa}\to \mathcal{B}_{sa}$ be a linear isometry with corank $1$.
Then one of the following holds.
\begin{itemize}
\item There is a central projection $q_0$ in $\mathcal{B}$ such that $\dim q_0\mathcal{B}=1$, and the mapping $\mathcal{A}_{sa}\ni a\mapsto (1-q_0)\Phi(a)\in (1-q_0)\mathcal{B}_{sa}$ is a linear surjective isometry. In this case, the mapping $\mathcal{A}_{sa}\ni a\mapsto q_0\Phi(a)\in q_0\mathcal{B}_{sa}$ is of norm at most $1$.
\item The operator $\Phi(1)\in \mathcal{B}$ is a central self-adjoint unitary, and the mapping $\mathcal{A}_{sa}\ni a\mapsto \Phi(1)\Phi(a)\in \mathcal{B}$ extends to an injective unital Jordan $^*$-homomorphism from $\mathcal{A}$ to $\mathcal{B}$ with corank $1$. 
\end{itemize}
\end{proposition}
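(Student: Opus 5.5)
We derive this from Theorem \ref{mainsa}, in the same way Proposition \ref{corank1} is read off from Theorem \ref{main}. Apply Theorem \ref{mainsa} to $\Phi$ and obtain $p,q,\Phi_1,\Phi_2,\Phi_3,J,v$. By the self-adjoint analogue of \eqref{corank}, $1=\mathrm{corank}\,\Phi=\mathrm{corank}\,\Phi_1+\mathrm{corank}\,J$, where $\mathrm{corank}\,J$ is measured on self-adjoint parts. This equals the complex corank of $J$, because $J$ is $^*$-preserving, so its range is $^*$-closed and $(1-q)\mathcal{B}/J((1-p)\mathcal{A})$ is the complexification of the self-adjoint quotient. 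Hence exactly one of $\Phi_1$, $J$ has corank $1$ and the other is surjective.

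\emph{Case 1: $\mathrm{corank}\,\Phi_1=1$ and $J$ is surjective.} Then $\Phi_1\colon p\mathcal{A}_{sa}\to q\mathcal{B}_{sa}$ is a corank-$1$ linear isometry between finite-dimensional spaces, so $\dim_{\mathbb{R}} q\mathcal{B}_{sa}=\dim_{\mathbb{R}} p\mathcal{A}_{sa}+1$. The goal is a minimal central projection $q_0\le q$ with $q_0\mathcal{B}=\mathbb{C}q_0$ such that $a\mapsto(q-q_0)\Phi_1(a)$ is a surjective isometry $p\mathcal{A}_{sa}\to(q-q_0)\mathcal{B}_{sa}$. This finite-dimensional claim is the main obstacle. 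I would attack it with extreme points of the unit balls and state spaces. Surjective isometries of $\mathcal{B}_{sa}$ for finite-dimensional $\mathcal{B}$ are Kadison-type maps (Kadison \cite{K2}), so it suffices to produce $q_0$ and show $(q-q_0)\Phi_1$ is bijective and isometric. Counting dimensions, the block decomposition $q\mathcal{B}=\bigoplus \mathbb{M}_{k_j}$ has real dimension $\sum k_j^2$, while that of $p\mathcal{A}$ is $\sum n_i^2$, with difference $1$.

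I would compare the facial structure of the unit balls. The extreme points of the unit ball of $\mathcal{A}_{sa}$ are the symmetries $2e-1$, and norming functionals correspond to states. The plan is to show that $\Phi_1$ carries the face structure of each matrix block into some block of $q\mathcal{B}$. The one "extra" real dimension must then come from a block of size $1$, since any matrix block of size at least $2$ would contribute too many dimensions. This gives $q_0$.

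Given $q_0$, the remaining part $(q-q_0)\Phi_1$ is injective with the correct dimension, hence bijective. It is isometric because, for the norming argument, the norm of $\Phi_1(a)$ is attained off $q_0$: one can perturb $a$ to see that if it were attained only on the one-dimensional block, the isometry would fail on a dense set of $a$. Finally, set the $q_0$-component of $\Phi$ to be $q_0\Phi(a)=q_0(\Phi_1(pa)+\Phi_3((1-p)a))$, which has norm at most $1$ by the second condition of Theorem \ref{mainsa}. The map $(1-q_0)\Phi$ is then block diagonal, with one block the surjective isometry $(q-q_0)\Phi_1$ and the other the surjective isometry $\Phi_2$. The off-diagonal term $(q-q_0)\Phi_3$ must vanish; I would show this by the $\pm$ argument from the converse of Theorem \ref{main}, which forces components into the norming part of a surjective isometry to be zero. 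This gives the first alternative.

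\emph{Case 2: $\Phi_1$ is surjective and $\mathrm{corank}\,J=1$.} Here $\Phi_1$ is a surjective isometry of finite-dimensional self-adjoint parts, so by Kadison's theorem \cite[Theorem 2]{K2} it has the form $\Phi_1=v_1J_1$ with $v_1=\Phi_1(p)$ a central symmetry in $q\mathcal{B}$ and $J_1$ a Jordan $^*$-isomorphism. By the same $\pm$ argument, the cross term $\Phi_3$ must vanish. Combining the pieces, $\Phi(1)=v_1+v$ is a central self-adjoint unitary, and $a\mapsto\Phi(1)\Phi(a)=J_1(pa)+J((1-p)a)$ extends complex-linearly to an injective unital Jordan $^*$-homomorphism of corank $1$. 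This gives the second alternative.
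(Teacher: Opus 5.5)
You reduce correctly to Theorem \ref{mainsa}, and your corank count and second case (corank carried by $J$) are fine and match the paper. There $\Phi_3=0$ follows cleanly: pick $y$ with $\Phi_1(y)=q$. Then $q\pm\Phi_3(x)$ lies in the unit ball for every $x$ in the unit ball of $(1-p)\mathcal{A}_{sa}$, and $q$ is an extreme point.

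The gap is your first case, which is the real content of the proposition. You never prove that a corank-one isometry $\Phi_1\colon p\mathcal{A}_{sa}\to q\mathcal{B}_{sa}$ of finite-dimensional algebras splits off a central atom $q_0$ with $(q-q_0)\Phi_1$ a surjective isometry. ``Comparing facial structure'' only states an intention, and the dimension count cannot replace it. The relation $\sum k_j^2=\sum n_i^2+1$ does not force a block of size one: for example, $\dim(\mathbb{M}_2\oplus\mathbb{M}_2)_{sa}+1=\dim(\mathbb{M}_3)_{sa}$. So ``a block of size $\geq 2$ contributes too many dimensions'' presupposes the block-to-block Jordan structure you have not established.

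Your isometry step also fails as stated, because it depends on picking the right atom. Take $\mathcal{A}=\mathbb{C}$, $\mathcal{B}=\mathbb{C}^2$ and $\Phi(x)=(x/2,x)$. Removing the second coordinate leaves a bijective but non-isometric map, and $\lVert\Phi(x)\rVert$ is attained only on the removed block for every $x\neq 0$. So no perturbation argument works without a mechanism that selects $q_0$.

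The paper supplies that mechanism in three steps:
\begin{enumerate}
\item Lemma \ref{l2} uses Lemma \ref{f1234} and the corank-one constraint to show that the projections $\chi_{\{1\}}(\Psi(e))$, for central $e\in\mathcal{A}$, are central in $\mathcal{B}$. This lets one peel off the surjective summands until the defect sits on a single block $e\mathcal{A}\cong\mathbb{M}_n$.
\item Lemma \ref{l4} then unitalizes via Lemma \ref{l1}. It combines Lemma \ref{l3} with the Baire-category/Broise argument of Case 1 of Lemma \ref{abe} to show that $(1-f)\Psi$ extends to a Jordan $^*$-homomorphism on $\mathbb{M}_n$, with $f$ a central atom or $0$. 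Injectivity comes from simplicity of $\mathbb{M}_n$. Only then does it use $\dim\mathcal{B}=n^2+1$ to conclude $\mathcal{B}\cong\mathbb{M}_n\oplus\mathbb{C}$, and this is what pins down $q_0$.
\item After that, $\lVert(1-q_0)\Phi(a)\rVert\geq\lVert a\rVert$ is immediate, and corank one gives surjectivity.
\end{enumerate}
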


We next consider another special case where $\mathcal{A}$ does not have a finite-dimensional ideal. 
\begin{corollary}
Assume that $\dim \mathcal{A}=\infty$ and $\mathcal{A}$ has no finite-dimensional ideal except for $\{0\}$. 
If $\Phi\colon \mathcal{A}\to \mathcal{B}$ is a linear isometry of finite corank, then there exists a central projection $q\in \mathcal{B}$ (which can be $0$) such that $q\mathcal{B}$ is finite-dimensional, and there exist
\begin{itemize}
\item a linear isometry $\Phi _2\colon \mathcal{A}\to (1-q)\mathcal{B}$ that is the composition $\Phi _2=vJ(\cdot)$ of an injective unital Jordan $^*$-homomorphism $J\colon \mathcal{A}\to (1-q)\mathcal{B}$ with finite corank and the left-multiplication by a unitary $v\in \mathcal{U}((1-q)\mathcal{B})$, and
\item a linear mapping $\Phi _3\colon \mathcal{A}\to q\mathcal{B}$ with norm at most $1$, 
\end{itemize}
satisfying $\Phi (a)=\Phi_2(a)+\Phi_3(a)$ for every $a\in \mathcal{A}$.
\end{corollary}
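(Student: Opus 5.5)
The corollary should follow directly from Theorem \ref{main}, by showing that the central projection $p$ there must be $0$.

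First apply Theorem \ref{main} to $\Phi$. This gives central projections $p\in\mathcal{A}$ and $q\in\mathcal{B}$ with $p\mathcal{A}$ and $q\mathcal{B}$ finite-dimensional, together with the maps $\Phi_1,\Phi_2,\Phi_3$. Since $p$ is central, $p\mathcal{A}$ is a two-sided closed ideal of $\mathcal{A}$. It is finite-dimensional, so the hypothesis forces $p\mathcal{A}=\{0\}$, that is, $p=0$.

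With $p=0$, the domain $p\mathcal{A}$ of $\Phi_1$ is $\{0\}$, so $\Phi_1=0$ and the decomposition reads $\Phi(a)=\Phi_2(a)+\Phi_3(a)$ for every $a\in\mathcal{A}$. Here $\Phi_2=vJ(\cdot)$, where $J\colon\mathcal{A}\to(1-q)\mathcal{B}$ is an injective unital Jordan $^*$-homomorphism of finite corank and $v=(1-q)\Phi(1)$ is a unitary in $(1-q)\mathcal{B}$. The last condition of Theorem \ref{main} says that $a\mapsto\Phi_1(pa)+\Phi_3((1-p)a)=\Phi_3(a)$ has operator norm at most $1$. This is exactly the required norm bound on $\Phi_3\colon\mathcal{A}\to q\mathcal{B}$.

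Two small points need checking. The first is that Theorem \ref{main} allows $1-q=0$, which would make $(1-q)\mathcal{B}=\{0\}$. In that case $J$ would be an injective map from $\mathcal{A}$ into $\{0\}$, which is impossible because $\mathcal{A}\neq\{0\}$. Equivalently, $\Phi_2$ is an isometry from the infinite-dimensional space $\mathcal{A}$, so its target cannot be zero. Hence $q\neq1$, and the unital statements about $J$ and $v$ make sense. The second point is that $q$ may be $0$, which the corollary explicitly permits. There is no real obstacle here; the only step with any content is observing that a finite-dimensional central summand $p\mathcal{A}$ is an ideal, so the hypothesis eliminates it.
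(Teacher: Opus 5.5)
Your proposal is correct and follows the paper's argument, which simply calls the corollary immediate from Theorem~\ref{main}. Concretely, $p\mathcal{A}$ is a finite-dimensional ideal, so $p=0$; then $\Phi_1$ vanishes, the norm condition of the theorem becomes the bound on $\Phi_3$, and your additional check that $q\neq 1$ is valid.
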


If we assume instead that $\mathcal{B}$ has such a property, then we get a result that resembles Kadison's theorem (Theorem \ref{kadison51}).
\begin{corollary}\label{hamada}
Assume that $\dim \mathcal{B}=\infty$ and $\mathcal{B}$ has no finite-dimensional ideal except for $\{0\}$. 
If $\Phi\colon \mathcal{A}\to \mathcal{B}$ is a linear isometry of finite corank, then $\Phi(1)\in \mathcal{B}$ is unitary and the mapping $\mathcal{A}\ni a\mapsto \Phi(1)^*\Phi(a)\in \mathcal{B}$ is an injective unital Jordan $^*$-homomorphism with finite corank. 
\end{corollary}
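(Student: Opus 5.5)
We deduce Corollary \ref{hamada} directly from Theorem \ref{main}. That theorem gives central projections $p\in\mathcal{A}$ and $q\in\mathcal{B}$ with $p\mathcal{A}$ and $q\mathcal{B}$ finite-dimensional. Since $q$ is central, $q\mathcal{B}$ is an ideal of $\mathcal{B}$. By hypothesis the only finite-dimensional ideal of $\mathcal{B}$ is $\{0\}$, so $q=0$.

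Next we force $p=0$. With $q=0$, the map $\Phi_1\colon p\mathcal{A}\to q\mathcal{B}=\{0\}$ is a linear isometry into the zero space. An isometry is injective, so $p\mathcal{A}=\{0\}$, and because $p=p\cdot 1\in p\mathcal{A}$ we get $p=0$. The map $\Phi_3\colon(1-p)\mathcal{A}\to q\mathcal{B}$ is then also zero. The decomposition in Theorem \ref{main} therefore reduces to $\Phi(a)=\Phi_2(a)=vJ(a)$ for all $a\in\mathcal{A}$. Here $J\colon\mathcal{A}\to\mathcal{B}$ is an injective unital Jordan $^*$-homomorphism with finite corank, and $v=(1-q)\Phi(1)=\Phi(1)$ is a unitary in $\mathcal{B}$.

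It remains to identify $\Phi(1)^*\Phi(\cdot)$ with $J$. Since $J$ is unital, $\Phi(1)=vJ(1)=v$, so $\Phi(1)$ is unitary. Then $\Phi(1)^*\Phi(a)=v^*vJ(a)=J(a)$ for all $a\in\mathcal{A}$. Hence $a\mapsto\Phi(1)^*\Phi(a)$ equals $J$, which has all the required properties. Its corank is finite: left multiplication by the unitary $v^*$ is a linear bijection of $\mathcal{B}$, so $\operatorname{corank} J=\operatorname{corank}\Phi$, consistent with \eqref{corank}.

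There is no serious obstacle once Theorem \ref{main} is available. The only point needing care is that $q\mathcal{B}$ is an ideal because $q$ is central, and this is what lets the hypothesis on $\mathcal{B}$ apply. The assumption $\dim\mathcal{B}=\infty$ is not used beyond excluding trivial cases, since the no-finite-dimensional-ideal condition already forces $q=0$.
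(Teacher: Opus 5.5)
Your proposal is correct and follows the same route as the paper, which simply states that this corollary is immediate from Theorem \ref{main}. Your argument spells out that deduction: $q=0$ because $q\mathcal{B}$ is a finite-dimensional ideal; then $p=0$ because $\Phi_1\colon p\mathcal{A}\to\{0\}$ is injective; hence $\Phi=vJ$ with $v=\Phi(1)$ unitary, so $\Phi(1)^*\Phi(\cdot)=J$.
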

The above two corollaries are immediate from Theorem \ref{main}.
It is needless to say that there are self-adjoint variants of the above two corollaries. We omit them.

\subsection{Order embeddings}
A mapping $\Psi\colon X_1\to X_2$ between two partially ordered sets $(X_1, \leq_1)$, $(X_2, \leq_2)$ is called an \emph{order isomorphism} (resp. an \emph{order embedding}) if it is a bijection (resp. a mapping) satisfying $x\leq_1 y \iff \Psi(x)\leq_2\Psi(y)$ for every pair $x,y\in X_1$. 
In the paper \cite{K2}, Kadison also gave a characterization of linear order isomorphisms from $\mathcal{A}_{sa}$ onto $\mathcal{B}_{sa}$. 
\begin{theorem}[Kadison {\cite[Corollary 5]{K2}}]\label{kadison52}
If $\Psi\colon \mathcal{A}_{sa}\to\mathcal{B}_{sa}$ is a linear order isomorphism with $\Psi(1)=1$, then $\Psi$ extends to a Jordan $^*$-isomorphism from $\mathcal{A}$ onto $\mathcal{B}$. 
\end{theorem}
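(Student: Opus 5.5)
This is Kadison's classical theorem, so the plan is to deduce it from the isometry version, Theorem \ref{kadison51}. First, $\Psi$ is a unital bijective linear map that is an order isomorphism, so both $\Psi$ and $\Psi^{-1}$ are positive. The first step is to show that $\Psi$ is an isometry of $\mathcal{A}_{sa}$ onto $\mathcal{B}_{sa}$. For $a\in\mathcal{A}_{sa}$ we have $-\lVert a\rVert 1\leq a\leq \lVert a\rVert 1$. Applying $\Psi$ and using $\Psi(1)=1$ gives $-\lVert a\rVert 1\leq \Psi(a)\leq \lVert a\rVert 1$, hence $\lVert\Psi(a)\rVert\leq\lVert a\rVert$. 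Applying the same argument to $\Psi^{-1}$, which is also a unital order isomorphism, gives the reverse inequality.

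Next, extend $\Psi$ complex-linearly to $\tilde\Psi(a+ib)=\Psi(a)+i\Psi(b)$. This map is a $^*$-preserving linear bijection $\mathcal{A}\to\mathcal{B}$. It is unital and positive, and so is its inverse. It remains to show that $\tilde\Psi$ is an isometry on all of $\mathcal{A}$, after which Theorem \ref{kadison51} gives that $\tilde\Psi(1)^*\tilde\Psi(\cdot)=\tilde\Psi(\cdot)$ is a Jordan $^*$-isomorphism. This is the main obstacle, because a unital positive map need not be isometric on non-self-adjoint elements.

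Instead, I would avoid the complex isometry and argue directly. For a unital positive map $\phi$, Kadison's inequality $\phi(a^2)\geq\phi(a)^2$ holds for $a\in\mathcal{A}_{sa}$. I would prove it by restricting to the commutative C$^*$-algebra $C^*(a,1)$, where positive maps are completely positive, and then applying the Stinespring or Schwarz inequality. Applying the inequality to $\tilde\Psi^{-1}$ at $\Psi(a)$ gives $a^2\geq\Psi^{-1}(\Psi(a)^2)$. Applying the order-preserving map $\Psi$ then yields $\Psi(a^2)\geq\Psi(a)^2$ and $\Psi(a)^2\geq\Psi(a^2)$, so $\Psi(a^2)=\Psi(a)^2$. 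Polarizing, $(a+b)^2$ gives $\Psi(ab+ba)=\Psi(a)\Psi(b)+\Psi(b)\Psi(a)$ for self-adjoint $a,b$. Complex-linearity of $\tilde\Psi$ extends this Jordan identity to all of $\mathcal{A}$, and $^*$-preservation is built in, so $\tilde\Psi$ is a Jordan $^*$-isomorphism. The key step to verify carefully is Kadison's inequality for unital positive maps on self-adjoint elements, via the commutative reduction.
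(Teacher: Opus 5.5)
Your proof is correct, apart from one reversed inequality noted below. The paper gives no proof of this statement; it is quoted from Kadison. So there is no in-paper argument to compare against. Your argument is the classical one, and every ingredient appears in the paper:
\begin{itemize}
\item Your first step is the order-embedding $\Rightarrow$ isometry direction of Lemma \ref{psi}.
\item The inequality you plan to re-derive via completely positive maps on $C^*(a,1)$ is exactly Theorem \ref{ks}. You can apply it directly to $\Psi$ and to $\Psi^{-1}$, since both are positive with norm at most $1$. That norm bound is the only thing your first step is actually needed for.
\end{itemize}
The detour through Theorem \ref{kadison51} can be dropped. Once the Jordan identity is established, Lemma \ref{isometry} shows a posteriori that $\tilde\Psi$ is isometric on all of $\mathcal{A}$, so the obstacle you identified disappears.

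The slip is this. Kadison's inequality for $\Psi^{-1}$ at $\Psi(a)$ gives $\Psi^{-1}(\Psi(a)^2)\geq \bigl(\Psi^{-1}(\Psi(a))\bigr)^2=a^2$, not $a^2\geq\Psi^{-1}(\Psi(a)^2)$. With the orientation you wrote, applying $\Psi$ merely reproduces $\Psi(a^2)\geq\Psi(a)^2$, which you already have from the inequality for $\Psi$ itself. With the correct orientation, applying the order-preserving map $\Psi$ yields the reverse inequality $\Psi(a)^2\geq\Psi(a^2)$. The two inequalities together give $\Psi(a^2)=\Psi(a)^2$. The remaining steps are fine: polarization, extension of the Jordan identity to $\mathcal{A}$ by complex bilinearity, and the $^*$-preservation and bijectivity of $\tilde\Psi$.
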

The following theorem combined with Theorem \ref{mainsa} gives a variant of Theorem \ref{kadison52}. 
\begin{theorem}\label{embedding}
A mapping $\Phi\colon \mathcal{A}_{sa}\to\mathcal{B}_{sa}$ is a linear order embedding with finite corank if and only if there are
\begin{itemize} 
\item a projection $f\in \mathcal{B}$ with $\dim f\mathcal{B}<\infty$, 
\item a positive invertible operator $b\in (1-f)\mathcal{B}(1-f)$, and
\item a unital linear isometry $\Psi \colon \mathcal{A}_{sa}\to (1-f)\mathcal{B}_{sa}(1-f)$ with finite corank
\end{itemize} 
satisfying $\Phi(a) = b\Psi(a)b$ for every $a\in \mathcal{A}_{sa}$.
\end{theorem}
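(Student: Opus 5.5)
The plan is to prove the two implications separately, using throughout the elementary fact that for a unital linear map $\Psi$ between self-adjoint parts, being an order embedding is equivalent to being an isometry.

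\emph{Sufficiency.} Suppose $f,b,\Psi$ are given. A unital linear isometry $\Psi\colon\mathcal{A}_{sa}\to(1-f)\mathcal{B}_{sa}(1-f)$ is an order embedding. Indeed, for $a\in\mathcal{A}_{sa}$ with $t=\lVert a\rVert$ we have $a\geq 0$ if and only if $\lVert a-t1\rVert\leq t$. Since $\Psi$ is a unital isometry, this condition is equivalent to $\lVert\Psi(a)-t(1-f)\rVert\leq t$ with $t=\lVert\Psi(a)\rVert$, that is, to $\Psi(a)\geq 0$. The map $x\mapsto bxb$ is a linear order automorphism of $(1-f)\mathcal{B}_{sa}(1-f)$, because $b$ is invertible there. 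Hence $\Phi=b\Psi(\cdot)b$ is a linear order embedding.

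For the corank, write $\mathcal{B}=f\mathcal{B}+(1-f)\mathcal{B}f+(1-f)\mathcal{B}(1-f)$. The first two summands are finite-dimensional, being contained in $f\mathcal{B}$ and $(f\mathcal{B})^*$. Therefore $(1-f)\mathcal{B}_{sa}(1-f)$ has finite codimension in $\mathcal{B}_{sa}$. Since conjugation by $b$ is a bijection of this corner, the corank of $\Phi$ equals the corank of $\Psi$ plus a finite number.

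\emph{Necessity.} Let $c=\Phi(1)\geq 0$. For $a\in\mathcal{A}_{sa}$ we have $-\lVert a\rVert\leq a\leq\lVert a\rVert$, so $-\lVert a\rVert c\leq\Phi(a)\leq\lVert a\rVert c$. Consequently every $\Phi(a)$ lies in the hereditary subalgebra generated by $c$. The key claim is the following.
\begin{quote}
$0$ is at most an isolated point of $\sigma(c)$, and the corresponding spectral projection $f$ (the kernel projection of $c$) satisfies $\dim f\mathcal{B}<\infty$.
\end{quote}
Granting the claim, $c$ is invertible in $(1-f)\mathcal{B}(1-f)$. Set $b=c^{1/2}$ there, and define $\Psi(a)=b^{-1}\Phi(a)b^{-1}$, which lies in $(1-f)\mathcal{B}_{sa}(1-f)$ by the bound above. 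Then $\Psi$ is a unital linear order embedding into this corner. It is an isometry because $\lVert a\rVert\leq t$ if and only if $-t\leq a\leq t$, and this is equivalent to $-t(1-f)\leq\Psi(a)\leq t(1-f)$. Finally, the corank of $\Psi$ is finite by the same decomposition of $\mathcal{B}$ used in the sufficiency part.

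\emph{The main obstacle is the spectral claim.} I would argue by contradiction via hereditary subalgebras $D_\delta=\overline{h_\delta(c)\mathcal{B}h_\delta(c)}$, where $h_\delta\geq 0$ is continuous, equal to $1$ near $0$, and supported in $[0,\delta]$.

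If the claim fails, then $D_\delta$ is infinite-dimensional for every $\delta>0$. This covers both possibilities: either $0$ is a non-isolated point of the spectrum, or the kernel projection has infinite-dimensional corner. By finite corank, $\Phi(\mathcal{A}_{sa})\cap(D_\delta)_{sa}$ has finite codimension in $(D_\delta)_{sa}$, and hence contains many nonzero elements $y=\Phi(a)$.

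For such $y$ I would exploit the order-reflecting property. From $y\leq\lVert y\rVert\cdot(\text{support of }h_\delta(c))$ I want to deduce $\Phi(a)\leq s c$ for small $s$, using a compression argument on the part of the spectrum where $c\leq\delta$, in order to conclude $a\leq s$. Applying the same argument to $-a$ gives $a\geq -s$, so $\lVert a\rVert$ is small. Combining this with $\lVert\Phi(a)\rVert\leq\lVert a\rVert\lVert c\rVert$ and normalizing should force $y=0$, a contradiction.

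The delicate point is controlling the off-diagonal part of $y$ relative to $c$. I would first try to handle it by choosing $y$ inside $h_\delta(c)\mathcal{B}h_\delta(c)$ with $\delta$ shrinking, possibly through a nested-sequence argument that extracts infinitely many linearly independent such $y$'s, which would contradict finite corank.
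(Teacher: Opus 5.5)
Your sufficiency argument, and everything you do after ``granting the claim'', is correct and matches the paper. The gap is the spectral claim, which is the entire content of the necessity direction, and the route you sketch for it does not work.

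\emph{Non-isolated case.} Suppose $0$ is a non-isolated point of $\sigma(c)$. The only bound tying $y=\Phi(a)$ to $c$ is $-\lVert a\rVert c\le y\le \lVert a\rVert c$, and nothing yields $y\le sc$ with $s$ small compared to $\lVert a\rVert$. For example, $y=h_\delta(c)\,c\,h_\delta(c)\in D_\delta$ with $0\le h_\delta\le 1$ satisfies $0\le y\le c$, yet $y\le sc$ fails for every $s<1$, because $h_\delta=1$ near $0$ and $0$ accumulates in $\sigma(c)$. So order reflection only returns the trivial $a\le\lVert a\rVert$; order embeddings need not be bounded below, and no single element of the range in $D_\delta$ gives a contradiction. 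Nor does extracting infinitely many linearly independent elements \emph{of} the range contradict finite corank. What you need is an infinite-dimensional subspace meeting $\Phi(\mathcal{A}_{sa})$ only in $0$.

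The missing idea is that the range lies in the order ideal $\{y\in\mathcal{B}_{sa}\mid -Cc\le y\le Cc \text{ for some } C\}$, and this ideal has infinite codimension when $0$ accumulates in $\sigma(c)$. Take $0\neq g=\sum_{m=2}^n a_m t^{1/m}$ with $a_n\neq 0$. Then $\lvert g(t)\rvert/t\to\infty$ as $t\to 0^+$, so $g(c)$ is dominated by no multiple of $c$. Hence $\{g(c)\mid g\in \mathrm{span}\{t^{1/m}\}_{m\geq 2}\}$ is an infinite-dimensional real space meeting the range trivially. This is the paper's lemma, and it uses only positivity of $\Phi$, not order reflection.

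\emph{Isolated case.} Your reduction ``if the claim fails, then $D_\delta$ is infinite-dimensional for every $\delta>0$'' is also false. Take $c=1\oplus 0\in\mathbb{B}(\mathcal{H}\oplus\mathbb{C})$ with $\dim\mathcal{H}=\infty$. Then $0$ is isolated, $f=0\oplus 1$, and $\dim f\mathcal{B}=\infty$, yet $D_\delta=f\mathcal{B}f=\mathbb{C}f$ for $\delta<1$. Failure of $\dim f\mathcal{B}<\infty$ can come entirely from the off-diagonal corner $f\mathcal{B}(1-f)$, which no hereditary subalgebra $D_\delta$ sees; your phrase ``infinite-dimensional corner'' conflates $f\mathcal{B}f$ with $f\mathcal{B}$.

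This case is settled by something you already observed: the range lies in $(1-f)\mathcal{B}_{sa}(1-f)$. Therefore the real space $\{x\in\mathcal{B}_{sa}\mid (1-f)x(1-f)=0\}$ meets the range only in $0$, and finite corank makes it finite-dimensional. It contains $(f\mathcal{B}f)_{sa}$ and $\{x+x^*\mid x\in f\mathcal{B}(1-f)\}$, and $x\mapsto x+x^*$ is injective on $f\mathcal{B}(1-f)$. So $f\mathcal{B}=f\mathcal{B}f+f\mathcal{B}(1-f)$ is finite-dimensional; the paper packages this step via Lemma \ref{eepetty}.
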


\subsection{Structure of this paper}
This paper is organized as follows. 
In the next section, we give examples of linear isometries between matrix algebras. 
Section \ref{preliminaries} is for preliminaries. We give a result on the extreme points of the closed unit ball for a finite-codimensional subspace of a C$^*$-algebra. We also introduce the concept of a petty subset of a C$^*$-algebra, which will be repeatedly used in the subsequent sections.
Theorem \ref{main} (resp. Theorem \ref{mainsa}) together with Proposition \ref{corank1} (resp. Proposition \ref{corank1sa}) is proved in Section \ref{secmain}  (resp. Section \ref{S3}).
Proofs of other results are collected in Section \ref{other}.
In the final section, we pose further research directions.

\section{Examples of isometries between matrix algebras}\label{secexample}
The results in this section are basically independent from those in the subsequent sections. 
Thus, those readers who are uninterested can skip this section. 
The examples below arise by generalizing the definition of CLP isometries. 
For simplicity, we mainly think of mappings on the space of hermitian matrices. 

Let $n$, $\alpha, \beta, \zeta\geq 0$ be integers, and let $f\colon \mathscr{H}_n\to \mathscr{H}_\zeta$ be a unital positive linear mapping. 
Assume that $\mathcal{K}\subset \mathbb{C}^{n(\alpha+\beta)+\zeta}$ is a subspace such that 
\[
\mathcal{K}\cap \{(h\otimes h_1) \oplus (\overline{h}\otimes h_2)\oplus 0_{\mathbb{C}^\zeta}\mid h_1\in \mathbb{C}^\alpha, h_2\in \mathbb{C}^\beta \}\neq \{0\} 
\] 
for every nonzero vector $h\in \mathbb{C}^n$.
Set $k:=\dim \mathcal{K}$, and take a linear surjective isometry $V\colon \mathbb{C}^k\to \mathcal{K}$. We view $V$ as a linear isometry from $\mathbb{C}^k$ to $\mathbb{C}^{n(\alpha+\beta)+\zeta}$.
Consider the mapping 
\[
\Phi\colon \mathscr{H}_n\ni X\mapsto V^*((X\otimes I_\alpha)\oplus (X^t\otimes I_\beta)\oplus f(X))V\in \mathscr{H}_k.
\]
Let us call a mapping that can be obtained in this manner a \emph{generalized CLP isometry}.
Since the dimension of the subspace $\{(h\otimes h_1) \oplus (\overline{h}\otimes h_2)\oplus 0_{\mathbb{C}^\zeta}\mid h_1\in \mathbb{C}^\alpha, h_2\in \mathbb{C}^\beta \}$ is $\alpha+\beta$, the above assumption on $\mathcal{K}$ is fulfilled whenever the codimension of $\mathcal{K}$ in $\mathbb{C}^{n(\alpha+\beta)+\zeta}$ is less than $\alpha+\beta$.
From this, it is easy to see that a CLP isometry is a generalized CLP isometry.

\begin{lemma}\label{MA}
A generalized CLP isometry is a unital positive linear isometry.
\end{lemma}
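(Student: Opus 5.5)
Write $W(X):=(X\otimes I_\alpha)\oplus (X^t\otimes I_\beta)\oplus f(X)$ on $\mathbb{C}^{n(\alpha+\beta)+\zeta}$, so that $\Phi(X)=V^*W(X)V$. The proof splits into three steps: unitality and positivity of $\Phi$, the upper bound $\lVert \Phi(X)\rVert\le\lVert X\rVert$, and the reverse bound $\lVert \Phi(X)\rVert\ge\lVert X\rVert$.

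\emph{Unitality, positivity, linearity.} The map $W$ is a direct sum of unital positive linear maps. Indeed, $X\mapsto X\otimes I_\alpha$ is a unital $^*$-homomorphism. The transpose $X\mapsto X^t$ is unital and positive, since $\langle X^t \xi,\xi\rangle=\langle X\overline{\xi},\overline{\xi}\rangle$. Finally, $f$ is unital and positive by assumption. Hence $W$ is unital, positive and linear. Since $V$ is an isometry, $V^*V=I_k$. It follows that $\Phi$ is linear, positive, and satisfies $\Phi(I_n)=V^*V=I_k$.

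\emph{Upper bound.} A unital positive map on hermitian matrices is contractive. To see this, note that $-\lVert X\rVert I\le X\le \lVert X\rVert I$ implies $-\lVert X\rVert I\le \Phi(X)\le\lVert X\rVert I$. Hence $\lVert \Phi(X)\rVert\le\lVert X\rVert$ for every $X\in\mathscr{H}_n$.

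\emph{Lower bound.} This is the key step, and it is where the hypothesis on $\mathcal{K}$ enters. Let $X\in\mathscr{H}_n$ be nonzero. Replacing $X$ by $-X$ if necessary, we may assume $\lVert X\rVert=\lambda$ is an eigenvalue of $X$, with unit eigenvector $h$.
\begin{itemize}
\item For $h_1\in\mathbb{C}^\alpha$ we have $(X\otimes I_\alpha)(h\otimes h_1)=\lambda\, h\otimes h_1$.
\item Since $X^t=\overline{X}$ for hermitian $X$ and $\lambda$ is real, $X^t\overline{h}=\overline{Xh}=\lambda\overline{h}$. Hence $(X^t\otimes I_\beta)(\overline{h}\otimes h_2)=\lambda\,\overline{h}\otimes h_2$ for $h_2\in\mathbb{C}^\beta$.
\end{itemize}
Consequently every vector of the form $\xi=(h\otimes h_1)\oplus(\overline{h}\otimes h_2)\oplus 0$ satisfies $W(X)\xi=\lambda\xi$. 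By the assumption on $\mathcal{K}$, we can choose such a $\xi\in\mathcal{K}$ with $\lVert\xi\rVert=1$. Write $\xi=V\eta$ with $\eta\in\mathbb{C}^k$ a unit vector. Then
\[
\langle \Phi(X)\eta,\eta\rangle=\langle W(X)V\eta,V\eta\rangle=\langle W(X)\xi,\xi\rangle=\lambda .
\]
Thus $\lVert\Phi(X)\rVert\ge\lambda=\lVert X\rVert$.

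Combining the two bounds gives $\lVert \Phi(X)\rVert=\lVert X\rVert$, so $\Phi$ is a unital positive linear isometry. The main point requiring care is the eigenvector choice in the lower bound: one must check that the conjugated vector $\overline{h}$ is still a $\lambda$-eigenvector in the transpose block. This relies on $X$ being hermitian, so that $X^t=\overline{X}$, and on $\lambda$ being real.
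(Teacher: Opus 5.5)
Your proof is correct and follows the paper's argument. The key step is the same: the vector $\xi=(h\otimes h_1)\oplus(\overline{h}\otimes h_2)\oplus 0\in\mathcal{K}$ is a $\lambda$-eigenvector of $(X\otimes I_\alpha)\oplus(X^t\otimes I_\beta)\oplus f(X)$. The only difference is that the paper uses $VV^*\xi=\xi$ to show $V^*\xi$ is a genuine eigenvector of $\Phi(X)$ for every eigenvalue of $X$, which gives the stronger inclusion $\sigma(X)\subset\sigma(\Phi(X))$ (invoked again in the paper's final section); you evaluate only the quadratic form at the extremal eigenvalue, which suffices for the norm equality.
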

\begin{proof}
It is easy to see that a generalized CLP isometry is unital, positive, linear, and with norm at most $1$. 
Let $X$ be in $\mathscr{H}_n$, $\lambda\in \mathbb{R}$ an eigenvalue of $X$, and $h$ an eigenvector of $X$ for $\lambda$. 
Take vectors $h_1\in \mathbb{C}^\alpha, h_2\in \mathbb{C}^\beta$ such that $0\neq (h\otimes h_1) \oplus (\overline{h}\otimes h_2)\oplus 0_{\mathbb{C}^\zeta}\in \mathcal{K}$. Then we have 
\[
\begin{split}
&\quad \Phi(X)V^*((h\otimes h_1) \oplus (\overline{h}\otimes h_2)\oplus 0_{\mathbb{C}^\zeta}) \\
&= V^*((X\otimes I_\alpha)\oplus (X^t\otimes I_\beta)\oplus f(X))V V^*((h\otimes h_1) \oplus (\overline{h}\otimes h_2)\oplus 0_{\mathbb{C}^\zeta}) \\
&= V^*((X\otimes I_\alpha)\oplus (X^t\otimes I_\beta)\oplus f(X))((h\otimes h_1) \oplus (\overline{h}\otimes h_2)\oplus 0_{\mathbb{C}^\zeta}) \\
&=V^*((Xh\otimes h_1)\oplus (\overline{Xh}\otimes h_2) \oplus 0_{\mathbb{C}^\zeta})\\
&= \lambda V^*((h\otimes h_1) \oplus (\overline{h}\otimes h_2)\oplus 0_{\mathbb{C}^\zeta}). 
\end{split}
\]
Therefore, we see that $V^*((h\otimes h_1) \oplus (\overline{h}\otimes h_2)\oplus 0_{\mathbb{C}^\zeta})$ is an eigenvector of $\Phi (X)$ for the eigenvalue $\lambda$. 
Consequently, we have $\sigma(X)\subset \sigma(\Phi(X))$ for every $X\in \mathscr{H}_n$. 
Since $\lVert X\rVert =\max\{\lvert \lambda\rvert\mid \lambda\in \sigma(X)\}$ for every hermitian matrix $X$, we see that $\Phi $ is an isometry.
\end{proof}

We are going to exhibit examples of generalized CLP isometries, and then show that some of them are not CLP.
\begin{example}\label{nonclp}
Let $n$ be a positive integer. 
Let $\mathcal{K}:=\mathrm{span}\, \{h\otimes h\mid h\in \mathbb{C}^n\}$, which is a $k:=n(n+1)/2$-dimensional space. 
Then, for each $h\in \mathbb{C}^n$ we have $h\otimes h\in \mathcal{K}$.
Thus, taking a linear isometry $V\colon \mathbb{C}^k\to \mathcal{K}\,(\subset \mathbb{C}^{n^2})$, we see that the mapping $\Theta_n\colon \mathscr{H}_n\ni X\mapsto V^*(X\otimes I_n)V\in \mathscr{H}_k$ is a generalized CLP isometry. 
Note that $\Theta_n$ is clearly a completely positive mapping.
\end{example}

\begin{lemma}
Fix $n$. 
There is no nontrivial subspace of $\mathbb{C}^k$ that is invariant simultaneously for $\Theta_n(X)$ for every $X\in \mathscr{H}_n$.
\end{lemma}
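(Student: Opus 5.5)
The plan is to identify $\Theta_n$ with the compression of $X\otimes I_n$ to the symmetric subspace $\mathcal{K}=\mathrm{span}\{h\otimes h\}\subset \mathbb{C}^n\otimes\mathbb{C}^n$, i.e.\ the symmetric tensor square $\mathrm{Sym}^2(\mathbb{C}^n)$. Since $V$ is a unitary identification of $\mathbb{C}^k$ with $\mathcal{K}$, a subspace of $\mathbb{C}^k$ is invariant under every $\Theta_n(X)$ if and only if its image under $V$ is invariant under every $PXP$, where $P=VV^*$ is the orthogonal projection of $\mathbb{C}^{n^2}$ onto $\mathcal{K}$ and $X$ acts as $X\otimes I_n$. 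It therefore suffices to show that the operators $P(X\otimes I)P|_{\mathcal{K}}$, $X\in\mathscr{H}_n$, have no common nontrivial invariant subspace of $\mathcal{K}$.

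The first step is to compute these operators. Let $S$ be the flip on $\mathbb{C}^n\otimes\mathbb{C}^n$. Then $P=(1+S)/2$, and $S(X\otimes I)S=I\otimes X$. For $\xi\in\mathcal{K}$,
\[
P(X\otimes I)\xi=\tfrac12\bigl((X\otimes I)+(I\otimes X)\bigr)\xi .
\]
So $\Theta_n(X)$ is, up to the unitary $V$, one half of the restriction of $X\otimes I+I\otimes X$ to $\mathrm{Sym}^2(\mathbb{C}^n)$. This is the derived action $d\rho(X)$ of the representation $\rho(g)=g\otimes g$ of $GL_n$ on $\mathrm{Sym}^2(\mathbb{C}^n)$.

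The second step is to pass from invariance under hermitian $X$ to invariance under a group. A subspace invariant under all $d\rho(X)$ with $X$ hermitian is invariant under the complex span of these operators, which is all of $d\rho(\mathbb{M}_n)$. By exponentiating, it is then invariant under $\rho(e^{Y})=e^{d\rho(Y)}$ for all $Y$, hence under $\rho(U)=U\otimes U$ for every unitary $U$. The conclusion now reduces to the classical fact that $\mathrm{Sym}^2(\mathbb{C}^n)$ is an irreducible $U(n)$-module.

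For a self-contained argument I would instead proceed directly with matrix units. Let $\mathcal{M}\neq\{0\}$ be a common invariant subspace. It is invariant under $d\rho(E_{ii})$; these commute, and on the basis $e_i\odot e_j$ they act diagonally with weight $\epsilon_i+\epsilon_j$, and distinct pairs $\{i,j\}$ give distinct weights. So $\mathcal{M}$ is spanned by some of the vectors $e_i\odot e_j$. The lowering and raising operators $d\rho(E_{ab})$ satisfy $d\rho(E_{ab})(e_b\odot e_j)=e_a\odot e_j$ for $j\neq b$, and $d\rho(E_{ab})(e_b\odot e_b)=2\,e_a\odot e_b$. Using these, from any single $e_i\odot e_j$ one reaches every other basis vector: first move to some $e_i\odot e_i$, then to all $e_a\odot e_b$. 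Hence $\mathcal{M}=\mathcal{K}$.

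The main obstacle is this last chain: one must check that the coefficients never vanish, including the diagonal cases $i=j$. A concrete choice handles it: from $e_i\odot e_j$ with $i\neq j$, apply $E_{ij}$ to get a nonzero multiple of $e_i\odot e_i$, and from $e_i\odot e_i$ apply $E_{ai}$ followed by $E_{bi}$ to reach $e_a\odot e_b$. The remaining points are routine bookkeeping: the weight-space decomposition, using the complex span of the hermitian matrices, and tracking the normalisation of $V$.
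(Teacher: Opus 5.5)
Your proposal is correct and follows essentially the same route as the paper. In both, the commuting diagonal operators (the paper's $\Theta_n(P_l)$, your $\tfrac12 d\rho(E_{ll})$) have one-dimensional joint eigenspaces, so any invariant subspace is spanned by some of the vectors $f_{jk}=e_j\otimes e_k+e_k\otimes e_j$; off-diagonal operators then link every basis vector to every other.

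The differences are cosmetic:
\begin{itemize}
\item You use $P=(1+S)/2$ to write $\Theta_n(X)$ as $\tfrac12(X\otimes I+I\otimes X)$ restricted to $\mathcal{K}$. The paper instead computes inner products of $(P_l\otimes I_n)f_{jk}$ against the basis directly.
\item You pass to the complex span and use the non-hermitian ladder operators $E_{ab}$. The paper stays with the hermitian $Q=E_{jj'}+E_{j'j}$ and checks $\langle (Q\otimes I_n)f_{jk},f_{j'k}\rangle\neq 0$.
\item Your optional appeal to the irreducibility of $\mathrm{Sym}^2(\mathbb{C}^n)$ under $U(n)$ is a valid shortcut that the paper does not take.
\end{itemize}
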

\begin{proof}
Let $e_1, e_2, \ldots, e_n$ denote the standard basis of $\mathbb{C}^n$. Set $f_{jk}= e_j\otimes e_k+e_k\otimes e_j$ and $g_{jk}=f_{jk}/\lVert f_{jk}\rVert$, $1\leq j\leq k\leq n$. It is a basic fact that $(g_{jk})_{1\leq j\leq k\leq n}$ forms an orthonormal basis of $\mathcal{K}=\mathrm{span}\, \{h\otimes h\mid h\in \mathbb{C}^n\}$. 
Let $P_l\in \mathscr{H}_n$ denote the projection onto $\mathbb{C}e_l$, $1\leq l\leq n$. 
Then, for $1\leq j\leq k\leq n$ and $1\leq j'\leq k'\leq n$, we have
\[
\begin{split}
\langle (P_l\otimes I_n)f_{jk}, f_{j'k'}\rangle &= \langle (P_l\otimes I_n)(e_{j}\otimes e_{k}+e_{k}\otimes e_{j}), e_{j'}\otimes e_{k'}+e_{k'}\otimes e_{j'}\rangle\\
&=\langle P_le_{j}\otimes e_{k}+P_le_{k}\otimes e_{j}, e_{j'}\otimes e_{k'}+e_{k'}\otimes e_{j'}\rangle.
\end{split}
\] 
\begin{itemize}
\item This is $0$ if $l\notin \{j,k\}$. 
\item If $l=j\neq k$, then this is equal to 
\[
\langle e_{j}\otimes e_{k}, e_{j'}\otimes e_{k'}+e_{k'}\otimes e_{j'}\rangle =\delta_{jj'}\delta_{kk'}+\delta_{jk'}\delta_{kj'}.
\]
If $\delta_{jk'}\delta_{kj'}\neq 0$, then $l=j=k'$ and $k=j'$ which together with $j\leq k$ and $j'\leq k'$ implies $j=k$, a contradiction. 
Thus we have 
\[
\delta_{jj'}\delta_{kk'}+\delta_{jk'}\delta_{kj'}=\delta_{jj'}\delta_{kk'}
\]
in this case.
\item Similarly, if $l=k\neq j$, then this is again equal to $\displaystyle \delta_{jj'}\delta_{kk'}$.
\item If $l=j=k$, then this is equal to 
\[
2\langle e_{j}\otimes e_{j}, e_{j'}\otimes e_{k'}+e_{k'}\otimes e_{j'}\rangle =4\delta_{jj'}\delta_{jk'}.
\]
\end{itemize}
Let us identify $\mathcal{K}$ with $\mathbb{C}^k$ using $V\colon \mathbb{C}^k\to \mathcal{K}$.
Then, with respect to the orthonormal basis $(g_{jk})_{1\leq j\leq k\leq n}$, $\Theta_n(P_l)$ may be considered as a diagonal matrix. Moreover, the diagonal entry corresponding to $1\leq j\leq k\leq n$ is equal to 
\begin{itemize}
\item $0$ if $l\notin \{j,k\}$, 
\item $1/2$ if $l=j\neq k$ or $l=k\neq j$, and
\item $1$ if $l=j=k$.
\end{itemize}
Using this, it is not hard to see that the algebra of all diagonal matrices is generated by $\Theta_n(P_l)$, $1\leq l\leq n$.
Therefore, a subspace which is invariant for $\Theta_n(X)$ for every $X\in \mathscr{H}_n$ needs to be a linear span of a subset of $\{f_{jk}\mid 1\leq j\leq k\leq n\}$.

Now, assume that $f_{jk}$ belongs to such a subspace but $f_{j'k}$ does not. Then we have $j\neq j'$.
Let $Q\in \mathscr{H}_n$ be the matrix whose $(j,j')$ and $(j',j)$-entries are $1$ and all other entries are $0$. 
Then $\langle(Q\otimes I_n)f_{jk}, f_{j'k}\rangle$ needs to be equal to $0$, but we have
\[
\begin{split}
\langle(Q\otimes I_n)f_{jk}, f_{j'k}\rangle &= \langle Qe_{j}\otimes e_{k}+Qe_{k}\otimes e_{j}, e_{j'}\otimes e_{k}+e_{k}\otimes e_{j'}\rangle\\
&= \langle e_{j'}\otimes e_{k}+Qe_{k}\otimes e_{j}, e_{j'}\otimes e_{k}+e_{k}\otimes e_{j'}\rangle\neq 0,
\end{split}
\]
a contradiction. Thus, if $f_{jk}$ belongs to such a subspace, then so does $f_{j'k}$ for every $j'$. Similarly, if $f_{jk}$ belongs to such a subspace, then so does $f_{jk'}$ for every $k'$, Thus such a subspace needs to be trivial. 
\end{proof}

\begin{proposition}
If $n\geq 3$ and $n$ is odd, then $\Theta_n$ is not CLP.
\end{proposition}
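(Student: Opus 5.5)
The plan is to show that if $\Theta_n$ were CLP, then the irreducibility lemma just proved would force the CLP decomposition to have no $f$-block. A dimension count would then force $\delta=0$, making $\Theta_n$ a unitary conjugate of a Jordan $^*$-homomorphism. That is impossible, because $\Theta_n(P_l)$ has eigenvalue $1/2$.

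First, suppose $\Theta_n$ is CLP, so that $\Theta_n(X)=V(\Phi_0(X)\oplus f(X))V^*$ as in \eqref{clp2}. Here $V\in\mathbb{M}_k$ is unitary with $k=n(n+1)/2$, $s:=\alpha+\beta>0$, $0\le\delta<s$, and $\Phi_0$ takes values in $\mathscr{H}_{ns-\delta}$. The $\Phi_0$-block has size $ns-\delta\ge (n-1)s+1\ge 1$. If the $f$-block had positive size, then $V$ would carry $\mathbb{C}^{ns-\delta}\oplus 0$ onto a nontrivial proper subspace of $\mathbb{C}^k$ that is invariant under every $\Theta_n(X)$. This contradicts the preceding lemma, so the $f$-block is absent, $k=ns-\delta$, and $\Theta_n(X)=VU^*((X\otimes I_\alpha)\oplus(X^t\otimes I_\beta))UV^*$.

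Next comes the dimension count. From $0\le\delta\le s-1$ we get $(n-1)s+1\le n(n+1)/2\le ns$. The right inequality gives $s\ge (n+1)/2$, and since $n$ is odd, $(n+1)/2$ is an integer. If $s\ge (n+3)/2$, the left inequality would give $(n-1)(n+3)/2<n(n+1)/2$, that is, $n^2+2n-3<n^2+n$, or $n<3$. This contradicts $n\ge 3$. Hence $s=(n+1)/2$, and then $ns=k$, so $\delta=0$.

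With $\delta=0$, the isometry $U\colon\mathbb{C}^{ns}\to\mathbb{C}^{ns}$ is unitary. Then $X\mapsto VU^*((X\otimes I_\alpha)\oplus(X^t\otimes I_\beta))UV^*$ preserves squares, so it maps projections to projections. But the computation in the preceding proof shows that $\Theta_n(P_1)$ is diagonal in the basis $(g_{jk})$ with some diagonal entry equal to $1/2$ (take $j=1\neq k$, possible since $n\ge 2$). So $\Theta_n(P_1)$ is not a projection, which is a contradiction.

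The main subtlety is the dimension count: the case $s=(n+1)/2$ is available only because $n$ is odd, which explains the hypothesis. The irreducibility step is routine once one notes that both blocks would be nonzero.
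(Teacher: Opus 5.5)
Your proof is correct and follows the paper's argument: the irreducibility lemma rules out the $f$-block, and the parity-dependent count $n(n+1)/2=n(\alpha+\beta)-\delta$ with $0\le\delta<\alpha+\beta$ does the rest. The one difference is how you exclude $\delta=0$: you note that $\Theta_n(P_1)$ has eigenvalue $1/2$ and so is not a projection, whereas the paper notes that $\delta=0$ forces $\alpha+\beta=(n+1)/2\ge 2$ and applies the irreducibility lemma again, since $(X\otimes I_\alpha)\oplus(X^t\otimes I_\beta)$ is then reducible; either works.
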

\begin{proof}
Assume that $\Theta_n$ is CLP, and that $\Theta_n$ is of the form \eqref{clp2}. 
If $k>n(\alpha+\beta)-\delta$, then it is easy to see that there is a nontrivial subspace of $\mathbb{C}^k$ that is invariant simultaneously for $\Phi (X)$ for every $X\in \mathscr{H}_n$. 
Therefore, the preceding lemma implies $n(n+1)/2=k=n(\alpha+\beta)-\delta$.
If $\delta=0$, then we have $\alpha+\beta\geq 2$, in which case we again see that there is a nontrivial invariant subspace, so $\delta>0$. 
Consequently, we get $k=n(\alpha+\beta)-\delta$ with $\alpha+\beta\geq 2$ and $\delta\in \{1,2,\ldots, \alpha+\beta-1\}$. As $k=n(n+1)/2$, we get $n(n+1)/2=n(\alpha+\beta)-\delta$ and hence $\delta =n\lambda\in \{1,2,\ldots, \lambda+\frac{n-1}{2}\}$, where $\lambda=\alpha+\beta -\frac{n+1}{2}$ is a positive integer. This is absurd.
\end{proof}

Thus we get infinitely many examples of non-CLP isometries.
Using the same idea, we will see that we may construct examples of non-CLP isometries on the full matrix algebras.

\begin{lemma}
Let $\Phi\colon \mathbb{M}_n\to \mathbb{M}_k$ be a CLP isometry. 
Assume that $\Phi(I_n)$ is a projection of rank $m$ whose range is $\mathcal{H}\subset \mathbb{C}^n$. 
Let $W\colon \mathbb{C}^m\to \mathcal{H}\subset \mathbb{C}^n$ be a linear isometry. 
Then the mapping $\Psi\colon \mathbb{M}_n\ni X\mapsto W^* \Phi(X) W\in \mathbb{M}_m$ restricts to a CLP isometry from $\mathscr{H}_n$ to $\mathscr{H}_m$.
\end{lemma}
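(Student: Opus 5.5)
We write $\Phi(X)=V_1(T_1^*Z(X)T_2\oplus f(X))V_2^*$, where $Z(X):=(X\otimes I_\alpha)\oplus(X^t\otimes I_\beta)$ and $\operatorname{rank}(I-T_1T_1^*)+\operatorname{rank}(I-T_2T_2^*)<\alpha+\beta$. (In the statement, $\mathcal{H}$ is of course a subspace of $\mathbb{C}^k$.) The plan is to show that $W^*\Phi(\cdot)W$ on $\mathscr{H}_n$ has the form \eqref{clp2}. The first step analyses the hypothesis $\Phi(I_n)=P$, a projection. Conjugating by $V_1,V_2$ does not change the problem, so we may assume $V_1=V_2=I$; then $\Phi(I_n)=T_1^*T_2\oplus f(I_n)$ and both summands are projections. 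For contractions $T_1,T_2$, the product $R:=T_1^*T_2$ is a projection exactly when $T_1\xi=T_2\xi$ and $\|T_2\xi\|=\|\xi\|$ for every $\xi$ in the range of $R$. The "if" direction is clear. For "only if", take $\xi$ with $R\xi=\xi$. Then $\|\xi\|^2=\langle T_2\xi,T_1\xi\rangle\le\|T_2\xi\|\|T_1\xi\|\le\|\xi\|^2$, which forces $T_1\xi=T_2\xi$ and both to be isometric on $\xi$. Hence on $\mathcal{H}_0:=\operatorname{ran}R$ the maps $T_1$ and $T_2$ agree and act as an isometry, which we call $U_0\colon\mathcal{H}_0\to\mathbb{C}^{n(\alpha+\beta)}$. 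A priori $V_1\ne V_2$, so the range $\mathcal{H}$ splits as $V_1(\mathcal{H}_0\oplus\mathcal{H}_1)$ only once we check that $V_1^*PV_1=R\oplus f(I)$ is block diagonal, which it is.

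The second step computes the compression. We choose $W$ adapted to the decomposition $\mathcal{H}=V_1\mathcal{H}_0\oplus V_1\mathcal{H}_1$; changing $W$ by a unitary of $\mathbb{C}^m$ only composes with a unitary conjugation, which \eqref{clp2} allows through $V$. For hermitian $X$ we then get
\[
W^*\Phi(X)W\cong U_0^*Z(X)U_0\oplus f_1(X),
\]
where $f_1(X)$ is the compression of $f(X)$ to $\mathcal{H}_1=\operatorname{ran}f(I_n)$. The map $f_1$ is hermitian-preserving and unital, since $f_1(I)=I$. To make $\Phi$ restrict to $\mathscr{H}_n\to\mathscr{H}_m$, we use $V_2^*W$ versus $V_1^*W$. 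On $\mathcal{H}_0$ we have $T_1=T_2$, and on $\mathcal{H}_1$ we have $f(I)=I$; so the left and right compressions coincide and the result is hermitian.

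The third step checks the CLP constraints. Set $\delta:=n(\alpha+\beta)-\dim\mathcal{H}_0$. We need $\delta<\alpha+\beta$. The range of $U_0$ lies in $\operatorname{ran}T_1\cap\operatorname{ran}T_2$, and more precisely in $\{\eta:\ T_1^*\eta=T_2^*\eta,\ \|T_i^*\eta\|=\|\eta\|\}$. Its orthogonal complement should have dimension at most $\operatorname{rank}(I-T_1T_1^*)+\operatorname{rank}(I-T_2T_2^*)$, the equality/isometry defects. This is the \textbf{main obstacle}: $\mathcal{H}_0$ is the range of $R$, which can be smaller than that set. My first attempt would argue that any unit $\eta$ on which $T_1^*$ and $T_2^*$ are isometric and equal gives $\xi=T_1^*\eta$ with $T_2\xi=T_2T_2^*\eta=\eta$, so $R\xi=T_1^*\eta=\xi$. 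Thus $\operatorname{ran}U_0=\{\eta: T_1T_1^*\eta=\eta=T_2T_2^*\eta,\ T_1^*\eta=T_2^*\eta\}$. The only remaining issue is the equality condition $T_1^*\eta=T_2^*\eta$, which might cost extra codimension. If this fails, I would instead use that $\Phi$ is an isometry together with the eigenvector argument of Lemma \ref{MA} to bound $\delta$.

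Finally, for \eqref{clp2} we need $f_1$ to be positive. Positivity of a compression of $f$ holds if $f$ is positive on $\mathscr{H}_n$, but $f$ is only assumed contractive. To handle this, use that $f_1$ is unital with $\|f_1\|\le1$; a unital contraction on an operator system is positive, since $\|I-X\|\le1$ for $0\le X\le I$. Hence $f_1$ is a unital positive map, and $W^*\Phi W|_{\mathscr{H}_n}$ is a CLP isometry.
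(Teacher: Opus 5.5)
Your proposal has two genuine gaps: a reduction in the first step that is false, and the key bound $\delta<\alpha+\beta$, which you leave open.

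\textbf{The reduction in your first step.} You cannot assume $V_1=V_2=I$. Replacing $\Phi$ by $V_1^*\Phi(\cdot)V_2$ destroys the hypothesis that $\Phi(I_n)$ is a projection. In general neither $T_1^*T_2$ nor $f(I_n)$ is a projection, $V_1^*\Phi(I_n)V_1\neq T_1^*T_2\oplus f(I_n)$, and $f(I_n)$ need not act as the identity on its range. For example, take $n=1$, $\alpha=1$, $\beta=0$, $T_1=T_2=1$, $k=2$, $f(x)=\lambda x$ with $\lvert\lambda\rvert=1\neq\lambda$, $V_1=I_2$ and $V_2=\mathrm{diag}(1,\lambda)$. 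Then $\Phi(x)=xI_2$, so $\Phi(I_1)=I_2$, while $T_1^*T_2\oplus f(I_1)=\mathrm{diag}(1,\lambda)$.

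What is true is that $V_1(T_1^*T_2\oplus 0)V_2^*$ and $V_1(0\oplus f(I_n))V_2^*$ are partial isometries with orthogonal initial and orthogonal final spaces whose sum is a projection, so each is a projection. This is the paper's $P+Q=I_m$ step after compressing by $W$. Your Cauchy--Schwarz argument then works if you apply it to the contractions $S_i:=(T_i\ \ 0)V_i^*\colon\mathbb{C}^k\to\mathbb{C}^{n(\alpha+\beta)}$. For these, $S_1^*S_2=V_1(T_1^*T_2\oplus 0)V_2^*$ and $S_iS_i^*=T_iT_i^*$, and the conclusion $S_1=S_2$ isometrically on the range of $S_1^*S_2$ is exactly the paper's $V_3=V_4$. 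The remainder $X\mapsto QW^*V_1(0\oplus f(X))V_2^*WQ$ is a complex-linear unital contraction on $\mathbb{M}_n$, hence positive by Lemma~\ref{restrict}. Your positivity argument via $\lVert I-X\rVert\le 1$ presupposes self-adjointness of the image, which you derived from the false claim $f(I_n)=I$ on $\mathcal{H}_1$.

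\textbf{The bound $\delta<\alpha+\beta$.} This is the more serious gap, and your fallback cannot close it. Being an isometry of the form $V^*((X\otimes I_\alpha)\oplus(X^t\otimes I_\beta))V$ gives no bound on $\delta$. The map $\Theta_n$ of Example~\ref{nonclp} is such an isometry with $\delta=n(n-1)/2\geq n=\alpha+\beta$ for $n\geq 3$. The present lemma is used precisely to transfer the non-CLP property of $\Theta_n$ to $\Xi_n$, so the bound must come from the rank hypothesis on $T_1,T_2$.

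The ``equality condition'' you worry about is in fact automatic. Let $K_i=\ker(I-T_iT_i^*)$, so that
$\dim(K_1\cap K_2)\geq n(\alpha+\beta)-\mathrm{rank}(I-T_1T_1^*)-\mathrm{rank}(I-T_2T_2^*)>(n-1)(\alpha+\beta)$.
For $\eta\in K_1\cap K_2$ put $\xi=S_2^*\eta$. Then $\lVert\xi\rVert=\lVert\eta\rVert$, $S_2\xi=\eta$, and $S_1^*S_2\xi=S_1^*\eta$ has norm $\lVert\xi\rVert$. Since $S_1^*S_2$ is a projection, this forces $S_1^*S_2\xi=\xi$, that is, $S_1^*\eta=S_2^*\eta$ and $\xi$ lies in the range of $S_1^*S_2\leq\Phi(I_n)$. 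The map $\eta\mapsto S_2^*\eta$ is isometric on $K_2$, so the rank $r$ of this projection is at least $\dim(K_1\cap K_2)>(n-1)(\alpha+\beta)$. This is the paper's one-line claim that $\mathrm{rank}\,P>(n-1)(\alpha+\beta)$, and it gives $\delta<\alpha+\beta$.
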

\begin{proof}
The fact that the mapping $\Psi$ restricted to $\mathscr{H}_n$ is a unital positive linear isometry is a consequence of \cite[Theorem 2.1]{CLP} (or the discussion in the proof of Lemma \ref{kadison} below).
We may assume that $\Phi$ is the mapping as in \eqref{clp}.
Then $I_m=\Psi(I_n)=W^*V_1(T_1^*T_2\oplus f(I_n))V_2^*W$. 
It follows from the assumption that $T_1^*T_2$ and $f(I_n)$ are partial isometries, and so are $P=W^*V_1(T_1^*T_2\oplus 0_{k-m})V_2^*W$ and $Q=W^*V_1(0_m\oplus f(I_n))V_2^*W$. 
Since the initial space of $P$ is orthogonal to that of $Q$, the equation $P+Q=I_m$ implies that both $P$ and $Q$ are projections.
Therefore, the linear mappings $V_3:=(T_2\oplus 0_{k-m})V_2^*WP$ and $V_4:=(T_1\oplus 0_{k-m})V_1^*WP$ need to be partial isometries whose initial spaces equal the range of $P$. Moreover,  $V_4^*V_3=P$  implies that $V:=V_3=V_4$. We also see from $\mathrm{rank}\, (I_{n(\alpha+\beta)}-T_1T_1^*)+\mathrm{rank}\,(I_{n(\alpha+\beta)}-T_2T_2^*)<\alpha+\beta$ that the rank $r$ of $P$ is greater than $(n-1)(\alpha+\beta)$.
Hence, by identifying the range of $P$ with $\mathbb{C}^r$, we see that the mapping $\mathbb{M}_n\ni X\mapsto V^*((X\otimes I_\alpha)\oplus (X^t\otimes I_\beta)\oplus 0_{k-m})V$ is of the form \eqref{phi0}.
It follows that the mapping $\Psi$ restricted to $\mathscr{H}_n$ is of the form \eqref{clp2}.
\end{proof}

\begin{example}
Let $n$, $\mathcal{K}$, $k$, $V$, $\Theta_n$ be the same as in Example \ref{nonclp}, and take the orthogonal projection $P\in \mathbb{M}_{n^2}$ onto $\mathcal{K}$. 
Define $\Xi_n\colon \mathbb{M}_n\to \mathbb{M}_{n^2}$ by $\Xi_n(X):= P(X\otimes I_n)\in \mathbb{M}_{n^2}$, $X\in \mathbb{M}_n$. 
Then, for each $X\in \mathbb{M}_n$, we have 
\[
\lVert \Xi_n(X)\rVert^2= \lVert \Xi_n(X)\Xi_n(X)^*\rVert = \lVert P(XX^*\otimes I_n)P\rVert,
\]
which is equal to
\[
\lVert V^*P(XX^*\otimes I_n)PV\rVert = \lVert \Theta_n(X^*X) \rVert =\lVert X^*X\rVert =\lVert X\rVert^2.
\]
Thus $\Xi_n$ is an isometry. 

Assume that $\Xi_n$ is CLP. 
Then, using the fact that $\Xi_n(I_n)=P$ and 
the preceding lemma, we see that the mapping $\mathscr{H}_n\ni X\mapsto V^*P\Xi_n(X)V\in \mathscr{H}_k$ needs to be a CLP isometry. However, this mapping is equal to $\Theta_n$. Therefore, if $\Theta_n$ is not CLP, then $\Xi_n$ is not CLP, either. 
\end{example}

Consequently, we have solved all problems (1)--(4) in \cite[p.\ 15]{CLP} negatively.
Indeed, $\Xi_n$ gives a counterexample to (1), and the mapping $\Theta_n$ (extended linearly to a mapping from $\mathbb{M}_n$ to $\mathbb{M}_k$) gives a counterexample to (2)--(4) (see also the proof of \cite[Proposition 4.3]{CLP}).

\section{Preliminaries}\label{preliminaries}
In what follows, we assume that the reader is acquainted with basic facts about C$^*$- or von Neumann algebras as in \cite{KR} or \cite{T1}.
\subsection{Extreme points of the unit ball}
For a normed space $\mathcal{X}$, we use the symbol $\mathcal{X}^{\leq 1}:=\{x\in\mathcal{X}\mid \lVert x\rVert\leq 1\}$ for the closed unit ball of $\mathcal{X}$.  For a convex subset $C\subset \mathcal{X}$, let $\mathrm{ext}\, C$ denote the set of extreme points of $C$.
Kadison \cite{K} gave the following characterization of extreme points of the unit ball. Recall that $\mathcal{A}$ is a unital C$^*$-algebra. 
\begin{theorem}[Kadison {\cite[Theorem 1]{K}}]\label{extreme}
An element $u\in \mathcal{A}$ lies in $\mathrm{ext}\, \mathcal{A}^{\leq 1}$ if and only if $u$ is a partial isometry satisfying $(1-uu^*)\mathcal{A}(1-u^*u)=\{0\}$. 
\end{theorem}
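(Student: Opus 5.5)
We need to prove Kadison's theorem: $u$ is an extreme point of the closed unit ball of $\mathcal{A}$ if and only if $u$ is a partial isometry with $(1-uu^*)\mathcal{A}(1-u^*u)=\{0\}$. The plan is to work in a faithful representation $\mathcal{A}\subset B(H)$ and use continuous functional calculus, proving the two implications separately.

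For sufficiency, let $u$ be such a partial isometry and set $p=u^*u$, $q=uu^*$. Suppose $u=(x+y)/2$ with $\lVert x\rVert,\lVert y\rVert\le 1$; we show $x=y=u$.
\begin{itemize}
\item Compress by $q$ and $p$. Since $quq=...$, we have $q u p=u$. The element $q x p$ has norm at most $1$, and $u$ is a unitary from $pH$ onto $qH$. Because the unit sphere of $B(H)$ restricted to unitaries is strictly convex in the relevant sense, we get $qxp=qyp=u$. Concretely, for a unit vector $\xi\in pH$, $u\xi$ is the midpoint of $qx\xi$ and $qy\xi$, all of norm at most $1$, and $u\xi$ has norm $1$. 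Strict convexity of Hilbert space then forces $x\xi=y\xi=u\xi$.
\item It follows that $x p = u = y p$ (the $(1-q)xp$ part also vanishes, since $\lVert x\xi\rVert\le 1=\lVert qx\xi\rVert$). Symmetrically, $qx=qy=u$, by applying the same argument to $x^*,y^*$.
\item Hence $x-u=(1-q)x(1-p)\in(1-q)\mathcal{A}(1-p)=\{0\}$. The same holds for $y$.
\end{itemize}

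For necessity, let $u$ be an extreme point.

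First we show $u$ is a partial isometry. Let $a=u^*u$. If $\sigma(a)\not\subset\{0,1\}$, choose a nonzero continuous $g$ on $[0,1]$ with $g(0)=0$, $\lvert g(t)\rvert$ small, and $\lVert t(1\pm g(t))^2\rVert_\infty\le 1$. For instance, take $g$ supported near an interior point $t_0\in\sigma(a)\cap(0,1)$, where $t<1$ leaves room. Then $u(1\pm g(a))$ have norm at most $1$, because their squared moduli are $(1\pm g(a))a(1\pm g(a))$. They average to $u$, and they differ because $u g(a)\neq 0$: indeed $g(a)a g(a)\ne0$ since $t g(t)^2\neq 0$ on the spectrum. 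This contradicts extremality, so $\sigma(a)\subset\{0,1\}$ and $u$ is a partial isometry.

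Next, suppose $z=(1-q)b(1-p)\neq 0$ with $\lVert z\rVert\le 1$. Then $u\pm z$ satisfy
\[
(u\pm z)^*(u\pm z)=p+z^*z,
\]
because $u^*z=0$ (as $u^*(1-q)=0$). This is a sum of orthogonal positives: $z^*z$ lives in $1-p$. So $\lVert u\pm z\rVert\le 1$, and $u=\tfrac12\bigl((u+z)+(u-z)\bigr)$ contradicts extremality.

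The main care point is the functional-calculus perturbation in the partial isometry step. One must ensure $g$ is chosen so that $\lVert u(1\pm g(a))\rVert\le1$. Concretely, we need $t(1+\lvert g(t)\rvert)^2\le1$ on $[0,1]$, which holds for, say, $g(t)=\epsilon\min(t,1-t)$ with small $\epsilon$, using $t(1+\epsilon(1-t))^2\le1$ for small $\epsilon$. We also need $ug(a)\neq0$, which holds whenever $\sigma(a)$ meets $(0,1)$.
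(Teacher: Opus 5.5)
Your proof is correct in both directions. The paper does not reprove this theorem; it quotes it from Kadison. However, its proof of Proposition~\ref{vvv}, which it says is modeled on Kadison's, uses the same two perturbations you use for the ``only if'' direction:
\begin{itemize}
\item a functional-calculus perturbation $v(1\pm h(v^*v))$ with $\lvert h(t)\rvert\le t^{-1/2}-1$, i.e.\ $t(1+\lvert h(t)\rvert)^2\le 1$, to control $\sigma(v^*v)$;
\item the perturbation $v\pm a$ with $a$ in the corner $(1-vv^*)\mathcal{A}(1-v^*v)$.
\end{itemize}
Your explicit choice $g(t)=\epsilon\min(t,1-t)$ satisfies this bound for $\epsilon\le 1/2$.

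Your ``if'' direction is the standard argument and is sound: you apply the parallelogram law to vectors $\xi\in u^*uH$ to get $xp=u$, use the adjoints to get $qx=u$, and conclude $x-u=(1-q)x(1-p)=0$. Two phrases should be removed: the unfinished ``Since $quq=\ldots$'', and the vague remark about the unit sphere of $B(H)$ being strictly convex. The concrete Hilbert-space computation is what actually does the work.
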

An \emph{atom} of a C$^*$-algebra is a projection $p\in \mathcal{P}(\mathcal{A})$ such that $\dim p\mathcal{A}p=1$. 
We will frequently use the following fact. Let $a\in \mathcal{A}_{sa}$.
If $f\colon \mathbb{R}\to \mathbb{R}$ restricts to a continuous function on $\sigma(a)$, by functional calculus we get an operator $f(a)$ which belongs to $\mathcal{A}_{sa}$.
For a subset $S\subset \mathbb{R}$, we use the symbol $\chi_{S}$ for the indicator function ($\chi_{S}(t)=1$ if $t\in S$ and $\chi_{S}(t)=0$ otherwise). Therefore, if $\#\sigma(a)<\infty$, then $\chi_S(a)$ is defined and it is a projection in $\mathcal{A}$.
We give a variant of Theorem \ref{extreme}.

\begin{proposition}\label{vvv}
Let $\mathcal{A}_1\subset \mathcal{A}$ be a linear subspace of finite codimension, and $v\in \mathrm{ext}\, \mathcal{A}_1^{\leq 1}$. Then there are partial isometries $v_0$, $v_1,v_2,\ldots, v_m$ in $\mathcal{A}$ having pairwise orthogonal initial spaces and pairwise orthogonal final spaces, and real numbers $1>c_1\geq c_2\geq\cdots\geq c_m>0$ such that  $v_1^*v_1$, $v_2^*v_2$, \ldots, $v_m^*v_m$ are atoms and $v=v_0+c_1v_1+c_2v_2+\cdots+c_mv_m$.
Moreover, the linear space $(1-v_0v_0^*)\mathcal{A}(1-v_0^*v_0)$ is finite-dimensional.
\end{proposition}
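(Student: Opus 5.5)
Let $\mathcal{A}_1\subset\mathcal{A}$ have codimension $N$ and let $v\in\mathrm{ext}\,\mathcal{A}_1^{\leq 1}$. The idea is to adapt Kadison's perturbation argument behind Theorem \ref{extreme}: every perturbation direction that keeps $v$ in the unit ball must be killed by the finitely many linear conditions defining $\mathcal{A}_1$. Concretely, fix functionals $\psi_1,\dots,\psi_N\in\mathcal{A}^*$ with $\mathcal{A}_1=\bigcap_j\ker\psi_j$. Whenever we find a real-linear subspace $\mathcal{D}\subset\mathcal{A}$ with $\lVert v\pm d\rVert\le 1$ for all small $d\in\mathcal{D}$, extremality in $\mathcal{A}_1$ forces $\mathcal{D}\cap\mathcal{A}_1=\{0\}$, hence $\dim_{\mathbb{R}}\mathcal{D}\le 2N$. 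We will exploit this repeatedly.

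\emph{First step: finiteness of the spectrum of $|v|$.} Let $v=w|v|$ be the polar decomposition. Since $w$ need not lie in $\mathcal{A}$, we only use elements of the form $v\,g(v^*v)$ with $g$ continuous. If $\sigma(v^*v)\cap[0,1)$ contained infinitely many points, or a point $t<1$ that is not isolated, we could choose infinitely many continuous functions $g_k$ with disjoint supports inside $[0,1-\varepsilon]$ (together with $0$ itself, using $v\,g_k(v^*v)$, or $g_k(vv^*)$-type terms at $t=0$). The elements $v\,h(v^*v)$ with $h$ real and supported there form a real space $\mathcal{D}$ of infinite dimension, and $\lVert v(1+h(v^*v))\rVert\le 1$ holds for small $h$ because $t(1+h(t))^2\le 1$ when $t\le 1-\varepsilon$. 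This contradicts the dimension bound. Hence $\sigma(v^*v)\cap[0,1)$ is finite, $1$ is at most an accumulation point from below only in a trivial way, and the spectral projections $e_c=\chi_{\{c^2\}}(v^*v)$ for $0<c<1$, as well as the remaining pieces, lie in $\mathcal{A}$. Setting $v_0:=v\chi_{\{1\}}(v^*v)$ gives a partial isometry, and $v\,\chi_{\{c^2\}}(v^*v)=c\,u_c$ with $u_c$ a partial isometry. The initial spaces are orthogonal, and so are the final ones, since the final projections are spectral projections of $vv^*$.

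\emph{Second step: each $u_c^*u_c$ is a finite sum of atoms, and refinement.} For a fixed $c\in(0,1)$ with projection $e=e_c$, any $x\in e\mathcal{A}e$ with $\lVert x\rVert$ small gives a perturbation $v\mapsto v+u_c x$ that keeps the norm at most $1$, because on that block the norm is $c<1$. The space $e\mathcal{A}e$ is therefore finite-dimensional, so $e\mathcal{A}e$ is a finite-dimensional C$^*$-algebra. Decomposing $e$ into minimal projections $e=\sum_i f_i$, each $f_i$ is an atom of $\mathcal{A}$ (minimality in $e\mathcal{A}e$ implies $f_i\mathcal{A}f_i=\mathbb{C}f_i$). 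Taking $v_j:=u_cf_i$ and listing them with repetition yields $v=v_0+\sum_j c_jv_j$ with $1>c_1\ge\dots\ge c_m>0$ and $v_j^*v_j$ atoms.

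\emph{Third step: the remainder space.} For $x\in(1-v_0v_0^*)\mathcal{A}(1-v_0^*v_0)$, replace it by $y=(1-vv^*)^{1/2}\,x\,(1-v^*v)^{1/2}$, the standard Kadison-type perturbation satisfying $\lVert v\pm y\rVert\le 1$ for $\lVert x\rVert\le 1$ (via the $2\times2$ unitary dilation of $v$). By the dimension bound, the map $x\mapsto y$ has finite-dimensional range. Because $(1-v^*v)$ restricted to $1-v_0^*v_0$ has finite spectrum bounded away from $0$, it is invertible there, and likewise on the left side. So the map is injective on the corner, and the corner is finite-dimensional.

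The main obstacle I expect is the first step: controlling spectrum near $1$ and near $0$ using only continuous functional calculus, and making sure the perturbations stay in $\mathcal{A}$. The argument needs care at the accumulation point $1$; there I would instead use perturbations $v\,h(v^*v)$ with $h\le 0$ combined with the symmetric trick $v(1\pm h)$, where $\lVert v(1+h)\rVert\le 1$ only requires $|t(1\pm h(t))^2|\le1$ near $t$ slightly below $1$.
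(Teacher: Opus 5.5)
Your route is the paper's. You show $\sigma(v^*v)$ is finite using perturbations $v\,h(v^*v)$. You show the corner over the spectral projection of $v^*v$ for $(0,1)$ is finite-dimensional using perturbations $vx$; you do this one eigenvalue at a time, the paper does it all at once. Finally you show the remaining corner is finite-dimensional using a perturbation supported there. Two steps, however, are not correct as written.

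\textbf{Step~1.} Bumps supported in a fixed $[0,1-\varepsilon]$ do not detect spectrum that accumulates only at $1$. So ``hence $\sigma(v^*v)\cap[0,1)$ is finite'' does not follow, and the phrase about $1$ being an accumulation point ``in a trivial way'' should be dropped.

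The obstacle you flag is not real, because your dimension bound only needs each $d\in\mathcal{D}$ to admit some $\delta>0$ with $\lVert v\pm\delta d\rVert\le 1$. Take any $2N+1$ points $t_k\in\sigma(v^*v)\cap(0,1)$ and real bumps $g_k$ with $g_k(t_k)=1$ and disjoint supports compactly contained in $(0,1)$.
\begin{itemize}
\item All supports lie in some $[\eta,1-\eta]$, so for $h$ in their span, small multiples satisfy $\sup_t t(1+\delta\lvert h(t)\rvert)^2\le 1$.
\item The elements $v\,g_k(v^*v)$ are linearly independent, since $\lVert v\,h(v^*v)\rVert^2=\sup_{t\in\sigma(v^*v)}t\,h(t)^2$.
\end{itemize}
This gives $\#(\sigma(v^*v)\cap(0,1))\le 2N$, which already makes $\sigma(v^*v)$ finite; nothing needs to be done at $t=0$. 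The paper instead uses a single weight $f$ with $0<f(t)\le t^{-1/2}-1$ on $(0,1)$, which is what your closing remark is reaching for.

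\textbf{Step~3.} The claim that $\lVert v\pm(1-vv^*)^{1/2}x(1-v^*v)^{1/2}\rVert\le 1$ for all $\lVert x\rVert\le 1$ is false. In $\mathcal{A}=\mathbb{C}$ with $v=1/2$ and $x=1$, the corner is all of $\mathbb{C}$ and the left side equals $1/2+3/4=5/4$. The inequality does hold for $\lVert x\rVert\le 1/2$, because $\begin{pmatrix}1&v\\ v^*&1\end{pmatrix}\ge\frac12\begin{pmatrix}1-vv^*&0\\ 0&1-v^*v\end{pmatrix}$. Since you only use small perturbations, your argument survives with this correction.

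After Steps~1--2 the detour is unnecessary anyway. On $(1-v_0v_0^*)\mathcal{A}(1-v_0^*v_0)$, the element $v$ acts as $w=\sum_j c_jv_j$ with $\lVert w\rVert=c_1<1$. So any $a$ in this corner with $\lVert a\rVert\le 1-c_1$ satisfies $\lVert v\pm a\rVert=\max\{\lVert v_0\rVert,\lVert w\pm a\rVert\}\le 1$. This is exactly the paper's final step.
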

\begin{proof}
The proof here is inspired by \cite[Proof of Theorem 1]{K}.
We first prove that the spectrum $\sigma(v^*v)$ consists of finitely many points. 
Fix any continuous function $f\colon [0,1]\to [0,\infty)$ satisfying $0<f(t)\leq t^{-1/2}-1$ for every $t\in (0,1)$. 
If $\sigma(v^*v)$ is an infinite set, then the real linear space 
\[
\{vf(v^*v)g(v^*v)\mid g\colon [0,1]\to \mathbb{R}\text{ is a continuous function}\}\subset \mathcal{A}
\]
is infinite-dimensional.
Since $\mathcal{A}_1\subset \mathcal{A}$ is finite-codimensional, one may find a continuous function $g\colon [0,1]\to\mathbb{R}$ satisfying $0\neq vf(v^*v)g(v^*v)\in \mathcal{A}_1$.  By scaling, we may assume that $g([0,1])\subset [-1,1]$. 
Then we have 
\[
\begin{split}
\lVert v\pm vf(v^*v)g(v^*v)\rVert^2 &=\lVert (1\pm f(v^*v)g(v^*v))v^*v(1\pm f(v^*v)g(v^*v))\rVert\\
&= \lVert v^*v(1\pm f(v^*v)g(v^*v))^2\rVert\\
&\leq  \lVert v^*v(1+f(v^*v))^2\rVert\\
&\leq \sup_{t\in [0,1]} \lvert t(1+f(t))^2\rvert \leq 1.
\end{split}
\]
This contradicts the assumption that $v\in \mathrm{ext}\, \mathcal{A}_1^{\leq 1}$. 
Therefore, $\sigma(v^*v)$ is a finite set. 

If $e=\chi_{(0,1)}(\lvert v\rvert)$ satisfies $\dim e\mathcal{A}e=\infty$, taking an element $0\neq vx\in ve\mathcal{A}e\cap \mathcal{A}_1$ with sufficiently small norm, we get $\lVert v\pm vx\rVert = \lVert \lvert v\rvert (1\pm x)\rVert\leq 1$ and thus $v\pm vx\in \mathcal{A}_1^{\leq 1}$, a contradiction. Hence we obtain $\dim e\mathcal{A}e<\infty$, which means that $e$ is a finite sum of atoms of $\mathcal{A}$. 
Thus there are pairwise orthogonal projections $p_0$, $p_1, p_2, \ldots, p_m\in \mathcal{P}(\mathcal{A})$ and real numbers $1>c_1\geq c_2\geq\cdots\geq c_m>0$ such that $p_1, p_2, \ldots, p_m$ are atoms in $\mathcal{A}$ and $v^*v=p+c_1^2p_1+c_2^2p_2+\cdots+c_m^2p_m$. 

Consequently, we have $\lvert v\rvert=p+c_1p_1+c_2p_2+\cdots+c_mp_m$. 
Therefore, setting $w=v(p+c_1^{-1}p_1+c_2^{-1}p_2+\cdots+c_m^{-1}p_m)\in \mathcal{A}$, we see that $w$ is a partial isometry and $v=w\lvert v\rvert$ is the polar decomposition of $v$. Hence we get partial isometries $v_0=wp_0$, $v_1=wp_1$, $v_2=wp_2$, \ldots, $v_m=wp_m$ having pairwise orthogonal initial spaces and pairwise orthogonal final spaces with $v_0^*v_0=p_0$, $v_1^*v_1=p_1$, $v_2^*v_2=p_2$, \ldots, $v_m^*v_m=p_m$ and $v=v_0+c_1v_1+c_2v_2+\cdots+c_mv_m$. 

If the linear space $(1-v_0v_0^*)\mathcal{A}(1-v_0^*v_0)$ is infinite-dimensional, then one may find an element $a$ with $\lVert a\rVert = 1-c_1$ belonging to this set and also to $\mathcal{A}_1$. (In case $v=v_0$, set $c_1=0$.) 
Set $w=c_1v_1+c_2v_2+\cdots+c_mv_m$. Then the ranges of $v_0$ and $w\pm a$ (resp.\, $v_0^*$ and $(w\pm a)^*$) are orthogonal. 
Therefore, we have $\lVert v\pm a\rVert =\lVert v_0+w\pm a\rVert =\max\{\lVert v_0\rVert, \lVert w\pm a\rVert\}\leq 1$.
This once again contradicts the assumption that $v\in \mathrm{ext}\, \mathcal{A}_1^{\leq 1}$.
\end{proof}

Let us also give a self-adjoint variant. 

\begin{proposition}\label{ppp}
Let $\mathcal{A}_1\subset \mathcal{A}_{sa}$ be a linear subspace of finite codimension, and $v\in \mathrm{ext}\, \mathcal{A}_1^{\leq 1}$. 
Then $\# \sigma(v)<\infty$. Moreover, $\chi_{(-1,1)}(v)$ is the sum of finitely many atoms in $\mathcal{A}$.
\end{proposition}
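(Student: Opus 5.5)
I will mimic the proof of Proposition \ref{vvv}, now in the self-adjoint setting, where perturbations are built from functional calculus of $v$ itself. Throughout, $\mathcal{A}_1\subset\mathcal{A}_{sa}$ has finite codimension and $v\in\mathrm{ext}\,\mathcal{A}_1^{\leq 1}$, so $\sigma(v)\subset[-1,1]$.

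\textbf{Step 1: $\sigma(v)$ is finite.} Fix a continuous function $f\colon[-1,1]\to[0,\infty)$ with $f>0$ on $(-1,1)$ and $f(t)\leq 1-\lvert t\rvert$. For instance, take $f(t)=1-\lvert t\rvert$. Suppose $\sigma(v)$ is infinite. Then the real linear space
\[
\{f(v)g(v)\mid g\colon[-1,1]\to\mathbb{R}\ \text{continuous}\}\subset\mathcal{A}_{sa}
\]
is infinite-dimensional. Indeed, $\sigma(v)\cap(-1,1)$ is infinite, and $f$ does not vanish there. Since $\mathcal{A}_1$ has finite codimension in $\mathcal{A}_{sa}$, we can pick $g$ with $g([-1,1])\subset[-1,1]$ and $0\neq f(v)g(v)\in\mathcal{A}_1$. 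Then $v\pm f(v)g(v)=h_\pm(v)$, where $h_\pm(t)=t\pm f(t)g(t)$. We have $\lvert h_\pm(t)\rvert\leq\lvert t\rvert+(1-\lvert t\rvert)=1$, so $\lVert v\pm f(v)g(v)\rVert\leq 1$. Both $v\pm f(v)g(v)$ lie in $\mathcal{A}_1^{\leq 1}$ and average to $v$, which contradicts extremality. Hence $\#\sigma(v)<\infty$, and $\chi_S(v)$ is a projection in $\mathcal{A}$ for every $S\subset\mathbb{R}$.

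\textbf{Step 2: $e:=\chi_{(-1,1)}(v)$ is a finite sum of atoms.} Since $e$ is a projection, it is enough to show $\dim e\mathcal{A}e<\infty$. A projection $e$ with $e\mathcal{A}e$ finite-dimensional is a finite-dimensional C$^*$-algebra with unit $e$, hence $e$ decomposes into finitely many minimal projections. These are atoms of $\mathcal{A}$, because $p\mathcal{A}p=p(e\mathcal{A}e)p$ for $p\leq e$.

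So suppose $\dim e\mathcal{A}e=\infty$. Then $(e\mathcal{A}e)_{sa}$ is infinite-dimensional over $\mathbb{R}$, and it meets $\mathcal{A}_1$ in a nonzero element $x$, which we scale so that $\lVert x\rVert\leq\varepsilon$. Set $\varepsilon:=1-\max\{\lvert t\rvert\mid t\in\sigma(v)\cap(-1,1)\}>0$; this is positive because $\sigma(v)$ is finite.

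Write $v=v(1-e)+ve$. Here $e$ commutes with $v$, $v(1-e)=\chi_{\{1\}}(v)-\chi_{\{-1\}}(v)$, and $\lVert ve\rVert\leq 1-\varepsilon$. Since $x=exe$, the element $v\pm x$ is the orthogonal sum of $v(1-e)$ and $ve\pm x$. Therefore
\[
\lVert v\pm x\rVert=\max\{\lVert v(1-e)\rVert,\lVert ve\pm x\rVert\}\leq 1.
\]
This again contradicts extremality.

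The only point needing care is this final norm computation. It uses $e\in C^*(v)$, so that $e$ commutes with $v$, and $x\in e\mathcal{A}e$, so that the two pieces live in orthogonal corners $(1-e)\mathcal{A}(1-e)$ and $e\mathcal{A}e$. I expect no real obstacle: this is simpler than Proposition \ref{vvv}, since no polar decomposition is needed.
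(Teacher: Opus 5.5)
Your proposal is correct and is essentially the paper's proof. It uses the same perturbation $f(v)g(v)$ with $f(t)=1-\lvert t\rvert$ to force $\#\sigma(v)<\infty$, and then a small nonzero $x\in e\mathcal{A}_{sa}e\cap\mathcal{A}_1$ with $e=\chi_{(-1,1)}(v)$ to contradict extremality. You also make explicit three points the paper leaves implicit: why the functional-calculus space is infinite-dimensional, the quantitative choice of $\varepsilon$ via the orthogonal splitting $v=v(1-e)+ve$, and why finite-dimensionality of $e\mathcal{A}e$ gives a finite sum of atoms.
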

Although the idea of the proof is almost the same as (or much easier than) Proposition \ref{vvv}, we give a full proof for completeness. 
\begin{proof}
We first prove that the spectrum $\sigma(v)$ consists of finitely many points. 
Define $f\colon [-1,1]\to [0,1]$ by $f(t)=1-\lvert t\rvert$ , $t\in [-1,1]$. 
If $\sigma(v)$ is an infinite set, then the real linear space 
\[
\{f(v)g(v)\mid g\colon [-1,1]\to \mathbb{R}\text{ is a continuous function}\}\subset \mathcal{A}_{sa}
\]
is infinite-dimensional.
Since $\mathcal{A}_1\subset \mathcal{A}_{sa}$ is finite-codimensional, one may find a continuous function $g\colon [-1,1]\to\mathbb{R}$ satisfying $0\neq f(v)g(v)\in \mathcal{A}_1$.  By scaling, we may assume that $g([-1,1])\subset [-1,1]$. 
Then we have 
\[
\lVert v\pm f(v)g(v)\rVert \leq \sup_{t\in [-1,1]} \lvert t\pm f(t)g(t)\rvert \leq 1.
\]
This contradicts the assumption that $v\in \mathrm{ext}\, \mathcal{A}_1^{\leq 1}$. 
Therefore, $\sigma(v)$ is a finite set. 
Set $e=\chi_{(-1,1)}(v)$. If $\dim e\mathcal{A}_{sa}e=\infty$, taking an element $0\neq x\in e\mathcal{A}_{sa}e\cap \mathcal{A}_1$ with sufficiently small norm, we get $v\pm x\in \mathcal{A}_1^{\leq 1}$, a contradiction. Hence we obtain $\dim e\mathcal{A}_{sa}e<\infty$, which means that $e$ is a finite sum of atoms of $\mathcal{A}$. 
\end{proof}

We remark that the result \cite[Theorem 3.1]{AW} by Akemann and Weaver is a variant of this proposition.

\subsection{Properties of positive mappings, isometries, and Jordan $^*$-homomorphisms}
We begin with a well-known fact. For readability, we give its proof.
\begin{lemma}\label{restrict}
Let $\Phi\colon \mathcal{A}\to \mathcal{B}$ be a unital complex linear mapping with $\lVert \Phi\rVert\leq 1$. Then $\Phi$ is a positive linear mapping (that is, if $a\in \mathcal{A}$ is positive, then $\Phi(a)\in \mathcal{B}$ is positive), and in particular, we have $\Phi(\mathcal{A}_{sa})\subset \mathcal{B}_{sa}$.
\end{lemma}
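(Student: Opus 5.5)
The plan is to reduce everything to states: first show that $\Phi$ sends positive elements of norm at most one into elements whose numerical range lies in $[0,1]$, then use that $\mathcal{B}$ is separated by its states. Concretely, I fix a state $\omega$ on $\mathcal{B}$ and consider the functional $\varphi=\omega\circ\Phi$ on $\mathcal{A}$. Since $\lVert\Phi\rVert\leq 1$ and $\Phi(1)=1$, we get $\lVert\varphi\rVert\leq 1$ and $\varphi(1)=\omega(1)=1$, so $\lVert\varphi\rVert=\varphi(1)=1$. I then invoke the standard fact that a bounded linear functional on a unital C$^*$-algebra with $\lVert\varphi\rVert=\varphi(1)$ is positive, i.e. $\varphi$ is a state on $\mathcal{A}$ (see \cite{KR}).

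If one wants to avoid quoting that fact, I would prove it directly in two steps. First, for self-adjoint $a$ I show $\varphi(a)\in\mathbb{R}$: write $\varphi(a)=\alpha+i\beta$ and, for real $t$, estimate $\lvert\varphi(a+it)\rvert^2=\alpha^2+(\beta+t)^2\leq\lVert a+it\rVert^2=\lVert a\rVert^2+t^2$. Expanding gives $\alpha^2+\beta^2+2\beta t\leq\lVert a\rVert^2$ for all $t$, forcing $\beta=0$. Second, for $0\leq a\leq 1$ we have $\lVert 1-a\rVert\leq 1$, so $\lvert 1-\varphi(a)\rvert=\lvert\varphi(1-a)\rvert\leq 1$. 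Since $\varphi(a)$ is real, this yields $\varphi(a)\geq 0$, and scaling extends this to all positive $a$.

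Finally, for positive $a\in\mathcal{A}$ I obtain $\omega(\Phi(a))=\varphi(a)\geq 0$ for every state $\omega$ of $\mathcal{B}$. An element $b\in\mathcal{B}$ with $\omega(b)\geq 0$ for all states $\omega$ is positive: applying the same to $b^*$ shows $\omega(b-b^*)=0$ for all states, hence $b$ is self-adjoint, because states span $\mathcal{B}^*$ via the Jordan decomposition of hermitian functionals. A self-adjoint element with nonnegative values on all states is positive. Hence $\Phi(a)\geq 0$. Every self-adjoint element is a difference of positives, so $\Phi(\mathcal{A}_{sa})\subset\mathcal{B}_{sa}$.

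The only delicate point is the self-adjointness step in the first argument, the imaginary-part trick with $a+it$. The rest is routine.
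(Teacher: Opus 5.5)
Your proof is correct, and it takes a different route from the paper's. You pass to scalars. For each state $\omega$ of $\mathcal{B}$ you show that $\omega\circ\Phi$ is a state: the $a+it$ estimate kills the imaginary part of $\varphi(a)$, and $\lvert 1-\varphi(a)\rvert\leq 1$ gives $\varphi(a)\geq 0$. You then lift back to $\mathcal{B}$ using that states separate points and detect positivity.

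The paper runs the same two ideas directly at the operator level. For a positive $a$ with $\min\sigma(a)=0$ and $\max\sigma(a)=1$, it writes $\Phi(a)=b_1+ib_2$ with $b_1,b_2$ self-adjoint. It uses $\sqrt{1+t^2}=\lVert a+it\rVert\geq\lVert b_1+i(b_2+t)\rVert\geq\lVert b_2+t\rVert$, together with a point $\lambda\in\sigma(b_2)$, to force $b_2=0$. It then reads off $\sigma(b_1)\subset[0,1]$ from $\lVert 2b_1-1\rVert\leq\lVert 2a-1\rVert=1$. General positive elements follow because each is a nonnegative combination of such an $a$ and $1$.

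The trade-off is this. In your version every estimate is an inequality between complex numbers; the delicate step you flag becomes the one line $\alpha^2+(\beta+t)^2\leq\lVert a\rVert^2+t^2$. The price is an appeal to duality: enough states, the Jordan decomposition or Hahn--Banach to conclude $b=b^*$, and positivity being detected by states. The paper's version stays inside the algebra. It needs only $\lVert x\rVert\geq\lVert (x-x^*)/2i\rVert$ and elementary spectral theory, at the cost of the slightly more fiddly spectral argument with $\lambda$.
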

\begin{proof}
Let $a\in \mathcal{A}$ be a positive operator with $\min\sigma(a)=0$ and $\max\sigma(a)=1$. 
Decompose $\Phi(a)$ as $b_1+ib_2$ with $b_1, b_2\in \mathcal{B}_{sa}$.  
For every real number $t\in \mathbb{R}$, we have  
\[
\sqrt{1+t^2}=\lVert a+it\rVert\geq \lVert \Phi(a)+it\rVert=\lVert b_1+i(b_2+t)\rVert\geq \lVert b_2+t\rVert.
\]
If $0<\lambda\in \sigma(b_2)$, then we have $\lVert b_2+t\rVert\geq \lambda+t$ for every $t>0$, so $\sqrt{1+t^2}\geq \lambda+t$. Taking the square, we get $1\geq \lambda^2+2\lambda t$ for every $t>0$, which is absurd. 
If $0>\lambda\in \sigma(b_2)$, then $\lVert b_2+t\rVert\geq -\lambda-t$ for every $t<0$, and we again get to a contradiction. 
Therefore, we get $b_2=0$.
Moreover, the inequality \[
1=\lVert 2a-1\rVert \geq \lVert \Phi(2a-1)\rVert=\lVert 2b_1-1\rVert
\]
shows that $b_1$ is a positive operator. Thus $\Phi(a)\in \mathcal{B}$ is positive.

Since every positive operator in $\mathcal{A}$ can be written as the sum of some nonnegative scalar multiple of $a$ as above and some nonnegative scalar multiple of $1$, we get the desired conclusion.
\end{proof}

On the other hand, we should beware of the fact that the image of a unital real linear mapping from $\mathcal{A}_{sa}$ into $\mathcal{B}$ with norm at most $1$ is not necessarily contained in $\mathcal{B}_{sa}$, see \cite[Remark 2.2]{CLP}.

The following is a theorem by Kadison. (In the statement of \cite[Theorem 1]{K2}, a mapping from $\mathcal{A}$ to $\mathcal{B}$ is considered, but actually the following statement is verified in the proof.)

\begin{theorem}[Kadison's generalized Schwarz inequality {\cite[Theorem 1]{K2}}, see also {\cite[Theorem 1.3.1]{S2}}]\label{ks}
Let $\Phi\colon \mathcal{A}_{sa}\to \mathcal{B}_{sa}$ be a positive real linear mapping with $\lVert \Phi\rVert\leq 1$. 
If $a\in \mathcal{A}_{sa}$, then $\Phi(a^2)\geq \Phi(a)^2$. 
\end{theorem}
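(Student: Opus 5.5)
We need to prove Kadison's generalized Schwarz inequality: for a positive real linear map $\Phi\colon\mathcal{A}_{sa}\to\mathcal{B}_{sa}$ with $\lVert\Phi\rVert\leq 1$ and $a\in\mathcal{A}_{sa}$, we have $\Phi(a^2)\geq\Phi(a)^2$.

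The plan is to reduce to a commutative domain and then exploit the fact that positive maps on commutative C$^*$-algebras are completely positive. Let $\mathcal{C}=C^*(a,1)\cong C(\sigma(a))$ be the unital commutative C$^*$-subalgebra generated by $a$. The restriction $\Phi|_{\mathcal{C}_{sa}}$ extends complex-linearly to $\tilde\Phi\colon\mathcal{C}\to\mathcal{B}$, with $\tilde\Phi(x+iy)=\Phi(x)+i\Phi(y)$. This extension is positive and $*$-preserving. Since $\Phi(1)\geq 0$ and $\lVert\Phi(1)\rVert\leq\lVert\Phi\rVert\leq 1$, we also get $\Phi(1)\leq 1$. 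By Stinespring's theorem, a positive map from a commutative C$^*$-algebra is completely positive. So after representing $\mathcal{B}\subset B(H)$ faithfully, we obtain a Hilbert space $K$, a unital $^*$-representation $\pi\colon\mathcal{C}\to B(K)$, and $V\colon H\to K$ with $\tilde\Phi(c)=V^*\pi(c)V$ and $V^*V=\Phi(1)\leq 1$.

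The inequality then follows from a direct computation:
\[
\Phi(a)^2=V^*\pi(a)VV^*\pi(a)V\leq V^*\pi(a)\pi(a)V=V^*\pi(a^2)V=\Phi(a^2).
\]
The middle step uses $VV^*\leq 1$, which holds because $\lVert V\rVert^2=\lVert V^*V\rVert\leq 1$. Since the order in $\mathcal{B}$ agrees with the order in $B(H)$, the result holds in $\mathcal{B}$.

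The main obstacle is justifying complete positivity of positive maps on a commutative domain, which is Stinespring's result. If we want to avoid it, there is a more elementary alternative: approximate $a$ in norm by finite-spectrum elements $a_n=\sum_j\lambda_jp_j$. Finite-spectrum elements with projections $p_j$ are only available when $\sigma(a)$ is totally disconnected, so in general we would instead use a partition of unity $\{f_j(a)\}$ with $\sum_j f_j=1$, $f_j\geq 0$, and $a\approx\sum_j\lambda_jf_j(a)$.

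For the finite sum $x=\sum_j\lambda_j e_j$ with $e_j=f_j(a)\geq 0$ and $\sum_j e_j=1$, we prove
\[
\sum_j\lambda_j^2\Phi(e_j)-\Big(\sum_j\lambda_j\Phi(e_j)\Big)^2\geq 0
\]
by writing $b_j=\Phi(e_j)\geq 0$ with $\sum_j b_j\leq 1$. Then we apply the column/row trick: let $R=(b_1^{1/2},\dots,b_n^{1/2})$, so that $RR^*=\sum_j b_j\leq 1$. Hence $R^*R\leq 1$, and therefore $(R\Lambda R^*)^2\leq R\Lambda^2R^*$ with $\Lambda=\mathrm{diag}(\lambda_j)$.

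Since $\sum_j\lambda_j^2f_j(a)$ approximates $a^2$ when the partition is fine, and $\Phi$ is bounded, passing to the limit yields $\Phi(a^2)\geq\Phi(a)^2$.
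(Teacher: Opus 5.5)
The paper gives no proof of this statement. It quotes it from Kadison \cite{K2} and St\o rmer \cite{S2}, remarking only that Kadison's argument actually yields the version for real linear maps on self-adjoint parts. Your proof is correct, and your first step is exactly what justifies that remark: only $\Phi|_{\mathcal{C}_{sa}}$ with $\mathcal{C}=C^*(a,1)$ matters, and its complexification is a positive map on $\mathcal{C}$. In both of your routes, $\lVert\Phi\rVert\leq 1$ enters only through $\Phi(1)\leq 1$.

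Your Stinespring route gives $\Phi(a^2)-\Phi(a)^2=V^*\pi(a)(1-VV^*)\pi(a)V\geq 0$. This is the standard modern proof. It is short, but it delegates the real work to the theorem that positive maps with commutative domain are completely positive.

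Your second route is a genuinely more elementary, self-contained alternative, and it is also sound. With $e_j=f_j(a)$, $b_j=\Phi(e_j)\geq 0$, $R=(b_1^{1/2},\dots,b_n^{1/2})$ and $\Lambda=\mathrm{diag}(\lambda_j)$, one has $RR^*=\Phi(1)\leq 1$, so
\[
\Phi\Bigl(\sum_j\lambda_je_j\Bigr)^2=R\Lambda R^*R\Lambda R^*\leq R\Lambda^2R^*=\Phi\Bigl(\sum_j\lambda_j^2e_j\Bigr).
\]
If each $f_j$ vanishes outside the $\delta$-neighborhood of $\lambda_j$, then $\lVert\sum_j\lambda_je_j-a\rVert\leq\delta$ and $\lVert\sum_j\lambda_j^2e_j-a^2\rVert\leq\delta(2\lVert a\rVert+\delta)$. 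Since $\sum_j\lambda_j^2e_j\neq(\sum_j\lambda_je_j)^2$ in general, both approximations are needed; closedness of the positive cone and continuity of $\Phi$ then finish the proof.

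This version needs neither complexification nor Stinespring. It uses nothing but positivity of $\Phi$ on the $e_j$ and $\Phi(1)\leq 1$. It is essentially an inlined form of the partition-of-unity argument commonly used to show that positive maps on $C(X)$ are completely positive, with the $f_j(a)$ standing in for spectral projections, which need not exist in $\mathcal{A}$.
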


For a Hilbert space $\mathcal{H}$, let $\mathbb{B}(\mathcal{H})$ denote the von Neumann algebra of all bounded linear operators on $\mathcal{H}$. The next fact is given by Kadison and St\o rmer.

\begin{theorem}[{\cite[Theorem 3.3]{S}}, see also {\cite[Theorem 10]{K}}]\label{jhomo}
Let $J\colon \mathcal{A}\to \mathbb{B}(\mathcal{H})$ be a Jordan $^*$-homomorphism. Then there is a central projection $p\in J(\mathcal{A})''$ such that the mapping $\mathcal{A}\ni a\mapsto pJ(a)\in \mathbb{B}(\mathcal{H})$ is a $^*$-homomorphism and the mapping $\mathcal{A}\ni a\mapsto (1-p)J(a)\in \mathbb{B}(\mathcal{H})$ is a $^*$-antihomomorphism.
\end{theorem}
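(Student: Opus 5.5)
The plan is to find two weakly closed two-sided ideals of the von Neumann algebra $\mathcal{M}:=J(\mathcal{A})''$. One ideal measures the failure of $J$ to be multiplicative, the other its failure to be anti-multiplicative. I will show these ideals annihilate each other and then read off $p$ from their central supports. For $a,b\in\mathcal{A}$ put
\[
D(a,b):=J(ab)-J(a)J(b),\qquad E(a,b):=J(ab)-J(b)J(a).
\]
Both lie in $\mathcal{M}$, since $J(\mathcal{A})\subset\mathcal{M}$ and $\mathcal{M}$ is an algebra containing $1$.

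\textbf{Step 1 (Jordan identities).} From $J(a_1a_2+a_2a_1)=J(a_1)J(a_2)+J(a_2)J(a_1)$ I derive the standard consequences $J(a^2)=J(a)^2$, $J(aba)=J(a)J(b)J(a)$, and the linearized triple identity
\[
J(abc+cba)=J(a)J(b)J(c)+J(c)J(b)J(a).
\]
The middle identity follows from $2aba=(ab+ba)a+a(ab+ba)-(a^2b+ba^2)$, and the last is obtained by linearizing it. These are purely algebraic (Jacobson--Rickart), using only linearity.

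\textbf{Step 2 (key annihilation identity, the main obstacle).} I will show
\[
D(a,b)\,J(x)\,E(c,d)=0 \quad\text{for all } a,b,c,d,x\in\mathcal{A}.
\]
I would start with the simplest case $D(a,b)E(a,b)=0$. Expanding, it reduces to $J(ab)^2-J(ab)J(b)J(a)-J(a)J(b)J(ab)+J(a)J(b)J(b)J(a)$. I rewrite this using $J((ab)^2)=J(ab)^2$ and $J(abba)=J(a)J(b^2)J(a)=J(a)J(b)J(b)J(a)$. The two cross terms are handled by applying the triple identity to $J(ab\cdot ba+\cdots)$, in the manner of Herstein's proof that a Jordan homomorphism onto a prime ring is a homomorphism or an antihomomorphism. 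Next I would insert $J(x)$ and replace one pair $(a,b)$ by $(c,d)$, using polarization and the triple identity repeatedly. I expect this bookkeeping to be the hardest part. If the direct computation becomes unwieldy, my fallback is to quote Herstein's identity $D(a,b)\,w\,E(c,d)=0$ for Jordan homomorphisms into associative rings, with $w$ ranging over the generated ring, and to check that the proof needs no primeness.

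\textbf{Step 3 (passage to $\mathcal{M}$ and central projections).} For fixed $a,b,c,d$, the set of $y\in\mathcal{M}$ with $D(a,b)\,y\,E(c,d)=0$ is a weakly closed linear subspace. It contains $J(\mathcal{A})$, and by iterating Step 2 it also contains products $J(x_1)\cdots J(x_k)$ and $1$. By the bicommutant theorem it is therefore all of $\mathcal{M}$. Let $\mathcal{I}$ and $\mathcal{K}$ be the weakly closed two-sided ideals of $\mathcal{M}$ generated by the $D$'s and by the $E$'s respectively. Then $\mathcal{I}\mathcal{M}\mathcal{K}=0$, and in particular $\mathcal{I}\mathcal{K}=0$. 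Write $\mathcal{I}=z\mathcal{M}$ and $\mathcal{K}=z'\mathcal{M}$ with central projections $z,z'\in\mathcal{M}$; then $zz'=0$.

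\textbf{Step 4 (conclusion).} Set $p:=1-z$, a central projection of $J(\mathcal{A})''$. Since $pD(a,b)=0$ and $p$ is central, I get $pJ(ab)=pJ(a)J(b)=(pJ(a))(pJ(b))$. Together with $pJ(a^*)=(pJ(a))^*$, this makes $a\mapsto pJ(a)$ a $^*$-homomorphism. Since $1-p=z\le 1-z'$, I also have $(1-p)E(c,d)=0$, so $(1-p)J(ab)=(1-p)J(b)\,(1-p)J(a)$. Hence $a\mapsto(1-p)J(a)$ is a $^*$-antihomomorphism, as required.
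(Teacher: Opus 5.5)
Your Steps 3 and 4 are sound once Step 2 is available. For the iteration in Step 3 you should record the identity $D(a,b)J(x)=-D(ab,x)+D(a,bx)+J(a)D(b,x)$, which reduces $D(a,b)J(x_1)\cdots J(x_k)E(c,d)$ to shorter words. The trouble is that Step 2 is essentially the whole theorem. If $p$ is as in the statement, then $D(a,b)=(1-p)D(a,b)$ and $E(c,d)=pE(c,d)$ with $p$ central in $\mathcal{M}$, so $D(a,b)\,\mathcal{M}\,E(c,d)=0$.

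You leave this identity unproved, and the route you propose cannot prove it. The mixed identity is \emph{not} a consequence of the Jordan identities. Let $\Lambda$ be the Grassmann algebra on $e_1,\dots,e_4$. Let $R=\mathbb{C}1\oplus\mathrm{span}\{x_1,\dots,x_4\}$ with $x_ix_j=0$ for all $i,j$, and put $\phi(1)=1$, $\phi(x_i)=e_i$. Since $e_ie_j+e_je_i=0$, $\phi$ is a unital Jordan homomorphism between unital associative $\mathbb{C}$-algebras. One can even give both algebras involutions fixing the $x_i$ and $e_i$, so that $\phi$ respects $^*$. Yet
\[
D(x_1,x_2)\,\phi(1)\,E(x_3,x_4)=(-e_1e_2)(e_3e_4)=-e_1e_2e_3e_4\neq 0,
\]
while every single-pair product $D(a,b)\,\phi(x)\,E(a,b)$ vanishes in this example. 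So no amount of polarization or triple-product bookkeeping yields Step 2. Herstein's lemma does not supply it either. His identity concerns a single pair $(a,b)$, the middle factor ranges over the whole target only because his $\phi$ is onto, and the passage to all pairs uses primeness of the target.

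What is missing is genuine input from the operator-algebra structure, which your argument never uses before Step 3. In a semiprime algebra such as $\mathcal{M}$ you can upgrade a single-pair identity to a mixed one, as follows.
\begin{enumerate}
\item Suppose $D(a,b)\,\mathcal{M}\,E(a,b)=0$ for all $a,b$. Semiprimeness then also gives $E(a,b)\,\mathcal{M}\,D(a,b)=0$.
\item Linearizing gives $D(a,b)yE(c,b)=-D(c,b)yE(a,b)$. Hence $w=D(a,b)yE(c,b)$ satisfies $w\mathcal{M}w=0$.
\item So $ww^*w=0$, which forces $w=0$.
\item Repeating this in the second variable gives $D(a,b)\,\mathcal{M}\,E(c,d)=0$.
\end{enumerate}
This still needs the single-pair identity with the middle factor ranging over $\mathcal{M}$, or over a weakly dense subalgebra. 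Herstein-type computations only give it with the middle factor in $J(\mathcal{A})$. Since $J$ is not onto, extending from $J(\mathcal{A})$ to products $J(x_1)\cdots J(x_k)$ is exactly what your iteration does, and that iteration already uses the mixed identity. You need an operator-algebraic argument that breaks this circularity.

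For reference, the paper gives no proof of this statement; it quotes St\o rmer and Kadison, whose proofs use von Neumann algebra techniques in the weak closure rather than ring identities alone.
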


\begin{corollary}\label{kernel}
The kernel of a Jordan $^*$-homomorphism $J\colon \mathcal{A}\to \mathcal{B}$ is a closed ideal in $\mathcal{A}$.
\end{corollary}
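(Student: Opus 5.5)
The plan is to represent $\mathcal{B}$ faithfully on a Hilbert space and use Theorem \ref{jhomo} to reduce to the cases of $^*$-homomorphisms and $^*$-antihomomorphisms, whose kernels are ideals.

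Fix a faithful representation $\mathcal{B}\subset \mathbb{B}(\mathcal{H})$, so that $J$ becomes a Jordan $^*$-homomorphism $J\colon \mathcal{A}\to \mathbb{B}(\mathcal{H})$ with the same kernel. By Theorem \ref{jhomo}, there is a central projection $p\in J(\mathcal{A})''$ such that $\pi_1:=pJ(\cdot)$ is a $^*$-homomorphism and $\pi_2:=(1-p)J(\cdot)$ is a $^*$-antihomomorphism. Since $p$ is a projection commuting with every $J(a)$, we have $J(a)=0$ if and only if $pJ(a)=0$ and $(1-p)J(a)=0$. Hence
\[
\ker J=\ker \pi_1\cap \ker \pi_2 .
\]

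The kernel of a $^*$-homomorphism is a closed two-sided self-adjoint ideal; this is standard, and closedness follows from continuity, as $^*$-homomorphisms between C$^*$-algebras are contractive. The kernel of a $^*$-antihomomorphism is also a two-sided ideal. Indeed, if $\pi_2(a)=0$, then $\pi_2(ab)=\pi_2(b)\pi_2(a)=0$ and $\pi_2(ba)=\pi_2(a)\pi_2(b)=0$. It is closed because $\pi_2$, viewed as a $^*$-homomorphism from the opposite algebra $\mathcal{A}^{op}$, is contractive. An intersection of two closed two-sided ideals is again a closed two-sided ideal, which gives the claim.

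The only step needing care is continuity, which is needed only for closedness. One can also bypass it: a Jordan $^*$-homomorphism is positive, since $J(a^*a)$ is a square of a self-adjoint element via $J(h^2)=J(h)^2$. Hence it is bounded with $\lVert J(a)\rVert^2=\lVert J(a)^2\rVert=\lVert J(a^2)\rVert\leq\lVert J\rVert\lVert a\rVert^2$ for self-adjoint $a$. Either way, the kernel is closed.

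Alternatively, one can avoid Theorem \ref{jhomo} and argue directly. Let $I=\ker J$. Since $J$ is $^*$-preserving, $I$ is self-adjoint. Using the identity $J(aba)=J(a)J(b)J(a)$, valid for Jordan homomorphisms, one checks that for $x\in I$ and $a\in\mathcal{A}$ we have $J(x^*a^*ax)=0$. From this, $J(ax)=0$ follows by a C$^*$-norm argument. This route, however, requires more computation, so I would present the Theorem \ref{jhomo} argument.
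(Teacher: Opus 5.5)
Your proposal is correct and is essentially the paper's proof. You represent $\mathcal{B}$ on a Hilbert space, use Theorem \ref{jhomo} to write $\ker J$ as the intersection of the kernels of a $^*$-homomorphism and a $^*$-antihomomorphism, and note that each is a closed ideal; you also spell out the continuity and antihomomorphism details that the paper leaves implicit.
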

\begin{proof}
One may consider $\mathcal{B}$ as a C$^*$-subalgebra of $\mathbb{B}(\mathcal{H})$.
Thus the kernel of $J$ is the intersection of the kernel of some $^*$-homomorphism and that of some $*$-antihomomorphism, so it is a closed ideal. 
\end{proof}

We call a linear subspace $\mathcal{A}_1\subset \mathcal{A}_{sa}$ satisfying $a^2\in \mathcal{A}_1$ for any $a\in \mathcal{A}_1$ (or equivalently, $a_1a_2+a_2a_1\in \mathcal{A}_1$ for any pair $a_1,a_2\in \mathcal{A}_1$) a \emph{Jordan subalgebra} of $\mathcal{A}_{sa}$. 
We will also need the following fact, which is essentially due to Broise (see \cite[Proposition 2.1.7]{S2}). We give a proof for completeness.
\begin{lemma}\label{broise}
Let $\Phi\colon \mathcal{A}_{sa}\to \mathcal{B}_{sa}$ be a positive real linear mapping with $\lVert \Phi\rVert\leq 1$. 
Then the set $\{a\in \mathcal{A}_{sa}\mid \Phi(a)^2=\Phi(a^2)\}$ is a Jordan subalgebra of $\mathcal{A}_{sa}$. 
\end{lemma}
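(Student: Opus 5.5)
The plan is to follow the classical Broise/Choi multiplicative-domain argument, using Kadison's generalized Schwarz inequality (Theorem \ref{ks}) to reduce everything to a one-variable polarization. Let $\mathcal{M}=\{a\in\mathcal{A}_{sa}\mid \Phi(a)^2=\Phi(a^2)\}$. By definition $\mathcal{M}$ is closed under squaring. Hence it suffices to show that $\mathcal{M}$ is a real linear subspace. It is clearly closed under real scalar multiplication, since $\Phi(ta)^2=t^2\Phi(a)^2=t^2\Phi(a^2)=\Phi((ta)^2)$. The only issue is therefore closedness under addition.

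The key intermediate claim is the following: if $a\in\mathcal{M}$, then for every $b\in\mathcal{A}_{sa}$ we have
\[
\Phi(ab+ba)=\Phi(a)\Phi(b)+\Phi(b)\Phi(a).
\]
To prove it, apply Theorem \ref{ks} to $a+tb$ for real $t$. This gives the positive operator
\[
D(t):=\Phi((a+tb)^2)-\Phi(a+tb)^2\geq 0.
\]
Expanding and using $\Phi(a^2)=\Phi(a)^2$, we get
\[
D(t)=tX+t^2Y,
\]
where $X=\Phi(ab+ba)-\Phi(a)\Phi(b)-\Phi(b)\Phi(a)$ and $Y=\Phi(b^2)-\Phi(b)^2$. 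Both $X$ and $Y$ are self-adjoint, and $Y\geq 0$. Now fix any state $\omega$ of $\mathcal{B}$. The real quadratic polynomial $t\mapsto t\omega(X)+t^2\omega(Y)$ is nonnegative on all of $\mathbb{R}$ and vanishes at $t=0$. Its derivative at $0$ must therefore vanish, so $\omega(X)=0$. Since this holds for every state and $X$ is self-adjoint, we conclude $X=0$. This proves the claim.

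Closedness under addition now follows directly. Let $a,b\in\mathcal{M}$. Then
\[
\Phi((a+b)^2)=\Phi(a^2)+\Phi(b^2)+\Phi(ab+ba).
\]
The first two terms equal $\Phi(a)^2+\Phi(b)^2$ by membership in $\mathcal{M}$. The cross term equals $\Phi(a)\Phi(b)+\Phi(b)\Phi(a)$ by the claim. Summing gives $\Phi((a+b)^2)=\Phi(a+b)^2$, so $a+b\in\mathcal{M}$. Thus $\mathcal{M}$ is a real subspace closed under squares, i.e.\ a Jordan subalgebra of $\mathcal{A}_{sa}$. The equivalence with closure under $a_1a_2+a_2a_1$ is the polarization noted in the definition.

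The main obstacle is the derivative step in the claim: the argument needs exactly that $D(t)\geq0$ for \emph{all} real $t$, which is where Theorem \ref{ks} enters. Note that it must be applied as stated, to a map defined only on $\mathcal{A}_{sa}$. No other step is delicate.
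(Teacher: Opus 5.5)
There is a genuine gap at the very first step. Your set $\mathcal{M}=\{a\in\mathcal{A}_{sa}\mid \Phi(a)^2=\Phi(a^2)\}$ is \emph{not} closed under squaring by definition. Closure under squaring means that $a\in\mathcal{M}$ implies $a^2\in\mathcal{M}$, i.e.\ $\Phi((a^2)^2)=\Phi(a^2)^2$, which amounts to $\Phi(a^4)=\Phi(a)^4$. The hypothesis $\Phi(a^2)=\Phi(a)^2$ only says that $\Phi$ respects the square of $a$; it says nothing about whether $a^2$ itself lies in the set. You have conflated ``$\Phi$ is multiplicative on $a$'' with ``$a^2\in\mathcal{M}$''. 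As a result, your concluding sentence (``a real subspace closed under squares'') is unsupported as written, and the step you called non-delicate is where the argument is actually incomplete.

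The repair is short and uses exactly your key claim. That claim is correct: your state-based derivative argument is a valid substitute for the paper's division by $t$ followed by $t\to\pm\infty$.
\begin{itemize}
\item Apply the claim with $b=a^2$: this gives $2\Phi(a^3)=\Phi(a)\Phi(a^2)+\Phi(a^2)\Phi(a)=2\Phi(a)^3$.
\item Apply it again with $b=a^3$: this gives $2\Phi(a^4)=\Phi(a)\Phi(a^3)+\Phi(a^3)\Phi(a)=2\Phi(a)^4$.
\end{itemize}
Hence $\Phi((a^2)^2)=\Phi(a)^4=\Phi(a^2)^2$, so $a^2\in\mathcal{M}$. This is precisely the computation the paper carries out. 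With it inserted, your argument (the claim, closure under addition, closure under real scalars) coincides with the paper's proof.
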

\begin{proof}
Let $a\in \mathcal{A}_{sa}$ be an element satisfying $\Phi(a)^2=\Phi(a^2)$, and let $x\in \mathcal{A}_{sa}$. 
By Kadison's generalized Schwarz inequality (Theorem \ref{ks}), for every $t\in \mathbb{R}$, we have
\[
\begin{split}
t(\Phi(a)\Phi(x)+\Phi(x)\Phi(a))&= \Phi(ta+x)^2 -t^2\Phi(a)^2-\Phi(x)^2\\
&\leq \Phi((ta+ x)^2) -t^2\Phi(a)^2-\Phi(x)^2\\
&=(t^2\Phi(a^2) + t\Phi(ax+xa)+\Phi(x^2))-t^2\Phi(a)^2-\Phi(x)^2\\
&=t\Phi(ax+xa) +\Phi(x^2)-\Phi(x)^2.
\end{split}
\]
Thus we get $\Phi(ax+xa)=\Phi(a)\Phi(x)+\Phi(x)\Phi(a)$. 
In particular, we have 
\[
\Phi(a^3)=\frac{1}{2}(\Phi(a)\Phi(a^2)+\Phi(a^2)\Phi(a))=\Phi(a)^3
\]
and 
\[
\Phi((a^2)^2)=\Phi(a^4)=\frac{1}{2}(\Phi(a)\Phi(a^3)+\Phi(a^3)\Phi(a))=\Phi(a)^4=\Phi(a^2)^2.
\]
On the other hand, if we additionally assume that $\Phi(x)^2=\Phi(x^2)$, then
\[
\Phi(a+x)^2=\Phi(a)^2+\Phi(a)\Phi(x)+\Phi(x)\Phi(a)+\Phi(x)^2= \Phi(a^2)+\Phi(ax+xa)+\Phi(x^2)= \Phi((a+x)^2).
\]
By what we have shown, we see that the set $\{a\in \mathcal{A}_{sa}\mid \Phi(a)^2=\Phi(a^2)\}$ is a Jordan subalgebra of $\mathcal{A}_{sa}$. 
%Moreover, we obtain
%\[
%\begin{split}
%\Phi(2axa)&= \Phi((a(ax+xa) + (ax+xa)a)-(a^2x+xa^2))\\
%&= \Phi(a)\Phi(ax+xa)+ \Phi(ax+xa)\Phi(a)-(\Phi(a^2)\Phi(x)+\Phi(x)\Phi(a^2))\\ 
%&= \Phi(a)(\Phi(a)\Phi(x)+\Phi(x)\Phi(a))+ (\Phi(a)\Phi(x)+\Phi(x)\Phi(a))\Phi(a)-(\Phi(a)^2\Phi(x)+\Phi(x)\Phi(a)^2)\\ 
%&= 2\Phi(a)\Phi(x)\Phi(a)
%\end{split}
%\]
%and thus $\Phi(axa)=\Phi(a)\Phi(x)\Phi(a)$. Therefore, if we additionally assume that $\Phi(x)^2=\Phi(x^2)$, then 
%\[
%\begin{split}
%\Phi((ax+xa)^2)&= \Phi(a(xax)+(xax)a+ax^2a+xa^2x)\\
%&= \Phi(a)\Phi(xax)+\Phi(xax)\Phi(a) +\Phi(a)\Phi(x^2)\Phi(a) + \Phi(x)\Phi(a^2)\Phi(x)\\
%&= \Phi(a)\Phi(x)\Phi(a)\Phi(x)+\Phi(x)\Phi(a)\Phi(x)\Phi(a) +\Phi(a)\Phi(x)^2\Phi(a) + \Phi(x)\Phi(a)^2\Phi(x)\\
%&= (\Phi(a)\Phi(x)+\Phi(x)\Phi(a))^2\\
%&= \Phi(ax+xa)^2
%\end{split}
%\]
%By what we have shown, we see that the set $\{a\in \mathcal{A}_{sa}\mid \Phi(a)^2=\Phi(a^2)\}$ is a Jordan subalgebra of $\mathcal{A}_{sa}$. 
\end{proof}

The next lemma is a variant of  \cite[Theorem 2.1]{CLP}. 

\begin{lemma}\label{psi}
Let $\Psi\colon \mathcal{A}_{sa}\to \mathcal{B}_{sa}$ be a unital real linear mapping. Then the following are equivalent.  
\begin{itemize} 
\item $\Psi$ is an isometry.
\item $\Psi$ is an order embedding. 
\item $\Psi$ satisfies $\max \sigma(\Psi(a))=\max\sigma(a)$ and $\min \sigma(\Psi(a))=\min\sigma(a)$ for every $a\in \mathcal{A}_{sa}$.
\end{itemize}
\end{lemma}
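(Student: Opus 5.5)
I would prove the three conditions equivalent by showing (iii)$\Rightarrow$(i), (i)$\Rightarrow$(iii), (iii)$\Rightarrow$(ii), and (ii)$\Rightarrow$(iii). The whole argument rests on the identities
\[
\max\sigma(a)=\min\{t\in\mathbb{R}\mid a\leq t1\},\qquad \lVert a\rVert=\max\{\max\sigma(a),\,-\min\sigma(a)\}
\]
for $a\in\mathcal{A}_{sa}$, together with unitality: $\Psi(a+t1)=\Psi(a)+t1$ for every $t\in\mathbb{R}$.

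The implication (iii)$\Rightarrow$(i) is immediate from the second identity, since $\Psi$ then preserves both $\max\sigma$ and $\min\sigma$, hence the norm. For (iii)$\Rightarrow$(ii), note that $a\geq 0$ if and only if $\min\sigma(a)\geq 0$. Applying this to $b-a$ and using linearity gives $a\leq b\iff \Psi(a)\leq\Psi(b)$.

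For (ii)$\Rightarrow$(iii), an order embedding satisfies $a\leq t1\iff\Psi(a)\leq \Psi(t1)=t1$. Taking the minimum over such $t$ gives $\max\sigma(\Psi(a))=\max\sigma(a)$. The same argument with $t1\leq a$ gives $\min\sigma(\Psi(a))=\min\sigma(a)$.

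The step needing care is (i)$\Rightarrow$(iii). First I show $\Psi$ is positive. Let $0\leq a\leq 1$. Then $\lVert 2a-1\rVert\leq 1$, so $\lVert 2\Psi(a)-1\rVert\leq 1$, which gives $0\leq\Psi(a)\leq 1$. Scaling by a positive constant extends positivity to every positive $a$. Here we are already in $\mathcal{B}_{sa}$ by hypothesis, so the issue raised in the remark after Lemma \ref{restrict} does not arise. Positivity gives $\Psi(a)\leq \max\sigma(a)1$, so $\max\sigma(\Psi(a))\leq\max\sigma(a)$, and symmetrically $\min\sigma(\Psi(a))\geq\min\sigma(a)$.

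For the reverse inequality, set $M=\max\sigma(a)$ and $m=\min\sigma(a)$, and suppose $\max\sigma(\Psi(a))=M'<M$. Choose $t$ large so that $a+t1$ is positive. Then $\lVert a+t1\rVert=M+t$. On the other hand $\Psi(a+t1)=\Psi(a)+t1$, which has $\min\sigma\geq m+t\geq 0$, so $\lVert \Psi(a+t1)\rVert=M'+t<M+t$. This contradicts the isometry property. The same shift argument with $a-t1$ handles $\min\sigma$, and this completes the equivalence.
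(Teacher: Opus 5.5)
Your proof is correct. The implications (iii)$\Rightarrow$(i), (iii)$\Rightarrow$(ii) and (ii)$\Rightarrow$(iii) are the same as in the paper. The difference is in (i)$\Rightarrow$(iii).

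The paper uses the identity $\lVert a-t1\rVert=\lvert t-\tfrac{m+M}{2}\rvert+\tfrac{M-m}{2}$, which holds for every self-adjoint element. The isometry condition $\lVert\Psi(a)-t1\rVert=\lVert a-t1\rVert$ for all $t\in\mathbb{R}$ then fixes the midpoint and the half-length of the spectral hull of $\Psi(a)$. This gives $\min\sigma$ and $\max\sigma$ at once, with no positivity argument.

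You instead first get positivity from unital contractivity alone, a self-adjoint version of Lemma \ref{restrict}. That yields the inclusion $[\min\sigma(\Psi(a)),\max\sigma(\Psi(a))]\subseteq[\min\sigma(a),\max\sigma(a)]$. You then use the isometry only at a large shift to get the reverse inclusion. Your route has the merit of separating what contractivity alone gives from what the isometry adds.

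The positivity step is not actually needed, though. Since $\lVert\Psi(a)\rVert=\lVert a\rVert$, for $t\geq\lVert a\rVert$ both $a+t1$ and $\Psi(a)+t1$ are already positive. Then $\lVert a+t1\rVert=\max\sigma(a)+t$ and $\lVert\Psi(a+t1)\rVert=\max\sigma(\Psi(a))+t$ give $\max\sigma(\Psi(a))=\max\sigma(a)$ directly. Using $a-t1$ with $t\geq\lVert a\rVert$ gives the same for $\min\sigma$. This shortcut is just the large-$\lvert t\rvert$ case of the paper's formula.
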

\begin{proof}
For an element $a\in \mathcal{A}_{sa}$ with $\min\sigma(a) = m$ and $\max\sigma(a)=M$, we have $\lVert a\rVert =\max\{\lvert m\rvert, \lvert M\rvert\}$. Using this fact, for $t\in \mathbb{R}$, we obtain 
\[
\lVert a-t\rVert =\left\lvert t-\frac{m+M}{2}\right\rvert + \frac{M-m}{2}.
\]

Assume that $\Psi$ is an isometry.
Since $\Psi$ is unital, we have 
\[
\lVert \Psi(a)-t\rVert =\lVert a-t\rVert =\left\lvert t-\frac{m+M}{2}\right\rvert + \frac{M-m}{2}
\]
for every $t\in \mathbb{R}$.
This in turn implies $\min\sigma(\Psi(a)) = m$ and $\max\sigma(\Psi(a))=M$. Thus the third item holds. 

Assume that $\Psi$ is an order embedding. 
Since $\Psi$ is unital, we have 
\[
\min\sigma(a)=\max\{t\in \mathbb{R}\mid t\leq a\} =\max\{t\in \mathbb{R}\mid t\leq \Psi(a)\}=\min\sigma(\Psi(a))
\]
and
\[
\max\sigma(a)=\min\{t\in \mathbb{R}\mid a\leq t\} =\min\{t\in \mathbb{R}\mid \Psi(a)\leq t\}=\max\sigma(\Psi(a)).
\]
Thus the third item holds.

Conversely, the third item clearly implies $\lVert \Psi(a)\rVert =\lVert a\rVert$, and $a\geq 0\iff \Psi(a)\geq 0$ for every $a\in \mathcal{A}_{sa}$, which together with the linearity implies that $\Psi$ is an isometric order embedding.
\end{proof}

The next lemma is probably well-known among experts. 
\begin{lemma}\label{isometry}
If $J\colon \mathcal{A}\to\mathcal{B}$ is an injective Jordan $^*$-homomorphism, then $J$ is an isometry.
\end{lemma}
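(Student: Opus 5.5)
The plan is to reduce the claim to the classical fact that an injective $^*$-homomorphism between C$^*$-algebras is isometric. Realize $\mathcal{B}$ faithfully as a C$^*$-subalgebra of $\mathbb{B}(\mathcal{H})$ and apply Theorem \ref{jhomo} to $J\colon\mathcal{A}\to\mathbb{B}(\mathcal{H})$. This gives a central projection $p\in J(\mathcal{A})''$ such that
\[
\pi_1(a)=pJ(a)
\]
is a $^*$-homomorphism and
\[
\pi_2(a)=(1-p)J(a)
\]
is a $^*$-antihomomorphism. Since $p$ is central in $J(\mathcal{A})''$, we have $J(a)=\pi_1(a)+\pi_2(a)$ with the two summands living in orthogonal corners. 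Hence
\[
\lVert J(a)\rVert=\max\{\lVert\pi_1(a)\rVert,\lVert\pi_2(a)\rVert\}.
\]

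Next, turn $\pi_2$ into an honest $^*$-homomorphism. Compose with the transpose map onto the opposite algebra: $a\mapsto\pi_2(a)^{op}\in\mathbb{B}((1-p)\mathcal{H})^{op}$. The opposite algebra is again a C$^*$-algebra with the same norm, so $\pi_2$ becomes a $^*$-homomorphism without changing norms. Then $\pi:=\pi_1\oplus\pi_2^{op}\colon\mathcal{A}\to\mathbb{B}(p\mathcal{H})\oplus\mathbb{B}((1-p)\mathcal{H})^{op}$ is a $^*$-homomorphism into a C$^*$-algebra. Its norm satisfies $\lVert\pi(a)\rVert=\lVert J(a)\rVert$ for every $a$, by the displayed formula.

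The main point is injectivity. The kernel of $\pi$ is $\ker\pi_1\cap\ker\pi_2$, which equals $\ker J$ because $J=\pi_1+\pi_2$ with orthogonal supports. Since $J$ is injective, this kernel is $\{0\}$, so $\pi$ is an injective $^*$-homomorphism. Such maps are isometric: $\pi(\mathcal{A})$ is closed, so $\pi$ is a $^*$-isomorphism onto a C$^*$-algebra, and spectra are preserved, giving $\lVert\pi(a)\rVert^2=\lVert\pi(a^*a)\rVert=r(a^*a)=\lVert a\rVert^2$. Therefore $\lVert J(a)\rVert=\lVert a\rVert$.

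The only delicate step is verifying that $\pi_1$ and $\pi_2$ act on orthogonal pieces so that the norms combine as a maximum. This holds because $p$ commutes with $J(\mathcal{A})$, so $J(a)=pJ(a)p+(1-p)J(a)(1-p)$.
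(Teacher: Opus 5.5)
Your proof is correct, and it takes a somewhat different route from the paper's.

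\textbf{The paper's route.} The paper first reduces to unital $J$ by viewing it as a map into $J(1)\mathcal{B}J(1)$. It then uses $J(a^2)=J(a)^2$ to say that $J$ restricts to a unital order embedding of $\mathcal{A}_{sa}$, and obtains isometry on $\mathcal{A}_{sa}$ from Lemma \ref{psi}. Only after that does it invoke Theorem \ref{jhomo}, and only to show
$\lVert J(a)\rVert^2=\lVert pJ(a)^*J(a)+(1-p)J(a)J(a)^*\rVert=\lVert J(a^*a)\rVert$.
This reduces a general $a$ to the self-adjoint element $a^*a$.

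\textbf{Your route.} You let Theorem \ref{jhomo} carry the whole argument. Passing to $\mathbb{B}((1-p)\mathcal{H})^{op}$ turns $J$ into a genuine $^*$-homomorphism $\pi$ with $\lVert\pi(a)\rVert=\lVert J(a)\rVert$ and $\ker\pi=\ker J$. The classical fact about injective $^*$-homomorphisms then finishes the proof.

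\textbf{Comparison.} Your approach needs no unital reduction, and it makes the role of injectivity transparent. In the paper, injectivity is used only implicitly, in the claim that $J$ reflects the order on $\mathcal{A}_{sa}$; that claim needs a short argument with $a=a_+-a_-$. The paper's approach stays inside its own toolkit (Lemma \ref{psi}) and avoids the opposite-algebra construction.

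\textbf{Two minor points.}
\begin{enumerate}
\item No transpose is needed. The identity map $\mathbb{B}((1-p)\mathcal{H})\to\mathbb{B}((1-p)\mathcal{H})^{op}$ already converts the $^*$-antihomomorphism into a $^*$-homomorphism without changing norms.
\item Your parenthetical justification of the classical fact runs backwards. Closedness of $\pi(\mathcal{A})$ is normally deduced from isometry, not the other way round. The standard argument goes as follows. Suppose $x$ is self-adjoint with $\lVert\pi(x)\rVert<\lVert x\rVert$. Choose a continuous $f$ that vanishes on $[-\lVert\pi(x)\rVert,\lVert\pi(x)\rVert]$ but not on $\sigma(x)$. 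Then $\pi(f(x))=f(\pi(x))=0$ while $f(x)\neq 0$, which contradicts injectivity. The C$^*$-identity then extends the norm equality from self-adjoint elements to all of $\mathcal{A}$.
\end{enumerate}
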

\begin{proof}
One may consider $J$ as an injective unital Jordan $^*$-homomorphism from $\mathcal{A}$ to $J(1)\mathcal{B} J(1)$. Thus we may assume that $J$ is unital without loss of generality. We may also assume that $\mathcal{B}=\mathbb{B}(\mathcal{H})$.
Since $J(a^2)=J(a)^2$ for every $a\in \mathcal{A}_{sa}$, we see that $J$ restricts to unital linear order embedding from $\mathcal{A}_{sa}$ to $\mathbb{B}(\mathcal{H})_{sa}$. By the preceding lemma, we get $\lVert J(a)\rVert =\lVert a\rVert$ for every $a\in \mathcal{A}_{sa}$. 
Take the projection $p\in \mathbb{B}(\mathcal{H})$ as in Theorem \ref{jhomo}. 
For $a\in \mathcal{A}$, we have 
\[
\begin{split}
\lVert J(a)\rVert^2 &= \max\{\lVert pJ(a)\rVert^2, \lVert(1-p)J(a) \rVert^2\} \\
&= \max\{\lVert (pJ(a))^*(pJ(a))\rVert, \lVert((1-p)J(a))((1-p)J(a))^* \rVert\} \\
&= \lVert pJ(a)^*J(a)+(1-p)J(a)J(a)^*\rVert\\
&= \lVert J(a^*a)\rVert.
\end{split}
\] 
Since $a^*a$ is self-adjoint, we obtain $\lVert J(a^*a)\rVert=\lVert a^*a\rVert=\lVert a\rVert^2$. Thus $\lVert J(a)\rVert=\lVert a\rVert$.
\end{proof}

Lastly, we give a variant of \cite[Corollary 2]{RD} by Russo and Dye who considered the case where $G=\mathcal{U}(\mathcal{A})$.
\begin{theorem}\label{russodye}
Let $G\subset \mathcal{U}(\mathcal{A})$ be a nonempty open subset. Let $\Phi \colon \mathcal{A}\to \mathcal{B}$ be a linear mapping satisfying $\lVert \Phi\rVert\leq 1$ and $\Phi (u)\in \mathcal{U}(\mathcal{B})$ for every $u\in G$. 
Fix an element $u_0\in G$ and set $\Phi _0(a) := \Phi(u_0)^*\Phi(u_0a)$ for $a\in \mathcal{A}$. 
Then $\Phi_0\colon \mathcal{A}\to \mathcal{B}$ is a unital Jordan $^*$-homomorphism, and $\Phi(\mathcal{U}(\mathcal{A})) \subset \mathcal{U}(\mathcal{B})$.
\end{theorem}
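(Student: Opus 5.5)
We follow the Russo--Dye strategy, localized to the open set $G$. The plan has three steps.

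First, we show that $\Phi$ sends unitaries near $u_0$ to unitaries in a way compatible with exponentials. Replacing $\Phi$ by $\Phi_0$, we may assume $u_0=1\in G$ and that $\Phi(1)=1$; the general case follows since $a\mapsto u_0a$ is an isometry of $\mathcal{A}$ preserving $\mathcal{U}(\mathcal{A})$, and left multiplication by the unitary $\Phi(u_0)^*$ preserves $\mathcal{U}(\mathcal{B})$. So let $G'=u_0^{-1}G$, an open neighbourhood of $1$ in $\mathcal{U}(\mathcal{A})$, and assume $\Phi_0(u)$ is unitary for $u\in G'$. By Lemma \ref{restrict}, $\Phi_0$ is positive and $*$-preserving. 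For $h\in\mathcal{A}_{sa}$ and small real $t$, we have $e^{ith}\in G'$, so $\Phi_0(e^{ith})$ is unitary.

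Second, we expand to second order in $t$:
\[
1=\Phi_0(e^{ith})^*\Phi_0(e^{ith}).
\]
Since
\[
\Phi_0(e^{ith})=1+it\Phi_0(h)-\tfrac{t^2}{2}\Phi_0(h^2)+O(t^3),
\]
the coefficient of $t^2$ gives
\[
\Phi_0(h)^2-\Phi_0(h^2)=0.
\]
(The $t^1$ coefficient vanishes automatically by self-adjointness.) Thus $\Phi_0(h^2)=\Phi_0(h)^2$ for every $h\in\mathcal{A}_{sa}$. Polarization, or Lemma \ref{broise}, then gives the Jordan identity on $\mathcal{A}_{sa}$. Extending complex-linearly, $\Phi_0$ is a unital Jordan $^*$-homomorphism. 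This is the main point, and it avoids any convex-combination argument.

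Third, we show $\Phi(\mathcal{U}(\mathcal{A}))\subset\mathcal{U}(\mathcal{B})$. Jordan $^*$-homomorphisms map unitaries to unitaries: by Theorem \ref{jhomo} (after representing $\mathcal{B}\subset\mathbb{B}(\mathcal{H})$), $J=\Phi_0$ splits as a homomorphism plus an antihomomorphism on complementary central pieces, so $J(u)^*J(u)=J(u^*u)=1$ and likewise $J(u)J(u)^*=1$. Hence for any $u\in\mathcal{U}(\mathcal{A})$ we get
\[
\Phi(u)=\Phi(u_0)\Phi_0(u_0^*u),
\]
which is a product of unitaries. The only care needed is checking that the second-order expansion is legitimate, which holds since $\Phi_0$ is bounded and linear, so it commutes with the norm-convergent exponential series.
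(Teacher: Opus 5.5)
Your proof is correct, but its key step differs from the paper's. The paper follows Russo and Dye. For a unitary $u$ near $1$, it applies Kadison's generalized Schwarz inequality (Theorem \ref{ks}) to $u+u^*$ and to $iu+(iu)^*$. This gives $\Phi_0(a^2)=\Phi_0(a)^2$ only for positive $a\le 2$ close to $2$, that is, for elements of the form $u+u^*$. The paper then needs Broise's lemma (Lemma \ref{broise}) to spread multiplicativity from this neighbourhood to all of $\mathcal{A}_{sa}$. Finally, it reads off unitarity from $2=\Phi_0(u)\Phi_0(u)^*+\Phi_0(u)^*\Phi_0(u)$ together with $\lVert\Phi_0(u)\rVert\le 1$.

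You instead differentiate the unitary-valued curve $t\mapsto\Phi_0(e^{ith})$ at $t=0$. The first-order term gives $\Phi_0(h)^*=\Phi_0(h)$ and the second-order term gives $\Phi_0(h^2)=\Phi_0(h)^2$, for every $h\in\mathcal{A}_{sa}$ at once. Polarization then finishes the Jordan identity. Your expansion is legitimate because $\Phi_0$ is bounded.

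What your route buys: it is more elementary and needs neither the Schwarz inequality nor Broise's lemma. It also uses the hypothesis $\lVert\Phi\rVert\le1$ only to know that $\Phi$ is bounded. Your appeal to Lemma \ref{restrict} is in fact dispensable, since the $t^1$ coefficient already gives self-adjointness, and your third step uses no norm bound either. So your argument proves the statement for any bounded $\Phi$.

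What the paper's route buys: it reuses order-theoretic machinery (Theorem \ref{ks}, Lemma \ref{broise}) that it needs anyway, for instance in the projection version inside Lemma \ref{abe}. Its final unitarity step via $J(uu^*+u^*u)=2$ is also lighter than your appeal to the Kadison--St\o rmer decomposition (Theorem \ref{jhomo}).

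One small slip in your third step: on the antihomomorphism summand, $J(u)^*J(u)$ equals $J(uu^*)$, not $J(u^*u)$. Both equal $1$, so the conclusion is unaffected.
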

\begin{proof}
The method of the proof here is almost the same as Russo and Dye's proof of \cite[Corollary 2]{RD}.
Observe that $\Phi _0\colon \mathcal{A}\to \mathcal{B}$ is a unital linear mapping with $\lVert \Phi_0\rVert\leq 1$ (which is automatically positive by Lemma \ref{restrict}) satisfying $\Phi _0(u)\in \mathcal{U}(\mathcal{B})$ for every unitary $u$ in $\mathcal{A}$ that is close enough to $1\in \mathcal{A}$. 
Let $u\in \mathcal{U}(\mathcal{A})$ be close enough to $1$. 
Then Theorem \ref{ks} implies that $\Phi _0((u+u^*)^2)\geq \Phi_0(u+u^*)^2$, hence $\Phi _0(u^2)+\Phi_0(u^2)^*\geq \Phi_0(u)^2+\Phi_0(u^*)^2$. Since $\Phi_0(iu)=i\Phi_0(u)$ is also a unitary, the same inequality holds when we replace $u$ by $iu$, so we have $-\Phi_0(u^2)-\Phi_0(u^2)^*\geq -\Phi_0(u)^2-\Phi_0(u^*)^2$. These two inequalities show $\Phi_0((u+u^*)^2)= \Phi_0(u+u^*)^2$. 
If a positive operator $a\in \mathcal{A}$ satisfies $a\leq 2$ and $a$ is close enough to $2$, then $a=u+u^*$ for some $u\in \mathcal{U}(\mathcal{A})$ that is close to $1$, so we have $\Phi _0(a^2)=\Phi_0(a)^2$. 
By Lemma \ref{broise}, the same equality holds for every $a\in \mathcal{A}_{sa}$, and moreover, for every $a\in \mathcal{A}$ by routine calculations.
Consequently, $\Phi _0$ is a unital Jordan $^*$-homomorphism. 

It follows that $\Phi _0$ sends $\mathcal{U}(\mathcal{A})$ into $\mathcal{U}(\mathcal{B})$. 
Indeed, for every $u\in \mathcal{U}(\mathcal{A})$, we have 
\[
2=\Phi_0(2)=\Phi_0(uu^*+u^*u)=\Phi_0(u)\Phi_0(u)^*+\Phi_0(u)^*\Phi_0(u),
\]
which together with $\lVert \Phi_0(u)\rVert\leq 1$ implies $\Phi _0(u)\Phi_0(u)^*=1=\Phi_0(u)^*\Phi_0(u)$.
Since $\Phi(u_0)\in \mathcal{B}$ is unitary, we get $\Phi(\mathcal{U}(\mathcal{A})) \subset \mathcal{U}(\mathcal{B})$.
\end{proof}

\subsection{Petty subsets}
In the subsequent sections, we frequently encounter the situation where we need to think of finite-dimensional ideals of C$^*$-algebras. 
For such cases, we introduce the following concept. 
We say a subset $S\subset \mathcal{A}$ is \emph{petty} if the ideal of $\mathcal{A}$ generated by $S$ is finite-dimensional. 
An element $a\in \mathcal{A}$ is said to be petty if $\{a\}\subset \mathcal{A}$ is petty.
Let us give some basic facts about finite-dimensional ideals and petty subsets.

\begin{lemma}\label{fdideal}
Let $\mathcal{I}\subset \mathcal{A}$ be a finite-dimensional ideal of $\mathcal{A}$. 
Then $\mathcal{I}$ is a closed $^*$-subalgebra of $\mathcal{A}$, and there is a central projection $e\in \mathcal{A}$ with $\mathcal{I}=e\mathcal{A}$.
\end{lemma}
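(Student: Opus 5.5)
The plan is first to show that $\mathcal{I}$ is automatically closed, $^*$-invariant, and a subalgebra, and then to produce the central projection $e$ as the unit of $\mathcal{I}$. "Ideal" here means a two-sided (not necessarily closed) ideal. Being finite-dimensional, $\mathcal{I}$ is a closed subspace of $\mathcal{A}$, and it is trivially a subalgebra.

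For $^*$-invariance, I would argue as follows. Let $x\in\mathcal{I}$. Then $x^*x\in\mathcal{I}$, and so $(x^*x)^k\in\mathcal{I}$ for every $k\geq 1$. Since $\mathcal{I}$ is a closed subspace, continuous functional calculus gives $f(x^*x)\in\mathcal{I}$ for every continuous $f$ with $f(0)=0$, because such functions are uniform limits of polynomials without constant term on $\sigma(x^*x)$. This family of elements spans a finite-dimensional space, so $\sigma(x^*x)$ is finite. Hence $x^*x=\sum_j \lambda_j e_j$ with $\lambda_j>0$ and spectral projections $e_j=\chi_{\{\lambda_j\}}(x^*x)\in\mathcal{I}$. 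Put $r=\sum_j e_j\in\mathcal{I}$, the support projection of $|x|$. Then $x=xr$, since $\lVert x(1-r)\rVert^2=\lVert (1-r)x^*x(1-r)\rVert=0$. Consequently $x^*=rx^*\in\mathcal{I}$, because $r\in\mathcal{I}$ and $\mathcal{I}$ is a left ideal. Thus $\mathcal{I}$ is a finite-dimensional C$^*$-subalgebra.

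Next, $\mathcal{I}$ is a finite-dimensional C$^*$-algebra, so it has a unit $e$, which is a projection in $\mathcal{A}$. To see that $e$ is central, take $a\in\mathcal{A}$. Then $ae\in\mathcal{I}$, so $ae=e(ae)=eae$, and similarly $ea=(ea)e=eae$. Hence $ae=ea$. Finally $e\mathcal{A}\subset\mathcal{I}$ because $\mathcal{I}$ is an ideal containing $e$, and $\mathcal{I}=e\mathcal{I}\subset e\mathcal{A}$. Therefore $\mathcal{I}=e\mathcal{A}$.

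The only step with any real content is the $^*$-invariance, since one-sided algebraic properties do not give adjoints for free. The finite-spectrum argument via functional calculus, essentially the device used in the proof of Proposition \ref{vvv}, handles it.
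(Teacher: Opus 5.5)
Your proof is correct and follows the paper's route. Like the paper, you take $e$ to be the unit of $\mathcal{I}$ and get centrality from $ae=eae=ea$; the only difference is that the paper cites the standard fact that closed ideals of C$^*$-algebras are self-adjoint, while you prove $^*$-invariance directly from the finite spectrum of $x^*x$, which is valid here. One small slip, harmless because $\mathcal{I}$ is two-sided: the step $rx^*\in\mathcal{I}$ uses $\mathcal{I}\mathcal{A}\subset\mathcal{I}$, the right-ideal property, not the left one.
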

\begin{proof}
The finite-dimensionality of $\mathcal{I}$ implies that $\mathcal{I}$ is closed, and it is a basic fact that a closed ideal of a C$^*$-algebra is a $^*$-subalgebra.
Let $e$ be the unit of the finite-dimensional C$^*$-algebra $\mathcal{I}$. 
Then it is readily seen that $e$ considered as an element of $\mathcal{A}$ is a projection. 
Moreover, for $a\in \mathcal{A}$, we have $ae, ea\in \mathcal{I}$ and hence $ae=eae=ea$, so $e$ is a central projection.
\end{proof}

\begin{lemma}\label{spetty}
If $S_1, S_2, \ldots, S_m$ are petty subsets of $\mathcal{A}$, then $\mathrm{span}\, (\bigcup_{j=1}^mS_j)\subset \mathcal{A}$ is also petty.  
\end{lemma}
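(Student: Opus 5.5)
For each $j$, let $\mathcal{I}_j$ denote the ideal of $\mathcal{A}$ generated by $S_j$. By the definition of petty, each $\mathcal{I}_j$ is finite-dimensional. The plan is to check that the sum $\mathcal{I}:=\mathcal{I}_1+\mathcal{I}_2+\cdots+\mathcal{I}_m$ is an ideal of $\mathcal{A}$ containing $\mathrm{span}\,(\bigcup_{j=1}^m S_j)$, and that it is finite-dimensional. The ideal generated by $\mathrm{span}\,(\bigcup_j S_j)$ is the smallest ideal containing this set, so it is contained in $\mathcal{I}$ and is therefore finite-dimensional.

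The steps are routine. First, a finite sum of two-sided ideals is again a two-sided ideal, since for $a,b\in\mathcal{A}$ and $x_j\in \mathcal{I}_j$ we have $a(\sum_j x_j)b=\sum_j ax_jb\in\mathcal{I}$. Second, $\dim \mathcal{I}\leq \sum_j \dim\mathcal{I}_j<\infty$. Third, $S_j\subset\mathcal{I}_j\subset\mathcal{I}$ for each $j$, and $\mathcal{I}$ is a linear subspace, so $\mathrm{span}\,(\bigcup_j S_j)\subset\mathcal{I}$.

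The only point needing care is which notion of ``ideal generated by'' is meant, algebraic or closed. The argument above works for both. If the closed ideal is meant, note that $\mathcal{I}$ is finite-dimensional and hence automatically closed, so the closed ideal generated by $\mathrm{span}\,(\bigcup_j S_j)$ is still contained in $\mathcal{I}$. Lemma \ref{fdideal} also confirms that each $\mathcal{I}_j=e_j\mathcal{A}$ for a central projection $e_j$, but this description is not needed here.

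I do not expect any real obstacle: the statement is a direct consequence of the closure of ideals under finite sums and the subadditivity of dimension.
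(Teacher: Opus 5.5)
Your proposal is correct and follows the paper's proof: the sum $\mathcal{I}_1+\cdots+\mathcal{I}_m$ of the finite-dimensional ideals generated by the $S_j$ is a finite-dimensional ideal containing $\mathrm{span}\,(\bigcup_j S_j)$. Your remark that this works for both the algebraic and the closed ideal, since finite-dimensional subspaces are closed, is a harmless extra.
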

\begin{proof}
This is a consequence of the facts that an ideal is a linear space and that the sum of finite-dimensional ideals is again a finite-dimensional ideal.
\end{proof}

\begin{lemma}\label{epetty}
Let $a\in \mathcal{A}$ be an element. Then $a$ is petty if and only if $\dim a\mathcal{A}<\infty$.
\end{lemma}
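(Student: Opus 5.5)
The plan is to prove the two implications separately, working throughout with the ideal $\mathcal{I}_a$ of $\mathcal{A}$ generated by $a$. This is the (algebraic) two-sided ideal $\mathrm{span}\{xay \mid x,y\in\mathcal{A}\}$, which contains $a\mathcal{A}$ because $\mathcal{A}$ is unital.

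For the forward direction, assume $a$ is petty, so $\dim\mathcal{I}_a<\infty$. Since $a\mathcal{A}\subset \mathcal{I}_a$, we get $\dim a\mathcal{A}<\infty$ immediately.

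For the converse, assume $\dim a\mathcal{A}<\infty$; we must show $\mathcal{I}_a$ is finite-dimensional. Since $\mathcal{I}_a=\mathrm{span}\bigcup_{x\in\mathcal{A}} x(a\mathcal{A})$, it is the left ideal generated by the finite-dimensional right ideal $\mathcal{R}:=a\mathcal{A}$. The steps are as follows.

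\begin{enumerate}
\item Show that $\mathcal{R}$ is a finite-dimensional (hence closed) right ideal. It contains $aa^*$, and $aa^*\mathcal{A}\subset\mathcal{R}$, so $aa^*$ is a positive element with $\dim aa^*\mathcal{A}<\infty$.
\item Show that $\sigma(aa^*)$ is finite. If it were infinite, the elements $aa^*g(aa^*)$, with $g$ continuous, would span an infinite-dimensional subspace of $\mathcal{R}$, which is impossible.
\item Let $e=\chi_{(0,\infty)}(aa^*)$, which is defined because $\sigma(aa^*)$ is finite. Then $e$ is a projection, $e=aa^*h(aa^*)$ for a suitable function $h$, so $e\in\mathcal{R}$. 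Moreover $a=ea$, because $(1-e)aa^*(1-e)=0$. Consequently $\mathcal{R}=e\mathcal{A}$ and $\dim e\mathcal{A}<\infty$.
\item Deduce that $e\mathcal{A}e$ is a finite-dimensional C$^*$-algebra. Hence $e$ is a finite sum of minimal projections $e_1,\dots,e_k$, each satisfying $\dim e_i\mathcal{A}e_i=1$ (each $e_i$ is an atom).
\item Reduce to atoms: since $\mathcal{I}_a$ is contained in the ideal generated by $e$, which is the sum of the ideals generated by the $e_i$ (Lemma \ref{spetty}), it suffices to show that each atom $p$ with $\dim p\mathcal{A}<\infty$ is petty.
\end{enumerate}

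The main obstacle is this last step: ruling out an infinite family of inequivalent copies of $p$ in the ideal generated by $p$. I would argue via the vector space $p\mathcal{A}$ with the inner product given by $\langle x,y\rangle p=xy^*$; this formula makes sense since $p\mathcal{A}p=\mathbb{C}p$.
\begin{itemize}
\item The algebra $\mathcal{A}$ acts on $p\mathcal{A}$ by right multiplication, giving an anti-representation on a finite-dimensional Hilbert space $\mathcal{H}$ with $\dim\mathcal{H}=\dim p\mathcal{A}=:d$.
\item Every element of $\mathcal{A}p\mathcal{A}$ is a finite sum $\sum x_i p y_i$. Choose an orthonormal basis $w_1,\dots,w_d$ of $p\mathcal{A}$, meaning partial isometries with $w_jw_k^*=\delta_{jk}p$. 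Expanding $py$ in this basis gives $py=\sum_j \lambda_j w_j$, where $\lambda_j p=pyw_j^*\in p\mathcal{A}p=\mathbb{C}p$.
\item Similarly, applying the same expansion to $px^*$ shows that $xp\in\mathrm{span}\{w_j^* p\}$.
\item Hence $\mathcal{A}p\mathcal{A}\subset\mathrm{span}\{w_j^*w_k\}$, which has dimension at most $d^2$.
\end{itemize}
This gives $\dim\mathcal{I}_p\le d^2<\infty$, as required. The point I expect to need care in checking is the expansion $x=\sum_j \langle x,w_j\rangle w_j$ for $x\in p\mathcal{A}$. To verify it, I would show that the remainder $r$ satisfies $rw_j^*=0$ for all $j$, hence lies in $p\mathcal{A}$ and is orthogonal to all of $p\mathcal{A}$. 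Taking $r$ itself then gives $rr^*=0$, so $r=0$.
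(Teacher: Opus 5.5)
Your proof is correct, and its first half matches the paper's. The paper also derives finiteness of the spectrum (of $\lvert a\rvert$) from $\dim a\mathcal{A}<\infty$, forms the polar decomposition $a=v\lvert a\rvert$ inside $\mathcal{A}$, and shows $vv^*\mathcal{A}\subset a\mathcal{A}$. Your $e=\chi_{(0,\infty)}(aa^*)$ is exactly the range projection $vv^*$, and your identity $a\mathcal{A}=e\mathcal{A}$ is the same observation.

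The routes diverge only at the end. The paper finishes in one line: $\mathcal{A}a\mathcal{A}=\mathcal{A}vv^*a\mathcal{A}=(vv^*\mathcal{A})^*(a\mathcal{A})$, so if $a_1,\ldots,a_m$ is a basis of $a\mathcal{A}$, the ideal is spanned by the $a_i^*a_j$. You instead split $e$ into atoms and, for each atom $p$, build an inner product and an orthonormal basis on $p\mathcal{A}$ to get $\mathcal{A}p\mathcal{A}\subset\mathrm{span}\{w_j^*w_k\}$.

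The identity you ultimately rely on, $xpy=(px^*)^*(py)$, holds for every projection, not just atoms, and it needs only some basis of $p\mathcal{A}$, not an orthonormal one. Applied directly to $e$ with a basis $r_1,\ldots,r_m$ of $e\mathcal{A}=a\mathcal{A}$, it gives $\mathcal{A}a\mathcal{A}\subset\mathcal{A}e\mathcal{A}\subset\mathrm{span}\{r_i^*r_j\}$. So your steps 4 and 5, the inner product, and the expansion check can all be dropped. The ``obstacle'' of infinitely many inequivalent copies of $p$ never arises, because $\mathcal{A}p=(p\mathcal{A})^*$ is already finite-dimensional.

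Your detour does give slightly more information: the $w_j^*w_k$ are nonzero matrix units, so the ideal generated by such an atom is a copy of $\mathbb{M}_d$ with $d=\dim p\mathcal{A}$. For the lemma itself, though, the paper's argument is shorter.
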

\begin{proof}
If $a$ is petty, then $a\mathcal{A}$ is contained in a finite-dimensional ideal of $\mathcal{A}$, so $\dim a\mathcal{A}<\infty$. 
Conversely, assume that $\dim a\mathcal{A}<\infty$ holds. If $\sigma(\lvert a\rvert)$ is an infinite set, then $\{af(\lvert a\rvert)\mid f\colon \sigma(\lvert a\rvert)\to \mathbb{R}\text{ is a continuous function}\}$ is infinite-dimensional, so we get a contradiction. Thus $\sigma(\lvert a\rvert)$ is a finite set. 
This implies that we may get the polar decomposition $a=v\lvert a\rvert$ inside $\mathcal{A}$.  
Set $f(0)=0$ and $f(t)=t^{-1}$, $t>0$. Then $f$ can be considered as a continuous function on $\sigma(\lvert a\rvert)$, and the element $f(\lvert a\rvert)\in \mathcal{A}$ satisfies $\lvert a\rvert f(\lvert a\rvert) =v^*v$. Thus we get
\[
vv^*\mathcal{A}\subset v\mathcal{A} =v\lvert a\rvert f(\lvert a\rvert)\mathcal{A}\subset v\lvert a\rvert\mathcal{A}=a\mathcal{A}. 
\]
Now, take a basis $a_1,a_2, \ldots, a_m$ of the finite-dimensional linear space $a\mathcal{A}$. 
Then every element in $\mathcal{A}a\mathcal{A} = \mathcal{A}vv^*a\mathcal{A} = (vv^*\mathcal{A})^*(a\mathcal{A})$ can be written as a linear combination of elements of the form $a_i^*a_j$, $1\leq i,j\leq m$. Thus $a$ is petty.
\end{proof}

\begin{lemma}\label{eepetty}
Let $e_1, e_2, \ldots, e_m\in \mathcal{A}$ be mutually orthogonal projections. 
Set $\mathcal{V}=\{a\in \mathcal{A}\mid e_1ae_1=e_2ae_2=\cdots=e_mae_m=0\}$, which is a linear subspace of $\mathcal{A}$. 
Assume that $\dim \mathcal{V}<\infty$ or $\dim \mathcal{V}\cap \mathcal{A}_{sa}<\infty$ holds. Then $\mathcal{V}\subset \mathcal{A}$ is petty.
Take the central projection $e$ such that the ideal generated by $\mathcal{V}$ equals $e\mathcal{A}$. Then $(1-e)e_1$, $(1-e)e_2$, \ldots, $(1-e)e_m$ are central projections in $\mathcal{A}$.
\end{lemma}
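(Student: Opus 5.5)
We first show that $\mathcal{V}$ is petty, and then show that each $(1-e)e_j$ is central.

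\emph{Reduction to $\dim\mathcal{V}<\infty$.} Since $\mathcal{V}$ is closed under $a\mapsto a^*$, we have $\mathcal{V}=(\mathcal{V}\cap\mathcal{A}_{sa})+i(\mathcal{V}\cap\mathcal{A}_{sa})$. Hence the two hypotheses are equivalent, and we may assume $\dim\mathcal{V}<\infty$.

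\emph{$\mathcal{V}$ is petty.} By Lemma \ref{spetty} it suffices to show that each $a\in\mathcal{V}$ is petty. By Lemma \ref{epetty} this means proving $\dim a\mathcal{A}<\infty$. Set $e_0=1-\sum_j e_j$, so that $1=\sum_{j=0}^m e_j$ is a decomposition into orthogonal projections. Then $\mathcal{V}$ contains every off-diagonal corner $e_i\mathcal{A}e_k$ ($i\neq k$) and the corner $e_0\mathcal{A}e_0$. For $a\in\mathcal{V}$, write $a\mathcal{A}=\sum_{i,k}ae_i\mathcal{A}e_k$. We need each piece $ae_i\mathcal{A}e_k$ to be finite-dimensional.

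\begin{itemize}
\item If $i\neq k$, or $i=k=0$, then $e_i\mathcal{A}e_k\subset\mathcal{V}$ is finite-dimensional, so $ae_i\mathcal{A}e_k$ is too.
\item If $i=k=j\geq 1$, write $a=\sum_{l,l'}e_lae_{l'}$; the diagonal terms $e_jae_j$ with $j\geq1$ vanish because $a\in\mathcal{V}$. Then
\[
ae_j\mathcal{A}e_j=\sum_{l\neq j} e_lae_j\mathcal{A}e_j .
\]
Each $e_lae_j\mathcal{A}e_j$ with $l\neq j$ lies in $e_l\mathcal{A}e_j\subset\mathcal{V}$, which is finite-dimensional.
\end{itemize}

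Hence $\dim a\mathcal{A}<\infty$ and $a$ is petty. Consequently the ideal generated by $\mathcal{V}$ is finite-dimensional, and by Lemma \ref{fdideal} it equals $e\mathcal{A}$ for a central projection $e$.

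\emph{Centrality of $(1-e)e_j$.} Let $j\geq 1$ and $x\in\mathcal{A}$. For $k\neq j$ (including $k=0$), both $e_jxe_k$ and $e_kxe_j$ lie in $\mathcal{V}\subset e\mathcal{A}$. Multiplying by $1-e$ kills them:
\[
(1-e)e_jxe_k=0=(1-e)e_kxe_j \qquad (k\neq j).
\]
Summing over $k\neq j$ and using $\sum_{k\neq j}e_k=1-e_j$ gives
\[
(1-e)e_jx(1-e_j)=0=(1-e)(1-e_j)xe_j .
\]
Therefore
\[
(1-e)e_jx=(1-e)e_jxe_j=(1-e)xe_j .
\]
Since $1-e$ is central, $(1-e)xe_j=x(1-e)e_j$. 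Thus $(1-e)e_j$ commutes with every $x\in\mathcal{A}$, and being a product of commuting projections it is a projection, hence a central projection.

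The only step needing any care is the pettiness argument, specifically the diagonal pieces $ae_j\mathcal{A}e_j$ with $j\geq1$. They are handled by expanding $a$ in the Peirce decomposition and using that $a$ has zero $e_j$-diagonal corners.
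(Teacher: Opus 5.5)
Your proof is correct and follows essentially the paper's route. Pettiness comes from $\dim\mathcal{V}<\infty$ together with Lemma \ref{epetty} and Lemma \ref{spetty}; you bound $a\mathcal{A}$ for a general $a\in\mathcal{V}$ through the Peirce decomposition, whereas the paper treats the spanning corners $e_j\mathcal{A}(1-e_j)$, $(1-e_j)\mathcal{A}e_j$ and $e_0\mathcal{A}e_0$ separately. Centrality then follows from the off-diagonal corners vanishing under $1-e$. One cosmetic fix: your displayed identity $ae_j\mathcal{A}e_j=\sum_{l\neq j}e_lae_j\mathcal{A}e_j$ should be an inclusion $\subset$, which is all the argument uses.
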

\begin{proof}
It is clear that $\dim \mathcal{V}<\infty$ if and only if $\dim \mathcal{V}\cap \mathcal{A}_{sa}<\infty$.
Observe that $\mathcal{V}$ is spanned by elements of $e_j\mathcal{A}(1-e_j)$, $(1-e_j)\mathcal{A}e_j$ ($1\leq j\leq m$), and $(1-e_1-e_2-\cdots-e_m)\mathcal{A}(1-e_1-e_2-\cdots-e_m)$. By Lemma \ref{spetty}, it suffices to show that $a\in \mathcal{A}$ is petty for elements belonging to each of the above sets. If $a\in e_j\mathcal{A}(1-e_j)$, then 
\[
a\mathcal{A}=e_ja(1-e_j)\mathcal{A}=e_ja(1-e_j)\mathcal{A}e_j+e_ja(1-e_j)\mathcal{A}(1-e_j)\subset e_ja\mathcal{V}+ e_j\mathcal{A}(1-e_j)=e_ja\mathcal{V}+\mathcal{V}, 
\]
so Lemma \ref{epetty} implies that $a$ is petty. Since $((1-e_j)\mathcal{A}e_j)^*=e_j\mathcal{A}(1-e_j)$, we see that every element of $(1-e_j)\mathcal{A}e_j$ is also petty. If $a\in (1-e_1-e_2-\cdots-e_m)\mathcal{A}(1-e_1-e_2-\cdots-e_m)$, then $a\mathcal{A}\subset \mathcal{V}$ and thus $a$ is petty. Thus we have shown that $\mathcal{V}\subset \mathcal{A}$ is petty.

It then follows from the definition of $e$ that $j\neq j'$ implies $(1-e)e_j\mathcal{A}(1-e)e_{j'}=0$. Moreover, we also have $(1-e)e_1+(1-e)e_2+\cdots+(1-e)e_m+e=1$. Thus $(1-e)e_1$, $(1-e)e_2$, \ldots, $(1-e)e_m$ are central projections in $\mathcal{A}$.
\end{proof}

\section{Proof of the main theorem}\label{secmain}
In this section, we assume that $\Phi\colon \mathcal{A}\to \mathcal{B}$ is a linear isometry with finite corank.

\subsection{Isometries with finite corank}
\begin{lemma}\label{kadison}
Let $u\in \mathcal{A}$ be a unitary. Set $v=\Phi(u)$. Observe that $u\in \mathrm{ext}\, \mathcal{A}^{\leq 1}$ by Kadison's theorem (Theorem \ref{extreme}), which implies $v\in \mathrm{ext}\, \Phi(\mathcal{A})^{\leq 1}$, so we may apply Proposition \ref{vvv} and 
write $v=v_0+c_1v_1+c_2v_2+\cdots+c_mv_m$. Then $\{1-v_0^*v_0, 1-v_0v_0^*\}\subset \mathcal{B}$ is petty.
\end{lemma}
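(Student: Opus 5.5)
The plan is to prove that $\dim \mathcal{B}p<\infty$ and $\dim q\mathcal{B}<\infty$, where $p:=1-v_0^*v_0$ and $q:=1-v_0v_0^*$. Lemma \ref{epetty} (applied to $p$ and to $q$, using $(\mathcal{B}p)^*=p\mathcal{B}$) then shows that $p$ and $q$ are petty, and Lemma \ref{spetty} finishes the proof. The strategy is to exploit a first-order property of unitaries that survives the isometry $\Phi$, and to show that no nonzero element of $\Phi(\mathcal{A})$ can lie in $\mathcal{B}p$ or in $q\mathcal{B}$. Since $\Phi(\mathcal{A})$ has finite codimension, this forces $\dim\mathcal{B}p<\infty$ and $\dim q\mathcal{B}<\infty$.

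\emph{Step 1 (rigidity at a unitary).} Let $a\in\mathcal{A}$ satisfy $\lVert u+\lambda a\rVert\leq 1+C\lvert\lambda\rvert^2$ for all small $\lambda\in\mathbb{C}$. We claim $a=0$. Put $b=u^*a$. For every state $\varphi$ of $\mathcal{A}$,
\[
\lVert u+\lambda a\rVert=\lVert 1+\lambda b\rVert\geq \lvert 1+\lambda\varphi(b)\rvert\geq 1+\mathrm{Re}\,(\lambda\varphi(b)).
\]
Letting $\lambda\to 0$ along all directions gives $\varphi(b)=0$ for every state, so $b=0$ and hence $a=0$. Because $\Phi$ is a linear isometry with $\Phi(u)=v$, it follows that every $y\in\Phi(\mathcal{A})$ with $\lVert v+\lambda y\rVert\leq 1+C\lvert\lambda\rvert^2$ for small $\lambda$ must be $0$.

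\emph{Step 2 (the estimate on $\mathcal{B}p$).} Write $w=c_1v_1+\cdots+c_mv_m$, so that $v=v_0+w$, $v_0p=0$, $wp=w$, and
\[
v^*v=(1-p)+\sum_j c_j^2p_j,
\]
where $p_j=v_j^*v_j\leq p$. Take $y\in\mathcal{B}p$ with $\lVert y\rVert\leq 1$. Then $(v+\lambda y)^*(v+\lambda y)$ has the following $2\times2$ block form with respect to $1-p$ and $p$:
\begin{itemize}
\item the $(1-p,1-p)$ corner equals $1$;
\item the off-diagonal corner is $\lambda(1-p)v_0^*y$, of size $O(\lvert\lambda\rvert)$;
\item the $(p,p)$ corner is $D_\lambda=\sum_j c_j^2p_j+\lambda w^*y+\bar\lambda y^*w+\lvert\lambda\rvert^2y^*y$.
\end{itemize}
The corner satisfies $\lVert D_\lambda\rVert\leq c_1^2+O(\lvert\lambda\rvert)\leq\delta<1$ for small $\lambda$, because $c_1<1$ (if there are no $c_j$'s, put $c_1=0$). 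A positive block matrix $\begin{pmatrix}1&\lambda X\\ \bar\lambda X^*&D\end{pmatrix}$ with $\lVert D\rVert\leq\delta<1$ has norm at most $1+\lvert\lambda\rvert^2\lVert X\rVert^2/(1-\delta)+O(\lvert\lambda\rvert^3)$. This follows from a Schur-complement or eigenvalue-perturbation estimate, using the spectral gap between $1$ and $\delta$. Hence $\lVert v+\lambda y\rVert\leq 1+C\lvert\lambda\rvert^2$, and Step 1 gives $\Phi(\mathcal{A})\cap\mathcal{B}p=\{0\}$. Since $\Phi(\mathcal{A})$ has finite codimension in $\mathcal{B}$, we get $\dim\mathcal{B}p<\infty$.

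\emph{Step 3 (the estimate on $q\mathcal{B}$).} The symmetric argument applies to $y\in q\mathcal{B}$, using $(v+\lambda y)(v+\lambda y)^*$ and
\[
vv^*=(1-q)+\sum_jc_j^2v_jv_j^*,
\]
where $v_jv_j^*\leq q$. It gives $\Phi(\mathcal{A})\cap q\mathcal{B}=\{0\}$ and therefore $\dim q\mathcal{B}<\infty$.

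The main obstacle is Step 2, specifically the atoms $p_j$ where $c_j\in(0,1)$. A naive orthogonality argument, which treats $v$ and $y$ as having orthogonal supports, only handles $y\in\mathcal{B}(p-\sum_jp_j)$. The first-order cross terms $w^*y$ must instead be absorbed by the spectral gap $1-c_1^2>0$, which is exactly what the second-order block-matrix bound provides. I would carry out that bound carefully, for instance via $\lVert T\rVert=\sup_{\lVert\xi\oplus\eta\rVert=1}\langle T(\xi\oplus\eta),\xi\oplus\eta\rangle$ in a faithful representation, maximizing over $\eta$ for fixed $\xi$.
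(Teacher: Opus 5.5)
Your proof is correct. It hinges on the same fact as the paper's proof: $\Phi(\mathcal{A})$ meets $\mathcal{B}(1-v_0^*v_0)$ and $(1-v_0v_0^*)\mathcal{B}$ only in $0$, after which finite codimension and Lemma \ref{epetty} finish. You reach that fact by a different mechanism, though.

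The paper reduces to $u=1$ and takes self-adjoint $a$ with $\max\sigma(a)=\lVert a\rVert=1$. It uses approximately norming vector pairs for $v+\Phi(a)$, together with Cauchy--Schwarz and $c_j<1$, to push these vectors onto the initial and final spaces of $v_0$. This shows that the compression $\Phi_0=v_0^*\Phi(\cdot)p_0$ is a unital contraction isometric on $\mathcal{A}_{sa}$. Since $\Phi_0$ is positive by Lemma \ref{restrict}, it is injective on $\mathcal{A}$, and it vanishes on $\Phi^{-1}(\mathcal{B}(1-p_0))$.

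You instead use a first-order ``vertex'' argument. Because states separate points, at a unitary the norm increases to first order in every nonzero direction, and this transfers to $v$ along $\Phi(\mathcal{A})$. On the other hand, the spectral gap $1-c_1^2$ makes every direction in $\mathcal{B}(1-v_0^*v_0)$ flat to second order at $v$. Your block bound actually holds with no error term: maximizing the quadratic form gives $\lVert T\rVert\leq\frac{1+\delta}{2}+\sqrt{(\frac{1-\delta}{2})^2+\lvert\lambda\rvert^2\lVert X\rVert^2}\leq 1+\lvert\lambda\rvert^2\lVert X\rVert^2/(1-\delta)$. This is the same $2\times 2$ computation as in Lemma \ref{e123}.

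Your route avoids the reduction to $u=1$, the limiting sequences, and the detour through self-adjoint parts. The paper's route additionally yields the unital positive compression $\Phi_0$ that is isometric on $\mathcal{A}_{sa}$, in the spirit of Kadison's and Cheung--Li--Poon's arguments, and its injectivity is invoked again in Lemma \ref{corank1lemma}. Your conclusion $\Phi(\mathcal{A})\cap\mathcal{B}(1-v_0^*v_0)=\{0\}$ is all that lemma really uses, however.
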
 
\begin{proof}
The discussion below is a modification of that in the proofs of \cite[Theorem 7]{K} by Kadison and \cite[Theorem 2]{CLP} by Cheung, Li, and Poon.
By considering the mapping $\mathcal{A}\ni a\mapsto \Phi(ua)\in \mathcal{B}$ instead of $\Phi$, we may and will assume that $u=1$.
Let $a\in \mathcal{A}_{sa}$ be an element satisfying $\lVert a\rVert=1=\max\sigma(a)$. 
Then we have 
\[
2=\lVert 1+a\rVert =\lVert \Phi(1+a)\rVert =\lVert v+\Phi(a)\rVert.
\] 
We may assume that $\mathcal{B}$ acts on a Hilbert space $\mathcal{H}$. Then we may take a sequence of unit vectors $g_n, h_n\in \mathcal{H}$ satisfying 
$\langle (v+\Phi(a))g_n, h_n\rangle\to \lVert v+\Phi(a)\rVert =2$ as $n\to\infty$. 
This together with $\lVert v\rVert =\lVert \Phi(a)\rVert =1$ implies $\langle vg_n, h_n\rangle \to 1$ and  $\langle \Phi(a)g_n, h_n\rangle \to 1$. 
Setting $v_j^*v_j g_n= g_{n, j}$ and $v_jv_j^* h_n= h_{n, j}$, $j=0,1,\ldots, m$, we have
\[
\begin{split}
\langle vg_n, h_n\rangle &= \langle v_0g_n, h_n\rangle + \langle c_1v_1g_n, h_n\rangle + \cdots +\langle c_mv_mg_n, h_n\rangle\\
&= \langle v_0g_{n,0}, h_{n,0}\rangle + c_1\langle v_1g_{n,1}, h_{n,1}\rangle + \cdots +c_m\langle v_mg_{n,m}, h_{n,m}\rangle.
\end{split}
\]
Thus $\lvert \langle vg_n, h_n\rangle\rvert$ is at most 
\[
\sqrt{\lVert g_{n,0}\rVert^2 + c_1\lVert g_{n,1}\rVert^2 +\cdots+c_m\lVert g_{n,m}\rVert^2} \sqrt{\lVert h_{n,0}\rVert^2 + c_1\lVert h_{n,1}\rVert^2 +\cdots+c_m\lVert h_{n,m}\rVert^2}
\]
by the Cauchy--Schwarz inequality.
However, since the initial (resp. final) spaces of $v_0, v_1,\ldots, v_m$ are orthogonal, we see that $\lVert g_{n,0}\rVert^2 + \lVert g_{n,1}\rVert^2 +\cdots+\lVert g_{n,m}\rVert^2\leq \lVert g_n\rVert^2=1$ (resp. $\lVert h_{n,0}\rVert^2 + \lVert h_{n,1}\rVert^2 +\cdots+\lVert h_{n,m}\rVert^2\leq \lVert h_n\rVert^2=1$). 
Combining these facts together with $1>c_1\geq c_2\geq \cdots\geq c_m>0$, we see that $g_{n,1}, g_{n,2},\ldots, g_{n,m}$ and $h_{n,1}, h_{n,2},\ldots, h_{n,m}$ all converge to $0$, and $\lVert g_n-g_{n,0}\rVert\to 0$, $\lVert h_n-h_{n,0}\rVert\to 0$,  as $n\to \infty$. 
Therefore, we get $\langle vg_{n,0}, h_{n,0}\rangle=\langle v_0g_{n,0}, h_{n,0}\rangle \to 1$ and  $\langle \Phi(a)g_{n,0}, h_{n,0}\rangle \to 1$.
By
\[
0\leq \lVert v_0g_{n,0}-h_{n,0}\rVert^2= \lVert v_0g_{n,0}\rVert^2-2\mathrm{Re}\, \langle v_0g_{n,0}, h_{n,0}\rangle + \lVert h_{n,0}\rVert^2  
\]
and $\lVert v_0g_{n,0}\rVert, \lVert h_{n,0}\rVert\leq 1$, we get $\lVert v_0g_{n,0}-h_{n,0}\rVert \to 0$. 
Hence $\langle \Phi(a)g_{n,0}, v_0g_{n,0}\rangle \to 1$. 
Set $p_0=v_0^*v_0$.
Since $\lVert g_{n,0}\rVert\leq 1$ and $\langle \Phi(a)g_{n,0}, v_0g_{n,0}\rangle = \langle v_0^*\Phi(a)p_0g_{n,0}, g_{n,0}\rangle$, we get $\lVert v_0^*\Phi(a)p_0\rVert =1$. 
It follows that $\lVert v_0^*\Phi(\lambda a)p_0\rVert =\lvert \lambda\rvert$ for every $\lambda\in \mathbb{R}$. 
Therefore, we have shown in particular that $\lVert v_0^*\Phi(a)p_0\rVert =\lVert a\rVert$ for every $a\in \mathcal{A}_{sa}$.

Now, we consider the mapping $\Phi _0\colon \mathcal{A}\ni a\mapsto v_0^*\Phi(a)p_0\in \mathcal{B}$. 
We may and will think of $\Phi _0$ as a unital linear mapping from $\mathcal{A}$ to the C$^*$-algebra $p_0\mathcal{B}p_0$ with $\lVert \Phi_0\rVert\leq 1$. 
By Lemma \ref{restrict}, $\Phi _0$ is a positive mapping and in particular, $\Phi _0(\mathcal{A}_{sa})\subset (p_0\mathcal{B}p_0)_{sa}$. 
Moreover, the preceding paragraph implies that $\Phi _0$ restricted to $\mathcal{A}_{sa}$ is an isometry.
Therefore, the injectivity of $\Phi _0$ on $\mathcal{A}_{sa}$ shows that $\Phi _0$ is injective on $\mathcal{A}$ as well.
(However, beware of the fact that $\Phi _0$ is not necessarily isometric on $\mathcal{A}$.)
On the other hand, if $\mathcal{B}(1-p_0)$ is not finite-dimensional, then the mapping $\mathcal{B}\ni b\mapsto bp_0$ has infinite-dimensional kernel, which together with the finite-codimensionality of $\Phi (\mathcal{A})\subset \mathcal{B}$ implies that $\Phi _0$ is not injective, a contradiction. 
Thus $\mathcal{B}(1-p_0)$ is finite-dimensional, and so is $(1-p_0)\mathcal{B}=(\mathcal{B}(1-p_0))^*$. By Lemma \ref{epetty}, $1-p_0=1-v_0^*v_0\in \mathcal{B}$ is petty. Similarly, $1-v_0v_0^*\in \mathcal{B}$ is petty. It follows from Lemma \ref{spetty} that $\{1-v_0^*v_0, 1-v_0v_0^*\}\subset \mathcal{B}$ is petty.
\end{proof}

\begin{lemma}\label{qb}
There is a petty central projection $q\in \mathcal{B}$ satisfying $(1-q)\Phi(u)\in \mathcal{U}((1-q)\mathcal{B})$ for every $u\in \mathcal{U}(\mathcal{A})$.
\end{lemma}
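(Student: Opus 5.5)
Fix a unitary $u_0\in\mathcal{A}$ (say $u_0=1$) and apply Lemma \ref{kadison}. Writing $v=\Phi(u_0)=v_0+c_1v_1+\cdots+c_mv_m$ as in Proposition \ref{vvv}, the set $\{1-v_0^*v_0,1-v_0v_0^*\}$ is petty. Together with the elements $v_1,\dots,v_m$, it generates a finite-dimensional ideal of $\mathcal{B}$: each $v_j^*v_j$ is an atom, so $v_j$ is petty by Lemma \ref{epetty}, and we combine these using Lemma \ref{spetty}. By Lemma \ref{fdideal} this ideal equals $q\mathcal{B}$ for a central projection $q$, and $q$ is petty. Since $(1-q)(1-v_0^*v_0)=0=(1-q)(1-v_0v_0^*)$ and $(1-q)v_j=0$ for $j\ge1$, the element $(1-q)\Phi(u_0)=(1-q)v_0$ is a unitary in $(1-q)\mathcal{B}$.

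The first main step is to upgrade this from the single unitary $u_0$ to an open set of unitaries, uniformly in $q$. Take a unitary $u$ close to $u_0$ and apply Lemma \ref{kadison} to $u$ to get a petty central projection $q_u$ with $(1-q_u)\Phi(u)$ unitary. The difficulty is that $q_u$ might depend on $u$. I would handle this by norm perturbation. If $\|u-u_0\|<\varepsilon$, then $(1-q)\Phi(u)$ is within $\varepsilon$ of the unitary $(1-q)\Phi(u_0)$, so it is invertible in $(1-q)\mathcal{B}$. On the other hand, its decomposition $w_0+\sum c_jw_j$ in the corner $(1-q)\mathcal{B}$ has the defect projections $1-w_0^*w_0$ and the atomic parts located in finite-dimensional central summands. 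An invertible element cannot have a nonzero kernel projection $1-w_0^*w_0$, and its atomic parts must have coefficients close to $1$. I therefore expect to need one more refinement, which I regard as the main obstacle. I would handle it by enlarging $q$ to the ideal generated by all atoms of $\mathcal{B}$ that appear in the finitely many relevant pieces. The bound should come from corank: only finitely many minimal central summands can be ``hit'' at all. To see this, note that $\Phi(\mathcal{A})$ has finite codimension. If infinitely many finite-dimensional central summands $z_k\mathcal{B}$ carried some nonunitary behaviour, then the compressions $a\mapsto z_k\Phi(a)$ would be non-isometric on infinitely many independent summands. We could then produce a nonextreme perturbation of $v$ inside $\Phi(\mathcal{A})$, by intersecting the span of the $z_k\mathcal{B}$ with $\Phi(\mathcal{A})$, contradicting Proposition \ref{vvv} as in its proof.

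With a fixed petty central $q$ and an open neighbourhood $G$ of $u_0$ in $\mathcal{U}(\mathcal{A})$ such that $(1-q)\Phi(u)\in\mathcal{U}((1-q)\mathcal{B})$ for all $u\in G$, I would apply Theorem \ref{russodye} to the contraction $(1-q)\Phi\colon\mathcal{A}\to(1-q)\mathcal{B}$. Its conclusion gives $(1-q)\Phi(\mathcal{U}(\mathcal{A}))\subset\mathcal{U}((1-q)\mathcal{B})$, which is the assertion of the lemma. So the plan reduces to: Lemma \ref{kadison} at one unitary, then a uniform choice of $q$ on a neighbourhood via the finite-corank/extreme-point argument, then Russo--Dye type globalization.
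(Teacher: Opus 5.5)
Your first step (at $u_0$) and your last step (Theorem \ref{russodye} applied to $(1-q)\Phi$, once a fixed petty central $q$ works on a nonempty open set of unitaries) are both fine. The middle step, which you flag as the main obstacle, is a genuine gap, and the argument you sketch for it fails.

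\textbf{Why the sketched argument fails.} Norm perturbation only shows that $(1-q)\Phi(u)$ is invertible with singular values near $1$. The atomic parts $c_jw_j$ with $c_j<1$ survive. As $u$ ranges over a neighbourhood of $u_0$, these parts may a priori land in infinitely many different minimal central summands $z_k\mathcal{B}$.

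Your proposed contradiction is to perturb $v=\Phi(u_0)$ by some $x\in\Phi(\mathcal{A})\cap\overline{\mathrm{span}}\bigcup_k z_k\mathcal{B}$. This cannot work. All but finitely many $z_k$ lie under $1-q$, and $(1-q)v$ is a unitary of $(1-q)\mathcal{B}$, hence extreme in its unit ball (Theorem \ref{extreme}). So any $x$ supported under $1-q$ with $\lVert v\pm x\rVert\le 1$ must vanish.

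The nonunitary behaviour on the summands $z_k$ occurs at \emph{different} unitaries $u_k$. Lemma \ref{kadison} only excludes a \emph{single} unitary that is nonunitary on infinitely many summands. Nothing in your argument prevents the open sets $\{u\mid z_k\Phi(u)\notin\mathcal{U}(z_k\mathcal{B})\}$ from being pairwise disjoint.

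\textbf{The missing ingredient.} What is needed is the Baire category theorem, fed by a per-summand dichotomy.

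The paper applies Theorem \ref{russodye} to each compression $a\mapsto f\Phi(a)$, where $f$ is central with $f\mathcal{B}\cong\mathbb{M}_m$. The closed set $F_f=\{u\mid f\Phi(u)\in\mathcal{U}(f\mathcal{B})\}$ has nonempty interior only if it equals $\mathcal{U}(\mathcal{A})$. If infinitely many $F_f$ were nowhere dense, Baire would produce one unitary outside all of them, contradicting Lemma \ref{kadison}. So only finitely many are nowhere dense, and $q$ is their sum. For each $u$, the remaining summands and the complement of the petty ideal from Lemma \ref{kadison} then contribute unitary pieces.

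If you want to keep your architecture, Baire also yields your uniform neighbourhood:
\begin{itemize}
\item Let $S(u)$ be the finite set of such $f$ for which $f\Phi(u)$ is not unitary.
\item The sets $\{u\mid \#S(u)\le n\}$ are closed and cover $\mathcal{U}(\mathcal{A})$, so one of them has nonempty interior $G$.
\item Take $u_1\in G$ maximizing $\#S$ on $G$. Openness of nonunitarity gives $S(u)=S(u_1)$ for $u$ near $u_1$.
\item Hence $q=\sum_{f\in S(u_1)}f$ works on a neighbourhood of $u_1$, and your final Russo--Dye step finishes the proof.
\end{itemize}
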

\begin{proof}
Let $f$ be a central projection in $\mathcal{B}$ such that $f\mathcal{B}$ is $^*$-isomorphic to the full matrix algebra $\mathbb{M}_m$ for some $m$.
If $\{u\in \mathcal{U}(\mathcal{A})\mid f\Phi(u)\in \mathcal{U}(f\mathcal{B})\}\subset \mathcal{U}(\mathcal{A})$ has nonempty interior, then Theorem \ref{russodye} implies that $f\Phi(u)\in \mathcal{U}(f\mathcal{B})$ for every $u\in \mathcal{U}(\mathcal{A})$. Therefore, the closed subset $\{u\in \mathcal{U}(\mathcal{A})\mid f\Phi(u)\in \mathcal{U}(f\mathcal{B})\}\subset \mathcal{U}(\mathcal{A})$ is either nowhere dense or equal to $\mathcal{U}(\mathcal{A})$.  

Assume that there is a sequence of distinct central projections $f_j$, $j\geq 1$, in $\mathcal{B}$ such that each $f_j\mathcal{B}$ is $^*$-isomorphic to the full matrix algebra (whose size can depend on $j$), and $F_j :=\{u\in \mathcal{U}(\mathcal{A})\mid f_j\Phi(u)\in \mathcal{U}(f_j\mathcal{B})\}\subset \mathcal{U}(\mathcal{A})$ is nowhere dense, $j\geq 1$. Then the Baire Category Theorem shows that $\mathcal{U}(\mathcal{A})\setminus \bigcup_{j\geq 1} F_j$ is nonempty. This clearly contradicts the preceding lemma. 
Therefore, there is a maximal family of distinct central projections $f_j$, $1\leq j\leq k$, in $\mathcal{B}$ such that each $f_j\mathcal{B}$ is $^*$-isomorphic to the full matrix algebra (whose size can depend on $j$), and $F_j :=\{u\in \mathcal{U}(\mathcal{A})\mid f_j\Phi(u)\in \mathcal{U}(f_j\mathcal{B})\}\subset \mathcal{U}(\mathcal{A})$ is nowhere dense. 
Then, the preceding lemma implies that that $q=f_1+f_2+\cdots+f_k$ satisfies the desired property.
\end{proof}

\begin{proof}[Proof of Theorem \ref{main}]
Take the projection $q\in \mathcal{B}$ as in Lemma \ref{qb}. 
Since the mapping $\mathcal{A}\ni a\mapsto (1-q)\Phi(a)\in (1-q)\mathcal{B}$ sends unitaries to unitaries, Theorem \ref{russodye} shows that the mapping $\mathcal{A}\ni a\mapsto \Phi(1)^*(1-q)\Phi(a)=(1-q)\Phi(1)^*\Phi(a)\in (1-q)\mathcal{B}$ is a unital Jordan $^*$-homomorphism. 
By Corollary \ref{kernel}, the kernel $\mathcal{K}$ of this Jordan $^*$-homomorphism is a closed ideal in the C$^*$-algebra $\mathcal{A}$. 
On the other hand, since $\Phi $ is injective with finite-codimensional range and $\dim q\mathcal{B}<\infty$, we also see that $\dim \mathcal{K}<\infty$. 
By Lemma \ref{fdideal}, there is a central projection $p\in \mathcal{A}$ such that $\mathcal{K}=p\mathcal{A}$. 
Then the mapping $J\colon (1-p)\mathcal{A}\in a\mapsto (1-q)\Phi(1)^*\Phi(a)\in (1-q)\mathcal{B}$ is an injective unital Jordan $*$-homomorphism, which is an isometry by Lemma \ref{isometry}. 
It follows that $\Phi_2\colon (1-p)\mathcal{A}\ni a\mapsto (1-q)\Phi(a)\in (1-q)\mathcal{B}$ is an isometry.
Moreover, the mapping $\Phi _1\colon p\mathcal{A}\ni a\mapsto \Phi(a)=q\Phi(a)\in q\mathcal{B}$ is an isometry. Finally, set $\Phi _3\colon (1-p)\mathcal{A}\to q\mathcal{B}$ by $\Phi _3(a)=q\Phi(a)$, $a\in (1-p)\mathcal{A}$. 
Then $p, q$ and $\Phi _1, \Phi_2, J, \Phi_3$ together with $v=(1-q)\Phi(1)\in \mathcal{U}((1-q)\mathcal{B})$ satisfy all of the desired properties.
\end{proof}

\subsection{Isometries with corank $1$}
We extract the special case of corank $1$ in the above proofs. 
\begin{lemma}\label{corank1lemma}
Assume that $\Phi\colon \mathcal{A}\to \mathcal{B}$ is a linear isometry with corank $1$, and that there is $u\in \mathcal{U}(\mathcal{A})$ such that $\Phi(u) \notin \mathcal{U}(\mathcal{B})$. Then there is a central atom $q_0\in \mathcal{B}$ such that $(1-q_0)\Phi(\mathcal{U}(\mathcal{A}))\subset \mathcal{U}((1-q_0)\mathcal{B})$.
\end{lemma}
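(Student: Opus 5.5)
The plan is to rerun the arguments of Lemma \ref{kadison} and Lemma \ref{qb}, this time tracking dimensions and using that the corank is $1$. Replacing $\Phi$ by $a\mapsto\Phi(ua)$, we may assume $u=1$ and $v=\Phi(1)\notin\mathcal{U}(\mathcal{B})$. Write $v=v_0+c_1v_1+\cdots+c_mv_m$ as in Proposition \ref{vvv} and set $p_0=v_0^*v_0$.

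In the proof of Lemma \ref{kadison}, the map $\mathcal{B}\ni b\mapsto bp_0$ must be injective on $\Phi(\mathcal{A})$. Its kernel is $\mathcal{B}(1-p_0)$, and this kernel meets $\Phi(\mathcal{A})$ trivially, while $\Phi(\mathcal{A})$ has codimension $1$. Hence $\dim\mathcal{B}(1-p_0)\le 1$, and likewise $\dim (1-v_0v_0^*)\mathcal{B}\le 1$.

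\emph{Step 1: locate a central atom.} If $1-p_0\neq 0$, then $\mathcal{B}(1-p_0)$ is one-dimensional. By Lemma \ref{epetty} and Lemma \ref{fdideal}, $1-p_0$ then generates a finite-dimensional ideal. This ideal must be $\mathbb{C}q_0$ for a central atom $q_0=1-p_0$: a matrix block $\mathbb{M}_k$ with $k\ge2$ would force $\dim \mathcal{B}(1-p_0)\ge k$. The same reasoning applies to $1-v_0v_0^*$. Since $v$ is not unitary, either $v_0$ fails to be unitary or some $c_j$ term is present ($m\ge1$). In the latter case $v_1^*v_1$ is an atom orthogonal to $p_0$, so again $1-p_0\ne0$. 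In every case we obtain a central atom $q_0$ with $1-p_0=q_0$ or $1-v_0v_0^*=q_0$, and then by dimension counting $1-p_0,1-v_0v_0^*\in\{0,q_0\}$.

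\emph{Step 2: the compressed map is unitary on a dense set.} We use the Baire argument of Lemma \ref{qb}, restricted to minimal central projections $f$ with $\dim f\mathcal{B}=1$. The sets $F_f=\{w\in\mathcal{U}(\mathcal{A}) : f\Phi(w)\in\mathcal{U}(f\mathcal{B})\}$ are either nowhere dense or all of $\mathcal{U}(\mathcal{A})$. By Step 1 applied to an arbitrary unitary $w$, the non-unitarity of $\Phi(w)$ is confined to a single central atom $q_0(w)$, and $(1-q_0(w))\Phi(w)$ is unitary. I therefore expect at most one $f$ to have nowhere dense $F_f$.

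\emph{Main obstacle: excluding two such atoms.} Suppose $f_1\ne f_2$ both have nowhere dense $F_{f_i}$. By Baire there is a $w$ lying in neither $F_{f_1}$ nor $F_{f_2}$, so $f_1\Phi(w)$ and $f_2\Phi(w)$ are both non-unitary. But Step 1 for $w$ yields only one atom carrying the defect, which is a contradiction. This requires the pieces $f_j\mathcal{B}$ that are full matrix blocks of size at least $2$ to be covered too; Step 1 rules these out, because the petty part there has dimension at most $1$.

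\emph{Conclusion.} Let $q_0$ be the unique atom with nowhere dense $F_{q_0}$; it exists because $\Phi(1)\notin\mathcal{U}(\mathcal{B})$. Then $(1-q_0)\Phi(w)$ is unitary for $w$ in the dense set $\mathcal{U}(\mathcal{A})\setminus F_{q_0}$. Theorem \ref{russodye}, applied to $a\mapsto(1-q_0)\Phi(a)$ on a nonempty open subset of this set, extends unitarity to all of $\mathcal{U}(\mathcal{A})$. If no open set is available, I would use instead that the set of $w$ for which $(1-q_0)\Phi(w)$ is unitary is closed and dense, hence equal to $\mathcal{U}(\mathcal{A})$.
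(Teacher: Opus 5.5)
Your proposal is correct and follows essentially the paper's proof. The corank-$1$ dimension count from Lemma \ref{kadison} puts $1-v_0^*v_0$ and $1-v_0v_0^*$ in $\{0,q_0\}$ for a single central atom $q_0$, so the defect of each non-unitary $\Phi(w)$ sits on one central atom, and then the Russo--Dye/Baire argument of Lemma \ref{qb} via Theorem \ref{russodye} finishes; your uniqueness step is harmless but unnecessary, since $\mathcal{U}(\mathcal{A})\setminus F_{q_0}$ is already open (as $F_{q_0}$ is closed) and contains $1$, which is all Theorem \ref{russodye} needs.
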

\begin{proof}
Using the symbols and methods as in the proof of Lemma \ref{kadison}, we see that either $p_0=v_0^*v_0\neq 1$ or $v_0v_0^*\neq 1$ holds. 
Assume that $p_0\neq 1$ holds. Since $\Phi_0\colon \mathcal{A} \to p_0\mathcal{B}p_0$ is injective and $\Phi$ is an isometry with corank $1$, we see that $q_0=1-p_0\in \mathcal{B}$ is an atom satisfying $q_0\mathcal{B}(1-q_0)=\{0\}$. Thus $q_0$ is central in $\mathcal{B}$. 
Since $q_0\mathcal{B}$ is a $1$-dimensional C$^*$-algebra, we see that $q_0\neq q_0v_0^*v_0=q_0v_0v_0^*$, so $v_0v_0^*\neq 1$.
Now, imitating the above discussion with $v_0v_0^*$ in place of $p_0$, we see that $1-v_0v_0^*$ also needs to be an atom, and we get to the equation $1-v_0v_0^*=q_0$. Consequently, we get $1-v_0^*v_0=1-v_0v_0^*=q_0$.
We can get to the same conclusion by assuming $v_0v_0^*\neq 1$. 

Looking at the proof of Lemma \ref{qb}, we see that the set $\{u_1\in \mathcal{U}(\mathcal{A})\mid q_0\Phi(u_1)\in \mathcal{U}(q_0\mathcal{B})\}$ is nowhere dense. On the other hand, if the set $\{u_1\in \mathcal{U}(\mathcal{A})\mid (1-q_0)\Phi(u_1)\in \mathcal{U}((1-q_0)\mathcal{B})\}$ is not equal to $\mathcal{U}(\mathcal{A})$, then this set also needs to be nowhere dense. 
Then one may take an element $u_2\in \mathcal{U}(\mathcal{A})$ satisfying $q_0\Phi(u_2)\notin \mathcal{U}(q_0\mathcal{B})$ and $(1-q_0)\Phi(u_2)\notin \mathcal{U}((1-q_0)\mathcal{B})$.
This contradicts the fact that $1-\Phi(u_2)^*\Phi(u_2)=1-\Phi(u_2)\Phi(u_2)^*$ needs to be a scalar multiple of an atom which is proved in the first paragraph.
\end{proof}

Now we can prove Proposition \ref{corank1}.
\begin{proof}[Proof of Proposition \ref{corank1}]
If there is $u\in \mathcal{U}(\mathcal{A})$ such that $\Phi(u)\notin \mathcal{U}(\mathcal{B})$, then Lemma \ref{corank1lemma} implies that there is a central atom $q_0\in \mathcal{B}$ with the property that$(1-q_0)\Phi(\mathcal{U}(\mathcal{A}))\subset \mathcal{U}((1-q_0)\mathcal{B})$.
By the Russo--Dye theorem (Theorem \ref{russodye}), we see that the mapping $J_1\colon \mathcal{A}\ni a\mapsto \Phi(1)^* (1-q_0)\Phi(a)\in (1-q_0)\mathcal{B}$ is a unital Jordan $^*$-homomorphism. 
Since $\Phi$ is an isometry and $q_0$ is a central atom in $\mathcal{B}$, we see that $\dim\ker J_1\leq 1$. 
Assume that $\ker J_1=0$, i.e., $J_1$ is injective. Since $\Phi$ has corank $1$, we see that $J_1$ is surjective. It follows that $J_1\colon \mathcal{A}\to (1-q_0)\mathcal{B}$ is a surjective isometry and hence the mapping $\mathcal{A}\ni a\mapsto (1-q_0)\Phi(a)\in  (1-q_0)\mathcal{B}$ is also a surjective isometry.
Assume $\ker J_1=1$ (we will obtain a contradiction). By Corollary \ref{kernel}, $\ker J_1$ is a $1$-dimensional ideal of $\mathcal{A}$. Thus there is a central atom $e\in \mathcal{A}$ such that $\ker J_1 = e\mathcal{A}$. It follows that $\Phi(e)=q_0\Phi(e)$. Since $\lVert \Phi(e)\rVert =\lVert e\rVert =1$, we get $\Phi(e)=\lambda q_0$ for some $\lambda \in \mathbb{T}$. 
If $a\in (1-e)\mathcal{A}^{\leq 1}$, then we have $e\pm a \in \mathcal{A}^{\leq 1}$. 
It follows that $\mathcal{B}^{\leq 1}\ni \Phi(e\pm a) = q_0(\lambda \pm \Phi(a))  +(1-q_0)\Phi(a)$, so $q_0\Phi(a)=0$. This leads to the fact that $\Phi(\mathcal{U}(\mathcal{A}))\subset \mathcal{U}(\mathcal{B})$, which contradicts the assumption. 

If $\Phi(\mathcal{U}(\mathcal{A}))\subset \mathcal{U}(\mathcal{B})$, then the Russo--Dye theorem (Theorem \ref{russodye}) implies that the mapping $\mathcal{A}\ni a\mapsto \Phi(1)^*\Phi(a)\in \mathcal{B}$ is an injective unital Jordan $^*$-homomorphism with corank $1$. 
\end{proof}

\section{Self-adjoint parts}\label{S3}
Now we turn our attention to the self-adjoint part. 
Some results can be obtained in parallel with the discussion as in Section \ref{secmain}, but many others are not. 

\subsection{Isometries with finite corank}
\begin{lemma}\label{e+-}
Let $\Phi \colon \mathcal{A}_{sa}\to \mathcal{B}_{sa}$ be a linear isometry with finite corank. 
Set $f_+:=\chi_{\{1\}}(\Phi(1))$ and $f_-:=\chi_{\{-1\}}(\Phi(1))$. 
Then the mapping 
\[
\Psi\colon \mathcal{A}_{sa}\ni a\mapsto f_+ \Phi(a)f_+-f_-\Phi(a)f_-\in f_+\mathcal{B}_{sa}f_+ +f_-\mathcal{B}_{sa}f_-
\]
is a unital isometry with finite corank.
\end{lemma}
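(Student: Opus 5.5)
Since $1$ is an extreme point of $\mathcal{A}_{sa}^{\leq 1}$ (the self-adjoint unit ball has exactly the self-adjoint unitaries as extreme points), and $\Phi$ is an isometry onto $\Phi(\mathcal{A}_{sa})$, the element $v:=\Phi(1)$ is an extreme point of $\Phi(\mathcal{A}_{sa})^{\leq 1}$, a finite-codimensional subspace of $\mathcal{B}_{sa}$. By Proposition \ref{ppp}, $\sigma(v)$ is finite and $e:=\chi_{(-1,1)}(v)$ is a finite sum of atoms. So $1=f_++f_-+e$ with $\dim e\mathcal{B}e<\infty$, and $\Psi$ is well defined with $\Psi(1)=f_++f_-$, which is the unit of the C$^*$-algebra $f_+\mathcal{B}f_++f_-\mathcal{B}f_-$. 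Unitality is therefore immediate, and $\lVert\Psi\rVert\leq 1$ because compressions and the sign flip on $f_-$ do not increase the norm.

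For the isometry property I would follow the first part of the proof of Lemma \ref{kadison}. By Lemma \ref{psi}, it suffices to show that for $a\in\mathcal{A}_{sa}$ with $\max\sigma(a)=\lVert a\rVert=1$ we have $\max\sigma(\Psi(a))\geq 1$; the case of $\min\sigma$ follows by replacing $a$ with $-a$, and $\Psi(a)$ lies in the unit ball. Suppose $\lVert 1+a\rVert=2$. Then $\lVert v+\Phi(a)\rVert=2$, so, representing $\mathcal{B}$ on $\mathcal{H}$, there are unit vectors $g_n$ with $|\langle (v+\Phi(a))g_n,g_n\rangle|\to 2$, using self-adjointness. Passing to a subsequence, $\langle vg_n,g_n\rangle\to\epsilon$ and $\langle\Phi(a)g_n,g_n\rangle\to\epsilon$ for some sign $\epsilon=\pm1$. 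Write $v=f_+-f_-+\sum c_j e_j$ with $|c_j|<1$, where the $e_j$ are the spectral projections inside $e$. If $\epsilon=1$, then $\langle vg_n,g_n\rangle\to 1$ forces $\lVert g_n-f_+g_n\rVert\to0$, hence $\langle f_+\Phi(a)f_+g_n,g_n\rangle\to1$. If $\epsilon=-1$, then $g_n$ concentrates on $f_-$, and $\langle -f_-\Phi(a)f_-g_n,g_n\rangle\to 1$. In either case $\max\sigma(\Psi(a))\geq1$, so $\Psi$ is isometric by Lemma \ref{psi}.

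For finite corank, I would argue that the map $\mathcal{B}_{sa}\ni b\mapsto f_+bf_+-f_-bf_-$ is surjective onto the target, and that its kernel consists of those $b$ with $f_\pm bf_\pm=0$. Since $\Phi(\mathcal{A}_{sa})$ has finite codimension in $\mathcal{B}_{sa}$, its image under a surjection has finite codimension in the target. Hence $\Psi(\mathcal{A}_{sa})$ has finite codimension, which completes the argument. Note that this step does not require the kernel to be finite-dimensional.

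The main obstacle is the concentration argument for the unit vectors in the isometry step. The care needed is in handling the sign $\epsilon$ correctly and in verifying that $e$ contributes nothing in the limit. Here $|c_j|<1$ together with the finiteness of the spectrum gives a uniform gap $\max_j|c_j|<1$, which makes the estimate go through.
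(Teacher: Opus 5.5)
Your proposal is correct and follows essentially the same route as the paper: extremality of $\Phi(1)$ plus Proposition \ref{ppp}, then the self-adjoint version of the vector-concentration argument from Lemma \ref{kadison}, split according to the sign of the limit, with finite corank coming from surjectivity of the compression map. The appeal to Lemma \ref{psi} is unnecessary. Your bound $\max\sigma(\Psi(b))\geq 1$ for $b=\pm a/\lVert a\rVert$ (with the sign chosen so that $\max\sigma(b)=\lVert b\rVert=1$), together with $\lVert\Psi\rVert\leq 1$, already gives $\lVert\Psi(a)\rVert=\lVert a\rVert$ directly, exactly as in the paper.
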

\begin{proof}
Since $1\in \mathrm{ext}\,A_{sa}^{\leq 1}$ and $T$ is isometric, we have $\Phi(1)\in \mathrm{ext}\, \Phi(\mathcal{A}_{sa})^{\leq 1}$. 
By Proposition \ref{ppp}, we see that $f_\pm$ makes sense and that $1-f_+-f_-$ is a sum of finitely many atoms in $\mathcal{B}_{sa}$. 
It is clear that $\Psi$ is unital, with norm at most $1$, and has finite corank.
From here we will give a self-adjoint variant of the discussion as in the first paragraph of the proof of Lemma \ref{kadison}. 
Let $a\in \mathcal{A}_{sa}$ be an element satisfying $\lVert a\rVert=1=\max\sigma(a)$. 
Then we have 
\[
2=\lVert 1+a\rVert =\lVert \Phi(1+a)\rVert =\lVert \Phi(1)+\Phi(a)\rVert.
\] 
We may assume that $\mathcal{B}$ acts on a Hilbert space $\mathcal{H}$. Then we may take a sequence of unit vectors $h_n\in \mathcal{H}$ satisfying either 
$\langle (\Phi(1)+\Phi(a))h_n, h_n\rangle\to \lVert \Phi(1)+\Phi(a)\rVert =2$ or $\langle (\Phi(1)+\Phi(a))h_n, h_n\rangle\to -\lVert \Phi(1)+\Phi(a)\rVert =-2$ as $n\to\infty$. 
Assume that the former condition holds.
Since $\lVert \Phi(1)\rVert =\lVert \Phi(a)\rVert =1$, we get $\langle \Phi(1)h_n, h_n\rangle \to 1$ and  $\langle \Phi(a)h_n, h_n\rangle \to 1$. 
Then, considering the spectral decomposition of $\Phi(1)$, one may see that $\lVert f_+h_n-h_n\rVert \to 0$ as $n\to \infty$. It follows that $\langle f_+\Phi(a)f_+h_n, h_n\rangle=\langle \Phi(a)f_+h_n, f_+h_n\rangle \to 1$, so we get $\lVert f_+\Phi(a)f_+\rVert =1$. 
Similarly, if the latter condition holds, one may see that $\lVert f_-\Phi(a)f_-\rVert =1$. 
Consequently, we get $\lVert \Psi(a)\rVert=\lVert a\rVert$. 
Since $\Psi$ is linear, the same equality holds for every $a\in \mathcal{A}_{sa}$, so $\Psi$ is a linear isometry. 
\end{proof}

\begin{lemma}\label{f123}
Let $\Psi\colon \mathcal{A}_{sa}\to \mathcal{B}_{sa}$ be a unital linear isometry with finite corank. 
Let $e_1, e_2, \ldots, e_l$ be mutually orthogonal nonzero projections in $\mathcal{A}$.
For each $j$, set $f_j:=\chi_{\{1\}}(\Psi(e_j))$. Then $f_1, f_2, \ldots, f_l$ are mutually orthogonal. Moreover, the mapping $e_j \mathcal{A}_{sa}e_j\ni a \mapsto f_j\Psi(a)f_j\in f_j\mathcal{B}_{sa}f_j$ is a unital linear isometry.
\end{lemma}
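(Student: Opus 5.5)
The plan is to work entirely with the order structure: Lemma \ref{psi} turns $\Psi$ into a unital order embedding, and the finite-spectrum result Proposition \ref{ppp} gives a spectral gap for each $\Psi(e_j)$. Concretely, $\Psi$ is unital and isometric, so Lemma \ref{psi} makes it a positive order embedding with $\max\sigma(\Psi(x))=\max\sigma(x)$ and $\min\sigma(\Psi(x))=\min\sigma(x)$ for all $x\in\mathcal{A}_{sa}$. In particular $0\leq \Psi(e_j)\leq 1$ and $1\in\sigma(\Psi(e_j))$.

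\textbf{Step 1: $f_j$ is well defined and there is a spectral gap.} The element $2e_j-1$ is a self-adjoint unitary, hence an extreme point of $\mathcal{A}_{sa}^{\leq 1}$. Therefore $2\Psi(e_j)-1\in \mathrm{ext}\,\Psi(\mathcal{A}_{sa})^{\leq 1}$, and $\Psi(\mathcal{A}_{sa})$ has finite codimension in $\mathcal{B}_{sa}$. Proposition \ref{ppp} then gives $\#\sigma(\Psi(e_j))<\infty$. So $f_j=\chi_{\{1\}}(\Psi(e_j))$ is a nonzero projection in $\mathcal{B}$, and there is $\delta>0$ with $1-\Psi(e_j)\geq \delta(1-f_j)$.

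\textbf{Step 2: orthogonality.} For $j\neq k$ we have $\Psi(e_j)+\Psi(e_k)=\Psi(e_j+e_k)\leq 1$. Let $h$ be a unit vector in the range of $f_j$. Then $\langle\Psi(e_j)h,h\rangle=1$, so $\langle \Psi(e_k)h,h\rangle\leq 0$. Positivity of $\Psi(e_k)$ forces $\Psi(e_k)^{1/2}h=0$, hence $\Psi(e_k)h=0$ and $f_kh=0$. Thus $f_kf_j=0$.

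\textbf{Step 3: the compression is a unital isometry.} Put $\Psi_j(a):=f_j\Psi(a)f_j$ for $a\in e_j\mathcal{A}_{sa}e_j$. It is unital because $f_j\Psi(e_j)f_j=f_j$. By Lemma \ref{psi} it suffices to show that $\max\sigma$ and $\min\sigma$ of $\Psi_j(a)$, computed in $f_j\mathcal{B}f_j$, agree with those of $a$ in $e_j\mathcal{A}e_j$. The min case follows by applying the max case to $-a$. So let $M=\max\sigma_{e_j\mathcal{A}e_j}(a)$.

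For the upper bound, put $b=a+M(1-e_j)$, which satisfies $\max\sigma(b)=M$. Then $\Psi(b)=\Psi(a)+M(1-\Psi(e_j))\leq M$. Compressing by $f_j$ kills the second term, so $\Psi_j(a)\leq Mf_j$.

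For attainment, which I expect to be the only delicate point, take $c<0$ with $\lvert c\rvert$ large and set $b_c=a+c(1-e_j)$. Then $\max\sigma(b_c)=M$, so $\max\sigma(\Psi(b_c))=M$. Choose a unit vector $h$ with $\langle\Psi(b_c)h,h\rangle\geq M-1/\lvert c\rvert$. Using $\langle\Psi(a)h,h\rangle\leq\lVert a\rVert$ and the gap $1-\Psi(e_j)\geq\delta(1-f_j)$, we get
\[
\lvert c\rvert\,\delta\,\lVert(1-f_j)h\rVert^2\leq \lVert a\rVert - M+1 \leq 2\lVert a\rVert+1 .
\]
Hence $\lVert (1-f_j)h\rVert\to 0$ as $c\to-\infty$. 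Since $c(1-\Psi(e_j))\leq 0$, we also have $\langle\Psi(a)h,h\rangle\geq M-1/\lvert c\rvert$. It follows that $\langle\Psi_j(a)f_jh,f_jh\rangle\geq M-o(1)$ while $\lVert f_jh\rVert\to1$. Combined with the upper bound, this gives $\max\sigma(\Psi_j(a))=M$, and Lemma \ref{psi} finishes the proof.
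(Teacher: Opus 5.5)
Your proof is correct and follows essentially the same route as the paper. Proposition \ref{ppp} gives finite spectrum and hence a spectral gap at $1$ for $\Psi(e_j)$; orthogonality comes from $\sum_j\Psi(e_j)\leq 1$ (the paper writes this as $\lVert \sum_j f_j\rVert\leq\lVert\sum_j\Psi(e_j)\rVert=1$); and near-maximizing vectors are forced into the range of $f_j$. The only cosmetic difference is how the vectors are localized: the paper imitates Lemma \ref{e+-} and uses $\lVert e_j+a\rVert=2$ for $\lVert a\rVert=1=\max\sigma(a)$, whereas you reduce to extremal spectral values via Lemma \ref{psi} and use the penalty $a+c(1-e_j)$ with $c\to-\infty$.
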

\begin{proof}
For each $j$, we have $0\leq \Psi(e_j)\leq 1$.
Since $2e_j-1\in \mathrm{ext}\, \mathcal{A}_{sa}^{\leq1}$, we get $2\Psi(e_j)-1=\Psi(2e_j-1)\in \mathrm{ext}\, \Psi(\mathcal{A}_{sa})^{\leq 1}$. 
Thus Proposition \ref{ppp} implies that $\#\sigma(\Psi(e_j))<\infty$ and that $\chi_{(0,1)}(\Psi(e_j))$ is a linear combination of finitely many atoms. 
Since $e_1, e_2, \ldots, e_l$ are mutually orthogonal, we have
\[
1= \lVert e_1+e_2+\cdots + e_l \rVert = \lVert \Psi(e_1)+\Psi(e_2)+\cdots + \Psi(e_l) \rVert.
\]
This together with 
\[
\Psi(e_1)+\Psi(e_2)+\cdots + \Psi(e_l)\geq f_1+f_2+\cdots + f_l\geq 0 
\]
implies that $\lVert f_1+f_2+\cdots + f_l\rVert \leq 1$. Since $f_1, f_2, \ldots, f_l$ are projections, we see that they are mutually orthogonal. 

Using the fact that $\Psi$ is positive, one may show that the mapping $e_j \mathcal{A}_{sa}e_j\ni a \mapsto f_j\Psi(a)f_j\in f_j\mathcal{B}_{sa}f_j$ is an isometry by imitating the discussion as in the proof of the preceding lemma.
\end{proof}

A special case of the preceding lemma follows.
\begin{lemma}\label{f1234}
Let $\Psi\colon \mathcal{A}_{sa}\to \mathcal{B}_{sa}$ be a unital linear isometry with finite corank. 
Let $e_1, e_2, \ldots, e_l$ be mutually orthogonal nonzero central projections $\mathcal{A}$ with $e_1+e_2+\cdots+e_l=1$.
For each $j$, set $f_j:=\chi_{\{1\}}(\Psi(e_j))$. Then the mapping 
\[
\mathcal{A}_{sa}\ni a \mapsto f_1\Psi(a)f_1+ f_2\Psi(a)f_2+\cdots + f_l \Psi(a) f_l\in f_1\mathcal{B}_{sa}f_1+f_2\mathcal{B}_{sa}f_2+\cdots +f_l\mathcal{B}_{sa}f_l
\]
is a unital linear isometry.
\end{lemma}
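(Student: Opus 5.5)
By Lemma \ref{f123}, the projections $f_1,\ldots,f_l$ are mutually orthogonal, and each compression $a\mapsto f_j\Psi(a)f_j$ restricted to $e_j\mathcal{A}_{sa}e_j=e_j\mathcal{A}_{sa}$ is a unital isometry. Write $\Theta(a):=\sum_j f_j\Psi(a)f_j$. The plan is to show three things in turn: that $\Theta$ is unital, that it has norm at most $1$, and that it preserves the norm of every $a\in\mathcal{A}_{sa}$.

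\emph{Unitality.} We have $\Theta(1)=\sum_j f_j\Psi(1)f_j=\sum_j f_j$, and $\sum_j f_j$ is the unit of $\bigoplus_j f_j\mathcal{B}f_j$. So $\Theta$ is unital into that algebra.

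\emph{Norm at most $1$.} Since the $f_j$ are orthogonal, the map $b\mapsto\sum_j f_jbf_j$ is a pinching, hence a contraction. Therefore $\lVert\Theta(a)\rVert\le\lVert\Psi(a)\rVert=\lVert a\rVert$.

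\emph{Isometry.} Because the $e_j$ are central and sum to $1$, every $a\in\mathcal{A}_{sa}$ decomposes as $a=\sum_j e_ja$ with $e_ja\in e_j\mathcal{A}_{sa}$. This gives $\lVert a\rVert=\max_j\lVert e_ja\rVert$. Fix $j$ with $\lVert e_ja\rVert=\lVert a\rVert$. Since $\lVert\Theta(a)\rVert\ge\lVert f_j\Psi(a)f_j\rVert$, it suffices to prove
\[
f_j\Psi(a)f_j=f_j\Psi(e_ja)f_j,
\]
that is, $f_j\Psi(e_ka)f_j=0$ for every $k\ne j$. Once this holds, the isometry part of Lemma \ref{f123} gives $\lVert f_j\Psi(e_ja)f_j\rVert=\lVert e_ja\rVert=\lVert a\rVert$, and we are done.

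\emph{The vanishing claim (main step).} To prove $f_j\Psi(x)f_j=0$ for $x\in e_k\mathcal{A}_{sa}$ with $k\ne j$, we first use positivity. Lemma \ref{psi} shows that $\Psi$ is an order embedding, so it is positive. Take $x\in e_k\mathcal{A}_{sa}$ with $\lVert x\rVert\le1$. Then $-e_k\le x\le e_k$, and hence
\[
-\Psi(e_k)\le\Psi(x)\le\Psi(e_k).
\]
Now we need $f_j\Psi(e_k)f_j=0$. From $e_j+e_k\le1$ we get $\Psi(e_j)+\Psi(e_k)\le1$. On the range of $f_j$ we have $\Psi(e_j)=1$, so $f_j\Psi(e_k)f_j\le f_j-f_j\Psi(e_j)f_j=0$. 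Since $\Psi(e_k)\ge0$, this forces $f_j\Psi(e_k)f_j=0$. Compressing the order inequality by $f_j$ then yields $f_j\Psi(x)f_j=0$.

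No serious obstacle is expected here; the only delicate point is making sure positivity is actually available, which is exactly what Lemma \ref{psi} provides.
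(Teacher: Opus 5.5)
Your proof is correct and is essentially the paper's argument: you reduce to $f_j\Psi(a)f_j=f_j\Psi(e_ja)f_j$ by combining positivity of $\Psi$ with the fact that compression by $f_j$ kills $\Psi$ of the complementary part, and then invoke the isometry statement of Lemma~\ref{f123}. The only cosmetic difference is that the paper treats $(1-e_j)a$ in one step, via $-\lVert a\rVert\Psi(1-e_j)\le\Psi((1-e_j)a)\le\lVert a\rVert\Psi(1-e_j)$ together with $\Psi(1-e_j)=1-\Psi(e_j)$; you instead handle each $e_ka$ with $k\ne j$ separately, using $\Psi(e_j)+\Psi(e_k)\le 1$.
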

\begin{proof}
Let $a\in \mathcal{A}_{sa}$.
By the assumption, we have $a=e_1a+e_2a+\cdots +e_la$, and each $e_ja$ belongs to $e_j\mathcal{A}_{sa}e_j$.
Let us show that $f_j\Psi(a)f_j=f_j\Psi(e_ja)f_j$. It suffices to verify $f_j\Psi((1-e_j)a)f_j=0$.
This follows from $-\lVert a\rVert \Psi(1-e_j)\leq \Psi((1-e_j)a)\leq \lVert a\rVert \Psi(1-e_j)$, $\Psi(1-e_j)=1-\Psi(e_j)$, and $f_j=\chi_{\{1\}}(\Psi(e_j))$.
Now the preceding lemma completes the proof.
\end{proof}

\begin{lemma}\label{e123}
Let $e_1, e_2, e_3\in \mathcal{A}$ be mutually orthogonal projections and $z\in \mathcal{A}$. Let $c$ be a real number with $-1\leq c\leq1$. If $e_2ze_3=z\neq 0$, then we have 
\[
\lVert -e_1+ce_2+e_3+z+z^*\rVert=\max\sigma(-e_1+ce_2+e_3+z+z^*) =\frac{1+c}{2}+\sqrt{\left(\frac{1-c}{2}\right)^2+\lVert z\rVert^2}.
\]
\end{lemma}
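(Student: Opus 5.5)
Set $x:=-e_1+ce_2+e_3+z+z^*$, which is self-adjoint. The plan is to split $x$ along the projection $e_2+e_3$ and reduce the computation to a $2\times 2$ operator matrix, whose spectrum is governed by the polar decomposition of $z$.

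First, $x=-e_1+y$ with $y:=ce_2+e_3+z+z^*\in(e_2+e_3)\mathcal{A}(e_2+e_3)$, since $z=e_2ze_3$ and $z^*=e_3z^*e_2$. Hence $x$ is the orthogonal sum of $-e_1$ and $y$, and after adding the diagonal block corresponding to $1-e_1-e_2-e_3$ we have $\sigma(x)\subset\{-1,0\}\cup\sigma(y)$. It therefore suffices to show the following two facts. (i) $\max\sigma(y)=\lambda_+:=\frac{1+c}{2}+\sqrt{(\frac{1-c}{2})^2+\lVert z\rVert^2}$. (ii) Every point of $\sigma(y)$ lies in $[\lambda_-,\lambda_+]$, where $\lambda_-:=\frac{1+c}{2}-\sqrt{(\frac{1-c}{2})^2+\lVert z\rVert^2}$. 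Granting these, $\lambda_+>\max\{1,c\}\ge 0$ because $z\neq0$. Moreover $\lambda_-\ge -\lambda_+$ holds since $\lambda_++\lambda_-=1+c\ge 0$. So $\lambda_+\ge\lvert t\rvert$ for every $t\in\sigma(x)$, and this gives both equalities in the statement.

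To analyse $y$, represent $\mathcal{A}$ faithfully on $\mathcal{H}$ and work with the block matrix relative to $e_2\mathcal{H}\oplus e_3\mathcal{H}$:
\[
y=\begin{pmatrix} c & z\\ z^* & 1\end{pmatrix}.
\]
For the upper bound, take a unit vector $\xi\oplus\eta$. Then
\[
\langle y(\xi\oplus\eta),\xi\oplus\eta\rangle\le c\lVert\xi\rVert^2+2\lVert z\rVert\lVert\xi\rVert\lVert\eta\rVert+\lVert\eta\rVert^2,
\]
and the right-hand side is at most the top eigenvalue of the scalar matrix $\begin{pmatrix} c&\lVert z\rVert\\ \lVert z\rVert&1\end{pmatrix}$, which is $\lambda_+$. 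The same estimate in the other direction bounds $\langle y\zeta,\zeta\rangle$ from below by that matrix's smaller eigenvalue $\lambda_-$, which proves (ii).

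For the lower bound in (i), choose unit vectors $\eta_n\in e_3\mathcal{H}$ with $\lVert z\eta_n\rVert\to\lVert z\rVert$. Put $\xi_n:=z\eta_n/\lVert z\rVert$, which lies in $e_2\mathcal{H}$ and has norm tending to $1$. Pick $(\alpha,\beta)$ to be a unit top eigenvector of the scalar matrix, with $\alpha,\beta\ge0$. Testing $y$ on $\alpha\xi_n\oplus\beta\eta_n$ gives
\[
\langle y(\alpha\xi_n\oplus\beta\eta_n),\alpha\xi_n\oplus\beta\eta_n\rangle\to c\alpha^2+2\lVert z\rVert\alpha\beta+\beta^2=\lambda_+,
\]
using $\langle z\eta_n,\xi_n\rangle=\lVert z\eta_n\rVert^2/\lVert z\rVert\to\lVert z\rVert$. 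Since the test vectors have norm tending to $1$, this yields $\max\sigma(y)\ge\lambda_+$.

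The main point needing care is this approximate-eigenvector step, because $\lVert z\rVert$ need not be attained. The sequence $\eta_n$ handles that, and no polar decomposition inside $\mathcal{A}$ is needed. Everything else is a $2\times2$ eigenvalue computation.
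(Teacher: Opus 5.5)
Your proof is correct and is essentially the paper's argument. Both reduce to the corner $(e_2+e_3)\mathcal{A}(e_2+e_3)$ and bound the form $c\lVert g\rVert^2+\lVert h\rVert^2+2\,\mathrm{Re}\,\langle zh,g\rangle$ by Cauchy--Schwarz; the paper packages this as the criterion $c_2e_2+c_3e_3\pm(z+z^*)\geq 0\iff\lVert z\rVert^2\leq c_2c_3$ together with the operator inequalities $\lambda_-(e_2+e_3)\leq y\leq\lambda_+(e_2+e_3)$, whereas you compare directly with the scalar matrix $\begin{pmatrix} c&\lVert z\rVert\\ \lVert z\rVert&1\end{pmatrix}$.

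Your approximate eigenvectors $\alpha\xi_n\oplus\beta\eta_n$ make explicit the ``only if'' half of that criterion (where $\lVert z\rVert$ need not be attained), which the paper leaves implicit. Just read $\sigma(y)$ throughout as the spectrum of the compression to $(e_2+e_3)\mathcal{H}$, as your block setup does. When $e_2+e_3\neq 1$, the spectrum of $y$ in $\mathcal{A}$ also contains $0$, which lies below $\lambda_-$ if $c>\lVert z\rVert^2$.
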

\begin{proof}
Since $e_1, e_2, e_3$ are mutually orthogonal and
\[
\frac{1+c}{2}+\sqrt{\left(\frac{1-c}{2}\right)^2+\lVert z\rVert^2}> 1,
\]
it suffices to show 
\[
\lVert ce_2+e_3+z+z^*\rVert=\max\sigma(ce_2+e_3+z+z^*) =\frac{1+c}{2}+\sqrt{\left(\frac{1-c}{2}\right)^2+\lVert z\rVert^2}.
\]
To show it, we only need to verify 
\begin{equation}\label{eq1}
\lVert ce_2+e_3+z+z^*\rVert\leq \frac{1+c}{2}+\sqrt{\left(\frac{1-c}{2}\right)^2+\lVert z\rVert^2}
\end{equation}
and 
\begin{equation}\label{eq2}
\frac{1+c}{2}+\sqrt{\left(\frac{1-c}{2}\right)^2+\lVert z\rVert^2}\in \sigma(ce_2+e_3+z+z^*).
\end{equation}

Let $c_2, c_3$ be positive real numbers. Then $c_2e_2+c_3e_3\pm(z+z^*)\geq 0$ holds if and only if $\langle (c_2e_2+c_3e_3\pm(z+z^*))(g+h), g+h\rangle\geq 0$ for every $g$ in the range of $e_2$ and every $h$ in the range of $e_3$. 
By
\[
\langle (c_2e_2+c_3e_3\pm(z+z^*))(g+h), g+h\rangle = c_2\langle g, g\rangle +  c_3\langle h, h\rangle \pm 2\mathrm{Re}\, \langle zh, g\rangle 
\]
and the Cauchy--Schwarz inequality, we get to the fact that $c_2e_2+c_3e_3\pm(z+z^*)\geq 0$ holds if and only if $\lVert z\rVert^2\leq c_2c_3$. 

Therefore, for a real number $t>1$, we have 
\[
\begin{split}
ce_2+e_3+z+z^*\leq t(e_2+e_3) &\iff (t-c)e_2+(t-1)e_3-z-z^*\geq 0\\
&\iff (t-c)(t-1)\geq \lVert z\rVert^2\\
&\iff t\geq \left(\frac{1+c}{2}+\sqrt{\left(\frac{1-c}{2}\right)^2+\lVert z\rVert^2}\right).
\end{split}
\]
In particular, we obtain \eqref{eq2} and 
\begin{equation}\label{eq3}
ce_2+e_3+z+z^*\leq \left(\frac{1+c}{2}+\sqrt{\left(\frac{1-c}{2}\right)^2+\lVert z\rVert^2}\right)(e_2+e_3).
\end{equation}
We also have
\[
\left(\frac{-1+c}{2}+\sqrt{\left(\frac{1-c}{2}\right)^2+\lVert z\rVert^2}\right)e_2+\left(\frac{1-c}{2}+\sqrt{\left(\frac{1-c}{2}\right)^2+\lVert z\rVert^2}\right)e_3 +z+z^*\geq 0,
\]
which implies
\begin{equation}\label{eq4}
\left(\frac{1+c}{2}-\sqrt{\left(\frac{1-c}{2}\right)^2+\lVert z\rVert^2}\right)(e_2+e_3)\leq ce_2+e_3+z+z^*.
\end{equation}
Therefore, using \eqref{eq3}, \eqref{eq4} together with the assumption $c\geq-1$, we obtain \eqref{eq1}.
\end{proof}

\begin{lemma}\label{1-chi}
Let $\Phi\colon \mathcal{A}_{sa}\to \mathcal{B}_{sa}$ be a linear isometry with finite corank. 
Let $s$ be a self-adjoint unitary in $\mathcal{A}$. 
Then $\chi_{(-1, 1)}(\Phi(s))\in \mathcal{B}$ is petty.
\end{lemma}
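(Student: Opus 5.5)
Since $s$ is a self-adjoint unitary, $s\in\mathrm{ext}\,\mathcal{A}_{sa}^{\leq 1}$: if $s=(x+y)/2$ with $\lVert x\rVert,\lVert y\rVert\leq 1$, then writing $e_\pm=\chi_{\{\pm1\}}(s)$ and compressing to $e_+$ and $e_-$ forces $x=y=s$. As $\Phi$ is an isometry onto $\Phi(\mathcal{A}_{sa})$, we get $\Phi(s)\in\mathrm{ext}\,\Phi(\mathcal{A}_{sa})^{\leq1}$, and $\Phi(\mathcal{A}_{sa})\subset\mathcal{B}_{sa}$ has finite codimension. Proposition \ref{ppp} then gives $\#\sigma(\Phi(s))<\infty$, and $e:=\chi_{(-1,1)}(\Phi(s))$ is a finite sum of atoms of $\mathcal{B}$. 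What remains is to upgrade ``finite sum of atoms'' to ``petty'', i.e.\ to show that the ideal generated by $e$ is finite-dimensional. Equivalently, by Lemma \ref{epetty}, it suffices to prove $\dim e\mathcal{B}<\infty$. Since $e\mathcal{B}e$ is finite-dimensional, this amounts to $\dim e\mathcal{B}(1-e)<\infty$.

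Set $f_\pm=\chi_{\{\pm1\}}(\Phi(s))$, so that $1=f_-+e+f_+$, and write $\Phi(s)=-f_-+x+f_+$ with $x\in e\mathcal{B}_{sa}e$. Since $e$ is a finite sum of atoms, I decompose further: $x=\sum_i c_ie_i$ with $e_i$ atoms, $c_i\in(-1,1)$. The relevant off-diagonal spaces are then $e_i\mathcal{B}f_\pm$. The plan is to show that each of $e_i\mathcal{B}f_+$ and $e_i\mathcal{B}f_-$ meets $\Phi(\mathcal{A}_{sa})$ only trivially, after taking self-adjoint parts $z+z^*$. Suppose $z=e_iz f_+\neq0$ and $z+z^*\in\Phi(\mathcal{A}_{sa})$, say $z+z^*=\Phi(y)$, and scale $z$ to be small. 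Lemma \ref{e123}, applied with $(f_-,e_i,f_+)$ and $c=c_i$ (the other atoms are orthogonal and contribute at most $1$), gives
\[
\lVert\Phi(s)+z+z^*\rVert=\tfrac{1+c_i}{2}+\sqrt{\bigl(\tfrac{1-c_i}{2}\bigr)^2+\lVert z\rVert^2}>1.
\]
The same holds for $\Phi(s)-(z+z^*)$, by replacing $z$ by $-z$. Hence $\lVert s\pm y\rVert>1$. I want to derive a contradiction from this, which means I need a quantitative comparison: for small $\lVert z\rVert$ the excess over $1$ is of order $\lVert z\rVert^2/(1-c_i)$, i.e.\ quadratic in $\lVert y\rVert$. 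On the $\mathcal{A}$ side, I need some $y$ in the preimage of such an element with $\lVert s\pm ty\rVert\le 1+o(t^2)$.

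This is the main obstacle: pure norm estimates are first order, whereas Lemma \ref{e123} gives second-order growth. My first attempt is to use linear-algebra dimension counting. If $\dim e_i\mathcal{B}f_+=\infty$, then its self-adjoint version $\{z+z^*\}$ is infinite-dimensional, so it has an infinite-dimensional intersection with $\Phi(\mathcal{A}_{sa})$, and hence an infinite-dimensional preimage $\mathcal{Y}\subset\mathcal{A}_{sa}$. Inside $\mathcal{Y}$, again by finite codimension, I pick $y$ with $e_+ye_+=e_-ye_-=0$ (using $e_\pm=\chi_{\{\pm1\}}(s)$). This is possible since $\{y:\ e_+ye_+=e_-ye_-=0\}$ is of finite codimension only when $e_\pm\mathcal{A}e_\pm$ are finite-dimensional. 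Otherwise I first apply Lemma \ref{e123} inside $\mathcal{A}$ with $(e_-,0,e_+)$, $c=-1$, which computes $\lVert s+y\rVert=\sqrt{1+\lVert e_-ye_+\rVert^2}$. Comparing with $\mathcal{B}$ yields
\[
\tfrac{1+c_i}{2}+\sqrt{\bigl(\tfrac{1-c_i}{2}\bigr)^2+\lVert z\rVert^2}=\sqrt{1+\lVert y\rVert^2}
\]
for all such $y$. Since $\lVert z\rVert=\lVert\Phi(y)\rVert=\lVert y\rVert$ up to a factor, this identity fails for small $t$ because $c_i>-1$. Replacing $y$ by $y+w$ with $w\in e_+\mathcal{A}e_+$, and using linearity along a two-dimensional family in $\mathcal{Y}$, should force the contradiction. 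This gives $\dim e\mathcal{B}f_\pm<\infty$, hence $\dim e\mathcal{B}<\infty$, and Lemma \ref{epetty} finishes the proof.
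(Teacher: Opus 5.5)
Your reduction is sound: $\Phi(s)$ is extreme, $e=\chi_{(-1,1)}(\Phi(s))$ is a finite sum of atoms, and it suffices to bound $\dim e_i\mathcal{B}f_\pm$. The proof breaks at the next step. There you need $y\in\mathcal{A}_{sa}$ with $\Phi(y)=z+z^*$ and with vanishing diagonal blocks $e_\pm ye_\pm$, where $e_\pm=\chi_{\{\pm1\}}(s)$. Choosing such a $y$ ``by finite codimension'' does not work. The subspace $\{y\mid e_+ye_+=e_-ye_-=0\}$ has infinite codimension as soon as $e_+\mathcal{A}e_+$ or $e_-\mathcal{A}e_-$ is infinite-dimensional, so it need not meet your preimage $\mathcal{Y}$ at all; your own sentence concedes this. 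Without vanishing diagonal blocks, Lemma \ref{e123} cannot be applied in $\mathcal{A}$. The fallback of replacing $y$ by $y+w$ with $w\in e_+\mathcal{A}e_+$ is left unspecified. Moreover, $y+w$ generally leaves $\mathcal{Y}$, so your exact $\mathcal{B}$-side norm formula no longer applies. As written, no contradiction is reached.

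The missing idea is that no choice is needed: every preimage automatically satisfies $e_\pm ye_\pm=0$. The first-order versus second-order mismatch you call the main obstacle is exactly what proves it. Your own computation gives, for all $t\in\mathbb{R}$,
\[
\lVert s+ty\rVert=\lVert\Phi(s)+t(z+z^*)\rVert=\tfrac{1+c_i}{2}+\sqrt{\bigl(\tfrac{1-c_i}{2}\bigr)^2+t^2\lVert z\rVert^2}=1+O(t^2).
\]
On the other hand, for $\lambda\in\sigma(e_+ye_+)$ one has $\lVert s+ty\rVert\geq\lVert e_++te_+ye_+\rVert\geq 1+t\lambda$. Likewise $\lVert s+ty\rVert\geq 1-t\mu$ for $\mu\in\sigma(e_-ye_-)$. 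Letting $t\to0$ with the appropriate sign forces $\lambda=\mu=0$, hence $e_\pm ye_\pm=0$.

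Lemma \ref{e123} with $e_1=0$ and $c=-1$ then gives $\lVert s+ty\rVert=\sqrt{1+t^2\lVert y\rVert^2}$. Also $\lVert y\rVert=\lVert z+z^*\rVert=\lVert z\rVert$ exactly, not merely up to a factor. Put $\alpha=\frac{1-c_i}{2}\in(0,1)$ and $r>0$. Then $1-\alpha+\sqrt{\alpha^2+r^2}>\sqrt{1+r^2}$, since $\alpha\mapsto-\alpha+\sqrt{\alpha^2+r^2}$ is decreasing. This is a contradiction for every $t\neq0$.

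This is the paper's argument. The paper does not split $e$ into atoms. Instead, in your notation, it sandwiches $\Phi(s)$ between $-f_-+(-1+\varepsilon)e+f_+$ and $-f_-+(1-\varepsilon)e+f_+$, which yields the $1+O(t^2)$ upper bound and a matching lower bound. Your atom decomposition gives the exact norm, so once the missing observation is inserted your route is slightly cleaner.
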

\begin{proof}
Set $f_{1}=\chi_{\{1\}}(\Phi(s))$, $f_{-1}=\chi_{\{-1\}}(\Phi(s))$,  and $f=\chi_{(-1,1)}(\Phi(s)) =1-f_1-f_{-1}$.
By Proposition \ref{ppp}, we know that $f$ is a sum of finitely many atoms, so $\dim f\mathcal{B}f<\infty$. 
Assume that $f\in \mathcal{B}$ is not petty. By Lemma \ref{epetty}, we have either $\dim f\mathcal{B}f_1  =\infty$ or $\dim f\mathcal{B} f_{-1}=\infty$. 
Assume that the former holds. Then $\{b+b^*\mid b\in f\mathcal{B} f_1\} \subset \mathcal{B}_{sa}$ is also infinite-dimensional.
Since $\Phi$ has finite corank, there is a nonzero element $b\in f\mathcal{B} f_1\neq 0$ such that $b+b^*\in \Phi (\mathcal{A}_{sa})$.
Take $a\in \mathcal{A}_{sa}$ satisfying $\Phi(a)=b+b^*$. 
Then, for each $t\in \mathbb{R}$, we have 
\[
\lVert s+ta\rVert =\lVert \Phi(s+ta)\rVert = \lVert \Phi(s)+t(b+b^*)\rVert. 
\]

Now, for $t\in \mathbb{R}$, we estimate the norm $\lVert \Phi(s)+t(b+b^*)\rVert$. 
Take a positive real number $0<\varepsilon<1$ satisfying $(-1,-1+\varepsilon)\cap \sigma(\Phi(s))=(1-\varepsilon, 1)\cap \sigma(\Phi(s))=\emptyset$. Then we have
\[
x+t(b+b^*)\leq \Phi(s)+t(b+b^*)\leq y+t(b+b^*),
\]
where $x=-f_{-1} + (-1+\varepsilon)f +f_1$ and $y=-f_{-1} + (1-\varepsilon)f +f_1$.
An application of the preceding lemma shows that 
\[
\lVert x+t(b+b^*)\rVert = \frac{\varepsilon}{2}+\sqrt{\left(1-\frac{\varepsilon}{2}\right)^2 +t^2\lVert b\rVert^2}
\]
and 
\[
\lVert y+t(b+b^*)\rVert = 1-\frac{\varepsilon}{2}+\sqrt{\frac{\varepsilon^2}{4} +t^2\lVert b\rVert^2}.
\] 
Since $0<\varepsilon<1$, we obtain 
\[
\lVert \Phi(s)+t(b+b^*)\rVert \leq  \max\{\lVert x+t(b+b^*)\rVert, \lVert y+t(b+b^*)\rVert\} =  1-\frac{\varepsilon}{2}+\sqrt{\frac{\varepsilon^2}{4} +t^2\lVert b\rVert^2}.
\]

We are going to show that $eae=0=(1-e)a(1-e)$, where $e=(s+1)/2$.
Assume that $\lambda\in \sigma(eae)$. For each $t\in \mathbb{R}$, we have $1+t\lambda\in\sigma(e+teae)$ and hence $\lVert e+teae\rVert\geq 1+t\lambda$. Thus we obtain $\lVert s+ta\rVert\geq \lVert e(s+ta)e\rVert= \lVert e+teae\rVert \geq 1+t\lambda$.
Therefore, we get
\[
1+t\lambda\leq \lVert s+ta\rVert=\lVert \Phi(s)+t(b+b^*)\rVert\leq 1-\frac{\varepsilon}{2}+\sqrt{\frac{\varepsilon^2}{4} +t^2\lVert b\rVert^2}
\]
and hence
\[
\frac{\varepsilon}{2}+t\lambda\leq \sqrt{\frac{\varepsilon^2}{4} +t^2\lVert b\rVert^2}.
\]
Since this holds for every $t\in \mathbb{R}$, we get $\lambda=0$ and thus $\sigma(eae)=\{0\}$ or equivalently, $eae=0$. 
A similar argument shows $(1-e)a(1-e)=0$ as well.

It follows from Lemma \ref{e123} (with $e_1=0$ and $c=-1$) that $\lVert s+ta\rVert= \sqrt{1+t^2\lVert a\rVert^2}$.
Lemma \ref{e123} also implies that 
\[
\frac{\varepsilon}{2}+\sqrt{\left(1-\frac{\varepsilon}{2}\right)^2 +t^2\lVert b\rVert^2}\in\sigma (x+t(b+b^*)).
\] 
This together with $x+t(b+b^*)\leq \Phi(s)+t(b+b^*)$ gives
\[
\frac{\varepsilon}{2}+\sqrt{\left(1-\frac{\varepsilon}{2}\right)^2 +t^2\lVert b\rVert^2}\leq \max\sigma(\Phi(s)+t(b+b^*)).
\]
Since the left-hand side is a positive real number, we obtain
\[
\frac{\varepsilon}{2}+\sqrt{\left(1-\frac{\varepsilon}{2}\right)^2 +t^2\lVert b\rVert^2}\leq \lVert \Phi(s)+t(b+b^*)\rVert.
\]
Thus we get 
\[
\frac{\varepsilon}{2}+\sqrt{\left(1-\frac{\varepsilon}{2}\right)^2 +t^2\lVert b\rVert^2}\leq \lVert \Phi(s)+t(b+b^*)\rVert=\lVert s+ta\rVert=\sqrt{1+t^2\lVert a\rVert^2}.
\]
This together with the equality $\lVert a\rVert =\lVert \Phi(a)\rVert =\lVert b+b^*\rVert =\lVert b\rVert$ leads to a contradiction. 
Similarly, we get to a contradiction if $\dim f\mathcal{B}f_{-1}<\infty$. 
Thus $f=\chi_{(-1, 1)}(\Phi(s))\in \mathcal{B}$ is petty.
\end{proof}

\begin{corollary}\label{01chi}
Let $\Psi\colon \mathcal{A}_{sa}\to \mathcal{B}_{sa}$ be a unital linear isometry with finite corank. 
If $e\in \mathcal{P}(\mathcal{A})$, then $\chi_{(0, 1)}(\Psi(e))\in \mathcal{B}$ is petty.
\end{corollary}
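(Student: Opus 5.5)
The plan is to apply Lemma \ref{1-chi} to the self-adjoint unitary $s=2e-1\in\mathcal{A}$. Since $\Psi$ is a linear isometry $\mathcal{A}_{sa}\to\mathcal{B}_{sa}$ with finite corank, Lemma \ref{1-chi} tells us that $\chi_{(-1,1)}(\Psi(s))\in\mathcal{B}$ is petty.

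Next we translate this into a statement about $\Psi(e)$. Because $\Psi$ is unital, we have
\[
\Psi(s)=2\Psi(e)-1 .
\]
By Proposition \ref{ppp}, applied as in the proof of Lemma \ref{f123}, the spectrum of $\Psi(e)$ is finite and contained in $[0,1]$, so the functional calculus below is legitimate. The affine map $t\mapsto 2t-1$ sends $(0,1)$ bijectively onto $(-1,1)$. Hence
\[
\chi_{(-1,1)}(2\Psi(e)-1)=\chi_{(0,1)}(\Psi(e)),
\]
and therefore $\chi_{(0,1)}(\Psi(e))$ is petty.

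The only points to check are that $s$ is indeed a self-adjoint unitary, which holds since $s^2=4e-4e+1=1$, and the spectral identity above. No real obstacle is expected: the corollary is a direct specialization of Lemma \ref{1-chi}.
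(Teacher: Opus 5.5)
Your argument is correct and is exactly the one the paper intends: the corollary is stated without proof immediately after Lemma \ref{1-chi} as its specialization to $s=2e-1$, using $\Psi(2e-1)=2\Psi(e)-1$ and $\chi_{(-1,1)}(2t-1)=\chi_{(0,1)}(t)$. That identity holds for every real $t$, so the containment $\sigma(\Psi(e))\subset[0,1]$ is not actually needed.
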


Now, in order to make use of the full power of projections, we study the case of von Neumann algebras. 
Later we will study the original problem for C$^*$-algebras using the bidual mapping. 
In what follows, we assume that $\mathcal{M}$, $\mathcal{N}$ are von Neumann algebras.
The following two lemmas give general facts about von Neumann algebras.
\begin{lemma}\label{generate}
Assume that $\mathcal{M}$ admits no direct summand of type I$_1$.
Then there is a projection $e_1\in \mathcal{P}(\mathcal{M})$ with the following property: For any neighborhood $G$ of $e_1$ in $\mathcal{M}_{sa}$, the Jordan subalgebra of $\mathcal{M}_{sa}$ generated by $G\cap\mathcal{P}(\mathcal{M})$ coincides with $\mathcal{M}_{sa}$.   
\end{lemma}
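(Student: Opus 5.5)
We need a projection $e_1\in\mathcal{P}(\mathcal{M})$ such that projections near $e_1$ generate all of $\mathcal{M}_{sa}$ as a Jordan algebra. We use the type decomposition. Since $\mathcal{M}$ has no type I$_1$ summand, the halving lemma (together with the standard fact for type I$_n$, $2\le n<\infty$, and for properly infinite or type II/III parts) gives an orthogonal family of projections that splits $1$ into pieces with equivalences. Concretely, we choose central projections $z_2,z_3$ with $z_2+z_3=1$. On $z_2\mathcal{M}$ we write $z_2=g_1+g_2$ with $g_1\sim g_2$. On $z_3\mathcal{M}$ we write $z_3=g_1'+g_2'+g_3'$ with $g_1'\sim g_2'\sim g_3'$; this is needed only for type I$_n$ with $n$ odd. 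Using these, $\mathcal{M}$ contains a system of matrix units $\{w_{ij}\}$ of size $2$ (resp. $3$) on each summand, so $\mathcal{M}\cong \mathbb{M}_k(\mathcal{R})$ summandwise with $k\in\{2,3\}$. We set $e_1=w_{11}$ on each summand, i.e.\ $e_1=g_1+g_1'$.

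Let $G$ be a neighborhood of $e_1$ and let $\mathcal{J}$ be the Jordan subalgebra generated by $G\cap\mathcal{P}(\mathcal{M})$. The Jordan algebra $\mathcal{J}$ is closed under $x\mapsto yxy$, since $2yxy=(xy+yx)y+y(xy+yx)-(xy^2+y^2x)$. It is therefore closed under polynomial combinations, though perhaps not under limits; the argument below avoids needing norm closure. First, $e_1\in\mathcal{J}$. For any partial isometry-type element we consider rotated projections: for a self-adjoint $h\in w_{11}\mathcal{M}w_{22}+w_{22}\mathcal{M}w_{11}$, the unitary $u_t=\exp(ith)$ gives $u_te_1u_t^*\in\mathcal{P}(\mathcal{M})$, and for small $t$ it lies in $G$. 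Hence $u_te_1u_t^*\in\mathcal{J}$. Expanding in $t$ does not directly give derivatives in $\mathcal{J}$ without closedness, so instead we use exact finite-rank formulas. Write $h=x+x^*$ with $x=w_{11}yw_{22}$, and use projections of the form
\[
P(x)=\begin{pmatrix}(1+xx^*)^{-1} & (1+xx^*)^{-1}x\\ x^*(1+xx^*)^{-1} & x^*(1+xx^*)^{-1}x\end{pmatrix},
\]
the range projection of $\binom{1}{x^*}$. When $\lVert x\rVert$ is small, $P(x)$ is close to $e_1$. From the elements $e_1$, $P(x)$, and $P(sx)$ for real $s$, we can extract the off-diagonal part. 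Compressing gives $e_1P(x)e_1+(1-e_1)P(x)(1-e_1)$ and its complement. Then $e_1P(x)(1-e_1)+h.c.=P(x)-e_1P(x)e_1-(1-e_1)P(x)(1-e_1)\in\mathcal{J}$, which equals $(1+xx^*)^{-1}x+h.c.$ To remove the factor $(1+xx^*)^{-1}$, we substitute $x=(1+zz^*)z'$-type reparametrizations. More simply, we note that the map $x\mapsto(1+xx^*)^{-1}x$ is a bijection of small balls in $w_{11}\mathcal{M}w_{22}$, with inverse $y\mapsto(1-\cdot)$, obtainable by continuous functional calculus inside $w_{11}\mathcal{M}w_{11}$. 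Hence every sufficiently small $y+y^*$ with $y\in w_{11}\mathcal{M}w_{22}$ lies in $\mathcal{J}$, and by scaling all of them do. The same holds with $w_{11},w_{33}$ in the $3\times 3$ case.

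Finally we generate everything else Jordan-algebraically. With off-diagonal self-adjoints $X_{12}=y+y^*$ and $e_1\in\mathcal{J}$, the Jordan product $X_{12}\circ X'_{12}$ restricted by compressions $e_1(\cdot)e_1$ and $(1-e_1)(\cdot)(1-e_1)$ gives all elements $w_{11}\mathcal{M}_{sa}w_{11}$ and $w_{22}\mathcal{M}_{sa}w_{22}$. Indeed, $(w_{12}a+a^*w_{21})$-type products yield $a a^*$ and $a^*a$ sums, and their polarizations span the hermitian corners. In the $3$ case the terms $w_{23}$-type off-diagonals arise from $\{X_{12},X_{13}\}$, using $X_{12}X_{13}+X_{13}X_{12}$. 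Summing corners and off-diagonals gives all of $\mathcal{M}_{sa}$. The main obstacle is the middle step, namely extracting off-diagonal elements exactly (not as limits) from projections near $e_1$. If the functional-calculus inversion proves awkward, I would instead show that $\mathcal{J}$ contains a norm-open neighborhood of $e_1$ in a suitable affine slice and use linearity to conclude.
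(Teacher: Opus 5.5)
\textbf{The gap is in your decomposition step.} You claim that every summand without a type I$_1$ part splits either as $1=g_1+g_2$ with $g_1\sim g_2$, or as $1=g_1'+g_2'+g_3'$ with three mutually equivalent pieces, so that summandwise $\mathcal{M}\cong\mathbb{M}_k(\mathcal{R})$ with $k\in\{2,3\}$. This is false for type I$_n$ with $n$ odd and $3\nmid n$. Already in $\mathbb{M}_5$, equivalent pieces summing to $1$ would have rank $5/2$ or $5/3$. Your final step is written entirely in terms of these $2\times2$ or $3\times3$ matrix units: the corners come from products of off-diagonal elements, and the $(2,3)$-blocks from $X_{12}\circ X_{13}$. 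So as it stands the proof does not cover such summands.

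The naive repair also fails: you cannot use $n\times n$ matrix units with $e_1$ abelian in each I$_n$ summand. The reason is that $\mathcal{J}$ is only the \emph{algebraically} generated Jordan algebra. Take $\mathcal{M}=\bigoplus_{n\geq2}\mathbb{M}_n$ and $e_1=\bigoplus_n E^{(n)}_{11}$. Every projection within distance $<1$ of $e_1$ has rank one in each summand. Hence any element of $\mathcal{J}$ built from $N$ generators has rank at most $N$ in every summand, and $1\notin\mathcal{J}$. So $1$ must be cut into a number of pieces that is bounded uniformly over summands.

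\textbf{How to repair it.} Use the paper's choice: orthogonal projections $e_1,e_2$ with $e_1\sim e_2$ and $e_3:=1-e_1-e_2\precsim e_1$; for example, in $\mathbb{M}_{2k+1}(C)$ take $e_1,e_2$ of rank $k$.
\begin{itemize}
\item Your graph projections $P(x)$ make sense for every $x\in e_1\mathcal{M}(1-e_1)$, and
$e_1P(x)(1-e_1)+\mathrm{h.c.}=2e_1\circ P(x)-2e_1P(x)e_1\in\mathcal{J}$.
\item You cannot compress by $1-e_1$ directly, since $1\in\mathcal{J}$ is not known a priori. Use $Z\mapsto Z-2e_1\circ Z+e_1Ze_1$ instead.
\item After your functional-calculus inversion you have every $y+y^*$ with $y\in e_1\mathcal{M}(1-e_1)$. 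Compressing $(y+y^*)\circ(y'+y'^*)$ by $e_1$ and $1-e_1$ yields $\tfrac12(yy'^*+y'y^*)$ and $\tfrac12(y^*y'+y'^*y)$ in $\mathcal{J}$.
\item For the $e_1$-corner, let $w^*w=e_1$, $ww^*=e_2$ and $h\in(e_1\mathcal{M}e_1)_{sa}$. Then $y=hw^*$, $y'=w^*$ give $\tfrac12(yy'^*+y'y^*)=h$.
\item For the $(1-e_1)$-corner, let $v_j^*v_j=e_j$, $v_jv_j^*\leq e_1$ for $j=2,3$, and $h\in((1-e_1)\mathcal{M}(1-e_1))_{sa}$. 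Then $y=v_j$, $y'=v_jh$ give $\tfrac12(e_jh+he_j)$, and these sum to $h$.
\end{itemize}

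With this change your argument is valid, and it takes a genuinely different route from the paper's. The paper writes down explicit projections $e_\pm=(e_1-a)\pm\bigl(va^{1/2}(1-a)^{1/2}+a^{1/2}(1-a)^{1/2}v^*\bigr)+vav^*$ near $e_1$. From these it reads off the corner elements $a$, $vav^*$ and the off-diagonal element by linear combinations and a single Jordan product with $e_1$. You instead extract only off-diagonal elements from the generators and build the corners by Jordan products.
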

\begin{proof}
By considering the type decomposition of $\mathcal{M}$, we see that we may take mutually orthogonal projections $e_1, e_2\in \mathcal{P}(\mathcal{M})$ satisfying $1-e_1-e_2\prec e_1\sim e_2$.  
Let $G$ be a neighborhood of $e_1$ in $\mathcal{M}_{sa}$, and let $\mathcal{M}_1$ be the Jordan subalgebra of $\mathcal{M}_{sa}$ generated by $G\cap\mathcal{P}(\mathcal{M})$.

Let $v\in \mathcal{M}$ be a partial isometry satisfying $v^*v\leq e_1$ and $vv^*\leq 1-e_1$.
Then, for every positive operator $a\in v^*v\mathcal{M}_{sa}v^*v$ with sufficiently small norm, we have 
\[
e_{\pm}=(e_1-a) \pm va^{1/2}(1-a)^{1/2}  \pm a^{1/2}(1-a)^{1/2}v^*+ vav^*\in G\cap\mathcal{P}(\mathcal{M}).
\]
Thus we obtain 
\[
\frac{e_++e_-}{2}=(e_1-a)+vav^*, \quad\frac{e_+-e_-}{2}=va^{1/2}(1-a)^{1/2}  + a^{1/2}(1-a)^{1/2}v^*\in \mathcal{M}_1.
\]
We also have 
\[
\frac{e_1((e_1-a)+vav^*)+((e_1-a)+vav^*)e_1}{2}=e_1-a\in \mathcal{M}_1
\]
and hence
\[
e_1-(e_1-a)=a,\quad ((e_1-a)+vav^*)-(e_1-a)=vav^*\in \mathcal{M}_1.
\] 
Considering the linear span of operators that can be obtained in the above way for various partial isometries $v$ satisfying $v^*v\leq e_1$ and $vv^*\leq 1-e_1$, and various positive operators $a\in v^*v\mathcal{M}_{sa}v^*v$ with sufficiently small norm, we reach the desired conclusion $\mathcal{M}_{sa}\subset \mathcal{M}_1$.
\end{proof}

\begin{lemma}\label{e1e2}
Let $\mathcal{P}\subset \mathcal{P}(\mathcal{M})$ be a connected component. 
If $e_1, e_2\in \mathcal{P}$, then there is a unitary $u\in \mathcal{M}$ satisfying $ue_1u^*=e_2$.
\end{lemma}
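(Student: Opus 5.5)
The plan is a standard path-lifting argument in the projection lattice. I will show that projections that are close in norm are unitarily equivalent via a unitary close to $1$, and then chain such equivalences along a path inside the connected component $\mathcal{P}$.

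\emph{Local step.} Let $e,f\in\mathcal{P}(\mathcal{M})$ with $\lVert e-f\rVert<1$. Put
\[
x=fe+(1-f)(1-e)\in\mathcal{M}.
\]
Then $fx=fe=xe$, so $x$ intertwines $e$ and $f$. Writing $x=1-(f-e)(1-e)-f(e-f)$, one gets $\lVert x-1\rVert\le 2\lVert e-f\rVert$. A cleaner route is the identity
\[
x^*x=1-(e-f)^2,
\]
which holds since $x^*x=efe+(1-e)(1-f)(1-e)$ and $(e-f)^2=e+f-ef-fe$. It shows $x^*x\ge(1-\lVert e-f\rVert^2)\cdot 1>0$. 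The same identity for $xx^*$ (by symmetry) gives that $x$ is invertible. Take the polar decomposition $x=u\lvert x\rvert$; since $\mathcal{M}$ is a von Neumann algebra (indeed any C$^*$-algebra suffices because $x$ is invertible), $u=x(x^*x)^{-1/2}\in\mathcal{M}$ is unitary. From $xe=fx$ one gets $ex^*x=x^*fx=x^*xe$, so $e$ commutes with $x^*x$ and hence with $\lvert x\rvert^{-1}$. Therefore
\[
ue=x\lvert x\rvert^{-1}e=xe\lvert x\rvert^{-1}=fx\lvert x\rvert^{-1}=fu,
\]
that is, $ueu^*=f$.

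\emph{Global step.} The set $\mathcal{P}(\mathcal{M})$ carries the norm topology. Define an equivalence relation on $\mathcal{P}$ by declaring $e\approx f$ if $ueu^*=f$ for some $u\in\mathcal{U}(\mathcal{M})$. By the local step, each equivalence class is open in $\mathcal{P}(\mathcal{M})$, since it contains the ball of radius $1$ around each of its points. The complement of a class within $\mathcal{P}$ is a union of other classes, hence also open. Because $\mathcal{P}$ is connected, there is only one class, which gives the statement.

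The only delicate point is verifying that $e$ commutes with $x^*x$, so that the unitary part of the polar decomposition still intertwines $e$ and $f$; the identity $xe=fx$ handles this directly. One also has to be careful that the connected component is taken in the norm topology, which is the implicit convention in the paper.
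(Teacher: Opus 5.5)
Your proof is correct and is essentially the paper's argument: because $x^*x=efe+(1-e)(1-f)(1-e)$ is block-diagonal with respect to $e$, the unitary $x\lvert x\rvert^{-1}$ is exactly $v+w$, where $v$ and $w$ are the polar parts of $fe$ and $(1-f)(1-e)$ (the paper's construction), and your clopen-class argument is the paper's implicit reduction to nearby projections. One harmless slip: the expression $1-(f-e)(1-e)-f(e-f)$ actually equals $1$ (the correct identity is $x-1=(f-e)e-f(f-e)$), but you never use it.
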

\begin{proof}
Since this is a well-known fact, we only give a sketch of the proof.
It suffices to consider the case where $\lVert e_1-e_2\rVert$ is sufficiently small. 
In that case, let $e_2e_1=v\lvert e_2e_1\rvert$ be the polar decomposition of $e_2e_1$. 
Then it can be seen that $v\in \mathcal{M}$ is a partial isometry satisfying $v^*v=e_1$ and $vv^*=e_2$. 
Since $\lVert (1-e_1)-(1-e_2)\rVert=\lVert e_1-e_2\rVert$ is small, we may also take a partial isometry $w\in \mathcal{M}$ satisfying $w^*w=1-e_1$ and $ww^*=1-e_2$. Then $u=v+w$ is a unitary satisfying the desired property. 
\end{proof}

\begin{lemma}\label{abe}
Let $\Psi\colon \mathcal{M}_{sa}\to \mathcal{N}_{sa}$ be a unital linear isometry with finite corank. 
Then there are a petty central projection $f\in \mathcal{N}$ and a Jordan $^*$-homomorphism $J\colon \mathcal{M}\to (1-f)\mathcal{N}$ satisfying $(1-f)\Psi(a)=J(a)$ for every $a\in \mathcal{M}_{sa}$.  
\end{lemma}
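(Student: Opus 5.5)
Here $\Psi$ is unital, so it is positive. By Lemma \ref{psi} it is also an order embedding. The aim is to find a large family of projections $e$ for which $\Psi(e)$ is a projection after cutting away a petty central part of $\mathcal{N}$. The Broise-type Lemma \ref{broise}, together with Lemma \ref{generate}, should then turn this into multiplicativity on all of $\mathcal{M}_{sa}$.

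\textbf{Step 1: the type I$_1$ summand.} First split $\mathcal{M}=\mathcal{M}_{ab}\oplus\mathcal{M}'$ by a central projection $z$, where $\mathcal{M}_{ab}=z\mathcal{M}$ is abelian and $\mathcal{M}'$ has no type I$_1$ summand. In an abelian von Neumann algebra every projection $e$ has a central partner $1-e$. For $e\le z$, Corollary \ref{01chi} says that $g_e=\chi_{(0,1)}(\Psi(e))$ is petty. Since only finitely many atoms can appear in a petty projection, I expect to handle this part by arguing (via Lemmas \ref{f123} and \ref{f1234} applied to $e,1-e$) that the family of central supports of the $g_e$ stays inside one petty central projection. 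A cleaner route may be to treat both summands uniformly through Step 2, using Baire category on $\mathcal{P}(\mathcal{M})$ as in Lemma \ref{qb}.

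\textbf{Step 2: uniform control of the petty defects.} For each projection $e$, let $c(e)$ be the central support of the petty projection $\chi_{(0,1)}(\Psi(e))$. This is a petty central projection, i.e.\ a finite sum of minimal central projections $f$ with $f\mathcal{N}\cong\mathbb{M}_k$. For such an $f$, the set
\[
F_f=\{e\in\mathcal{P}(\mathcal{M})\mid f\Psi(e)\ \text{is a projection}\}
\]
is closed. I would argue by Baire category, applied to a connected component of $\mathcal{P}(\mathcal{M})$ (a complete metric space), that only finitely many such $f$ can have $F_f$ nowhere dense; otherwise some $e$ would lie outside infinitely many $F_f$, contradicting pettiness of $c(e)$. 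Let $f$ be the sum of these finitely many bad summands. Then for every other minimal central $f'$, the set $F_{f'}$ has nonempty interior in a component, and the goal is that it covers everything relevant. For this I would use Lemma \ref{e1e2}: the projections near $e_1$ in a component are unitary conjugates $ue_1u^*$.

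\textbf{Step 3: passing from an open set of projections to all of $\mathcal{M}_{sa}$.} On $(1-f)\mathcal{N}$, the map $\Psi_1=(1-f)\Psi$ is unital, positive and of norm at most one. By Step 2 it satisfies $\Psi_1(e)^2=\Psi_1(e)=\Psi_1(e^2)$ for all projections $e$ in some open set $G\cap\mathcal{P}(\mathcal{M})$ around the projection $e_1$ of Lemma \ref{generate}. Lemma \ref{broise} says that the set where $\Psi_1(a^2)=\Psi_1(a)^2$ is a Jordan subalgebra. Hence it contains the Jordan algebra generated by $G\cap\mathcal{P}(\mathcal{M})$, which is $\mathcal{M}_{sa}$ by Lemma \ref{generate} (for the non-I$_1$ part). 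Extending complex-linearly gives a Jordan $^*$-homomorphism $J$ with $(1-f)\Psi=J$ on $\mathcal{M}_{sa}$. The abelian summand is treated analogously, or via Step 1.

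\textbf{Main obstacle.} The main difficulty is Step 2. Nowhere density of the bad sets must be played off against pettiness of each individual defect. Upgrading ``$F_{f'}$ has interior'' to ``$f'\Psi(e)$ is a projection near $e_1$'' requires the rigidity of a unital positive map into a matrix block. I would try to get it through a Russo--Dye-type argument analogous to Theorem \ref{russodye}, using the self-adjoint unitaries $2e-1$, with Kadison's Schwarz inequality (Theorem \ref{ks}) forcing multiplicativity on an open set.
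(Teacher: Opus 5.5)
\textbf{Verdict.} There is a genuine gap, and the main part of it is the type I$_1$ (abelian) summand, which your proposal never actually handles.

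\textbf{The abelian summand.} In Step~1 you assert that the central supports of $\chi_{(0,1)}(\Psi(e))$, $e\le z$, all lie under one petty central projection. That is exactly what has to be proved, and no argument is given. The ``cleaner route'' through Baire category cannot supply it.

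In an abelian von Neumann algebra two distinct projections are at distance $1$. So $\mathcal{P}(z\mathcal{M})$ is discrete and its components are points. Baire category therefore yields nothing beyond the pointwise pettiness of Corollary~\ref{01chi}. Lemma~\ref{generate} also has no analogue there, since the Jordan algebra generated by projections near $e_1$ is just $\mathbb{R}e_1$.

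For instance, $\Psi\colon(\ell^\infty)_{sa}\to(\ell^\infty\oplus\mathbb{C})_{sa}$, $a\mapsto a\oplus\frac{a_1+a_2}{2}$, is a unital isometry of corank $1$. Every $F_f$ is open in $\mathcal{P}(\ell^\infty)$ and contains $0$, yet the summand $0\oplus 1$ must be cut off.

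The paper's Case~2 uses finite corank through a counting argument rather than topology:
\begin{enumerate}
\item Take a partition of unity $e_1,\dots,e_m$ into projections and put $f_j=\chi_{\{1\}}(\Psi(e_j))$.
\item By Lemma~\ref{f1234}, the space $\{y\in\mathcal{N}_{sa}\mid (\sum_j f_j)y(\sum_j f_j)=0\}$ meets $\Psi(\mathcal{M}_{sa})$ only in $0$, so its dimension is at most the corank.
\item Choose a partition maximizing this dimension. Lemma~\ref{eepetty} then gives a petty central $f$.
\item Maximality (refine the partition by $e$ and $1-e$) forces $(1-f)\Psi(e)$ to be a projection for every $e$.
\item Lemma~\ref{broise} and norm density of the span of projections finish this case.
\end{enumerate}
The abelian and non-I$_1$ parts are then glued using Lemmas~\ref{f123}, \ref{f1234} and~\ref{eepetty}. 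You do not address this gluing step either.

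\textbf{The non-I$_1$ part.} Your Baire step agrees with the paper's. Step~3 does not follow from it as written, however. Infinitely many good summands $f'$, each with $F_{f'}$ having nonempty interior \emph{somewhere}, need not give a single open set around $e_1$.

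What is missing is a per-summand dichotomy. It needs no matrix-block rigidity and no Russo--Dye argument:
\begin{enumerate}
\item Let $\mathcal{P}$ be the component of $e_1$. Suppose $F_{f'}\supset G\cap\mathcal{P}(\mathcal{M})$ for some neighborhood $G$ of a point $e\in\mathcal{P}$.
\item By Lemma~\ref{e1e2}, $e=ue_1u^*$ for a unitary $u$. Conjugation by $u$ is a Jordan automorphism, so $e$ also has the generating property of Lemma~\ref{generate}.
\item Lemma~\ref{broise}, applied to the unital positive contraction $f'\Psi$ alone, then shows that $f'\Psi$ is Jordan on all of $\mathcal{M}_{sa}$. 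Hence $F_{f'}=\mathcal{P}$.
\end{enumerate}
Combined with Corollary~\ref{01chi}, this shows that $(1-f)\Psi(e)$ is a projection for every $e\in\mathcal{P}$, and your Step~3 then goes through.
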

\begin{proof}
The proof is separated into three parts. \medskip

\noindent \underline{Case 1} We first consider the case where $\mathcal{M}$ admits no direct summand of type I$_1$. 
Take a projection $e_1\in \mathcal{M}$ as in Lemma \ref{generate}. 
Let $\mathcal{P}$ denote the connected component of $\mathcal{P}(\mathcal{M})$ with $e_1\in \mathcal{P}$. 
By Lemma \ref{e1e2}, every element in $\mathcal{P}$ has the property of $e_1$ as in Lemma \ref{generate}.
From now on, our discussion proceeds in parallel with that in the proof of Lemma \ref{qb}.

Let $f$ be a central projection in $\mathcal{N}$ such that $f\mathcal{N}$ is $^*$-isomorphic to the full matrix algebra $\mathbb{M}_m$ for some $m$.
If $\{e\in \mathcal{P}\mid f\Psi(e)\in \mathcal{P}(f\mathcal{N})\}$ has nonempty interior as a subset of $\mathcal{P}$, then it is contained in $\mathcal{M}_1:=\{a\in \mathcal{M}_{sa}\mid f\Psi(a)^2=f\Psi(a^2)\}$. 
On the other hand, since the mapping $\mathcal{M}_{sa}\ni a\mapsto f\Psi(a)\in f\mathcal{N}_{sa}$ is a positive linear map with norm at most $1$, the set $\mathcal{M}_1\subset \mathcal{M}_{sa}$ is a Jordan subalgebra of $\mathcal{M}_{sa}$ by Lemma \ref{broise}. Therefore, we get $\mathcal{M}_1=\mathcal{M}_{sa}$. It follows that $f\Psi(e)\in \mathcal{P}(f\mathcal{N})$ for every $e\in \mathcal{P}$. 
Consequently, the closed subset $\{e\in \mathcal{P}\mid f\Phi(e)\in \mathcal{P}(f\mathcal{N})\}\subset \mathcal{P}$ is either nowhere dense or equal to $\mathcal{P}$. 

Assume that there is a sequence of distinct central projections $f_j$, $j\geq 1$, in $\mathcal{N}$ such that each $f_j\mathcal{N}$ is $^*$-isomorphic to the full matrix algebra (whose size can depend on $j$), and $F_j :=\{e\in \mathcal{P}\mid f_j\Psi(e)\in \mathcal{P}(f_j\mathcal{N})\}\subset \mathcal{P}$ is nowhere dense, $j\geq 1$. Then the Baire Category Theorem shows that $\mathcal{P}\setminus \bigcup_{j\geq 1} F_j$ is nonempty. Taking a projection of this set, we get from Corollary \ref{01chi} a contradiction.
Therefore, there is a maximal family of distinct central projections $f_j$, $1\leq j\leq k$, in $\mathcal{N}$ such that each $f_j\mathcal{N}$ is $^*$-isomorphic to the full matrix algebra (whose size can depend on $j$), and $F_j :=\{e\in \mathcal{P}\mid f_j\Phi(e)\in \mathcal{P}(f_j\mathcal{N})\}\subset \mathcal{P}$ is nowhere dense. 
Then, the projection $f=f_1+f_2+\cdots+f_k$ satisfies the desired property.
Indeed, Corollary \ref{01chi} implies that $(1-f)\Psi(e)\in \mathcal{P}((1-f)\mathcal{N})$ holds for every $e\in \mathcal{P}$. 
Since $\mathcal{P}$ generates $\mathcal{M}_{sa}$ as a Jordan algebra and $\Psi$ is a unital positive linear mapping, Lemma \ref{broise} implies that the mapping $\mathcal{M}_{sa}\ni a\mapsto (1-f)\Psi(a)\in (1-f)\mathcal{N}_{sa}$ extends to a Jordan $^*$-homomorphism from $\mathcal{M}$ to $(1-f)\mathcal{N}$.\medskip

\noindent \underline{Case 2} We next work on the case where $\mathcal{M}$ is abelian.
Let $d$ be the corank of $\Psi$. 
Let $e_1, e_2, \cdots, e_m$ be mutually orthogonal projections in $\mathcal{M}$ whose sum is $1$. 
Set $f_j=\chi_{\{1\}}(\Psi(e_j))$. Then 
it follows from Lemma \ref{f1234} that the mapping 
\[
\mathcal{M}_{sa}\ni a\mapsto f_1 \Psi(a)f_1 + f_2\Psi(a)f_2+\cdots + f_m\Psi(a)f_m\in \mathcal{N}_{sa}
\]
is an isometry. Therefore, the mapping 
\[
\mathcal{M}_{sa}\ni a\mapsto (f_1+f_2+\cdots+f_m) \Psi(a)(f_1 + f_2+\cdots+f_m)\in \mathcal{N}_{sa}
\]
is also an isometry.
Since the corank of $\Psi$ is $d$, we see that the dimension of the real linear space $\{y\in \mathcal{N}_{sa}\mid (f_1+f_2+\cdots+f_m) y(f_1 + f_2+\cdots+f_m)=0\}$ is at most $d\,(<\infty)$. 
Therefore, we may retake a family $e_1, e_2, \cdots, e_m$ of mutually orthogonal central projections in $\mathcal{M}$ ($m$ can be different from the above choice) whose sum is $1$ in such a way that the dimension of the space $\{y\in \mathcal{N}_{sa}\mid (f_1+f_2+\cdots+f_m) y(f_1 + f_2+\cdots+f_m)=0\}$ is maximal. By Lemma \ref{eepetty}, this space is petty as a subset of $\mathcal{N}$.
Take the central projection $f\in \mathcal{N}$ such that the ideal of $\mathcal{N}$ generated by $1-(f_1+f_2+\cdots+f_m)$ is equal to  $f\mathcal{N}$.
Then we have $(1-f)\Psi(e)\in \mathcal{P}((1-f)\mathcal{N})$ for every $e\in \mathcal{P}(\mathcal{M})$.
Indeed, if $e\in \mathcal{P}(\mathcal{M})$ satisfies $(1-f)\Psi(e)\notin \mathcal{P}((1-f)\mathcal{N})$, then it is not hard to see that the family $ee_1, ee_2, \cdots, ee_m, (1-e)e_1, (1-e)e_2, \cdots, (1-e)e_m$ gives a contradiction to the maximality of the dimension in the above discussion.
Consequently, the mapping $\mathcal{M}_{sa}\ni a\mapsto (1-f)\Psi(a)\in (1-f)\mathcal{N}_{sa}$ extends to a Jordan $^*$-homomorphism from $\mathcal{M}$ to $(1-f)\mathcal{N}$.\medskip

\noindent \underline{Case 3} Lastly, we study the general case.
Recall that by the type decomposition we may take a central projection $e\in \mathcal{M}$ such that $e\mathcal{M}$ is abelian and $(1-e)\mathcal{M}$ admits no direct summand of type I$_1$. 
Set $f_1=\chi_{\{1\}}(\Psi(e))$, $f_2=\chi_{\{1\}}(\Psi(1-e))$, and $\mathcal{N}_1=f_1\mathcal{N}f_1$, $\mathcal{N}_2=f_2\mathcal{N}f_2$. By Lemma \ref{f123}, we see that the two mappings $\Psi_1\colon e\mathcal{M}_{sa}\ni a\mapsto f_1\Psi(a)f_1\in (\mathcal{N}_1)_{sa}$ and $\Psi_2\colon (1-e)\mathcal{M}_{sa}\ni a\mapsto f_2\Psi(a)f_2\in (\mathcal{N}_2)_{sa}$ are unital isometries. 
Therefore, by what we have already shown, there are petty central projections $f_3\in \mathcal{N}_1$, $f_4\in \mathcal{N}_2$ and Jordan $^*$-homomorphisms $J_1\colon e\mathcal{M}\to (f_1-f_3)\mathcal{N}_1$, $J_2\colon (1-e)\mathcal{M}\to (f_2-f_4)\mathcal{N}_2$ such that $J_1(a)=(f_1-f_3)\Psi_1(a)$ for every $a\in e\mathcal{M}_{sa}$ and $J_2(a)=(f_2-f_4)\Psi_2(a)$ for every $a\in (1-e)\mathcal{M}_{sa}$. Define the Jordan $^*$-homomorphism $J_3\colon \mathcal{M}\to \mathcal{N}$ by $J_3(a)=J_1(ea) +J_2((1-e)a)$, $a\in \mathcal{M}$. Then the kernel of $J_3$ is finite-dimensional. For $a\in \mathcal{M}_{sa}$, we have
\[
\begin{split}
J_3(a)&=(f_1-f_3)\Psi_1(ea)+(f_2-f_4)\Psi_2((1-e)a) \\
&=(f_1-f_3)\Psi(ea)(f_1-f_3) + (f_2-f_4)\Psi((1-e)a)(f_2-f_4),
\end{split}
\]
which equals $(f_1-f_3)\Psi(a)(f_1-f_3) + (f_2-f_4)\Psi(a)(f_2-f_4)$ by Lemma \ref{f1234}.
Since $\Psi$ has finite corank and $J_3$ has finite-dimensional kernel, we see that the linear space $\mathcal{V}=\{b\in \mathcal{N}_{sa}\mid (f_1-f_3)b(f_1-f_3) = (f_2-f_4)b(f_2-f_4)=0\}\subset \mathcal{N}_{sa}$ is finite-dimensional. By Lemma \ref{eepetty}, $\mathcal{V}\subset \mathcal{N}$ is petty.
Now, take the central projection $f\in \mathcal{N}$ such that the ideal generated by $\mathcal{V}$ equals $f\mathcal{N}$. 
Then we have $(1-f)\Psi(a) = (1-f)J_3(a)$ for every $a\in \mathcal{M}_{sa}$. It follows that the mapping $\mathcal{M}_{sa}\ni a\mapsto (1-f)\Psi(a)\in (1-f)\mathcal{N}_{sa}$ extends to a Jordan $^*$-homomorphism from $\mathcal{M}$ to $(1-f)\mathcal{N}$.
\end{proof}

Now we begin the proof of Theorem \ref{mainsa}.
We first give a proof for von Neumann algebras. 

\begin{proof}[Proof of Theorem \ref{mainsa} for von Neumann algebras]
Let $\Phi\colon \mathcal{M}_{sa}\to\mathcal{N}_{sa}$ be a linear isometry with finite corank. 
Set $f_+:=\chi_{\{1\}}(\Phi(1))$ and $f_-:=\chi_{\{-1\}}(\Phi(1))$. 
Then the mapping 
\[
\Psi\colon \mathcal{M}_{sa}\ni a\mapsto f_+ \Phi(a)f_+-f_-\Phi(a)f_-\in (\mathcal{N}_1)_{sa}
\]
is a unital isometry with finite corank, where $\mathcal{N}_1= f_+\mathcal{N}f_+ +f_-\mathcal{N}f_-$, by Lemma \ref{e+-}.

By the preceding lemma, we see that there is a petty central projection $f\in \mathcal{N}_1$ such that the mapping $\mathcal{M}_{sa}\ni a\mapsto (1-f)\Psi(a)\in (1-f)(\mathcal{N}_1)_{sa}$ extends to a Jordan $^*$-homomorphism $J_1\colon \mathcal{M}\to(1-f)\mathcal{N}_1$. 
Since $\Psi$ has finite corank and $J_1$ has finite-dimensional kernel, we see that the linear space $\mathcal{V}:=\{b\in \mathcal{N}_{sa}\mid f_1bf_1=f_2bf_2=0\}$ is finite-dimensional, where $f_1=(1-f)f_+$ and $f_2=(1-f)f_-$. By Lemma \ref{eepetty}, we see that $\mathcal{V}\subset \mathcal{N}$ is petty.
Now, take the central projection $q\in \mathcal{N}$ such that the ideal generated by $\mathcal{V}$ equals $q\mathcal{N}$. 
Then, Lemma \ref{eepetty} implies that the projections $g_+=(1-q)f_1$ and $g_-=(1-q)f_2$ are central in $\mathcal{N}$, and we have $g_+\Phi(a)-g_-\Phi(a) = (1-q)J_1(a)$ for every $a\in \mathcal{M}_{sa}$. 
Since the unital Jordan $^*$-homomorphism $J_2\colon \mathcal{M}\ni a \mapsto (1-q)J_1(a)\in (1-q)\mathcal{N}$ has finite-dimensional kernel,
there is a central projection $p\in\mathcal{M}$ such that $\ker J_2=p\mathcal{N}$.

Then the mapping $(1-p)\mathcal{M}_{sa}\ni a\mapsto g_+\Phi(a)-g_-\Phi(a)\in (1-q)\mathcal{N}_{sa}$ extends to an injective unital Jordan $^*$-homomorphism $J\colon (1-p)\mathcal{M}\to (1-q)\mathcal{N}$, which is an isometry. 
Thus the mapping $\Phi_2\colon (1-p)\mathcal{M}_{sa}\ni a\mapsto (1-q)\Phi(a)= (g_+-g_-)J(a) \in (1-q)\mathcal{N}_{sa}$ is also an isometry.
We also see that the mapping $\Phi _1\colon p\mathcal{M}_{sa}\ni a\mapsto \Phi(a)=q\Phi(a)\in q\mathcal{N}_{sa}$ is an isometry. Finally, set $\Phi _3\colon (1-p)\mathcal{M}_{sa}\to q\mathcal{N}_{sa}$ by $\Phi _3(a)=q\Phi(a)$, $a\in (1-p)\mathcal{M}_{sa}$. 
Then $p, q$ and $\Phi _1, \Phi_2, J, \Phi_3$ together with $v=g_+-g_-$ satisfy all of the desired properties.
\end{proof}

\begin{proof}[Proof of Theorem \ref{mainsa} in the general case]
Let $\Phi\colon \mathcal{A}_{sa}\to \mathcal{B}_{sa}$ be a linear isometry with finite corank. 
Then the bidual mapping $\Phi^{**}\colon \mathcal{A}^{**}_{sa} \to \mathcal{B}^{**}_{sa}$ is also a linear isometry with finite corank. 
Since $\mathcal{A}^{**}$, $\mathcal{B}^{**}$ are considered as von Neumann algebras, we see that the following holds: 
There exist central projections $p\in \mathcal{A}^{**}$, $q\in \mathcal{B}^{**}$ such that $p\mathcal{A}^{**}$, $q\mathcal{B}^{**}$ are finite-dimensional, and there exist
\begin{itemize}
\item a linear isometry $\Phi _1\colon p\mathcal{A}^{**}_{sa}\to q\mathcal{B}^{**}_{sa}$, 
\item a linear isometry $\Phi _2\colon (1-p)\mathcal{A}^{**}_{sa}\to (1-q)\mathcal{B}^{**}_{sa}$ that satisfies $\Phi _2(a)=vJ(a)$ for every $a\in (1-p)\mathcal{A}^{**}_{sa}$, where $J\colon (1-p)\mathcal{A}^{**}\to (1-q)\mathcal{B}^{**}$ is an injective unital Jordan $^*$-homomorphism with finite corank and $v=(1-q)\Phi^{**}(1)=(1-q)\Phi(1)$ is a central self-adjoint unitary in $(1-q)\mathcal{B}^{**}$, and
\item a linear mapping $\Phi _3\colon (1-p)\mathcal{A}^{**}_{sa}\to q\mathcal{B}^{**}_{sa}$, 
\end{itemize}
satisfying the following two conditions: 
\begin{itemize}
\item For every $a\in \mathcal{A}^{**}_{sa}$, we have $\Phi^{**} (a)=\Phi_1(pa)+\Phi_2((1-p)a)+\Phi_3((1-p)a)$, and 
\item the operator norm of the mapping $\mathcal{A}^{**}_{sa}\ni a\mapsto \Phi_1(pa)+\Phi_3((1-p)a)\in q\mathcal{B}^{**}_{sa}$ is at most $1$.
\end{itemize}
(One can complete the proof immediately in the case where $p\in \mathcal{A}$ and $q\in\mathcal{B}$ hold, but it is unclear whether this is the case. Thus we need a further discussion.)
Let us examine the two mappings 
\[
\Phi_4\colon \mathcal{A}_{sa}\ni a\mapsto \frac{\Phi(1)\Phi(a)+(\Phi(1)\Phi(a))^*}{2}\in \mathcal{B}_{sa}
\]
and  
\[
\Phi_5\colon \mathcal{A}_{sa}\ni a\mapsto \frac{\Phi(1)\Phi(a)-(\Phi(1)\Phi(a))^*}{2i}\in \mathcal{B}_{sa}.
\]
Then we have $\Phi(1)\Phi(a) = \Phi_4(a)+i\Phi_5(a)$ for every $a\in \mathcal{A}_{sa}$.
Set 
\[
S=\{\Phi_4(a^2)-\Phi_4(a)^2\mid a\in \mathcal{A}_{sa}\}\cup \Phi_5(\mathcal{A}_{sa})\cup\{\chi_{(-1,1)}(\Phi(1))\} \cup \{\Phi(1)b-b\Phi(1)\mid b\in \mathcal{B}\} \subset \mathcal{B}.
\]
Then it is readily seen that $S\subset q\mathcal{B}^{**}$. Since $q$ is a petty central projection of $\mathcal{B}^{**}$, we see that $S$ is petty as a subset of $\mathcal{B}$. Take the central projection $q_1\in \mathcal{B}$ such that the ideal generated by $S$ equals $q_1\mathcal{B}$. 
It follows that the mapping $\mathcal{A}_{sa}\ni a\mapsto (1-q_1)\Phi(1)\Phi(a)\in \mathcal{B}$ (whose image is actually contained in $\mathcal{B}_{sa}$) extends to a unital Jordan $^*$-homomorphism $J_1\colon \mathcal{A}\to (1-q_1)\mathcal{B}$. Moreover, $J_1$ has finite-dimensional kernel and finite corank. 
Now, take the central projection $p_1\in \mathcal{A}$ such that $\ker J_1=p_1\mathcal{A}$. 
The rest of the proof is almost the same as the case of von Neumann algebras. 
\end{proof}

\begin{example}
Let $J_1\colon \mathcal{A}\to \mathcal{B}$ be an injective unital Jordan $^*$-homomorphism with finite corank, and let $w\in \mathcal{B}$ be a self-adjoint unitary that commutes with every element of $J_1(\mathcal{A}_{sa})$, but assume that $w$ is not central in $\mathcal{B}$. Then $\Phi\colon \mathcal{A}_{sa}\ni a\mapsto wJ_1(a)\in \mathcal{B}_{sa}$ is a linear isometry with finite corank. 
For this $\Phi$, how can we take projections $p\in \mathcal{A}$, $q\in \mathcal{B}$ as in Theorem \ref{mainsa}? The answer is as follows. 

Take the projection $e\in \mathcal{B}$ satisfying $w=e-(1-e)$.
Since $J_1$ has finite corank, we may take operators $b_1, b_2, \ldots, b_m\in \mathcal{B}_{sa}$ which together with $J_1(\mathcal{A}_{sa})$ linearly span $\mathcal{B}_{sa}$. 
Since $e$ commutes with every element of $J_1(\mathcal{A}_{sa})$, we see that the space $\mathcal{V}=\{b\in \mathcal{B}_{sa}\mid ebe=(1-e)b(1-e)=0\}$ is at most $m$-dimensional. 
Therefore, Lemma \ref{eepetty} implies that $\mathcal{V}\subset \mathcal{B}$ is petty. One may take a petty central projection $q\in \mathcal{B}$ such that the ideal generated by $\mathcal{V}$ equals $q\mathcal{B}$. Then the mapping $J_2\colon \mathcal{A}\ni a\mapsto (1-q)J_1(a)\in \mathcal{B}$ is a unital Jordan $^*$-homomorphism with finite-dimensional kernel. Thus one may take a petty central projection $p\in \mathcal{A}$ such that $\ker J_2=p\mathcal{A}$. 
One may check that $p$ and $q$ satisfy the desired property.
\end{example}

\subsection{Isometries with corank $1$}
Let us proceed with the case of corank $1$. We need several lemmas.

\begin{lemma}\label{l1}
Let $\Phi\colon \mathcal{A}_{sa}\to \mathcal{B}_{sa}$ be a linear isometry with corank $1$.
Set $f_+:=\chi_{\{1\}}(\Phi(1))$ and $f_-:=\chi_{\{-1\}}(\Phi(1))$. 
Then $f_+$ and $f_-$ are central projections in $\mathcal{B}$, and $1-f_+-f_-$ is either an atom in $\mathcal{B}$ or $0$. 
\end{lemma}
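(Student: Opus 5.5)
We use the structure already available from Lemma \ref{e+-} and Proposition \ref{ppp}. Since $1\in\mathrm{ext}\,\mathcal{A}_{sa}^{\leq1}$, Proposition \ref{ppp} shows that $\sigma(\Phi(1))$ is finite and that $f:=\chi_{(-1,1)}(\Phi(1))=1-f_+-f_-$ is a finite sum of atoms. Consider the real subspace
\[
\mathcal{V}:=\{b\in\mathcal{B}_{sa}\mid f_+bf_+=f_-bf_-=0\}.
\]
The plan is to show $\dim\mathcal{V}\leq 1$. From this both conclusions should follow directly from the structure of $\mathcal{V}$.

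To bound $\mathcal{V}$, we use the unital isometry $\Psi$ of Lemma \ref{e+-}, given by $a\mapsto f_+\Phi(a)f_+-f_-\Phi(a)f_-$. Suppose $b\in\mathcal{V}\cap\Phi(\mathcal{A}_{sa})$, say $b=\Phi(a)$. Then $\Psi(a)=0$, and since $\Psi$ is isometric, $a=0$ and hence $b=0$. So $\mathcal{V}\cap\Phi(\mathcal{A}_{sa})=\{0\}$. Because $\Phi(\mathcal{A}_{sa})$ has codimension $1$ in $\mathcal{B}_{sa}$, this forces $\dim\mathcal{V}\leq1$.

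Next we list what lies in $\mathcal{V}$. It contains $f\mathcal{B}_{sa}f$, the elements $b+b^*$ for $b\in f_+\mathcal{B}f_-$ (and likewise $ib-ib^*$), and the elements $b+b^*$ for $b$ in $f\mathcal{B}f_\pm$. If $f_+\mathcal{B}f_-\neq0$, then taking a nonzero $b$ there, $b+b^*$ and $i(b-b^*)$ are two linearly independent elements of $\mathcal{V}$, which is impossible; hence $f_+\mathcal{B}f_-=0$. Now suppose $f\neq0$. Then $f\mathcal{B}_{sa}f$ is a nonzero subspace of $\mathcal{V}$ containing $f$. If $f\mathcal{B}f_+\neq 0$, a nonzero $b\in f\mathcal{B}f_+$ gives $b+b^*$, which is linearly independent from $f$ (it has zero corner $fxf$). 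That would make $\dim\mathcal{V}\geq2$, a contradiction. So $f\mathcal{B}f_\pm=0$. Also $\dim f\mathcal{B}_{sa}f\leq1$, so $f\mathcal{B}f=\mathbb{C}f$ and $f$ is an atom. If $f=0$ we simply have $f_++f_-=1$.

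In every case the three projections $f_+,f_-,f$ sum to $1$ and are mutually "orthogonal in $\mathcal{B}$", meaning $x\mathcal{B}y=0$ for distinct $x,y$ among them. Each is then central, since for instance
\[
f_+\mathcal{B}=f_+\mathcal{B}(f_++f_-+f)=f_+\mathcal{B}f_+=(f_++f_-+f)\mathcal{B}f_+=\mathcal{B}f_+.
\]
The only delicate point is the linear-independence bookkeeping in $\mathcal{V}$. In particular, we need the complex structure of $f_+\mathcal{B}f_-$ to supply two real dimensions; this is where the real (self-adjoint) setting must be handled with care.
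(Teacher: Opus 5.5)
Your proof is correct and follows the paper's argument. The paper likewise uses the unital isometry $\Psi$ of Lemma \ref{e+-} to show that $\{b\in\mathcal{B}_{sa}\mid f_+bf_+=f_-bf_-=0\}$ has dimension at most $1$, and then reads off centrality and the atom property directly; you have spelled out the linear-independence bookkeeping the paper leaves implicit, such as using $b+b^*$ and $i(b-b^*)$ for a nonzero $b\in f_+\mathcal{B}f_-$.
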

\begin{proof}
By Lemma \ref{e+-},  
the mapping 
\[
\Psi\colon \mathcal{A}_{sa}\ni a\mapsto f_+ \Phi(a)f_+-f_-\Phi(a)f_-\in f_+\mathcal{B}_{sa}f_+ +f_-\mathcal{B}_{sa}f_-
\]
is a unital isometry. 
Since $\Phi\colon \mathcal{A}_{sa}\to \mathcal{B}_{sa}$ is a linear isometry with corank $1$, we see that the real linear space $\{b\in \mathcal{B}_{sa}\mid f_+bf_+=f_-bf_-=0\}$ is at most $1$-dimensional. It follows that $f_+$, $f_-$ are central in $\mathcal{B}$ and $1-f_+-f_-$ is either $0$ or an atom in $\mathcal{B}$. 
\end{proof}

\begin{lemma}\label{l2}
Let $\Phi\colon \mathcal{A}_{sa}\to\mathcal{B}_{sa}$ be a linear isometry with corank $1$.
Let $e\in \mathcal{A}$ be a central projection. Then there are a central projection $f\in \mathcal{B}$ and linear isometries $\hat{\Phi}\colon e\mathcal{A}_{sa}\to f\mathcal{B}_{sa}$, $\tilde{\Phi}\colon (1-e)\mathcal{A}_{sa}\to (1-f)\mathcal{B}_{sa}$ satisfying $\Phi(a)=\hat{\Phi}(ea)+\tilde{\Phi}((1-e)a)$ for every $a\in \mathcal{A}_{sa}$.
(Note that one of these mappings has corank $1$ and the other is surjective.)
\end{lemma}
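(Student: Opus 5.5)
Since $e$ is a projection, $s:=2e-1$ is a self-adjoint unitary, and because $e$ is central, $s$ is central as well. I would work with $\Phi(s)$ and use the corank-$1$ hypothesis, as in Lemma \ref{l1}, to find the central projection $f$. Set $f_1=\chi_{\{1\}}(\Phi(s))$, $f_{-1}=\chi_{\{-1\}}(\Phi(s))$ and $g=\chi_{(-1,1)}(\Phi(s))$. These are defined and $g$ is a finite sum of atoms by Proposition \ref{ppp}, since $s\in\mathrm{ext}\,\mathcal{A}_{sa}^{\leq1}$.

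The key step is to apply Lemma \ref{e+-} to the isometry $a\mapsto\Phi(sa)$. Its value at $1$ is $\Phi(s)$, so the map $a\mapsto f_1\Phi(sa)f_1-f_{-1}\Phi(sa)f_{-1}$ is a unital isometry. As in Lemma \ref{l1}, corank $1$ then forces the real space $\{b\in\mathcal{B}_{sa}\mid f_1bf_1=f_{-1}bf_{-1}=0\}$ to have dimension at most $1$. Consequently $f_1$ and $f_{-1}$ are central and $g$ is $0$ or a central atom. If $g$ is a central atom, its compression $g\Phi(s)=cg$ with $\lvert c\rvert<1$ is handled by adding $g$ to $f_1$ or to $f_{-1}$. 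The choice depends on whether $ga\mapsto g\Phi(a)$ factors through $e$ or through $1-e$, and this is decided in the next step.

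Next I would show that $f_1\Phi((1-e)a)=0$ and $f_{-1}\Phi(ea)=0$ for all $a\in\mathcal{A}_{sa}$. Take $\lVert a\rVert\le1$. Then $\pm(1-e)a\le 1-e$, so $s\pm 2(1-e)a$ lies between $-1$ and $e-(1-e)+2(1-e)=1$ in norm, hence has norm at most $1$. Its image $\Phi(s)\pm2\Phi((1-e)a)$ therefore has norm at most $1$. Compressing by the central projection $f_1$ gives $\lVert f_1\pm 2f_1\Phi((1-e)a)\rVert\le1$, which forces $f_1\Phi((1-e)a)=0$. The same argument with $-s$ gives $f_{-1}\Phi(ea)=0$.

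For the atom $g$, the functional $a\mapsto g\Phi(a)$ is a norm $\le1$ real functional on $\mathcal{A}_{sa}$ with $g\Phi(s)=cg$. I expect this step to be the main obstacle: deciding how to attach $g$ so that both pieces remain isometries. I would first put $g$ with $f_1$ if $c\ge0$ and with $f_{-1}$ otherwise. The cross-terms for $g$ need not vanish, but the splitting requires that $\Phi(a)$ be decomposed as $\hat\Phi(ea)+\tilde\Phi((1-e)a)$ with the ranges in $f\mathcal{B}$ and $(1-f)\mathcal{B}$. So I would then use the corank-$1$ count once more: $g$ is itself the $1$-dimensional defect space. A cleaner route is to first apply Proposition \ref{corank1sa}-type reasoning, which shows that either $\Phi$ differs from a surjective isometry only on a central atom $q_0$, or $\Phi(1)\Phi(\cdot)$ is a Jordan homomorphism. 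In the Jordan case $g=0$ automatically, since Jordan homomorphisms send $s$ to a symmetry. In the other case, $q_0\Phi(\cdot)$ is a norm $\le1$ functional that we may put on either side, provided the remaining isometry property persists. To verify that, I would check via the splitting argument above that $\hat\Phi$ and $\tilde\Phi$, defined as compressions by $f$ and $1-f$, are isometric: for $a\in e\mathcal{A}_{sa}$ we have $\Phi(a)=f\Phi(a)$, so $\lVert\hat\Phi(a)\rVert=\lVert\Phi(a)\rVert=\lVert a\rVert$. The only issue is that the atom, if placed on one side, must carry zero contribution from the other side's inputs. If that fails, I would place the atom on the side where it is killed, using the inequality trick with $s\pm2(1-e)a$ restricted to $g$ together with $\lvert c\rvert$.
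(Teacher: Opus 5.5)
The step you call ``the main obstacle'' cannot be completed, because the lemma is false as stated. Take $\mathcal{A}=\mathbb{C}^2$, $\mathcal{B}=\mathbb{C}^3$, $e=(1,0)$ and $\Phi(x,y)=(x,y,\tfrac{x+y}{2})$ on the self-adjoint parts. With sup norms this is a linear isometry of corank $1$. Putting $a=e$ and $a=1-e$ into the required identity forces $\Phi(e)=f\Phi(e)$ and $\Phi(1-e)=(1-f)\Phi(1-e)$. That is impossible, since $\Phi(e)=(1,0,\tfrac12)$ and $\Phi(1-e)=(0,1,\tfrac12)$ both have nonzero third coordinate. In your notation $\Phi(s)=(1,-1,0)$, $g$ is the third coordinate, $c=0$, and $g\Phi$ is killed by neither $e$ nor $1-e$, so neither your sign rule nor your fallback can work.

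The paper's proof has the same hole. It passes to $\Psi=(f_+-f_-)\Phi$, shows that $f_1=\chi_{\{1\}}(\Psi(e))$ and $f_2=\chi_{\{1\}}(\Psi(1-e))$ are central with $g=1-f_1-f_2$ zero or a central atom, and sets $f=f_1$. This gives $f_1\Phi((1-e)a)=0=f_2\Phi(ea)$ but says nothing about $g\Phi(ea)$. What is actually provable is a dichotomy. If $g=0$, then $f=f_1$ works. If $g\neq0$, then $(1-g)\Phi=f_1\Phi(e\,\cdot)+f_2\Phi((1-e)\,\cdot)$ is isometric by Lemma \ref{f1234}, so $\Phi(\mathcal{A}_{sa})\cap g\mathcal{B}_{sa}=\{0\}$, and the codimension count makes it surjective onto $(1-g)\mathcal{B}_{sa}$. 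That second alternative is exactly the first item of Proposition \ref{corank1sa}, so the proposition survives if its proof uses this dichotomy instead of the lemma.

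Independently of the atom, two of your steps are wrong.
\begin{enumerate}
\item The bound $\lVert s\pm2(1-e)a\rVert\le1$ fails. You only have $s\pm2(1-e)a\le1$ from above; for example $s-2(1-e)=e-3(1-e)$ has norm $3$. A one-sided order bound also does not pass through $\Phi$, which is not known to be positive.
\item The projections $f_{\pm1}=\chi_{\{\pm1\}}(\Phi(s))$ are the sign projections that make the compression of $\Phi(s\,\cdot)$ unital; that is all Lemma \ref{e+-} gives. They do not separate $e$ from $1-e$. For $\Phi(x,y)=(x,-y,0)$ one gets $\Phi(s)=(1,1,0)$, $f_1=(1,1,0)$ and $f_1\Phi((1-e)a)=(0,-y,0)\neq0$.
\end{enumerate}
To fix this, first normalize to a unital map $\Psi$, which is then positive by Lemma \ref{psi}. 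Then take $\chi_{\{1\}}(\Psi(e))$ and $\chi_{\{1\}}(\Psi(1-e))$, and kill the cross terms with the order estimate $-\lVert a\rVert\Psi(1-e)\le\Psi((1-e)a)\le\lVert a\rVert\Psi(1-e)$, as in Lemma \ref{f1234}.

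Finally, invoking Proposition \ref{corank1sa} is circular, since the paper deduces it from this lemma. Its first alternative would not let you place $q_0\Phi$ on one side anyway.
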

\begin{proof}
Take $f_+, f_-$ as in the preceding lemma. 
Then the mapping 
\[
\Psi\colon \mathcal{A}_{sa}\ni a\mapsto f_+ \Phi(a)f_+-f_-\Phi(a)f_- =(f_+-f_-)\Phi(a)\in (f_++f_-)\mathcal{B}_{sa}
\]
is a unital isometry with corank at most $1$. 
Set $f_1:= \chi_{\{1\}}(\Psi(e))$ and $f_2:= \chi_{\{1\}}(\Psi(1-e))$. 
Then Lemma \ref{f1234} implies that the mapping $\mathcal{A}_{sa}\ni a\mapsto f_1\Psi(a)f_1+f_2\Psi(a)f_2\in \mathcal{B}_{sa}$ is a linear isometry. 
Since the corank of $\Psi$ is at most $1$, we see that the real linear space $\{b\in (f_++f_-)\mathcal{B}_{sa}\mid f_1bf_1=f_2bf_2=0\}$ is at most $1$-dimensional. Thus we see that $f_1$ and $f_2$ are central in $(f_++f_-)\mathcal{B}$, and $(f_++f_-)-(f_1+f_2)$ is either $0$ or a central atom in $(f_++f_-)\mathcal{B}$. It then follows that $f_1$ is central in $\mathcal{B}$ as well. 
Therefore, setting $f=f_1$, $\hat{\Phi}(a)= f\Phi(a)$ for $a\in e\mathcal{A}_{sa}$, and  $\tilde{\Phi}(a)= (1-f)\Phi(a)$ for $a\in (1-e)\mathcal{A}_{sa}$, we see that the desired property holds. 
\end{proof}

\begin{lemma}\label{l3}
Let $\Phi\colon \mathcal{A}_{sa}\to \mathcal{B}_{sa}$ be a linear isometry with corank $1$, and $s\in \mathcal{A}$ a self-adjoint unitary. Then $\chi_{(-1,1)}(\Phi(s))$ is either a central atom in $\mathcal{B}$ or zero. 
\end{lemma}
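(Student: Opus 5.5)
We refine the proof of Lemma \ref{1-chi}, using the corank $1$ hypothesis to upgrade ``petty'' to ``central atom or zero''. Set $f_1=\chi_{\{1\}}(\Phi(s))$, $f_{-1}=\chi_{\{-1\}}(\Phi(s))$ and $f=\chi_{(-1,1)}(\Phi(s))$. By Proposition \ref{ppp}, applied to $s\in\mathrm{ext}\,\mathcal{A}_{sa}^{\leq 1}$, we know that $\#\sigma(\Phi(s))<\infty$ and that $f$ is a finite sum of atoms. The target is twofold: $f$ is central, and $\dim f\mathcal{B}f\leq 1$.

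\emph{Centrality.} Suppose $f\mathcal{B}f_1\neq 0$ (the case $f_{-1}$ is symmetric). The argument of Lemma \ref{1-chi} applies verbatim to any nonzero $b\in f\mathcal{B}f_1$ with $b+b^*\in\Phi(\mathcal{A}_{sa})$: it yields $a$ with $eae=(1-e)a(1-e)=0$, where $e=(s+1)/2$, and then the two-sided estimate via Lemma \ref{e123} gives a contradiction. Hence the real space $\{b+b^*\mid b\in f\mathcal{B}f_1\}$ meets $\Phi(\mathcal{A}_{sa})$ only in $0$. Since the corank is $1$, this space is at most one-dimensional. But if $b\neq 0$ lies in $f\mathcal{B}f_1$, then so does $ib$, and $b+b^*$ and $i(b-b^*)$ are linearly independent over $\mathbb{R}$. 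So the space is $0$, i.e.\ $f\mathcal{B}f_1=0$. The main obstacle is exactly this step: the mixed space $\{b+b^*\mid b\in f\mathcal{B}(f_1+f_{-1})\}$ may also be complex-closed, so the same argument gives $f\mathcal{B}(f_1+f_{-1})=0$. Since $f+f_1+f_{-1}=1$, this shows $f$ is central.

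\emph{Dimension.} It remains to show $\dim f\mathcal{B}f\leq 1$. Now $f\mathcal{B}_{sa}f$ is a finite-dimensional real space with $f\mathcal{B}_{sa}f\cap\Phi(\mathcal{A}_{sa})$ of codimension at most $1$ in it. Pick $0\neq x\in f\mathcal{B}_{sa}f\cap\Phi(\mathcal{A}_{sa})$ of small norm, say $x=\Phi(a)$. Then $\Phi(s)\pm x$ still has norm $\leq 1$, because $f$ is central and $\lVert f\Phi(s)\rVert<1$. This gives $\lVert s\pm a\rVert\leq 1$, contradicting that $s\in\mathrm{ext}\,\mathcal{A}_{sa}^{\leq1}$. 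Hence the intersection is $0$, so $\dim_{\mathbb{R}} f\mathcal{B}_{sa}f\leq 1$. Therefore $f=0$ or $f$ is an atom, and since $f$ is central, it is a central atom.

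This is consistent with Lemma \ref{l1}, which is exactly the special case $s=1$.
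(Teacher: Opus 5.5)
Your proposal is correct and is essentially the paper's proof with the two steps in the opposite order. The paper first gets ``$0$ or an atom'' from the argument of Proposition~\ref{ppp} applied inside the codimension-one subspace $\Phi(\mathcal{A}_{sa})$; this is your dimension step, which only needs that $f$ commutes with $\Phi(s)$, not that $f$ is central. It then obtains centrality exactly as you do: a nonzero $f\mathcal{B}f_{\pm1}$ has real dimension at least $2$, so it contains some $b\neq 0$ with $b+b^*\in\Phi(\mathcal{A}_{sa})$, and the argument of Lemma~\ref{1-chi} applies to that $b$. Your sentence about the ``mixed space'' $f\mathcal{B}(f_1+f_{-1})$ is unnecessary, and Lemma~\ref{e123} would not apply verbatim there. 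The two separate cases $f\mathcal{B}f_1=0=f\mathcal{B}f_{-1}$ already give $f\mathcal{B}(1-f)=0$.
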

\begin{proof}
Observe that $\Phi(s)\in \mathrm{ext}\, \Phi(\mathcal{A}_{sa})^{\leq 1}$ holds. Using Proposition \ref{ppp} and the fact that $\Phi(\mathcal{A}_{sa})\subset \mathcal{B}_{sa}$ is of codimension $1$, we see that $\chi_{(-1,1)}(\Phi(s))\in \mathcal{B}$ is either $0$ or an atom.
Assume that $\chi_{(-1,1)}(\Phi(s)) \mathcal{B} \chi_{\{1\}}(\Phi(s)) \neq \{0\}$. 
Then the dimension of this space as a real vector space is at least $2$. It follows that there is a nonzero element $b$ in this space such that $b+b^*\in \Phi(\mathcal{A}_{sa})$. 
Thus, by exactly the same discussion as in the proof of Lemma \ref{1-chi}, we get to a contradiction. Similarly, we get a contradiction by assuming $\chi_{(-1,1)}(\Phi(s)) \mathcal{B} \chi_{\{-1\}}(\Phi(s)) \neq \{0\}$. Thus we get $\chi_{(-1,1)}(\Phi(s)) \mathcal{B} (1-\chi_{(-1,1)}(\Phi(s)))=\{0\}$, which means that $\chi_{(-1,1)}(\Phi(s))$ is central.
\end{proof}

\begin{lemma}\label{l4}
Assume that $\mathcal{A}$ equals $\mathbb{M}_n$ with $n\geq 1$.
Let $\Phi\colon \mathcal{A}_{sa}\to \mathcal{B}_{sa}$ be a linear isometry with corank $1$.
Then there is a central atom $q_0\in \mathcal{B}$ such that the mapping $\mathscr{H}_n\ni a\mapsto (1-q_0)\Phi(a)\in (1-q_0)\mathcal{B}_{sa}$ is a linear surjective isometry.
\end{lemma}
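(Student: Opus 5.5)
Since $\dim \mathcal{A}_{sa}=n^2$ and $\Phi$ has corank $1$, we get $\dim_{\mathbb{R}}\mathcal{B}_{sa}=n^2+1$. So $\mathcal{B}$ is finite-dimensional, $\mathcal{B}\cong\bigoplus_i\mathbb{M}_{k_i}$ with $\sum_i k_i^2=n^2+1$. The plan is to reduce to a unital isometry, find one exceptional central atom $q_0$ by a Baire-type argument on projections, and then show that $(1-q_0)\Phi$ is an injective Jordan map, hence surjective for dimensional reasons.

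\emph{Reduction.} First apply Lemma \ref{l1} to $f_\pm=\chi_{\{\pm1\}}(\Phi(1))$. These are central, and $1-f_+-f_-$ is $0$ or an atom.

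If it is an atom, call it $q_0$; it is then central. By Lemma \ref{e+-}, the map $\Psi=(f_+-f_-)\Phi$ is an isometry from $\mathscr{H}_n$ into $(1-q_0)\mathcal{B}_{sa}$. The space $(1-q_0)\mathcal{B}_{sa}$ has real dimension $n^2$, so the injective map $\Psi$ is surjective. Since $f_+-f_-$ is a central self-adjoint unitary of $(1-q_0)\mathcal{B}$, the map $(1-q_0)\Phi=(f_+-f_-)\Psi$ is also a surjective isometry, and we are done.

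So assume $f_++f_-=1$. Then $\Phi(1)$ is a central self-adjoint unitary, and $\Psi:=\Phi(1)\Phi$ is a unital isometry of corank $1$; it suffices to treat $\Psi$. By Lemma \ref{restrict} and Lemma \ref{psi}, $\Psi$ is positive. The case $n=1$ is trivial, so assume $n\ge 2$.

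\emph{Finding $q_0$.} Take $\mathcal{P}$ to be the (connected) set of projections of a fixed rank $k$, chosen as in Lemma \ref{generate}; for instance $k=\lfloor n/2\rfloor$ (or $\lceil n/2\rceil$) works, since then there are orthogonal $e_1,e_2$ with $1-e_1-e_2\prec e_1\sim e_2$. For each projection $e$, Lemma \ref{l3} applied to $s=2e-1$ shows that $\chi_{(0,1)}(\Psi(e))$ is either $0$ or a single central atom. Hence for each $e$ there is at most one central atom $q$ with $q\Psi(e)\notin\{0,q\}$.

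There are finitely many central atoms $q$. For each, set $F_q=\{e\in\mathcal{P}\mid q\Psi(e)\in\{0,q\}\}$; this set is closed. Suppose $F_q$ has nonempty interior. Then Lemma \ref{broise}, applied to the positive contraction $a\mapsto q\Psi(a)$, together with Lemma \ref{generate} and Lemma \ref{e1e2}, shows that $q\Psi$ is multiplicative on squares on all of $\mathscr{H}_n$, so $F_q=\mathcal{P}$. This is the argument of Case 1 of Lemma \ref{abe}. Thus each $F_q$ is either nowhere dense or all of $\mathcal{P}$.

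If two distinct atoms $q,q'$ had $F_q,F_{q'}$ nowhere dense, then $\mathcal{P}\setminus(F_q\cup F_{q'})$ would be nonempty. This contradicts the ``at most one atom'' property above. So there is at most one atom $q_0$ with $F_{q_0}\neq\mathcal{P}$; if none exists, pick any central atom, or set $q_0=0$.

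Next, Lemma \ref{l3} gives that $(1-q_0)\Psi(e)$ is a projection for every $e\in\mathcal{P}$. Since $\mathcal{P}$ generates $\mathscr{H}_n$ as a Jordan algebra, Lemma \ref{broise} shows that $(1-q_0)\Psi$ extends to a unital Jordan $^*$-homomorphism $J\colon\mathbb{M}_n\to(1-q_0)\mathcal{B}$.

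\emph{Conclusion.} By Corollary \ref{kernel}, $\ker J$ is an ideal of the simple algebra $\mathbb{M}_n$. It is not all of $\mathbb{M}_n$ since $J$ is unital, so $J$ is injective. By Lemma \ref{isometry}, $J$ is then an isometry, and $(1-q_0)\Phi=(1-q_0)\Phi(1)J$ is an isometry on $\mathscr{H}_n$.

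It remains to exclude $q_0=0$ (or $q_0$ not an atom), i.e.\ an injective unital Jordan map $J\colon\mathbb{M}_n\to\mathcal{B}$ with $\dim\mathcal{B}=n^2+1$. Each block $\mathbb{M}_{k_i}$ of $\mathcal{B}$ would receive a unital Jordan map from $\mathbb{M}_n$. By Theorem \ref{jhomo} this forces $n\mid k_i$. Then $\sum_i k_i^2=n^2+1$ is impossible for $n\ge 2$: a single block gives $n^2$, and two or more give at least $2n^2$. Hence $q_0$ is a genuine central atom, and the dimension count $\dim(1-q_0)\mathcal{B}_{sa}=n^2$ gives surjectivity.

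The main obstacle I expect is the Baire/Jordan-generation step: justifying that a nonempty interior of $F_q$ inside $\mathcal{P}$ propagates to all of $\mathscr{H}_n$. I would handle it exactly as in Case 1 of Lemma \ref{abe}.
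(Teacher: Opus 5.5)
Your proposal is correct and follows essentially the paper's route: the reduction via Lemma~\ref{l1} and Lemma~\ref{e+-}, then the Case~1 argument of Lemma~\ref{abe} combined with Lemma~\ref{l3}, simplicity of $\mathbb{M}_n$, and a dimension count; your divisibility argument $n\mid k_i$ just makes explicit the paper's identification $\mathcal{B}\cong\mathbb{M}_n\oplus\mathbb{C}$. One harmless slip: for odd $n$ there are no two orthogonal projections of rank $\lceil n/2\rceil$, so drop that parenthetical alternative (the choice $\lfloor n/2\rfloor$ suffices, or you can simply cite Lemma~\ref{generate}).
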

\begin{proof}
If $n=1$, then $\mathcal{B}$ is a $2$-dimensional C$^*$-algebra, and it is clear that the desired conclusion holds. 
In what follows we assume $n\geq 2$. 
Take the central projections $f_{\pm}$ as in Lemma \ref{l1}. 
If $f_++f_-\neq 1$, then
\[
\Psi\colon \mathcal{A}_{sa}\ni a\mapsto f_+ \Phi(a)f_+-f_-\Phi(a)f_- =(f_+-f_-)\Phi(a)\in (f_++f_-)\mathcal{B}_{sa}
\]
needs to be surjective. (Indeed, if this is not surjective, then the corank of $\Phi$ has to be at least $2$.)
Thus, by setting $q_0=1-f_+-f_-$, we get to the desired conclusion.

In what follows, we assume that $f_++f_-=1$. It suffices to study the unital linear isometry $\Psi$ instead of $\Phi$.
By imitating the proof of Lemma \ref{abe} in Case 1, and using Lemma \ref{l3}, we see that there is a central projection $f\in \mathcal{B}$ which is either $0$ or an atom such that the mapping $\mathcal{A}_{sa}\ni a\mapsto (1-f)\Psi(a)\in (1-f)\mathcal{B}_{sa}$ extends to a Jordan $^*$-homomorphism $J_1\colon \mathcal{A}\to (1-f)\mathcal{B}$. 
Since $\mathcal{A}$ is simple, $J_1$ needs to be either the zero mapping or injective. Since $\Psi$ has corank $1$, we see that $J_1$ is injective.    
It follows that $\mathcal{B}$ contains $\mathbb{M}_n$ as a $^*$-subalgebra. Since $\mathcal{B}$ is $^*$-isomorphic to a direct sum of matrix algebras and $\dim \mathcal{B}=n^2+1$, we see that $\mathcal{B}$ may be identified with $\mathbb{M}_n\oplus \mathbb{C}$ as a C$^*$-algebra. 
Setting $q_0$ as $0\oplus 1\in \mathbb{M}_n\oplus \mathbb{C}$, we see that the desired property holds.
\end{proof}

\begin{proof}[Proof of Proposition \ref{corank1sa}]
By applying Theorem \ref{mainsa}, we get central projections $p\in \mathcal{A}$, $q\in \mathcal{B}$ and mappings $\Phi_1, \Phi_2, \Phi_3$ as in the statement. 
Since $\mathrm{corank}\, \Phi=1$, we get $(\mathrm{corank}\, \Phi_1, \mathrm{corank}\, \Phi_2)$ is either $(1,0)$ or $(0,1)$.  

If $(\mathrm{corank}\, \Phi_1, \mathrm{corank}\, \Phi_2)=(0,1)$, then we see that $\Phi_1\colon p\mathcal{A}_{sa}\to q\mathcal{B}_{sa}$ is a surjective isometry, which implies that $\Phi_1(p)$ is a central self-adjoint unitary in $q\mathcal{B}$. This further leads to $\Phi_3=0$, and we see that the mapping $\mathcal{A}_{sa}\ni a\mapsto \Phi(1)\Phi(a)\in \mathcal{B}$ extends to an injective unital Jordan $^*$-homomorphism from $\mathcal{A}$ to $\mathcal{B}$ with corank $1$.

Assume that $(\mathrm{corank}\, \Phi_1, \mathrm{corank}\, \Phi_2)=(1,0)$. Then $\Phi_1$ is a linear isometry with corank $1$ between the self-adjoint parts of finite-dimensional C$^*$-algebras. By applying Lemma \ref{l2} repeatedly, we see that there are central projections $e\leq p$ in $\mathcal{A}$ and $f\leq q$ in $\mathcal{B}$ and linear isometries $\hat{\Phi}\colon e\mathcal{A}_{sa} \to f\mathcal{B}_{sa}$, $\tilde{\Phi}\colon (p-e)\mathcal{A}_{sa} \to (q-f)\mathcal{B}_{sa}$ satisfying the following conditions: 
\begin{itemize}
\item The algebra $e\mathcal{A}$ is $^*$-isomorphic to $\mathbb{M}_n$ for some $n\geq 1$. 
\item The linear mapping $\hat{\Phi}$ has corank $1$, while $\tilde{\Phi}$ is surjective.
\item The equation $\Phi_1(a)=\hat{\Phi}(ea)+\tilde{\Phi}((p-e)a)$ holds for every $a\in p\mathcal{A}_{sa}$.
\end{itemize}
Now, applying Lemma \ref{l4}, we see that there is a central atom $q_0\in f\mathcal{B}$ (which is also a central atom in $\mathcal{B}$) such that the mapping $e\mathcal{A}\ni a\mapsto (f-q_0)\Phi_1(a)=(f-q_0)\Phi(a)\in (f-q_0)\mathcal{B}_{sa}$ is a surjective isometry. 
From what we have shown, it is clear that $\lVert (1-q_0)\Phi(a)\rVert\geq \lVert a\rVert$, which together with the assumption that $\Phi$ is an isometry shows $\lVert (1-q_0)\Phi(a)\rVert= \lVert a\rVert$, for every $a\in \mathcal{A}_{sa}$. Since $q_0\in \mathcal{B}$ is a central atom and $\Phi$ has corank $1$, we see that the mapping $\mathcal{A}_{sa}\ni a\mapsto (1-q_0)\Phi(a)\in (1-q_0)\mathcal{B}_{sa}$ is a surjective isometry.
\end{proof}

\section{Complementary results}\label{other}
\subsection{Finite-codimensional Jordan $^*$-subalgebras}
\begin{lemma}\label{measure}
Let $X$ be a compact Hausdorff space and $\mathcal{A}_0\subset C(X)$ a finite-codimensional C$^*$-subalgebra. 
Let $\varphi\in C(X)^*$. Then we may also consider $\varphi$ as a complex measure on $X$. 
If $\mathcal{A}_0\subset \ker \varphi$ (where $\varphi$ is considered as a functional), then the support of $\varphi$ (considered as a complex measure) is a finite subset of $X$.
\end{lemma}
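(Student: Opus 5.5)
My plan is to prove the statement by contradiction: if the support of $\varphi$ is infinite, I will produce infinitely many linearly independent continuous functions whose images under a suitable quotient stay independent. First I would set up the quotient. Since $\mathcal{A}_0$ is a finite-codimensional closed $^*$-subalgebra of $C(X)$ (closed because a finite-codimensional C$^*$-subalgebra is by definition norm closed), I would consider the finite-dimensional space $C(X)/\mathcal{A}_0$, say of dimension $m$. Then any family of functions $g_1,\dots,g_{m+1}\in C(X)$ has a nontrivial linear combination lying in $\mathcal{A}_0\subset\ker\varphi$.

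The key step is to separate points of the support. Suppose $\mathrm{supp}\,\varphi$ contains $m+1$ distinct points $x_1,\dots,x_{m+1}$. I would choose pairwise disjoint open neighbourhoods $U_j\ni x_j$. Since $x_j\in\mathrm{supp}\,\varphi$, we have $\lvert\varphi\rvert(U_j)>0$, and by regularity of the measure there is $h_j\in C(X)$ supported in $U_j$ with $\int h_j\,d\varphi\neq 0$. Next I would use the algebra structure of $\mathcal{A}_0$. For $a\in\mathcal{A}_0$, the product $a h_j$ need not lie in $\mathcal{A}_0$, so a direct argument is not available. Instead I would use that $\varphi$ annihilates $\mathcal{A}_0$, which is a subalgebra. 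The functionals $\varphi_g:=\varphi(g\,\cdot)$ for $g\in\mathcal{A}_0$ also annihilate $\mathcal{A}_0$, so they lie in the annihilator $\mathcal{A}_0^\perp$, which is $m$-dimensional. Hence the measures $g\,d\varphi$, $g\in\mathcal{A}_0$, span a space of dimension at most $m$.

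It therefore suffices to show that if $\mathrm{supp}\,\varphi$ is infinite, then $\{g\,d\varphi\mid g\in\mathcal{A}_0\}$ is infinite-dimensional. This reduces to a Stone--Weierstrass-type statement: $\mathcal{A}_0$ separates all but finitely many points of $X$. By Stone--Weierstrass, the closed $^*$-subalgebra $\mathcal{A}_0$ equals $\{f\in C(X)\mid f(x)=f(y)\text{ for }(x,y)\in R,\ f(z)=0\text{ for }z\in Z\}$, possibly with $\mathcal{A}_0$ nonunital, for an equivalence relation $R$ and a set $Z$. Finite codimension forces $R$ to have only finitely many nontrivial identifications involving finitely many points, and forces $Z$ to be finite. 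Indeed, infinitely many identified or annihilated points would give infinitely many independent point-evaluation constraints, contradicting $\dim C(X)/\mathcal{A}_0=m$.

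Let $F$ be this finite exceptional set. For infinitely many support points outside $F$, I would pick $m+1$ of them with disjoint neighbourhoods avoiding $F$, and functions $g_j\in C(X)$ supported in those neighbourhoods. These $g_j$ vanish near $F$, hence lie in $\mathcal{A}_0$, and satisfy $\int g_jh_j\,d\varphi\neq0$. The measures $g_j\,d\varphi$ then have disjoint supports and are nonzero, so they are linearly independent, which gives the contradiction. I expect the main obstacle to be the rigorous derivation that finite codimension forces the exceptional set $F$ to be finite, that is, identifying $\mathcal{A}_0$ via Stone--Weierstrass and counting independent evaluation constraints.
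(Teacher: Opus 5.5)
Your proof is correct and rests on the same key fact as the paper's proof. By Stone--Weierstrass, $\mathcal{A}_0$ is cut out by finitely many vanishing and point-identification constraints, so every continuous function vanishing on the finite exceptional set $F$ lies in $\mathcal{A}_0\subset\ker\varphi$; the paper uses this directly to conclude $\mathrm{supp}\,\varphi\subset F$. Consequently your detour through the functionals $\varphi(g\,\cdot)\in\mathcal{A}_0^\perp$ and the choice of $m+1$ support points is unnecessary: for a single support point outside $F$, the function $g_jh_j$ you construct already lies in $\mathcal{A}_0$ and satisfies $\varphi(g_jh_j)\neq 0$, which directly contradicts $\mathcal{A}_0\subset\ker\varphi$.
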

\begin{proof}
Since $\mathcal{A}_0\subset C(X)$ is finite-codimensional, we see that $X_0=\{x\in X\mid f(x)=0 \text{ for every }f\in \mathcal{A}_0\}$ is a finite set. 
Moreover, for each $x\in X$, the set $X_x=\{x_1\in X\mid f(x)=f(x_1)\text{ for every }f\in \mathcal{A}_0\}$ is a finite set. 
We also see that the set $X_1=\{x\in X\mid X_x \text{ has at least two points}\}$ is a finite set.
It follows by (the proof of) the Stone--Weierstrass theorem that $\mathcal{A}_0$ equals the algebra of all continuous functions on $X$ such that $f(X_0)=\{0\}$ and $f(X_x)$ is a singleton for each $x\in X$. 
In particular, the subalgebra $\{f\in C(X)\mid f(X_0\cup X_1)=\{0\} \}$ is contained in $\mathcal{A}_0$. 
If $\mathcal{A}_0\subset \ker \varphi$, then we have $\varphi(f)=0$ for every $f\in C(X)$ satisfying $f(X_0\cup X_1)=\{0\}$. 
Therefore, the support of $\varphi$ is contained in the finite set $X_0\cup X_1$.
\end{proof}

\begin{lemma}\label{bhsub}
Let $\mathcal{H}$ be an infinite-dimensional Hilbert space. 
Let $\varphi_1, \varphi_2, \ldots, \varphi_m$ be normal self-adjoint bounded linear functionals on $\mathbb{B}(\mathcal{H})$ whose support projections are of finite rank. If $\mathcal{A}_0= \bigcap_{j=1}^m\ker \varphi_j$ is a Jordan $^*$-subalgebra of $\mathbb{B}(\mathcal{H})$, then $\varphi_1= \varphi_2=\cdots= \varphi_m=0$ and $\mathcal{A}_0=\mathbb{B}(\mathcal{H})$.
\end{lemma}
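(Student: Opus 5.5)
Set $P:=\bigvee_{j=1}^m s(\varphi_j)$, the join of the support projections; it is a finite-rank projection on $\mathcal{H}$. Since $\mathcal{H}$ is infinite-dimensional, $1-P\neq 0$ and indeed $1-P$ has infinite rank. Each $\varphi_j$ satisfies $\varphi_j(x)=\varphi_j(PxP)$, so $\mathcal{A}_0\supset (1-P)\mathbb{B}(\mathcal{H})+\mathbb{B}(\mathcal{H})(1-P)$. In other words, $\mathcal{A}_0$ contains every operator $x$ with $PxP=0$. The strategy is to use the Jordan structure to show that $\mathcal{A}_0$ also contains $P\mathbb{B}(\mathcal{H})P$. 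Then $\mathcal{A}_0=\mathbb{B}(\mathcal{H})$, which forces all $\varphi_j=0$.

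Fix rank-one partial isometries through the large space $(1-P)\mathcal{H}$. Let $\xi,\eta\in P\mathcal{H}$ and $\zeta\in(1-P)\mathcal{H}$ be unit vectors. Then $x=\xi\otimes\zeta^*$ (the map $h\mapsto\langle h,\zeta\rangle\xi$) and $y=\zeta\otimes\eta^*$ both satisfy $PxP=PyP=0$, so $x,y\in\mathcal{A}_0$. Since $\mathcal{A}_0$ is a Jordan $^*$-subalgebra, $xy+yx\in\mathcal{A}_0$. We compute $xy=\xi\otimes\eta^*$ and $yx=\langle\xi,\eta\rangle\,\zeta\otimes\zeta^*$ up to the inner-product convention. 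The second term lies in $(1-P)\mathbb{B}(\mathcal{H})(1-P)\subset\mathcal{A}_0$, so $\xi\otimes\eta^*\in\mathcal{A}_0$. Rank-one operators $\xi\otimes\eta^*$ with $\xi,\eta\in P\mathcal{H}$ span the finite-dimensional space $P\mathbb{B}(\mathcal{H})P$. Hence $P\mathbb{B}(\mathcal{H})P\subset\mathcal{A}_0$, and together with the previous paragraph $\mathcal{A}_0=\mathbb{B}(\mathcal{H})$.

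It follows that every $\varphi_j$ vanishes on $\mathbb{B}(\mathcal{H})$, so $\varphi_j=0$. Normality and self-adjointness of the $\varphi_j$ are used only to guarantee that support projections exist and that $\varphi_j(x)=\varphi_j(PxP)$. For a normal functional $\varphi$ with support $s$ one has $\varphi(x)=\varphi(sxs)$, and this identity persists when $s$ is replaced by a larger projection $P$.

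The only step needing care is the claim that $\varphi_j(x)=\varphi_j(PxP)$ for all $x$. This is where finite-rank supports and the infinite-dimensionality of $\mathcal{H}$ enter: the existence of $\zeta\in(1-P)\mathcal{H}$ uses $1-P\neq0$. The claim follows from the standard fact that a normal positive functional satisfies $\varphi(x)=\varphi(s x s)$. For a self-adjoint $\varphi$, I would split it into its Jordan decomposition $\varphi=\varphi_+-\varphi_-$ with orthogonal supports, both dominated by the support of $\lvert\varphi\rvert$. The Jordan product computation itself is routine.
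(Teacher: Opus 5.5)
Your proof is correct, and it takes a more direct route than the paper's.

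The paper records only that the corner $p\mathbb{B}(\mathcal{H})p$ lies in $\mathcal{A}_0$, where $p$ is a projection with $\mathrm{rank}\,(1-p)<\infty$ orthogonal to all supports. It then enlarges $p$ one atom at a time:
\begin{itemize}
\item it uses the finite codimension of $\mathcal{A}_0$ and the infinite rank of $p$ to find a nonzero $a\in\mathcal{A}_0$ with $pa(1-p)=a$;
\item a Jordan product with an atom $p_0\leq p$ cuts this down to a rank-one element $p_0aq_0\in\mathcal{A}_0$;
\item it asserts that $p\mathbb{B}(\mathcal{H})p$ and $p_0aq_0$ generate $(p+q_0)\mathbb{B}(\mathcal{H})(p+q_0)$;
\item finitely many such steps reach $1$.
\end{itemize}

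You instead use $\varphi_j(x)=\varphi_j(PxP)$. This puts not only the corner $(1-P)\mathbb{B}(\mathcal{H})(1-P)$ but also both off-diagonal blocks into $\mathcal{A}_0$. A single Jordan product of rank-one operators $x\in P\mathbb{B}(\mathcal{H})(1-P)$ and $y\in(1-P)\mathbb{B}(\mathcal{H})P$ then yields all of $P\mathbb{B}(\mathcal{H})P$.

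What your route buys:
\begin{itemize}
\item It removes the induction, the finite-codimension argument and the generation step.
\item It needs only closure under $a_1a_2+a_2a_1$.
\item It proves the conclusion whenever $P\neq 1$, even for finite-dimensional $\mathcal{H}$.
\end{itemize}

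What the paper's route buys: it asks less of its starting point. It works for any finite-codimensional Jordan $^*$-subalgebra containing a corner $p\mathbb{B}(\mathcal{H})p$ with $\mathrm{rank}\,(1-p)<\infty$, without using that the $\varphi_j$ vanish on off-diagonal blocks.

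One small correction to your last paragraph. The identity $\varphi_j(x)=\varphi_j(PxP)$ holds for every normal self-adjoint functional and every projection $P$ dominating its support, with no rank condition. The quickest proof is to write $\varphi_j=\mathrm{tr}(\rho_j\,\cdot\,)$ and note $\rho_j=P\rho_jP$. Finite rank is used only to ensure $1-P\neq 0$ and that $P\mathbb{B}(\mathcal{H})P$ is spanned by the operators $\xi\otimes\eta^*$.
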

\begin{proof}
Observe that there is a projection $p\in \mathbb{B}(\mathcal{H})$ with $\mathrm{rank}\, (1-p)<\infty$ that is orthogonal to the support projection of $\varphi_j$ for $j=1,2,\ldots, m$. Then  we have $p\mathbb{B}(\mathcal{H})p\subset \mathcal{A}_0$. 
Now, let $0\neq p\in \mathbb{B}(\mathcal{H})$ be an arbitrary projection satisfying $p\mathbb{B}(\mathcal{H})p\subset \mathcal{A}_0$ and $\mathrm{rank}\, (1-p)<\infty$. 
We can get to the desired conclusion by showing that there is another projection $p\neq q\geq p$ satisfying $q\mathbb{B}(\mathcal{H})q\subset \mathcal{A}_0$. 
Since $\mathcal{A}_0\subset \mathbb{B}(\mathcal{H})$ has finite codimension, one may take a nonzero element $a\in \mathcal{A}_0$ satisfying $pa(1-p)=a$. Then one may take an atom $p_0\leq p$ such that $p_0a\neq 0$. 
It follows from $a, p_0\in \mathcal{A}_0$ that $\mathcal{A}_0\ni p_0a+ap_0=p_0a+pa(1-p)p_0=p_0a$. 
Since $p_0a=p_0a(1-p)$ is of rank $1$, one may take an atom $q_0\leq (1-p)$ satisfying $p_0a=p_0aq_0$. 
Then it is not hard to see that the Jordan $^*$-algebra generated by $p\mathbb{B}(\mathcal{H})p$ and $p_0a=p_0aq_0$ equals $q\mathbb{B}(\mathcal{H})q$, where $q=p+q_0$.
\end{proof}

\begin{proof}[Proof of Proposition \ref{J1}]
Since $\mathcal{A}_1\subset \mathcal{A}$ is a self-adjoint finite-codimensional closed subspace, one may find self-adjoint bounded linear functionals $\varphi_1, \varphi_2, \ldots, \varphi_m$ on $\mathcal{A}$ such that $\mathcal{A}_1=\bigcap_{j=1}^m \ker \varphi_j$. 
Observe that  each $\varphi_j$ can be considered as an element of the predual of the von Neumann algebra $\mathcal{A}^{**}$, and $\mathcal{A}_1^{**}$ can be identified with $\bigcap_{j=1}^m \ker \varphi_j$ in $\mathcal{A}^{**}$.
Then, $\mathcal{A}_1^{**}$ is in fact a Jordan $^*$-subalgebra of $\mathcal{A}^{**}$. (Indeed, for each $a\in \mathcal{A}_1^{**}$, we may take a net $a_\lambda\in \mathcal{A}_1$ converging to $a$ in the $\sigma$-weak ($=$ weak $^*$-) topology of $\mathcal{A}^{**}$. 
Thus we have $a_\lambda a_\mu+a_\mu a_\lambda\in \bigcap_{j=1}^m \ker \varphi_j$ for every $\lambda, \mu$. 
Taking the limit first with respect to $\mu$ and then with respect to $\lambda$ gives $2a^2\in \bigcap_{j=1}^m \ker \varphi_j$ and so $a^2\in \mathcal{A}_1^{**}$, as desired.)

We next show that the support projection of each $\varphi_j$ is petty in $\mathcal{A}^{**}$.
Take any maximal abelian von Neumann subalgebra (masa) $\mathcal{M}$ of $\mathcal{A}^{**}$. 
Let $\psi_j$ denote the restriction of $\varphi_j$ to $\mathcal{M}$.
Then $\mathcal{A}_1^{**}\cap \mathcal{M}$ is a $^*$-subalgebra of $\mathcal{M}$ with finite codimension. 
Therefore, by Lemma \ref{measure}, if we identify $\mathcal{M}$ with $C(X)$ for some compact Hausdorff space $X$, then the support of each $\psi_j$ considered as a complex measure on $X$ is a finite set. 
In other words, we see that the the support projection of the self-adjoint normal linear functional $\psi_j\in \mathcal{M}_*$ is a sum of finitely many atoms in $\mathcal{M}$. 
Since we may take an arbitrary masa $\mathcal{M}\subset \mathcal{A}^{**}$ in the above discussion, we see that the support projection of each $\varphi_j$ is a sum of finitely many atoms in $\mathcal{A}^{**}$.
Let $p$ be a central projection of $\mathcal{A}^{**}$ such that $p\mathcal{A}^{**}$ is $^*$-isomorphic to $\mathbb{B}(\mathcal{H})$ for some infinite-dimensional Hilbert space $\mathcal{H}$. 
Then the space $p\mathcal{A}^{**}\cap \mathcal{A}_1^{**}\subset p\mathcal{A}^{**}$ is a Jordan $^*$-subalgebra with finite codimension. It follows from Lemma \ref{bhsub} that $p\mathcal{A}^{**}\cap \mathcal{A}_1^{**}= p\mathcal{A}^{**}$. Therefore, we see that the support projection of each $\varphi_j$ is orthogonal to $p$. 
Consequently, the support projection of each $\varphi_j$ is petty in $\mathcal{A}^{**}$. 

Take the central projection $e\in \mathcal{A}^{**}$ such that the ideal generated by these support projections equals $\mathcal{A}_2=e\mathcal{A}^{**}$, which is a finite-dimensional C$^*$-algebra. Then the mapping $\pi\colon \mathcal{A}\ni a\mapsto ea\in \mathcal{A}_2$ is a $^*$-homomorphism. 
Moreover, we see that $\mathcal{A}_3= \mathcal{A}_2\cap \mathcal{A}_1^{**}$ is a Jordan $^*$-subalgebra of $\mathcal{A}_2$ satisfying $\mathcal{A}_1=\{a\in \mathcal{A}\mid \pi(a)\in \mathcal{A}_3\}$. 
Because $\mathcal{A}\subset \mathcal{A}^{**}$ is weak $^*$-dense and $\mathcal{A}_2$ is finite-dimensional, we see that $\pi$ is surjective.
Thus the equality $\mathcal{A}_1=\{a\in \mathcal{A}\mid \pi(a)\in \mathcal{A}_3\}$ leads to $m=\dim \mathcal{A}_2-\dim \mathcal{A}_3$.
\end{proof}

\begin{lemma}\label{kl}
Let $\mathcal{A}=\bigoplus_{j=1}^m \mathbb{M}_{n_j}$ be a finite-dimensional C$^*$-algebra. Let $\mathcal{A}_0\subset \mathcal{A}$ be a Jordan $^*$-subalgebra with codimension $1$. Then one of the following holds.
\begin{itemize}
\item There is $1\leq k\leq m$ satisfying $n_{k}=1$ and 
\[
\mathcal{A}_0=\bigoplus_{1\leq j\leq m,\, j\neq k} \mathbb{M}_{n_j} \subset \mathcal{A}.
\] 
\item There are $1\leq k<l\leq m$ satisfying $n_{k}=n_{l}=1$ and 
\[
\mathcal{A}_0=\{a=(A_j)_{1\leq j\leq m}\in \bigoplus_{j=1}^m \mathbb{M}_{n_j}\mid A_{k}=A_{l}\}\subset \mathcal{A}.
\] 
\item There is $1\leq k\leq m$ satisfying $n_{k}=2$ and a unitary matrix $U\in \mathbb{M}_2$ such that 
\[
\mathcal{A}_0=\{a=(A_j)_{1\leq j\leq m}\in \bigoplus_{j=1}^m \mathbb{M}_{n_j}\mid (UA_{k}U^*)_{11}=(UA_{k}U^*)_{22}\}\subset \mathcal{A}.
\]
\end{itemize}
\end{lemma}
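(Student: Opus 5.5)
Consider the projections $P_j\in\mathcal{A}$ onto the $j$-th summand $\mathbb{M}_{n_j}$. I will show that $\mathcal{A}_0$ "differs from $\mathcal{A}$" in at most two summands, and then classify case by case.

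\emph{Step 1: reduction to one or two summands.} Since $\mathcal{A}_0$ is self-adjoint with codimension $1$, there is a self-adjoint functional $\varphi\neq0$ with $\mathcal{A}_0=\ker\varphi$. Write $\varphi=\sum_j\varphi_j$ with $\varphi_j$ supported on $\mathbb{M}_{n_j}$, and let $S=\{j\mid \varphi_j\neq 0\}$. For $j\notin S$, $\mathbb{M}_{n_j}\subset\mathcal{A}_0$. Next, for each $k\in S$, consider the Jordan subalgebra $\mathcal{A}_0\cap \mathbb{M}_{n_k}=\ker\varphi_k\cap\mathbb{M}_{n_k}$, which has codimension $1$ in $\mathbb{M}_{n_k}$.

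If $|S|\geq 2$, I claim each such intersection contains no unit-like obstruction, and in fact I will argue that $|S|\le 2$ and that $n_k=1$ for $k\in S$. Indeed, suppose $n_k\ge2$ for some $k\in S$. The subspace $\ker\varphi_k\cap\mathbb{M}_{n_k}$ is a codimension-$1$ Jordan subalgebra of $\mathbb{M}_{n_k}$. Take $x\in\mathcal{A}_0$ with $x_k=0$ and $\varphi_l(x_l)\ne0$ for some other $l\in S$; then $x$ lies in $\mathcal{A}_0$. Taking Jordan products $x\circ y$ with $y\in\mathcal{A}_0$ concentrated on $\mathbb{M}_{n_l}$, and using that $\mathcal{A}_0\cap\mathbb{M}_{n_l}$ is a codimension-$1$ Jordan subalgebra containing "most" of $\mathbb{M}_{n_l}$, I expect to force $\mathcal{A}_0\supset \mathbb{M}_{n_l}$, which contradicts $l\in S$. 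The cleanest version of this is as follows. Note that $\mathcal{A}_0\cap\bigoplus_{j\in S}\mathbb{M}_{n_j}$ projects onto each $\mathbb{M}_{n_k}$, so the projection $\pi_k(\mathcal{A}_0)=\mathbb{M}_{n_k}$. Then $\mathcal{A}_0$ is a Jordan subalgebra whose image in each factor is everything, but which has codimension $1$. Hence it is the graph-type subalgebra $\{a\mid \varphi_k(a_k)=-\varphi_l(a_l)\}$, and this being Jordan forces the linear relation between factors to come from a Jordan homomorphism between quotients. Since the $\mathbb{M}_{n}$'s are simple as Jordan algebras (for the Jordan structure relevant here), the relation $\varphi_k(a_k)=c\,\varphi_l(a_l)$ being multiplicative forces $\varphi_k,\varphi_l$ to be (multiples of) characters. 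This gives $n_k=n_l=1$, $|S|=2$, and after normalizing via $1\in\mathcal{A}_0$ (which is forced by idempotents being closed under squaring: $a_k=a_l$), we obtain the second item.

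\emph{Step 2: the case $|S|=1$,} that is, a codimension-$1$ Jordan $^*$-subalgebra $\mathcal{B}_0\subset\mathbb{M}_n$ with $\mathcal{B}_0\supset$ everything else. If $1\notin\mathcal{B}_0$, then $\mathcal{B}_0$ is a Jordan ideal-like subspace. Using Jordan products with rank-one projections (in the manner of Lemma \ref{bhsub}), I will show that $\mathcal{B}_0=\{0\}$, so $n=1$; this is the first item. If $1\in\mathcal{B}_0$, then $\mathcal{B}_0=\ker\psi$ with $\psi(1)=0$, $\psi=\mathrm{tr}(H\,\cdot)$ for a traceless self-adjoint $H\neq0$. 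The Jordan condition says $\mathrm{tr}(H(ab+ba))=0$ whenever $\mathrm{tr}(Ha)=\mathrm{tr}(Hb)=0$. Taking $a=b$ rank-one projections onto vectors $\xi$ with $\langle H\xi,\xi\rangle=0$ gives a consistency condition. Applying polarization, the bilinear form $(a,b)\mapsto\mathrm{tr}(H(ab+ba))$ must be a combination of $\mathrm{tr}(Ha)\mathrm{tr}(\cdot b)+\mathrm{tr}(\cdot a)\mathrm{tr}(Hb)$, which yields $2H^2$-type identities. Evaluating on matrix units in an eigenbasis of $H$ forces $H$ to have rank $2$ with eigenvalues $\pm\lambda$, and it forces $n=2$ (for $n\ge3$, pick $a=E_{13}+E_{31}$-type elements in $\ker\psi$ whose squares have nonzero $\psi$). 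Then $H=\lambda U^*\mathrm{diag}(1,-1)U$, which gives the third item.

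The main obstacle is Step 1, namely ruling out $|S|\ge2$ with a nonabelian summand, and handling $n\ge3$ in Step 2. For these I will first try direct computation with matrix units and rank-one projections lying in the kernel.
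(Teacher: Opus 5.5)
Your route differs from the paper's and most of it can be made to work, but Step 1 does not prove $\lvert S\rvert\le 2$. Both the ``graph-type'' description $\{a\mid \varphi_k(a_k)=-\varphi_l(a_l)\}$ and the Jordan isomorphism between two quotients presuppose $\lvert S\rvert=2$. For $\lvert S\rvert\ge 3$ the kernel is $\sum_{j\in S}\varphi_j(a_j)=0$, and you only assert the conclusion ``$\lvert S\rvert=2$''. Your first sketch has the opposite defect: an $x\in\mathcal{A}_0$ with $x_k=0$ and $\varphi_l(x_l)\neq0$ exists only if $S$ has a third index.

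The fix uses your own idea without pairing factors. Take any $\lvert S\rvert\ge 2$ and $k\in S$. Then $\pi_k(\mathcal{A}_0)=\mathbb{M}_{n_k}$, and $\ker\varphi_k$, placed in the $k$-th summand, lies in $\mathcal{A}_0$. Hence $xa_k+a_kx\in\ker\varphi_k$ for all $x\in\ker\varphi_k$ and $a_k\in\mathbb{M}_{n_k}$, i.e.\ $\ker\varphi_k$ is a codimension-one Jordan ideal of $\mathbb{M}_{n_k}$.

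Rather than invoking Jordan simplicity loosely, check directly that this forces $n_k=1$. Write $\varphi_k=\mathrm{tr}(B_k\,\cdot)$ with $B_k$ diagonal. Then the kernel contains every off-diagonal matrix unit $E_{ij}$. It therefore contains $E_{ij}E_{ji}+E_{ji}E_{ij}=E_{ii}+E_{jj}$ and $E_{ii}(E_{ii}+E_{jj})+(E_{ii}+E_{jj})E_{ii}=2E_{ii}$, so it would be all of $\mathbb{M}_{n_k}$ if $n_k\ge2$.

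So $\varphi$ on $\bigoplus_{k\in S}\mathbb{M}_{n_k}\cong\mathbb{C}^S$ is $\sum_{k\in S}c_ka_k$ with all $c_k\neq0$. For $k\neq l$ in $S$, take the element with $k$-th entry $c_l$, $l$-th entry $-c_k$, and all other entries $0$. It lies in $\mathcal{A}_0$, and its square forces $c_kc_l(c_k+c_l)=0$. Thus $c_k=-c_l$ for every pair, which is impossible for three indices and gives the second item for two.

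The paper gets this reduction in one stroke. It diagonalizes every $B_j$ and passes to the diagonal masa $\mathcal{D}$, where $\mathcal{D}\cap\mathcal{A}_0$ is a codimension-one $^*$-subalgebra of a commutative algebra. As in Lemma \ref{measure}, this shows the support projection of $\varphi$ is a sum of at most two atoms. Its remaining ``elementary calculations'' are exactly your off-diagonal tests. That handles the number of summands and the rank of each $B_j$ at once; your Goursat/Jordan-ideal picture is more structural but costs the extra case above.

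Your Step 2 is sound and can be shortened. The polarization and the rank-one projections with $\langle H\xi,\xi\rangle=0$ add nothing, since such $P$ satisfy $P^2=P$. The split on $1\in\mathcal{B}_0$ is also unnecessary. In an eigenbasis of $B_k$, each $E_{ij}+E_{ji}$ with $i\neq j$ lies in $\ker\varphi$, and its square forces $h_i+h_j=0$. This gives $n_k=1$, or $n_k=2$ with eigenvalues $\pm\lambda$ (the first and third items), and rules out $n_k\ge3$.
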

\begin{proof}
Since $\mathcal{A}_0\subset \mathcal{A}$ is self-adjoint and of codimension $1$, one may find a nonzero self-adjoint linear functional $\varphi$ on $\mathcal{A}$ satisfying $\mathcal{A}_0=\ker \varphi$. Then there are hermitian matrices $B_j\in \mathbb{M}_{n_j}$, $1\leq j\leq m$, such that $\varphi(a)=\sum_{j=1}^m\mathrm{tr} (A_jB_j)$ for every $a=(A_j)_{1\leq j\leq m}\in \bigoplus_{j=1}^m \mathbb{M}_{n_j}=\mathcal{A}$. 
By diagonalizing each $B_j$ by a unitary matrix, we see that it suffices to consider the case where each $B_j$ is a diagonal matrix. 
Moreover, by looking at the commutative C$^*$-algebra $\mathcal{D}$ formed of the direct sum of diagonal matrices in $\mathcal{A}=\bigoplus_{j=1}^m \mathbb{M}_{n_j}$, and its $^*$-subalgebra $\mathcal{D}\cap \mathcal{A}_0$ with codimension $1$, we also see that the support projection of $\varphi$ is the sum of at most $2$ atoms in $\mathcal{A}$.
(To see this, imitate the proof of Lemma \ref{measure}.) 
Now, it is easily seen by some elementary calculations that $\ker \varphi\subset \mathcal{A}$ forms a Jordan $^*$-subalgebra if and only if one of the following three conditions holds.
\begin{itemize}
\item There is $1\leq k\leq m$ satisfying $n_{k}=1$, $B_{k}\neq 0$, and $B_j=0$, $j\in \{1,2,\ldots, m\}\setminus \{k\}$.  
\item There are $1\leq k<l\leq m$ satisfying $n_{k}=n_{l}=1$, $B_{k}=-B_{l}\neq 0$, and $B_j=0$, $j\in \{1,2,\ldots, m\}\setminus \{k, l\}$.
\item There is $1\leq k\leq m$ satisfying $n_{k}=2$, $(B_{k})_{11}=-(B_{k})_{22}\neq 0$, and $B_j=0$, $j\in \{1,2,\ldots, m\}\setminus \{k\}$. 
\end{itemize}
Thus we get the desired conclusion.
\end{proof}

\begin{proof}[Proof of Proposition \ref{J2}] 
By Proposition \ref{J1}, there is a surjective $^*$-homomorphism $\pi\colon \mathcal{A}\to \mathcal{A}_2$ onto a finite-dimensional C$^*$-algebra $\mathcal{A}_2$ and a Jordan $^*$-subalgebra $\mathcal{A}_3\subset \mathcal{A}_2$ with codimension $1$ such that $\mathcal{A}_1=\{a\in \mathcal{A}\mid \pi(a)\in \mathcal{A}_3\}$.
Then Lemma \ref{kl} gives the explicit form of $\mathcal{A}_3$. 
By considering the restriction of $\pi$ to a suitable direct summand of $\mathcal{A}_2$, we get the desired conclusion. 
\end{proof}

\subsection{Order embeddings}
\begin{lemma}
Let $\Phi\colon \mathcal{A}_{sa}\to\mathcal{B}_{sa}$ be a positive linear mapping with finite corank. 
Then either $\Phi(1)$ is invertible or $0$ is an isolated point of $\sigma(\Phi(1))$, and $\chi_{\{0\}}(\Phi(1))\in \mathcal{B}$ is petty.
\end{lemma}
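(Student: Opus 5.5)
Set $b:=\Phi(1)\geq 0$. The plan is to show that the spectral region of $b$ near $0$ is controlled by the finite codimension of $\Phi(\mathcal{A}_{sa})$. We use positivity in the form
\[
-\lVert a\rVert\, b\leq \Phi(a)\leq \lVert a\rVert\, b, \qquad a\in\mathcal{A}_{sa}.
\]
Hence every element of the range $\Phi(\mathcal{A}_{sa})$ is dominated in absolute value by a multiple of $b$. The key consequence is this: if $y\in\Phi(\mathcal{A}_{sa})$ and $g$ is a projection (or a positive element) with $gbg=0$, then $gyg=0$, and hence also $y g = 0$ via the standard fact that $0\le x\le cb$ and $gbg=0$ imply $xg=0$. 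Applying this to $x=\Phi(a)+\lVert a\rVert b\ge 0$ gives $\Phi(a)g=0$.

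\textbf{Step 1: $0$ is isolated in $\sigma(b)$ or $b$ is invertible.} Suppose not. Then $\sigma(b)$ accumulates at $0$ (with $0\in\sigma(b)$), and we can pick infinitely many continuous functions $h_k\geq0$ on $\sigma(b)$ with pairwise disjoint supports contained in $(0,\varepsilon_k)$, with $\varepsilon_k\to0$ and each $h_k(b)\neq 0$. Consider the infinite-dimensional real space $\mathcal{W}=\overline{\mathrm{span}}\{h_k(b)\}$, or rather its finite spans. Finite corank yields a nonzero $y=\sum_k t_k h_k(b)\in\Phi(\mathcal{A}_{sa})$, and in fact an infinite-dimensional subspace of such $y$ inside the range. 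Write $y=\Phi(a)$. The domination $|y|\le \lVert a\rVert b$ gives $|t_k|\,h_k(b)\le \lVert a\rVert\, b$, which is only a weak constraint. I would therefore argue differently and use the isometry-free fact that $\Phi$ is bounded: choosing $h_k$ with $\lVert h_k(b)\rVert=1$ but supported where $b<\varepsilon_k$ forces, for $y=\Phi(a)$ with only finitely many coefficients, no contradiction. So the better route is a Baire/finite-codimension argument: the range has finite codimension, so the quotient map $Q\colon\mathcal{B}_{sa}\to\mathcal{B}_{sa}/\overline{\Phi(\mathcal{A}_{sa})}$ has finite rank. Here I expect to need that the range is closed, which I will assume, or else work with the algebraic complement. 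Taking $y_k=h_k(b)/\lVert h_k(b)\rVert$, a subsequence of the $Q(y_k)$ converges, so differences $y_k-y_l$ lie close to the range. Since $b y_k\to0$ in norm while $y_k$ are orthogonal norm-one elements, I aim to contradict domination: an element $\Phi(a)$ close to $y_k-y_l$ must satisfy $\lVert \Phi(a)\rVert$ bounded while $\Phi(a)\le\lVert a\rVert b$. Compressing by the spectral region where $b<\varepsilon$ gives norm at most $\lVert a\rVert\varepsilon$. Controlling $\lVert a\rVert$ is the main obstacle: $\Phi$ need not be bounded below. I would handle it by instead taking $\Phi(a)$ exactly equal to combinations lying in the range, using the algebraic finite codimension on the span of $\{y_k\}$ together with the compression bound $\lVert g\Phi(a)g\rVert\le\lVert a\rVert\lVert gbg\rVert$. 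This gives $\Phi(a)=0$ on spectral parts where $b$ is tiny only if a lower bound holds. Consequently I expect the cleanest fix to be working in $\mathcal{B}^{**}$ with spectral projections $g_\varepsilon=\chi_{[0,\varepsilon)}(b)$.

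\textbf{Step 2: pettiness.} Once $g=\chi_{\{0\}}(b)\in\mathcal{B}$ exists, the domination gives $\Phi(a)g=g\Phi(a)=0$ for all $a$. So $\Phi(\mathcal{A}_{sa})\subset\{y\in\mathcal{B}_{sa}: yg=0\}$, and the latter has finite codimension. Hence $\{y\in\mathcal{B}_{sa}\mid (1-g)y(1-g)=0\}$, which contains $g\mathcal{B}_{sa}g$ and $\{x+x^*: x\in g\mathcal{B}(1-g)\}$, is finite-dimensional. Therefore $g\mathcal{B}$ is finite-dimensional, and Lemma \ref{epetty} shows that $g$ is petty, in the same way as Lemma \ref{eepetty} with $e_1=1-g$.
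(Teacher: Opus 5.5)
Your Step~2 is correct and is essentially the paper's argument: domination gives $\Phi(a)g=0$, so $\{y\in\mathcal{B}_{sa}\mid (1-g)y(1-g)=0\}$ meets the range only in $0$, hence is finite-dimensional, and Lemma~\ref{eepetty} applies.

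\textbf{The gap is Step~1.} The proposal never actually proves it, and Step~2 needs it even to have $g=\chi_{\{0\}}(b)$ in $\mathcal{B}$. You try three routes and finish none.

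The disjoint bumps $h_k$ fail for a structural reason. Since $0\notin\mathrm{supp}\,h_k$, each $h_k$ vanishes on $\sigma(b)\cap[0,\delta_k)$ for some $\delta_k>0$. Hence $h_k(t)\leq C_kt$ on $\sigma(b)$, and every finite combination $\sum t_kh_k(b)$ \emph{is} dominated by a multiple of $b$. Finite corank then yields no contradiction.

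The quotient route rests on two things you do not supply. Closedness of the range is assumed without justification. A bound on $\lVert a\rVert$ over preimages is explicitly left as ``the main obstacle''. Finally, the move to $\mathcal{B}^{**}$ is only announced, not carried out.

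The missing idea is to test against a space in which \emph{no} nonzero element is dominated by a multiple of $b$. The paper uses $\{f(b)\mid f\in\mathrm{span}\{t^{1/m}\mid m\geq 2\}\}$. If $0$ is a non-isolated point of $\sigma(b)$, this space is infinite-dimensional. For $f(t)=\sum_{m=2}^n a_mt^{1/m}$ with $a_n\neq 0$, one has $\lvert f(t)\rvert/t\to\infty$ as $t\to 0^+$ along $\sigma(b)$. So $-Cb\leq f(b)\leq Cb$ fails for every $C>0$. Every element of $\Phi(\mathcal{A}_{sa})$ satisfies $-\lVert a\rVert b\leq \Phi(a)\leq \lVert a\rVert b$, so this space meets the range only in $0$, contradicting finite corank. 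This uses finite corank purely algebraically and needs neither closed range nor control of $\lVert a\rVert$.

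Your quotient route could in fact be rescued. The bound $\lVert \Phi(a)\rVert\leq \lVert a\rVert\lVert b\rVert$ makes $\Phi$ bounded. A bounded operator whose range has finite algebraic codimension has closed range. The open mapping theorem then gives every $y$ in the range a preimage $a$ with $\lVert a\rVert\leq C\lVert y\rVert$. Take a cutoff $0\leq\phi\leq 1$ supported in $[0,\varepsilon]$ and equal to $1$ on $\mathrm{supp}\,h_k\cup\mathrm{supp}\,h_l$. The compression bound $\lVert \phi(b)\Phi(a)\phi(b)\rVert\leq \varepsilon\lVert a\rVert$ then contradicts $\lVert y_k-y_l\rVert=1$. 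But those are exactly the steps your proposal omits.
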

\begin{proof}
Assume that $0\in \sigma(\Phi(1))$ and it is not an isolated point of $\sigma(\Phi(1))$.
Let $F$ denote the real vector space of continuous functions on $[0,\infty)$ spanned by the functions $[0,\infty)\ni t\mapsto t^{1/m}\in \mathbb{R}$, $m\geq 2$. 
Then the linear space $\{f(\Phi(1))\mid f\in F\}$ is infinite-dimensional. 

Let $a_2, a_3, \ldots, a_n\in \mathbb{R}$ and assume that $a_n\neq 0$. 
Set $g(t)=a_2t^{1/2}+a_3 t^{1/3}+\cdots +a_nt^{1/n}$. 
Then we have $g(\Phi(1))=a_2\Phi(1)^{1/2}+a_3 \Phi(1)^{1/3}+\cdots +a_n\Phi(1)^{1/n}$. 
Considering $\Phi(1)$ as an element of a commutative C$^*$-algebra, and using the assumption that $0$ is not an isolated point of $\sigma(\Phi(1))$, we see that there is no $C>0$ satisfying $-C\Phi(1)\leq g(\Phi(1))\leq C\Phi(1)$. 

On the other hand, if $a\in \mathcal{A}_{sa}$, then we have $-\lVert a\rVert\Phi(1)\leq \Phi(a)\leq \lVert a\rVert\Phi(1)$.
It follows that $\{f(\Phi(1))\mid f\in F\}\cap \Phi(\mathcal{A}_{sa})=\{0\}$. 
This contradicts the assumption that $\Phi$ has finite corank. 
Therefore, we see that either $\Phi(1)$ is invertible or $0$ is an isolated point of $\sigma(\Phi(1))$. 
Moreover, $\mathcal{V}=\{b\in \mathcal{B}_{sa}\mid \chi_{(0,\infty)}(\Phi(1))b\chi_{(0,\infty)}(\Phi(1))=0\}$ satisfies $\mathcal{V}\cap \Phi(\mathcal{A}_{sa})=\{0\}$. Thus $\mathcal{V}$ is finite-dimensional, and Lemma \ref{eepetty} implies that $\chi_{\{0\}}(\Phi(1)) \in\mathcal{B}$ is petty.
\end{proof}

\begin{proof}[Proof of Theorem \ref{embedding}]
Let $\Phi\colon \mathcal{A}_{sa}\to \mathcal{B}_{sa}$ be a linear order embedding of finite corank. 
Set $f=\chi_{\{0\}}(\Phi(1))$. 
By the preceding lemma, we see that $f\in \mathcal{B}$ is petty, and $(1-f)\Phi(1)(1-f)\in (1-f)\mathcal{B}_{sa}(1-f)$ is positive and invertible. Take the square root $b$ of this operator.
Since the range of the operator $\Phi(a)$ is contained in the range of $1-f$ for each $a\in \mathcal{A}_{sa}$, we may define the mapping $\Psi\colon \mathcal{A}_{sa}\to (1-f)\mathcal{B}_{sa}(1-f)$ by $\Psi(a)=b^{-1} \Phi(a)b^{-1}$. 
Then $\Psi$ is a unital linear order embedding with finite corank, which is a linear isometry by Lemma \ref{psi}, and we have $\Phi(a)=b\Psi(a)b$ for every $a\in \mathcal{A}_{sa}$.
The converse is also a consequence of Lemma \ref{psi}.
\end{proof}

\section{Questions}
We still do not know whether every unital positive linear isometry $\mathscr{H}_n\to\mathscr{H}_k$ is a generalized CLP isometry.
A remarkable fact from the proof of Lemma \ref{MA} is that a generalized CLP isometry $\Phi \colon \mathscr{H}_n\to \mathscr{H}_k$ always satisfies $\sigma(X)\subset \sigma(\Phi(X))$ for every $X\in \mathscr{H}_n$. 
It should be of interest to ask the following question. 
\begin{question}
Is there a unital positive linear isometry $\Phi \colon \mathscr{H}_n\to \mathscr{H}_k$ satisfying $\sigma(X)\not\subset \sigma(\Phi(X))$ for some $X\in \mathscr{H}_n$?
\end{question}
By Lemma \ref{psi}, there is no such mapping if $n=2$. 
Interestingly, mappings $\Phi$ satisfying the ``dual'' condition $\sigma(X)\supset \sigma(\Phi(X))$ for every $X$ is recently studied (but in a slightly different context) for example in \cite{CGT} (see also references therein).

In the commutative case, the study of isometric shifts is an area with fruitful research activity. 
We gave in Proposition \ref{corank1} a generalization of the result by Gutek, Hart, Jamison, and Rajagopalan \cite{GHJR}. That proposition shows that there are two types of isometric shifts even in the setting of noncommutative C$^*$-algebras. 
A naive question is the following. 
\begin{question}
Is it possible to consider variants of the results for commutative C$^*$-algebras given for example in \cite{FV}, \cite{H}, \cite{AF2}, \cite{GN}, \cite{A}, and \cite{A2} in the setting of noncommutative C$^*$-algebras?
\end{question}

Lastly, we pose the following question.
\begin{question}
The original theorems by Kadison in \cite{K}, \cite{K2} are generalized in various directions. In particular, many results on linear surjective isometries between certain classes of real or complex Banach spaces with certain algebraic structures are known. See for example \cite{FMP} and references therein.
Is it possible to think of generalizations of our theorems to such directions?
\end{question}

\medskip\medskip

\noindent 
\textbf{Acknowledgements.} \quad 
This work is originally motivated by a question by Hiroyasu Hamada (Okayama University of Science) to the author which asks whether there is an isometric shift on $\mathbb{B}(\mathcal{H})$, to which the author could not answer immediately. 
By Corollary \ref{hamada}, the author now can tell him that the answer is negative.
The author is grateful to him for asking this question and also for allowing the author to visit Sasebo, where intensive discussions on related topics are made.

\end{document}